\documentclass[12pt,english]{amsart}
\usepackage[margin=1in]{geometry}
\usepackage[T1]{fontenc}
\usepackage[latin9]{inputenc}
\usepackage{amstext}
\usepackage{amsthm}
\usepackage{amssymb}
\usepackage{stmaryrd}
\usepackage{setspace}
\usepackage{graphicx}
\usepackage[export]{adjustbox}
\allowdisplaybreaks

\makeatletter
\numberwithin{equation}{section}
\numberwithin{figure}{section}
\theoremstyle{plain}
\newtheorem{thm}{\protect\theoremname}[section]
\theoremstyle{definition}
\newtheorem{defn}[thm]{\protect\definitionname}
\theoremstyle{plain}
\newtheorem{cor}[thm]{\protect\corollaryname}
\newtheorem{prop}[thm]{\protect\propositionname}
\newtheorem{lem}[thm]{\protect\lemmaname}
\newtheorem{conjecture}[thm]{\protect\conjecturename}
\theoremstyle{remark}
\newtheorem{rem}[thm]{\protect\remarkname}

\usepackage{tikz-cd}
\usepackage{graphicx}
\usepackage{enumerate} 
\usepackage{enumitem} 
\usepackage{romannum}

\makeatother

\usepackage{babel}
\providecommand{\corollaryname}{Corollary}
\providecommand{\definitionname}{Definition}
\providecommand{\remarkname}{Remark}
\providecommand{\theoremname}{Theorem}
\providecommand{\propositionname}{Proposition}
\providecommand{\lemmaname}{Lemma}
\providecommand{\conjecturename}{Conjecture}

\begin{document}
\pagenumbering{arabic}

\title{On 3d Quantum Trace Maps}

\author[Qingjing Chen]{Qingjing Chen}

\author[Andrew Kricker]{Andrew Kricker}

\begin{abstract}
A 
3d quantum trace map is a homomorphism from the skein module of an
ideally triangulated 3-manifold to its quantum gluing module that quantizes the classical trace map. There
are two constructions of such maps, one by Garoufalidis and Yu in
\cite{GY1}, and the other by Panitch and Park in \cite{PP1}. However,
the relationship between these two constructions was 
unknown. We propose
a third construction of the 3d quantum trace map which agrees with
the one given by Garoufalidis and Yu, and extends to certain types
of manifolds with ideally triangulated boundaries. Our 3d quantum
trace map can be compared with that of \cite{PP1} relatively easily 
by subdividing the face suspensions of \cite{PP1} and the ideal tetrahedra of our definition into a common subdivision based on face cones. This allows us to give an exact relation between the definitions, which partially addresses the
equivalence between the constructions of \cite{GY1} and \cite{PP1}.
\end{abstract}
\maketitle

\section{Introduction\label{sec:Introduction}}

For a 3-manifold $Y$ equipped with an ideal triangulation $\mathcal{T}$,
the 3d quantum trace map
\[
Tr_{\mathcal{T}}\colon\mathrm{Sk}(Y)\rightarrow\hat{\mathcal{G}}_{\mathcal{T}}
\]
is a map from the skein module $\mathrm{Sk}(Y)$ of $Y$ to the 
quantum gluing module $\hat{\mathcal{G}}_{\mathcal{T}}$ of $Y$.
The significance of this map is that it gives an exact relation between
two different ``quantizations'' of the $\text{SL}_{2}$-character
variety
\[
\mathfrak{X}(Y):=\text{Hom}(\pi_{1}(Y),\text{SL}_{2})//\text{SL}_{2}
\]
 of the 3-manifold $Y$:
\begin{enumerate}
\item Intrinsically, we can consider the skein module of the manifold $Y$:
fix a commutative ring $R$ containing a distinguished invertible
element $A$, the \emph{Kauffman bracket skein module} $\mathrm{Sk}(Y)$ \cite{Tur,Prz}
is defined to be the $R$-module spanned by isotopy classes of framed
links in $Y$, modulo Kauffman bracket relations. The module $\mathrm{Sk}(Y)$
is a quantization of the character variety $\mathfrak{X}(Y)$ in the
sense that if one specialize the module at $A=1$, $\mathrm{Sk}_{A=1}(Y)$
is a commutative algebra which, up to nilpotent elements, agrees with
the ring of regular functions on $\mathfrak{X}(Y)$ \cite{BW,Prz}.
\item Since the manifold $Y$ has an ideal triangulation $\mathcal{T}$,
we can consider the so called \emph{quantum gluing variety}
$\mathcal{G}_{\mathcal{T}}$ \cite{Dim,NZ}. Roughly speaking, we
assign the so-called \emph{shape parameters} to each edge of ideal tetrahedra
in $\mathcal{T}$ in certain consistent way, these shape-parameters
then satisfy a system of algebraic equations including the \emph{vertex
equations}, the\emph{ lagrangian equations} and the \emph{edge equations}.
The solution set of these equations is an algebraic variety $\mathcal{G}_{\mathcal{T}}$,
which can be viewed as a parametrization of $\mathfrak{X}(Y)$ as
points of $\mathcal{G}_{\mathcal{T}}$ give $\text{SL}_{2}$-representations
of $\pi_{1}(Y)$ up to conjugacy. The quantum gluing module $\hat{\mathcal{G}}_{\mathcal{T}}$
is by construction a deformation of the coordinate ring of the variety
$\mathcal{G}_{\mathcal{T}}$.  
\end{enumerate}

The existence of the 3d quantum trace map was conjectured in \cite{AGLR}
and since then there are two independent constructions of this map,
one by Garoufalidis and Yu \cite{GY1}, and the other by Panitch and
Park \cite{PP1}. We offer a third construction of the quantum trace
map that extends to certain types of 3-manifold with boundaries and explain how it compares with the constructions of Garoufalidis \& Yu and Panitch \& Park for a manifold without boundary (with cusps). 

\subsection{Our construction of a 3d quantum trace map.}
We give an overview of our construction. In what follows, we assume that
we have fixed a commutative ring $R$ containing distinguished invertible
elements $A^{\frac{1}{2}}$ and $(-A^{2})^{\frac{1}{2}}$. The notion of skein modules
of 3-manifolds can be extended to manifolds with boundaries if one considers
framed tangles with endpoints suitably localized on the boundary,
for instance see \cite{CL2} and \cite{PP1}. In our work, we will
take the approach in \cite{PP1}. This requires an extra decoration
on the boundary $\partial Y$ of the 3-manifold called a \emph{boundary
marking}, which is an oriented bipartite graph $\Gamma$ embedded
in $\partial Y$ such that each vertex is either a sink or a source,
and it serves to ``localize'' the endpoints of the framed tangles.
The result is the stated skein module $\mathrm{Sk}(Y,\Gamma)$ (see Definition \ref{def: stated skein modules} or \cite[Definition 3.3]{PP1}) and its
reduced version $\overline{\mathrm{Sk}}(Y,\Gamma)$ (see \cite[Definition 3.33]{PP1}). As we will discuss
later, if $v\in V(\Gamma)$ is a vertex of the boundary marking of
degree $n$, then $\overline{\mathrm{Sk}}(Y,\Gamma)$ is naturally a
module over the reduced skein algebra $\overline{\mathrm{SkAlg}}(D_{n})$
of the $n$-gon $D_{n}$. If $v$ is a sink, denoted $v\in V^{-}(\Gamma)$, then $\overline{\mathrm{Sk}}(Y,\Gamma)$
is a left $\overline{\mathrm{SkAlg}}(D_{n})$-module; if $v$ is a source, denoted $v\in V^{+}(\Gamma)$, 
then $\overline{\mathrm{Sk}}(Y,\Gamma)$ is a right $\overline{\mathrm{SkAlg}}(D_{n})$-module.
Thus if we denote by $D_{v}$ the $n$-gon associated to the vertex
$v$ (of degree $n$), then $\overline{\mathrm{Sk}}(Y,\Gamma)$ is a
\[
\underset{v\in V^{-}(\Gamma)}{\bigotimes}\overline{\mathrm{SkAlg}}(D_{v})\text{-}\underset{v\in V^{+}(\Gamma)}{\bigotimes}\overline{\mathrm{SkAlg}}(D_{v})
\]
-bimodule.

We consider an oriented 
ideally triangulated 3-manifold with boundary
$(Y,\mathcal{T})$. That is, $Y$ can be decomposed into a finite
collection $\mathcal{T}$ of ideal tetrahedra glued along their faces;
in this case, the boundary $\partial Y$ admits an ideal triangulation
$\mathcal{F}$ by some faces of the ideal tetrahedra in $\mathcal{T}$.
(By boundary of the manifold, we mean the actual manifold boundary,
not the cusp.) We fix once and for all a labeling of every edge of every
ideal tetrahedron in $\mathcal{T}$ by the shape parameters. More specifically, every ideal tetrahedron $T\in\mathcal{T}$ inherits
an orientation from the 3-manifold $Y$. We put a tetrahedron $T$
in $\mathbb{R}^{3}$ so that the orientation induced from $Y$ agrees
with the natural orientation of $\mathbb{R}^{3}$. Choose an ideal
vertex $v$ of $T$ and label the three edges adjacent to $v$ by
$z_{T},z^{\prime}_{T},z^{\prime\prime}_{T}$ in clockwise order 
with respect to an outward pointing normal 
and label the
edge opposite to edge labeled $z^{\boxempty}_{T}$ by $y^{\boxempty}_{T}$,
see Figure \ref{fig:ideal T wth shape parameters}.
\begin{figure}[h]
  \includegraphics[scale=0.15]{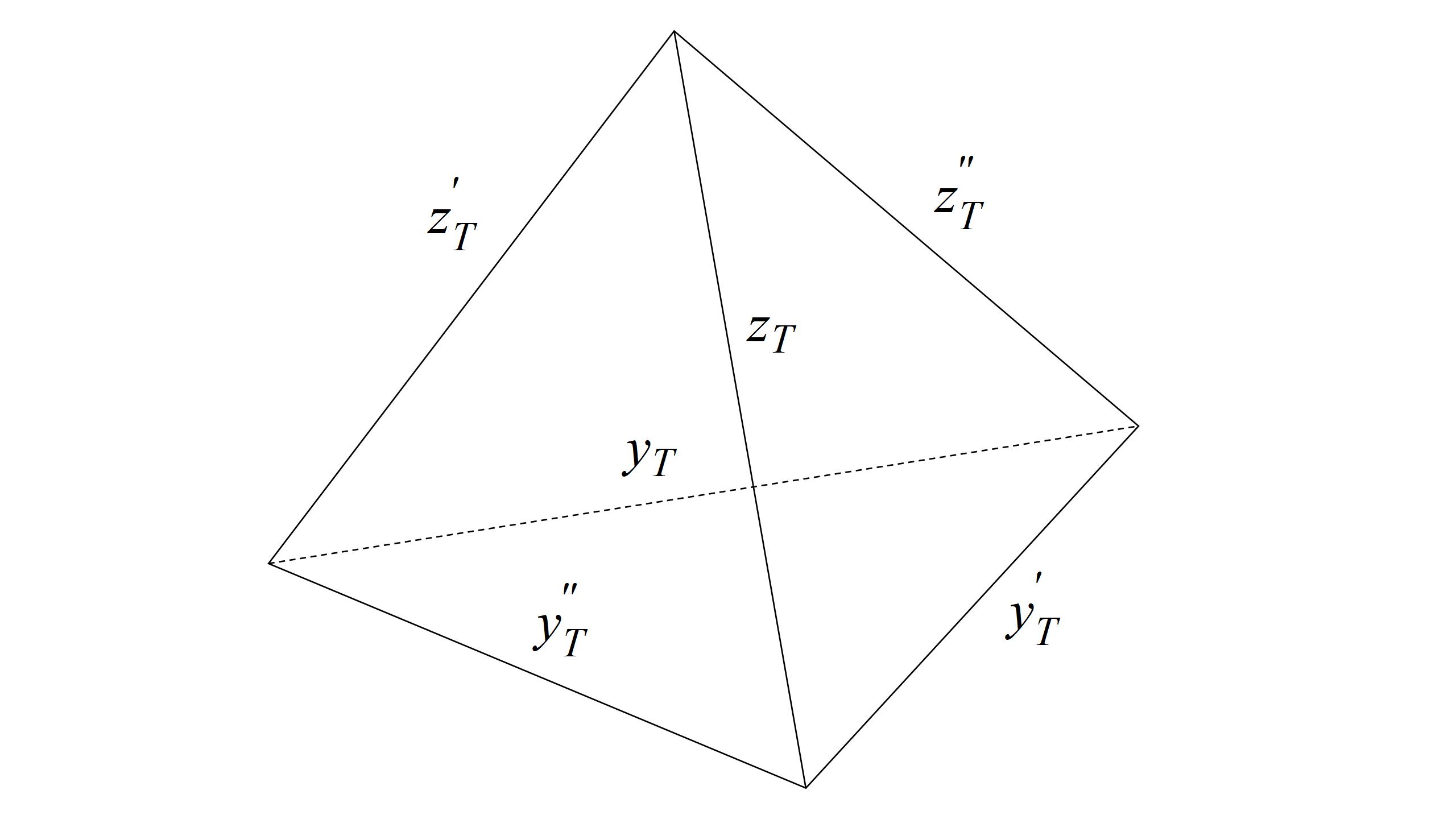} 
  \caption{}
  \label{fig:ideal T wth shape parameters}
\end{figure}
This is done for every ideal tetrahedron $T\in\mathcal{T}$. These labels are
shape parameters, meaning $z^{\boxempty}_{T}$ and $y^{\boxempty}_{T}$ are in fact variables with values in $\mathbb{C}$
that satisfy the following equations:
\begin{itemize}
\item In each ideal tetrahedron we have the \emph{vertex equation}
\begin{equation}
z_{T}z^{\prime}_{T}z^{\prime\prime}_{T}=z_{T}y^{\prime}_{T}y^{\prime\prime}_{T}=y_{T}z^{\prime}_{T}y^{\prime\prime}_{T}=y_{T}y^{\prime}_{T}z^{\prime\prime}_{T}=(-1)^{\frac{1}{2}}\label{eq:classical vertex equation}
\end{equation}
and the \emph{lagrangian equation}
\begin{equation}
z^{2}_{T}+(z^{\prime\prime}_{T})^{-2}=1.
\label{eq:classical lagrangian equation}
\end{equation}
\item We have the \emph{edge equations}. For every internal edge $e$ of the ideal triangulation
$\mathcal{T}$, let $e_{1},e_{2},\dots,e_{k}$ be shape parameters
of the edges of ideal tetrahedra that are identified to $e$. Then the edge equation associated to $e$ is
\begin{equation}
e_{1}e_{2}\dots e_{k}=-1.\label{eq:classical edge equation}
\end{equation}
\end{itemize}
To be more precise, $z^{\boxempty}_{T}$ and $y^{\boxempty}_{T}$ should be
called \emph{square-root} shape parameters as their squares play the
roles of honest shape parameters. In fact, by (\ref{eq:classical vertex equation})
and (\ref{eq:classical lagrangian equation}) we have $\left(z^{\boxempty}_{T}\right)^{2}=\left(y^{\boxempty}_{T}\right)^{2}$
and that the value of $Z^{\boxempty}_{T}:=\left(z^{\boxempty}_{T}\right)^{2}$ in $\mathbb{C}$ determines
a hyperbolic structure on each ideal tetrahedron (therefore $Z^{\boxempty}_{T}$'s are the honest
shape parameters) which, by (\ref{eq:classical edge equation}) together with suitable requirements on the arguments of the $Z^{\boxempty}_{T}$'s,
can be consistently glued by face-pairing isometries and give a hyperbolic
structure on $Y$.

The subscript $T$ on the shape parameters is to indicate the dependence on the ideal tetrahedra. When there is only a single ideal tetrahedron at hand, we will drop the subcript $T$.

Here is our key construction. For such a 3-manifold
$(Y,\mathcal{T})$ we can consider the \emph{canonical boundary marking}
$\Gamma_{0}$ on $\partial Y$, which is an oriented bipartite graph in $\partial Y$ whose vertices are either sources or sinks (see Definition \ref{def:ideally triangulated boundary marked 3-manifolds}).
The sources of $\Gamma_{0}$ are bivalent and are located at the barycentres of the edges of the boundary triangulation. 
The sinks of $\Gamma_{0}$ are trivalent and are located at the barycenters
of the triangles of the boundary triangulation. The (reduced) skein
algebra $\overline{\mathrm{SkAlg}}(D_{3})$ associated with such a trivalent
vertex is given by a certain quantum torus $\mathbb{T}$. We can twist
the usual stacking product structure on $\mathbb{T}$ by certain powers of
$A$ using a grading on the skein module via the boundary states of skeins.
Denoting the resulting product by $\cdot$, we will find that
$\left(\mathbb{T},\cdot\right)$ is commutative and is isomorphic to a Laurent polynomial algebra
$R[\alpha^{\pm1},\beta^{\pm1},\gamma^{\pm1}]$. We can also twist
the natural left action of $\mathbb{T}$ on $\overline{\mathrm{Sk}}(Y,\Gamma_{0})$,
and the result is a left $\left(\mathbb{T},\cdot\right)$-module structure
on $\overline{\mathrm{Sk}}(Y,\Gamma_{0})$. Again, we denote the resulting
left action of $\left(\mathbb{T},\cdot\right)$ on $\overline{\mathrm{Sk}}(Y,\Gamma_{0})$
by $\cdot$. 

We define the \emph{corner-reduced} skein module $\overline{\mathrm{Sk}}^{c}(Y,\Gamma_{0})$
to be the quotient of $\overline{\mathrm{Sk}}(Y,\Gamma_{0})$ that identifies
the left $\cdot$-action of the generators $\alpha$, $\beta$ and $\gamma$ of $\left(\mathbb{T},\cdot \right)$ with
the left multiplication by the scalar $(-A^{2})^{-\frac{1}{2}}$, and this identification is performed for
each trivalent sink of the canonical marking $\Gamma_{0}$ (see Definition
\ref{def:corner reduced module}). This construction
is largely motivated by the corner-reduction of Garoufalidis \& Yu
in \cite{GY1} for certain types of surfaces. A prototypical example
of such a 3-manifold with boundary is a single ideal tetrahedron $T$
(see Figure \ref{fig:ideal T with bm only} for the canonical boundary
marking on $T$), and we have the corresponding corner-reduced skein module $\overline{\mathrm{Sk}}^{c}(T)$. (We will drop $\Gamma_{0}$ from the notation whenever we are refering
to the canonical boundary markings coming from the ideal triangulation.)

Our construction of a quantum trace map may be summarized by
the following three steps.
\begin{enumerate}
\item First, we study the map of skein modules induced by splitting the
3-manifold $Y$ into ideal tetrahedra. In \cite{PP1}, there is a
splitting homomorphism $\overline{\mathrm{Sk}}(Y)\rightarrow\underset{T\in\mathcal{T}}{\overline{\bigotimes}}\overline{\mathrm{Sk}}(T)$
of reduced skein modules induced by the decomposition of $Y$ into
ideal tetrahedra, where the reduced tensor product $\underset{T\in\mathcal{T}}{\overline{\bigotimes}}\overline{\mathrm{Sk}}(T)$
is the quotient of the usual tensor product (of $R$-modules) $\underset{T\in\mathcal{T}}{\bigotimes}\overline{\mathrm{Sk}}(T)$
by two types of relations: (i) the relations coming from gluing ideal
tetrahedra around internal edges of the triangulation $\mathcal{T}$
and (ii) the relations coming from gluing faces of the ideal tetrahedra.
We adapt the argument in \cite{PP1} to the setting of corner-reduced
module and obtain our splitting homomorphism (Theorem \ref{thm:splitting homomorphism})
\[
\sigma\colon\overline{\mathrm{Sk}}^{c}(Y)\rightarrow\underset{T\in\mathcal{T}}{\overline{\bigotimes}}\overline{\mathrm{Sk}}^{c}(T).
\]
What is different from its counterpart in \cite{PP1} is that the\emph{ }reduced tensor
product $\underset{T\in\mathcal{T}}{\overline{\bigotimes}}\overline{\mathrm{Sk}}^{c}(T)$
of the corner-reduced modules is the quotient of the usual tensor
product $\underset{T\in\mathcal{T}}{\bigotimes}\overline{\mathrm{Sk}}^{c}(T)$
by relations coming \emph{only} from gluing of ideal tetrahedra around
internal edges of the triangulation. In a sense, the relations coming
from identifying pairs of faces of tetrahedra become trivial in the corner-reduced
setting.
\item Next, we construct the quantum trace map for a single tetrahdron $T$.
We obtain a map (Definition \ref{def:Quantum trace map on T})
\[
Tr_{T}^{c}\colon\overline{\mathrm{Sk}}^{c}(T)\rightarrow\hat{\mathcal{G}}(T)
\]
which turns out to be an {\bf isomorphism} 
of $R$-modules (Theorem \ref{thm:Tr induce isomorphism between corner reduced module and quantum module}). The codomain $\hat{\mathcal{G}}(T)$,
which we refer to as the \emph{quantum module} of the ideal tetrahedron
$T$, is given by the quotient
\[
\frac{\mathbb{T}\langle T\rangle}{\langle\text{vertex}\rangle_{\text{R}}+\langle\text{lagrangian}\rangle_{\text{R}}}.
\]
Here $\mathbb{T}\langle T\rangle$ is the skein algebra associated
to the six vertices of the boundary marking at the barycenters of
the six edges of $T$, equipped with $\cdot$-product (we can also
twist the reduced skein algebras associated with the sources of $\Gamma_{0}$).
It turns out to be a quantum torus, generated by six elements $\hat{z},\hat{z}^{\prime},\hat{z}^{\prime\prime},\hat{y},\hat{y}^{\prime},\hat{y}^{\prime\prime}$
corresponding to the six classical
shape parameters $z,z^{\prime},z^{\prime\prime},y,y^{\prime},y^{\prime\prime}$ we associated with the six edges of $T$. See Lemma \ref{lem:presntation of (T^tensor4,cdot)  and (B^tensor6, cdot)}
(ii) for the precise presentation of $\mathbb{T}\langle T\rangle$. This quantum torus can be thought
of as a non-commutative deformation of the Laurent polynomial algebra
in the variables corresponding to the classical shape
parameters. Therefore the generators of $\mathbb{T}\langle T\rangle$
are referred to as the \emph{quantized shape parameters}. The submodules
$\langle\text{vertex}\rangle_{\text{R}}$ and $\langle\text{lagrangian}\rangle_{\text{R}}$
are right ideals of $\mathbb{T}\langle T\rangle$ generated by the
quantum versions of vertex and lagrangian equations respectively (see
Definition \ref{def: quantum module of T}).
\item Lastly, to obtain the quantum trace map for $(Y,\mathcal{T})$, we
glue the individual quantum trace maps $Tr_{T}^{c}$ for each ideal
tetrahedron $T\in\mathcal{T}$. First, we must glue the quantum modules
$\hat{\mathcal{G}}(T)$ for each ideal tetrahedron. We define the
\emph{quantum gluing module} $\hat{\mathcal{G}}_{\mathcal{T}}$ to
be the quotient
\[
\hat{\mathcal{G}}_{\mathcal{T}}=\frac{\underset{T\in\mathcal{T}}{\bigotimes}\hat{\mathcal{G}}(T)}{\langle\text{edge}\rangle_{\text{L}}},
\]
where $\langle\text{edge}\rangle_{\text{L}}$ is the (image in $\underset{T\in\mathcal{T}}{\bigotimes}\hat{\mathcal{G}}(T)$
under quotient map of) left ideal of $\underset{T\in\mathcal{T}}{\bigotimes}\mathbb{T}\langle T\rangle$
generated by the quantum version of edge equations (see Definition
\ref{def: quantum gluing module}). This definition
of the quantum gluing module agrees with the one given in \cite{GY1}.
We will show that the tensor product of maps
\[
\underset{T\in\mathcal{T}}{\bigotimes}Tr_{T}^{c}\colon\underset{T\in\mathcal{T}}{\bigotimes}\overline{\mathrm{Sk}}^{c}(T)\rightarrow\underset{T\in\mathcal{T}}{\bigotimes}\hat{\mathcal{G}}(T)
\]
 descends to a well-defined map (Proposition \ref{prop:quantum trace for T glue})
\[
\underset{T\in\mathcal{T}}{\overline{\bigotimes}}Tr_{T}^{c}\colon\underset{T\in\mathcal{T}}{\overline{\bigotimes}}\overline{\mathrm{Sk}}^{c}(T)\rightarrow\hat{\mathcal{G}}_{\mathcal{T}}.
\]
Composing with the splitting homomorphism $\sigma\colon\overline{\mathrm{Sk}}^{c}(Y)\rightarrow\underset{T\in\mathcal{T}}{\overline{\bigotimes}}\overline{\mathrm{Sk}}^{c}(T)$,
we obtain our quantum trace map
\[
Tr_{\mathcal{T}}:=\left(\underset{T\in\mathcal{T}}{\overline{\bigotimes}}Tr_{T}^{c}\right)\circ\sigma\colon\overline{\mathrm{Sk}}^{c}(Y)\rightarrow\underset{T\in\mathcal{T}}{\overline{\bigotimes}}\overline{\mathrm{Sk}}^{c}(T)\rightarrow\hat{\mathcal{G}}_{\mathcal{T}}.
\]
\end{enumerate}

We have arrived at our main result.
\begin{thm}
\label{thm:Main theorem 0, existence of 3d quantum trace map} Let
$(Y,\mathcal{T})$ be an ideally triangulated 3-manifold with boundary,
equipped with the canonical boundary marking $\Gamma_{0}$. 
Our definitions give a well-defined $R$-module homomorphism
\[
Tr_{\mathcal{T}}\colon\overline{\mathrm{Sk}}^{c}(Y)\rightarrow\hat{\mathcal{G}}_{\mathcal{T}}.
\]
\end{thm}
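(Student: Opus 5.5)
The theorem follows by assembling the three ingredients summarized before the statement. By construction $Tr_{\mathcal{T}}=\left(\overline{\bigotimes}_{T\in\mathcal{T}}Tr_{T}^{c}\right)\circ\sigma$, so it suffices to show that each factor is a well-defined $R$-module homomorphism between the indicated modules.

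\textbf{The splitting map $\sigma$.} This is Theorem \ref{thm:splitting homomorphism}. Its proof adapts the splitting homomorphism of \cite{PP1}. We first isotope a tangle representative to be transverse to the faces of $\mathcal{T}$, and then cut along the faces. Cutting a strand along a face produces two new endpoints at the marking vertex on that face, which we label by a sum over states. The outputs are independent of choices for the following reasons.
\begin{enumerate}
\item Isotopies across faces are absorbed by the edge relations of the reduced tensor product.
\item Height exchanges and moves across the trivalent marking vertices are absorbed by the corner relations $\alpha,\beta,\gamma\mapsto(-A^{2})^{-\frac{1}{2}}$.
\end{enumerate}
The last point is exactly why the face-gluing relations of \cite{PP1} become trivial after corner reduction. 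A face-gluing relation is a skein in the bigon or triangle near the face, carried by an element of $\mathbb{T}$. After twisting by the $\cdot$-product, that element acts by a scalar on both sides, and the two scalars agree. We also check that $\sigma$ kills the corner-reduction relations of $\overline{\mathrm{Sk}}^{c}(Y)$ itself. This holds because boundary triangles of $Y$ are faces of tetrahedra, so the relation is inherited by the corresponding factor $\overline{\mathrm{Sk}}^{c}(T)$.

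\textbf{The local maps $Tr_{T}^{c}$.} These are given by Definition \ref{def:Quantum trace map on T}, and by Theorem \ref{thm:Tr induce isomorphism between corner reduced module and quantum module} each $Tr_{T}^{c}$ is an $R$-module isomorphism $\overline{\mathrm{Sk}}^{c}(T)\to\hat{\mathcal{G}}(T)$. The tensor product $\bigotimes_{T}Tr_{T}^{c}$ is therefore a well-defined homomorphism into $\bigotimes_{T}\hat{\mathcal{G}}(T)$.

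\textbf{Descent to the quotient.} The substantive step is Proposition \ref{prop:quantum trace for T glue}. We must show that the image of each internal-edge gluing relation of $\overline{\bigotimes}_{T}\overline{\mathrm{Sk}}^{c}(T)$ lies in $\langle\text{edge}\rangle_{\text{L}}$.

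Around an internal edge $e$ with tetrahedra $T_{1},\dots,T_{k}$, the gluing relation identifies the action of a small arc looping around $e$ in one tetrahedron with its action in the neighbouring tetrahedron. Equivalently, the product of the edge-marking actions around $e$ equals a scalar.

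Under $Tr_{T_{i}}^{c}$, the action of the source vertex at the edge of $T_{i}$ becomes left multiplication by the quantized shape parameter $\hat{e}_{i}$. The relation therefore maps to $(\hat{e}_{1}\cdots\hat{e}_{k}+1)\cdot x$ up to a power of $A$, and this lies in the left ideal generated by the quantum edge equation.

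The main obstacle is the normalization. We need the $A$-power twists in the $\cdot$-products, together with the ordering of the noncommuting factors $\hat{e}_{i}$, to produce exactly the quantum edge equation as defined in Definition \ref{def: quantum gluing module}. We also need the relation to land in a left ideal, even though the vertex and lagrangian quotients are by right ideals. I would handle this compatibility first. The plan is to check that left multiplication by edge monomials preserves $\langle\text{vertex}\rangle_{\text{R}}+\langle\text{lagrangian}\rangle_{\text{R}}$, which holds because edge monomials from different tetrahedra commute, and then to verify the scalar by a direct computation for a single bivalent source.

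Composing the three homomorphisms then gives the well-defined $R$-module map $Tr_{\mathcal{T}}$.
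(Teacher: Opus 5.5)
Your route is the paper's: $Tr_{\mathcal{T}}=\bigl(\overline{\bigotimes}_{T}Tr_{T}^{c}\bigr)\circ\sigma$, with $\sigma$ from Theorem \ref{thm:splitting homomorphism} and $Tr_{T}^{c}$ from Theorem \ref{thm:Tr induce isomorphism between corner reduced module and quantum module}. So everything rests on the descent step, Proposition \ref{prop:quantum trace for T glue}. That is the one step you argue yourself, and you have it on the wrong side.

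The bigon algebras sit at \emph{sources}, so they act on $\overline{\mathrm{Sk}}(T)$ on the right. A generator of $\mathfrak{R}_{E}$ is $\bigl(\otimes_{T}x_{T}\bigr)\cup\bigl(\hat{e}^{\epsilon}-(-A^{2})^{\epsilon}\bigr)$. Each $Tr_{T}^{c}$ is a right $\mathbb{T}\langle T\rangle$-module map into $\hat{\mathcal{G}}(T)$, which is a quotient by a \emph{right} ideal, so right multiplication is well defined there with nothing to check. The image is $\bigl(\otimes_{T}Tr_{T}^{c}(x_{T})\bigr)\cdot\bigl(\hat{e}^{\epsilon}-(-A^{2})^{\epsilon}\bigr)$, with the generator on the right, which is exactly what lying in the image of the left ideal $E$ means.

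Your expression $(\hat{e}_{1}\cdots\hat{e}_{k}+1)\cdot x$ has the generator on the left. It also has the wrong constant, since the edge relation is $\hat{e}+A^{2}$. Moreover, the compatibility you propose to check first is false. Left multiplication by the single-tetrahedron factor of an edge monomial does not preserve $V_{T}+L_{T}$. For instance, $\hat{z}\cdot(\hat{z}^{2}+\hat{z}^{\prime\prime-2}-1)=(\hat{z}^{2}+A^{2}\hat{z}^{\prime\prime-2}-1)\cdot\hat{z}$, and this lies in $V_{T}+L_{T}$ only if $A^{2}-1$ annihilates $\hat{\mathcal{G}}(T)$. The fact that monomials from different tetrahedra commute does not address this.

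The more substantive gap is the normalization you defer. There is no scalar to compute at a source. The constant $(-A^{2})^{\epsilon}$ in $\mathfrak{R}_{E}$ is by definition the one in the edge relation, and $\hat{e}=\hat{e}_{1}\cdots\hat{e}_{k}=[\hat{e}_{1}\cdot\ldots\cdot\hat{e}_{k}]$ is literally the same element on both sides. What must be shown is that the $\cup$-action of $\hat{e}$ on $\otimes_{T}x_{T}$ equals its $\cdot$-action, because $Tr_{T}^{c}$ is linear only for the latter.

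``Up to a power of $A$'' is not good enough. Set $k=\langle d(\otimes_{T}x_{T}),d(\hat{e})\rangle$ and $y=\otimes_{T}Tr_{T}^{c}(x_{T})$. Then the image is $y\cdot\bigl(A^{\frac{1}{2}k}\hat{e}-(-A^{2})\bigr)$, which in general is not in $\overline{E}$ unless $k=0$.

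The missing idea is balancedness. Both $\otimes_{T}x_{T}$ and $\hat{e}$ are balanced, and $\hat{e}$ has zero degree on boundary faces because $e$ is internal. For each pair of glued bare faces the degree vectors agree and the skew forms are opposite under matching labels (Remark \ref{rem:matching face, opposite skew-symmetric form}), so the two pairings cancel and $k=0$. This cancellation happens across face pairs, not at a single source. The one-tetrahedron twist $\langle d_{f}(x_{T_{i}}),d_{f}(\hat{e}_{i})\rangle_{f}$ is generally nonzero, so your planned single-source computation cannot supply it.

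A smaller point about your sketch of $\sigma$: height exchanges, half twists and births/deaths of intersection points are handled by skein relations alone. Corner reduction is used only for the move across a singular leaf interior to a face. The edge relations are used only for the move across half of an internal edge.
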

Because $\overline{\mathrm{Sk}}^{c}(Y)$ is an $R$-module quotient
of $\overline{\mathrm{Sk}}(Y)$, which is in turn an $R$-module quotient
of $\mathrm{Sk}(Y)$, therefore composing with the quotient maps we
can also think of $Tr_{\mathcal{T}}$ as defined on $\overline{\mathrm{Sk}}(Y)$
or $\mathrm{Sk}(Y)$.

One reason this definition of quantum trace is significant is because it can be compared naturally to the definitions given in both \cite{GY1} and \cite{PP1}, in the case $Y$ is a cusped
3-manifold without boundary. (In this case, we have $\overline{\mathrm{Sk}}^{c}(Y)=\overline{\mathrm{Sk}}(Y)=\mathrm{Sk}(Y)$.)
We next give an overview of these relationships.


\subsection{Our quantum trace map agrees with that of Garoufalidis \& Yu.}

\begin{thm}
\label{thm: Main theorem 1, our Tr agrees with Tr of GY}Our quantum
trace map $Tr_{\mathcal{T}}$ agrees with the quantum trace map $Tr_{\mathcal{T}}^{[\text{GY}]}$
of Garoufalidis \& Yu when $Y$ is a manifold without boundary (with
cusps).
\end{thm}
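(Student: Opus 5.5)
Both maps are built by the same three-step recipe: cut $Y$ along the faces of $\mathcal{T}$, apply a local trace on each ideal tetrahedron, and reassemble modulo the edge equations. Since both land in the same quantum gluing module $\hat{\mathcal{G}}_{\mathcal{T}}$ (our Definition \ref{def: quantum gluing module} was chosen to match \cite{GY1}), it suffices to compare the two constructions locally on each tetrahedron. Concretely, I would show that the Garoufalidis--Yu map $Tr^{[\text{GY}]}_{\mathcal{T}}$ also factors through the splitting $\sigma\colon\overline{\mathrm{Sk}}^{c}(Y)\to\overline{\bigotimes}_{T}\overline{\mathrm{Sk}}^{c}(T)$, followed by some per-tetrahedron maps. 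I would then check that those per-tetrahedron maps agree with $Tr^{c}_{T}$.

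\textbf{Step 1: put a skein in good position.} Represent an element of $\mathrm{Sk}(Y)$ by a framed link in normal position with respect to $\mathcal{T}$. It should meet each face transversally, away from the edges, and in each tetrahedron it should consist of arcs lying near the face corners, as in \cite{GY1}. Garoufalidis--Yu define their trace by a state sum: sum over states on the intersection points with faces, take a product of local weights (monomials in the quantized shape parameters, with corner arcs contributing $\hat z$-type factors), and order the product by heights.

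\textbf{Step 2: identify the splitting.} Our splitting $\sigma$ cuts the same link along the same faces, producing stated tangles in each $T$. The sum over states at each face corresponds exactly to the sum in the GY state sum. Here I would use the fact that, in the corner-reduced setting, face-gluing relations become trivial (Theorem \ref{thm:splitting homomorphism}), so no extra face factors appear. What remains is to show that, for a stated tangle in a single $T$ made of corner arcs, $Tr^{c}_{T}$ of it equals the GY local weight.

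\textbf{Step 3: match generators.} Since $Tr^{c}_{T}$ is an isomorphism $\overline{\mathrm{Sk}}^{c}(T)\cong\hat{\mathcal{G}}(T)$ (Theorem \ref{thm:Tr induce isomorphism between corner reduced module and quantum module}), it is determined by its values on generators. These are the elementary stated corner arcs around each edge, which map to $\hat z^{\boxempty},\hat y^{\boxempty}$ up to powers of $A$. Products of such arcs are handled by the twisted $\cdot$-product. I would compute these values from Definition \ref{def:Quantum trace map on T} and compare them with the GY weights for a single corner arc with given states.

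\textbf{Main obstacle: normalizations.} The hard part is keeping track of powers of $A$, $(-A^2)^{1/2}$, and orderings. The GY weights involve Weyl-ordered monomials and height-ordering conventions, while ours use the grading twist on $\mathbb{T}$ and the left versus right ideal conventions in $\hat{\mathcal{G}}_{\mathcal{T}}$. I would first verify agreement on a single corner arc with each pair of states. Next I would check that the twisted $\cdot$-product reproduces Weyl ordering, via the presentation in Lemma \ref{lem:presntation of (T^tensor4,cdot)  and (B^tensor6, cdot)}. Finally I would check that the corner-reduction scalar $(-A^2)^{-1/2}$ matches the GY treatment of arcs turning around a vertex. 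Once these agree, multiplicativity together with the compatibility of edge-equation quotients (Proposition \ref{prop:quantum trace for T glue}) yields equality for all skeins.
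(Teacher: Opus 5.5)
Your skeleton (compatible splittings, a per-tetrahedron comparison, compatibility with the edge quotient) is the one the paper uses. However, the per-tetrahedron comparison in Steps 2--3 is never actually set up, and that is where the content of the proof lies.

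The Garoufalidis--Yu map is defined on the skein module $\mathrm{Sk}(\Sigma_{\mathcal{T}})$ of the dual surface. One splits along the lantern boundary circles by $\Theta$ into $\bigotimes_{T}\overline{\mathrm{Sk}}^{c}(\mathbb{L}_{T})$, then applies $Tr_{T}^{[\mathrm{GY}]}\colon\overline{\mathrm{Sk}}^{c}(\mathbb{L}_{T})\to\hat{\mathcal{G}}(T)$. The result reaches $\mathrm{Sk}(Y)$ only by descending along the surjection $\Phi\colon\mathrm{Sk}(\Sigma_{\mathcal{T}})\twoheadrightarrow\mathrm{Sk}(Y)$. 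So the GY ``local weights'' are values on the corner-reduced \emph{lantern} module. Your Step 3 tests on generators of $\overline{\mathrm{Sk}}^{c}(T)$, but a spanning set of $\overline{\mathrm{Sk}}^{c}(T)$ cannot test a map that is not defined there, so this compares nothing. The isomorphism property of $Tr^{c}_{T}$ plays no role.

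The missing bridge is the map $\Phi_{T}\colon\overline{\mathrm{Sk}}^{c}(\mathbb{L}_{T})\to\overline{\mathrm{Sk}}^{c}(T)$ induced by $\mathbb{L}_{T}\times(-1,1)\hookrightarrow T$, together with a proof that it is well defined. That means showing GY's corner-reduction right ideal $J^{c}$ of $(\overline{\mathrm{SkAlg}}(\mathbb{L}_{T}),\cdot)$ dies in $\overline{\mathrm{Sk}}^{c}(T)$. The generator $\Gamma'_{ab}-(-A^{2})^{-\frac{1}{2}}$ maps to a generator of $I^{c}$. The ideal also contains the mixed-state relation $\Gamma''_{ab}-1$. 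This is killed too, because $(\iota_{T})_{*}\Gamma''_{ab}=\Gamma_{cb}\cdot\Gamma_{ac}^{-1}$, which acts as $1$ on $\overline{\mathrm{Sk}}^{c}(T)$. Until $\Phi_{T}$ exists, the claim that GY's map ``factors through $\sigma$'' cannot even be formulated.

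Once $\Phi_{T}$ is in place the rest is short, and the normalization chase you single out as the main obstacle does not occur. For a link $\ell$ in $\Sigma_{\mathcal{T}}\times(-1,1)$ in general position, $\Theta(\ell)$ and $\sigma(\Phi(\ell))$ are both represented by $\sum_{\overrightarrow{\epsilon}}\otimes_{T}\ell_{T}^{\overrightarrow{\epsilon}}$. Hence $\sigma\circ\Phi$ equals $(\bigotimes_{T}\Phi_{T})\circ\Theta$ followed by the quotient to $\overline{\bigotimes}_{T}\overline{\mathrm{Sk}}^{c}(T)$.

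For the local identity $Tr^{c}_{T}\circ\Phi_{T}=Tr^{[\mathrm{GY}]}_{T}$, test on a spanning set of the lantern side. By \cite[Lemma 4.22]{GY1}, $\overline{\mathrm{Sk}}^{c}(\mathbb{L}_{T})$ is a quotient of $\mathcal{S}^{0}$. Modulo $B^{c}$, which both maps kill (mixed-state $x_{a}$ become bad arcs in $T$), it is therefore spanned by $\cdot$-monomials in the $x_{a}^{++}$. Since $\Phi_{T}(x_{a}^{++})=\hat{a}$, $\Psi(x_{a}^{++})=\hat{a}$ and $Tr^{c}_{T}(1\otimes x)=x$, both sides return the same monomial on the nose.

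This also corrects your Step 1. What you can legitimately assume is that the link lies in $\Sigma_{\mathcal{T}}\times(-1,1)$ (surjectivity of $\Phi$, \cite[Proposition 4.3]{GY1}), where its piece in each lantern is an arbitrary tangle. Reducing such pieces to products of elementary arcs is the algebraic spanning statement just quoted; your normal-position assumption does not provide it for free. Equality on all of $\mathrm{Sk}(Y)$ then follows from the surjectivity of $\Phi$ and the fact that $\bigotimes_{T}Tr^{c}_{T}$ descends to the edge quotient.
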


We'll now give an overview of Garoufalidis \& Yu's construction of their quantum trace map $Tr_{\mathcal{T}}^{[\text{GY}]}$ and a quick sketch of how Theorem \ref{thm: Main theorem 1, our Tr agrees with Tr of GY} is proved. The construction of Garoufalidis and Yu makes heavy use of the theory
of skein algebra of surfaces developed, for instance, in \cite{CL}.
To start, one considers the dual surface $\Sigma_{\mathcal{T}}$ associated
to the ideal triangulation $\mathcal{T}$ constructed in the following
way: one places a lantern $\mathbb{L}_{T}$ (a 2-sphere with four discs
removed) in each ideal tetrahedron $T\in\mathcal{T}$, see Figure
\ref{fig:ideal T with an embedded lantern(bare)}; then when we glue
the ideal tetrahedra together, these lanterns glue along their boundary
circles into an embedded surface $\Sigma_{\mathcal{T}}$ in the 3--manifold
$Y$.\begin{figure}[h]
  \includegraphics[scale=0.17]{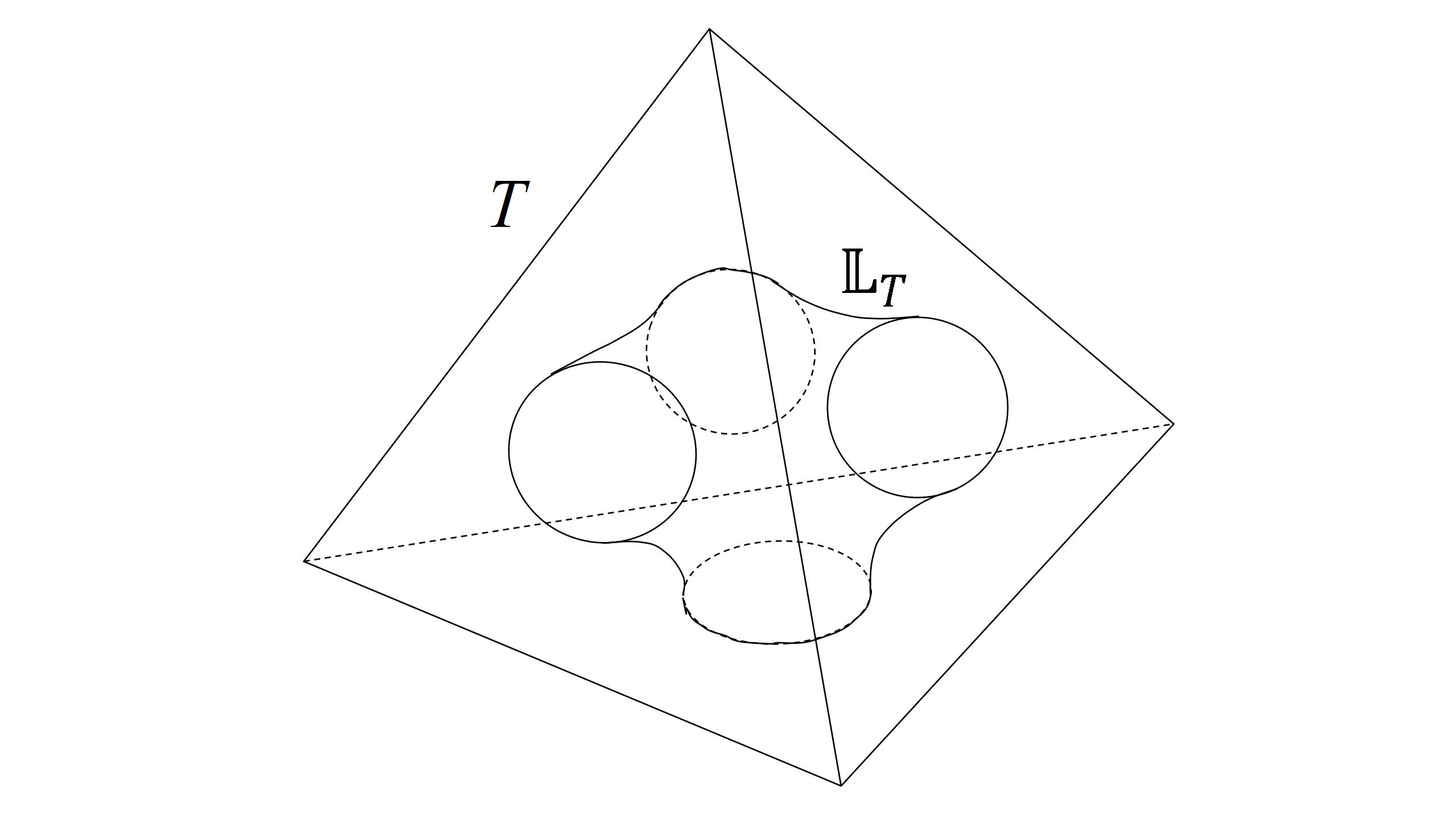} 
  \caption{ideal tetrahedron with an embedded lantern surface}
  \label{fig:ideal T with an embedded lantern(bare)}
\end{figure}
For each lantern $\mathbb{L}_{T}$ in the ideal tetrahedron $T$,
one has the so called corner-reduced skein module $\overline{\mathrm{Sk}}^{c}(\mathbb{L}_{T})$,
obtained in the following way: we first twist the product structure
on the skein algebra $\overline{\mathrm{SkAlg}}(\mathbb{L}_{T})$ in
a way similar to how we twist the product structure on $\mathbb{T}=\overline{\mathrm{SkAlg}}(D_{3})$
mentioned ealier, then $\overline{\mathrm{Sk}}^{c}(\mathbb{L}_{T})$
is defined to be the $R$-module quotient of $\overline{\mathrm{SkAlg}}(\mathbb{L}_{T})$
by a certain right ideal $J^{c}$ of the algebra $\left(\overline{\mathrm{SkAlg}}(\mathbb{L}_{T}),\cdot\right)$
(see Definition \ref{def:corner-reduced skein module of lantern surfaces}).
It was shown in \cite{GY1} that there is a well-defined surjective
$R$-module homomorphism
\[
\overline{\mathrm{Sk}}^{c}(\mathbb{L}_{T})\twoheadrightarrow\hat{\mathcal{G}}(T),
\]
where $\hat{\mathcal{G}}(T)$ is the quantum module of the ideal tetrahedron
$T$ we mentioned earlier. We will show that the natual inclusion $\mathbb{L}_{T}\hookrightarrow T$
induces a well-defined $R$-module homomorphism $\Phi_{T}\colon\overline{\mathrm{Sk}}^{c}(\mathbb{L}_{T})\rightarrow\overline{\mathrm{Sk}}^{c}(T)$
fitting into the following commutative diagram (Lemma \ref{lem:compatibility of GY Tr and our Tr for a single T}):
\begin{equation}
\label{lem:compatibility of GY Tr and our Tr for a single T in intro}
\begin{tikzcd} \overline{\mathrm{Sk}}^{c}(\mathbb{L}_{T}) \arrow[d, "\Phi_{T}"'] \arrow[rd, two heads] &                      \\ \overline{\mathrm{Sk}}^{c}(T) \arrow[r, "Tr_{T}^{c}"']                                  & \hat{\mathcal{G}}(T) \end{tikzcd}
\end{equation}
On the other hand, in \cite{GY1} it was shown that the decomposition
$\Sigma_{\mathcal{T}}=\bigcup_{T\in\mathcal{T}}\mathbb{L}_{T}$ induces
a well-defined splitting homomorphism 
\[
\Theta\colon\mathrm{Sk}(\Sigma_{\mathcal{T}})\rightarrow\bigotimes_{T\in\mathcal{T}}\overline{\mathrm{Sk}}^{c}(\mathbb{L}_{T}),
\]
which, we show, satisfies the following commutative diagram (Lemma
\ref{lem:lantern embeds in T induces map between corner reduced modules})
\begin{equation}
\label{eq:compatability of GY splitting with our splitting in intro} 
\begin{tikzcd} \mathrm{Sk}(\Sigma_{\mathcal{T}}) \arrow[r, "\Theta"] \arrow[d, two heads, "\Phi"] & \bigotimes_{T\in\mathcal{T}}\overline{\mathrm{Sk}}^{c}(\mathbb{L}_{T}) \arrow[r, "\bigotimes\Phi_{T} "] & \bigotimes_{T\in\mathcal{T}}\overline{\mathrm{Sk}}^{c}(T) \arrow[d, two heads] \\ \mathrm{Sk}(Y) \arrow[rr, "\sigma"]                                     &                                                                                                       & \overline{\bigotimes}_{T\in\mathcal{T}}\overline{\mathrm{Sk}}^{c}(T)           \end{tikzcd}.
\end{equation}
The quantum trace map $Tr_{\mathcal{T}}^{[\text{GY}]}$
of Garoufalidis \& Yu is then constructed in the following way. One
first considers the composition of $R$-module homomorphisms:
\[
\mathrm{Sk}(\Sigma_{\mathcal{T}})\stackrel{\Theta}{\rightarrow}\bigotimes_{T\in\mathcal{T}}\overline{\mathrm{Sk}}^{c}(\mathbb{L}_{T})\twoheadrightarrow\bigotimes_{T\in\mathcal{T}}\hat{\mathcal{G}}(T)\twoheadrightarrow\hat{\mathcal{G}}_{\mathcal{T}}.
\]
It was shown in \cite{GY1} that this map descends to $\mathrm{Sk}(Y)$
via the map $\Phi\colon\mathrm{Sk}(\Sigma_{\mathcal{T}})\twoheadrightarrow\mathrm{Sk}(Y)$
induced by the inclusion $\Sigma_{\mathcal{T}}\hookrightarrow Y$;
it is a quotient map with kernel generated by handle slides. The resulting
map defined on $\mathrm{Sk}(Y)$ is the quantum trace map $Tr_{\mathcal{T}}{}^{[\text{GY}]}$.
Therefore, in order to show that $Tr_{\mathcal{T}}^{[\text{GY}]}=Tr_{\mathcal{T}}$,
we only need to show that the following diagram commutes 
\begin{equation}
\begin{tikzcd} \mathrm{Sk}(\Sigma_{\mathcal{T}}) \arrow[r, "\Theta"] \arrow[d, "\Phi"', two heads] & \bigotimes_{T\in\mathcal{T}}\overline{\mathrm{Sk}}^{c}(\mathbb{L}_{T}) \arrow[r, two heads] & \bigotimes_{T\in\mathcal{T}}\hat{\mathcal{G}}(T) \arrow[r, two heads] & \hat{\mathcal{G}}_{\mathcal{T}} \\ \mathrm{Sk}(Y) \arrow[rrru, "Tr_{\mathcal{T}}"']                                    &                                                                                                              &                                                                       &                                 \end{tikzcd}
\end{equation}
But this is boundary of the following commutative diagram
\[ 
\begin{tikzcd} \mathrm{Sk}(\Sigma_{\mathcal{T}}) \arrow[d, "\Phi"', two heads] \arrow[r, "\Theta"] & \bigotimes_{T\in\mathcal{T}}\overline{\mathrm{Sk}}^{c}(\mathbb{L}_{T}) \arrow[d, "\bigotimes \Phi_{T}"'] \arrow[rd, "\bigotimes Tr_T^{c} "] &                                                                       \\ \mathrm{Sk}(Y) \arrow[rd, "\sigma"']                                                & \bigotimes_{T\in\mathcal{T}}\overline{\mathrm{Sk}}^{c}(T) \arrow[d, two heads] \arrow[r, "\bigotimes Tr_{T}^{c}"']                          & \bigotimes_{T\in\mathcal{T}}\hat{\mathcal{G}}(T) \arrow[d, two heads] \\                                                                                   & \overline{\bigotimes}_{T\in\mathcal{T}}\overline{\mathrm{Sk}}^{c}(T) \arrow[r, "\overline{\bigotimes}Tr_{T}^{c}"']                          & \hat{\mathcal{G}}_{\mathcal{T}}                                       \end{tikzcd} 
\]
which is given by combining (\ref{lem:compatibility of GY Tr and our Tr for a single T in intro}),
(\ref{eq:compatability of GY splitting with our splitting in intro})
and the fact that 
\[
\underset{T\in\mathcal{T}}{\bigotimes}Tr_{T}^{c}\colon\underset{T\in\mathcal{T}}{\bigotimes}\overline{\mathrm{Sk}}^{c}(T)\rightarrow\underset{T\in\mathcal{T}}{\bigotimes}\hat{\mathcal{G}}(T)
\]
 descends to the map 
\[
\underset{T\in\mathcal{T}}{\overline{\bigotimes}}Tr_{T}^{c}\colon\underset{T\in\mathcal{T}}{\overline{\bigotimes}}\overline{\mathrm{Sk}}^{c}(T)\rightarrow\hat{\mathcal{G}}_{\mathcal{T}}.
\]

Strictly speaking, our quantum trace map is a lift of the quantum
trace map defined in \cite{GY1}, but this is only a minor technical
issue. To be precise, in \cite{GY1}, one further quotients out the quantum
gluing module $\hat{\mathcal{G}}_{\mathcal{T}}$ by the relation which
identifies the quantized shape parameter associated to opposite edges
in each ideal tetrahedron. However, it is also remarked in \cite{GY1}
that their construction went through without imposing such further
identification on $\hat{\mathcal{G}}_{\mathcal{T}}$.

Another point worth mentioning is that the labeling and orientation
conventions in our work follows those of \cite{PP1} and are different
from those of \cite{GY1}, so the equivalence in the last theorem
is also up to these differences.

\subsection{Our quantum trace map agrees with that of Panitch \& Park up to a natural
map of quantum gluing modules.}

Again in this section we will assume $Y$ is a cusped 3-manifold without boundary and $\mathcal{T}$
is an ideal triangulation of $Y$.

The quantum gluing module $\hat{\mathcal{G}}_{\mathcal{T}}^{[\text{PP}]}$
in \cite{PP1} is constructed slightly differently from ours. Without
getting into the precise definition of $\hat{\mathcal{G}}_{\mathcal{T}}^{[\text{PP}]}$
for the moment,
(see Definition \ref{def:quantized shape parameters and quantum gluing module as in PP}
and \cite[Definition 5.13]{PP1}\footnote{For comparison, the notation for $\hat{\mathcal{G}}_{\mathcal{T}}^{[\text{PP}]}$
is $\text{SQGM}_{\mathcal{T}}(Y)$ in \cite{PP1} and is referred to as the \emph{square-root quantum gluing module}, which is a more accurate terminology because we are indeed in the square-root setting.}), 
as a consequence of the definition we know that, similar to our quantum
gluing module $\hat{\mathcal{G}}_{\mathcal{T}}$, $\hat{\mathcal{G}}_{\mathcal{T}}^{[\text{PP}]}$
is a quotient of a quantum torus $\underset{T\in\mathcal{T}}{\bigotimes}\tilde{\mathbb{T}}\langle T\rangle$,
where each $\tilde{\mathbb{T}}\langle T\rangle$ is a quantum torus
generated by six elements $\tilde{z}_{T},\tilde{z}^{\prime}_{T},\tilde{z}^{\prime\prime}_{T},\tilde{y}_{T},\tilde{y}^{\prime}_{T},\tilde{y}^{\prime\prime}_{T}$
corresponding to the classical shape parameters $z_{T},z^{\prime}_{T},z^{\prime\prime}_{T},y_{T},y^{\prime}_{T},y^{\prime\prime}_{T}$.
We will also refer to the elements $\tilde{z}^{\boxempty}_{T}$ and $\tilde{y}^{\boxempty}_{T}$
as \emph{quantized shape parameters} (in the sense of Panitch \& Park).
It turns out that $\tilde{\mathbb{T}}\langle T\rangle$ and $\mathbb{T}\langle T\rangle$
are isomorphic quantum tori through the isomorphism that matches
quantized shape parameters $\hat{z}^{\boxempty}_{T}\mapsto\tilde{z}^{\boxempty}_{T}$
and $\hat{y}^{\boxempty}_{T}\mapsto\tilde{y}^{\boxempty}_{T}$. It also turns out that these isomorphisms
induce a surjective map 
\[
\mu\colon\hat{\mathcal{G}}_{\mathcal{T}}\twoheadrightarrow\hat{\mathcal{G}}_{\mathcal{T}}^{[\text{PP}]}
\]
 of the quantum gluing modules. This needs the fact that 
 the (quantum
version of) vertex relations, lagrangian relations, and edge relations
all hold in $\hat{\mathcal{G}}_{\mathcal{T}}^{[\text{PP}]}$.

\begin{thm}
\label{thm: Main theorem 2, exact relation between our Tr and PP's Tr}There
is a surjective $R$-module homomorphism $\mu\colon\hat{\mathcal{G}}_{\mathcal{T}}\rightarrow\hat{\mathcal{G}}_{\mathcal{T}}^{[PP]}$
, which sends quantized shape parameters in $\hat{\mathcal{G}}_{\mathcal{T}}$ to the corresponding quantized shape parameters in 
$\hat{\mathcal{G}}_{\mathcal{T}}^{[PP]}$,
such that
\[
\mu\circ Tr_{\mathcal{T}}=Tr_{\mathcal{T}}^{[\text{PP}]}.
\]
\end{thm}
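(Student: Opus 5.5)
The proof has three parts: build $\mu$, reduce the identity $\mu\circ Tr_{\mathcal{T}}=Tr_{\mathcal{T}}^{[\text{PP}]}$ to a single tetrahedron, and do the local comparison.

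\emph{Constructing $\mu$.} Let $\tilde{\mathbb{T}}\langle T\rangle$ be the quantum torus underlying $\hat{\mathcal{G}}_{\mathcal{T}}^{[\text{PP}]}$. I first check that the assignment $\hat{z}^{\boxempty}_{T}\mapsto\tilde{z}^{\boxempty}_{T}$, $\hat{y}^{\boxempty}_{T}\mapsto\tilde{y}^{\boxempty}_{T}$ respects the commutation relations in the presentation of $\mathbb{T}\langle T\rangle$ from Lemma \ref{lem:presntation of (T^tensor4,cdot)  and (B^tensor6, cdot)}(ii). This gives an algebra isomorphism $\mathbb{T}\langle T\rangle\cong\tilde{\mathbb{T}}\langle T\rangle$, and hence an isomorphism of the tensor products over $T\in\mathcal{T}$.

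For $\mu$ to descend to the quotients, three families of relations must hold in $\hat{\mathcal{G}}_{\mathcal{T}}^{[\text{PP}]}$:
\begin{itemize}
\item the images of the right ideals $\langle\text{vertex}\rangle_{\text{R}}$ and $\langle\text{lagrangian}\rangle_{\text{R}}$ must vanish;
\item the image of the left ideal $\langle\text{edge}\rangle_{\text{L}}$ must vanish.
\end{itemize}
I would verify this directly from the defining relations in \cite[Definition 5.13]{PP1} (Definition \ref{def:quantized shape parameters and quantum gluing module as in PP}). Panitch and Park impose their relations through face suspensions, so this means expressing each quantum vertex, lagrangian and edge relation as a consequence of their face and edge relations. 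One has to track the $A$-power twists coming from the $\cdot$-product. Surjectivity is then immediate, because the quantized shape parameters generate the target as a module over the quantum torus.

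\emph{Reducing to a single tetrahedron.} Both quantum traces are obtained by splitting $Y$ into local pieces, followed by local evaluation. Ours splits into tetrahedra; Panitch and Park split into face suspensions. Following the abstract, I subdivide both into the common refinement by face cones, where each tetrahedron is the union of four cones on its faces. Each face suspension is the union of the two cones on a given face, one from each adjacent tetrahedron.

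The plan is to define a trace on each face-cone piece, valued in a small quantum torus, such that the following compatibilities hold:
\begin{itemize}
\item gluing four face cones recovers $Tr_{T}^{c}$, using the splitting map for the decomposition $T=\bigcup$cones together with Theorem \ref{thm:Tr induce isomorphism between corner reduced module and quantum module};
\item gluing two face cones recovers the Panitch--Park local trace on a face suspension.
\end{itemize}
Functoriality of splitting maps, with Theorem \ref{thm:splitting homomorphism} as the model, makes the two composites $\mathrm{Sk}(Y)\to\overline{\bigotimes}(\text{cones})\to(\cdots)$ agree. Hence $\mu\circ Tr_{\mathcal{T}}$ and $Tr_{\mathcal{T}}^{[\text{PP}]}$ factor through the same map, which proves the theorem.

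\emph{The local comparison.} This is the hard step. Since $\overline{\mathrm{Sk}}^{c}(T)$ is generated by a few simple stated arcs, it suffices to check the local compatibility on these generators. A natural generating set is the corner arcs near each edge, which Theorem \ref{thm:Tr induce isomorphism between corner reduced module and quantum module} identifies with monomials in the shape parameters. For each such arc I would compute both sides explicitly and match them, including the normalizing powers of $A$ and $(-A^{2})^{1/2}$.

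I expect the main obstacle to be twofold. First, reconciling the conventions: the corner reduction on our side against Panitch and Park's boundary reductions on the face cones, where our face-gluing relations become trivial. Second, showing that the face-cone splitting is well defined on corner-reduced modules. The first thing I would try is working with explicit stated-arc bases on a single face cone.
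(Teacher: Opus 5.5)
Your plan matches the paper's proof essentially step for step. In the paper, $\mu$ is induced by the quantum-torus isomorphism $\hat a\mapsto\tilde a$: the edge relations land in Panitch \& Park's left ideal generated by $\eta(\tilde I_E)$, and the vertex and lagrangian relations land in the right ideal generated by $\rho(\tilde I_T)$. The identity $\mu\circ Tr_{\mathcal{T}}=Tr_{\mathcal{T}}^{[\text{PP}]}$ is then obtained, as you propose, from the common face-cone refinement. The face-cone traces are checked against $Tr_{Sf}$ on a spanning set of $\overline{\mathrm{Sk}}(Sf)$, and against $\mu\circ Tr_{T}^{c}$ on monomials in the edge arcs.

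The obstacle you flag is resolved in the paper by corner-reducing each face cone only at its bare face $f_i$, giving the partially corner-reduced module $\overline{\mathrm{Sk}}^{pc}(Cf_i)$. The cone-point sink is left unreduced, so Panitch \& Park's $\rho_{f_i}$ still acts there, and both face-cone splittings become well defined without any face-gluing relations.
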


We will now sketch out some of the ideas behind this theorem. The main geometric idea is to further split both the ideal tetrahedra used by $Tr_{\mathcal{T}}$,  and the face suspensions used by $Tr_{\mathcal{T}}^{[\text{PP}]}$, into a common subdivision based on face cones. We define these terms next.

The construction of Panitch \& Park
of their quantum trace map starts with decomposing the manifold $Y$
into so-called \emph{face suspensions}. If $f$ is a face of
the ideal triangulation $\mathcal{T}$, its face suspension $Sf$
is built in the following way. If $f_{i}$ and $f_{j}$ are the bare faces
of ideal tetrahedra that are identified to the face $f$ of the
ideal triangulation, then the face suspension $Sf$ is given by gluing
the corresponding \emph{face cones} $Cf_{i}$ and $Cf_{j}$ along the map that identifies
$f_{i}$ and $f_{j}$; here a face cone of a face $f_{i}$ of a tetrahedron
$T$ is defined to be the union of line segments between the barycenter
of $T$ and points of $f_{i}$. Both face cones and face suspensions
come with canonical 
boundary markings (see Figure \ref{fig:face cone with bm}
and \ref{fig:face suspension with bm}) and therefore we can talk
about their (reduced) skein modules $\overline{\mathrm{Sk}}(Sf)$ and
$\overline{\mathrm{Sk}}(Cf_{i})$. 

For face cones, we will also use a further
reduction, called the \emph{partially corner-reduced} skein module $\overline{\mathrm{Sk}}^{pc}(Cf_{i})$,
whose construction is essentially the same as the corner-reduced modules discussed earlier,
but in this case we perform corner-reduction \emph{only} on the face $f_{i}$ 
and
disregard the other parts of the boundary  
(see Definition \ref{def: partial corner reduced module of face cone}
for the precise description). 

We can decompose the 3-manifold $Y$ into face cones in two ways.
One way is to first decompose $Y$ into face suspensions and then decompose
each of those face suspensions into two face cones. Alternatively, we can first decompose
$Y$ into ideal tetrahedra and then decompose each of those tetrahedra into
four face cones. This results in the following commutative diagram of splitting
homomorphisms,
which is discussed in detail in Section \ref{subsec:splitting homomorphisms for face cones and face suspensions}.
\begin{equation}
\label{eq:commutatve diagram of splitting homomorphisms in intro}
\begin{tikzcd}                                                                 & \underset{f\in\mathcal{T}^{(2)}}{\overline{\bigotimes}}\overline{\mathrm{Sk}}(Sf)  \arrow[rd, "\underset{f\in\mathcal{T}^{(2)}}{\overline{\bigotimes}}\sigma_{Sf}"] &                                                                                                                \\ \mathrm{Sk}(Y) \arrow[ru, "\tilde{\sigma}"] \arrow[rd, "\sigma"'] &                                                                                                                                                                   & {{{\underset{f_{i}\in\mathbf{f}(\mathcal{T})}{\overline{\bigotimes}}\overline{\mathrm{Sk}}^{pc}(Cf_{i})}}} \\                                                                 & \underset{T\in\mathcal{T}}{\overline{\bigotimes}}\overline{\mathrm{Sk}}^{c}(T) \arrow[ru, "\underset{T\in\mathcal{T}}{\overline{\bigotimes}}\sigma_{T}"']           &                                                                                                                \end{tikzcd}. \end{equation}
In this diagram:
\begin{itemize}
\item The map $\sigma$ is the splitting homomorphism we discussed earlier induced by decomposing $Y$
into ideal tetrahedra. Recall that the module $\underset{T\in\mathcal{T}}{\overline{\bigotimes}}\overline{\mathrm{Sk}}^{c}(T)$
is the quotient of the usual tensor product 
of corner-reduced, reduced skein modules of tetrahedra $\underset{T\in\mathcal{T}}{\bigotimes}\overline{\mathrm{Sk}}^{c}(T)$ by relations coming from gluing around edges of 
$\mathcal{T}$. (See Theorem \ref{thm:splitting homomorphism}.)
\smallskip
\item The map $\tilde{\sigma}$ is the splitting homomorphism of Panitch \& Park
induced by decomposing $Y$ into face suspensions (\cite[Corollary 3.37]{PP1}),
where $\underset{f\in\mathcal{T}^{(2)}}{\overline{\bigotimes}}\overline{\mathrm{Sk}}(Sf)$
is the quotient of the usual tensor product $\underset{f\in\mathcal{T}^{(2)}}{\bigotimes}\overline{\mathrm{Sk}}(Sf)$
by two types of relations: (i) the relations coming from gluing around
edges of $\mathcal{T}$ and (ii) the relations coming from gluing
face suspensions around the vertex cones of ideal tetrahedra. (See
Definition \ref{def:reduced tensor product of skein modules of face suspensions}.)\smallskip
\item $\underset{f\in\mathcal{T}^{(2)}}{\overline{\bigotimes}}\sigma_{Sf}$
is the map induced by decomposing each face suspension into 2 face cones
(see Proposition \ref{prop:gluing splitting homomorphism coming from splitting every single face suspension}). To build the co-domain we first consider the tensor product 
$\underset{f_{i}\in\mathbf{f}(\mathcal{T})}{\bigotimes}\overline{\mathrm{Sk}}^{pc}(Cf_{i})$,
which is over $\mathbf{f}(\mathcal{T})$, the set of faces of tetrahedra of $\mathcal{T}$, (the so-called {\em bare faces}), which indexes the face cones of $\mathcal{T}$. 
Then $\underset{f_{i}\in\mathbf{f}(\mathcal{T})}{\overline{\bigotimes}}\overline{\mathrm{Sk}}^{pc}(Cf_{i})$
is the quotient of that tensor product by the same two types
of relations considered in the last point (see Definition \ref{def:reduced tensor peoduct of partially corner-reduced modules of face cones}).
Note that we corner-reduce at the new faces $f_{i}$ appearing in this splitting, and only those faces,  
and the fact that there is no relation coming from gluing up these faces $f_{i}$ is 
because it becomes trivial in the corner-reduced
setting.\smallskip
\item $\underset{T\in\mathcal{T}}{\overline{\bigotimes}}\sigma_{T}$ is
the map induced by decomposing each ideal tetrahedron into 4 face cones.
(See Proposition \ref{prop:gluing splitting homomorphism coming from splitting every ideal tetrahedron}.)
\end{itemize}

To construct $Tr_{\mathcal{T}}^{[\text{PP}]}$, one first constructs
a quantum trace map $Tr_{Sf}\colon\overline{\mathrm{Sk}}(Sf)\rightarrow\mathbf{S}f$
for each face suspension $Sf$. Here $\mathbf{S}f$ is called the
\emph{face suspension module} of $Sf$ (Definition \ref{def:face cone module and face suspension module}
(ii)). It is a quantum torus whose generators are formal variables
corresponding to the marking edges of the boundary marking of $Sf$.
$\mathbf{S}f$ can be expressed as the tensor product $\mathbf{S}f=\mathbf{C}f_{i}\otimes\mathbf{C}f_{j}$
where $\mathbf{C}f_{i}$ and $\mathbf{C}f_{j}$ are the so called
\emph{face cone modules} associated with the face cones $Cf_{i}$
and $Cf_{j}$ that are glued to form $Sf$  (Definition \ref{def:face cone module and face suspension module}
(i)). Next, one consider the reduced
tensor product $\underset{f\in\mathcal{T}^{(2)}}{\overline{\bigotimes}}\mathbf{S}f$,
which is essentially defined in a way so that the usual tensor product
\[
\underset{f\in\mathcal{T}^{(2)}}{\bigotimes}Tr_{Sf}\colon\underset{f\in\mathcal{T}^{(2)}}{\bigotimes}\overline{\mathrm{Sk}}(Sf)\rightarrow\underset{f\in\mathcal{T}^{(2)}}{\bigotimes}\mathbf{S}f
\]
descends to a well-defined map
\[
\underset{f\in\mathcal{T}^{(2)}}{\overline{\bigotimes}}Tr_{Sf}\colon\underset{f\in\mathcal{T}^{(2)}}{\overline{\bigotimes}}\overline{\mathrm{Sk}}(Sf)\rightarrow\underset{f\in\mathcal{T}^{(2)}}{\overline{\bigotimes}}\mathbf{S}f
\]
(see (\ref{eq:reduced tensor product of face suspension modules})).
Then the quantum trace map $Tr_{\mathcal{T}}^{[\text{PP}]}$ of Panitch
\& Park is defined to be the composition
\[
Tr_{\mathcal{T}}^{[\text{PP}]}=\left(\underset{f\in\mathcal{T}^{(2)}}{\overline{\bigotimes}}Tr_{Sf}\right)\circ\tilde{\sigma}\colon\mathrm{Sk}(Y)\rightarrow\underset{f\in\mathcal{T}^{(2)}}{\overline{\bigotimes}}\overline{\mathrm{Sk}}(Sf)\rightarrow\underset{f\in\mathcal{T}^{(2)}}{\overline{\bigotimes}}\mathbf{S}f.
\]
By definition, the quantum gluing module $\hat{\mathcal{G}}_{\mathcal{T}}^{[\text{PP}]}$
is a certain submodule of $\underset{f\in\mathcal{T}^{(2)}}{\overline{\bigotimes}}\mathbf{S}f$
and it is shown that the image of $Tr_{\mathcal{T}}^{[\text{PP}]}$
is contained in $\hat{\mathcal{G}}_{\mathcal{T}}^{[\text{PP}]}$.

To connect $Tr_{\mathcal{T}}^{[\text{PP}]}$ to our quantum trace
map $Tr_{\mathcal{T}}$ we first construct a quantum trace map for
each face cone
\[
Tr_{Cf_{i}}^{pc}\colon\overline{\mathrm{Sk}}^{pc}(Cf_{i})\rightarrow\mathbf{C}f_{i}
\]
in a way similar to the way the quantum trace map $Tr_{T}^{c}$ for an ideal tetrahedron was constructed earlier (see Proposition \ref{prop:P^pc descends}).
Together, they give us a map (Lemma \ref{lem:quantum trace map for face cones glue})
\[
\underset{f_{i}\in\mathbf{f}(\mathcal{T})}{\overline{\bigotimes}}Tr_{Cf_{i}}^{pc}\colon\underset{f_{i}\in\mathbf{f}(\mathcal{T})}{\overline{\bigotimes}}\overline{\mathrm{Sk}}^{pc}(Cf_{i})\rightarrow\underset{f_{i}\in\mathbf{f}(\mathcal{T})}{\overline{\bigotimes}}\mathbf{C}f_{i}=\underset{f\in\mathcal{T}^{(2)}}{\overline{\bigotimes}}\mathbf{S}f
\]
 making the following two diagrams commute: 
 
 First, we have the commutative diagram which essentially says that ``the quantum trace maps on the face cones glue to the quantum trace maps on the face suspensions via the splitting homomorphisms induced by decomposing each face suspension into two face cones'':
\begin{equation}
\label{lem:comparing 2,3 in intro}
\begin{tikzcd}
\underset{f\in\mathcal{T}^{(2)}}{\overline{\bigotimes}}\overline{\mathrm{Sk}}(Sf) \arrow[rrd, "\underset{f\in\mathcal{T}^{(2)}}{\overline{\bigotimes}}Tr_{Sf}"] \arrow[dd, "\underset{f\in\mathcal{T}^{(2)}}{\overline{\bigotimes}}\sigma_{Sf}"'] &  &                                                                    \\
&  & \underset{f\in\mathcal{T}^{(2)}}{\overline{\bigotimes}}\mathbf{S}f \\
\underset{f_{i}\in\mathbf{f}(\mathcal{T})}{\overline{\bigotimes}}\overline{\mathrm{Sk}}^{pc}(Cf_{i}) \arrow[rru, "\underset{f_{i}\in\mathbf{f}(\mathcal{T})}{\overline{\bigotimes}}Tr_{Cf_{i}}^{pc}"']        &  &                                                                   
\end{tikzcd}.
\end{equation}
Second, we have the commutative diagram which essentially says that ``the quantum trace map on the face cones glue to the quantum trace map on the ideal tetrahedra, up to the map $\mu$, via the splitting homomorphism induced by decomposing each ideal tetrahedron into four face cones'':
\begin{equation}
\label{lem:comparing 1,3 in intro}
\begin{tikzcd} \underset{f_{i}\in\mathbf{f}(\mathcal{T})}{\overline{\bigotimes}}\overline{\mathrm{Sk}}^{pc}(Cf_{i}) \arrow[rr, "\underset{f_{i}\in\mathbf{f}(\mathcal{T})}{\overline{\bigotimes}}Tr_{Cf_{i}}^{pc}"] &  & \underset{f\in\mathcal{T}^{(2)}}{\overline{\bigotimes}}\mathbf{S}f \\ &  & {\hat{\mathcal{G}}_{\mathcal{T}}^{[\text{PP}]}} \arrow[u, hook]    \\ \underset{T\in\mathcal{T}}{\overline{\bigotimes}}\overline{\mathrm{Sk}}^{c}(T) \arrow[uu, "\underset{T\in\mathcal{T}}{\overline{\bigotimes}}\sigma_{T}"] \arrow[rr, "\underset{T\in\mathcal{T}}{\overline{\bigotimes}}Tr_{T}^{c}"']      &  & \hat{\mathcal{G}}_{\mathcal{T}} \arrow[u, "\mu"']                  \end{tikzcd}.
\end{equation}
(See Lemma \ref{lem:comparing 2,3} and \ref{lem:comparing 1,3}.) Now combine the commutative diagrams (\ref{eq:commutatve diagram of splitting homomorphisms in intro}),
(\ref{lem:comparing 2,3 in intro}) and (\ref{lem:comparing 1,3 in intro})
to get the following commutative diagram whose boundary proves Theorem \ref{thm: Main theorem 2, exact relation between our Tr and PP's Tr},
that is that $Tr_{\mathcal{T}}^{[\text{PP}]}=\mu\circ Tr_{\mathcal{T}}$.

\[
\begin{tikzcd}  &  & \underset{f\in\mathcal{T}^{(2)}}{\overline{\bigotimes}}\overline{\mathrm{Sk}}(Sf) \arrow[dd] \arrow[rrdd, "\underset{f\in\mathcal{T}^{(2)}}{\overline{\bigotimes}}Tr_{Sf}"] &  &                                                                    \\  &  &                                                                                &  &                                                                    \\ \mathrm{Sk}(Y) \arrow[rruu, "\tilde{\sigma}"] \arrow[rrdd, "\sigma"'] &  & \underset{f_{i}\in\mathbf{f}(\mathcal{T})}{\overline{\bigotimes}}\overline{\mathrm{Sk}}^{pc}(Cf_{i}) \arrow[rr]                                           &  & \underset{f\in\mathcal{T}^{(2)}}{\overline{\bigotimes}}\mathbf{S}f \\  &  &                                                                                &  & {\hat{\mathcal{G}}_{\mathcal{T}}^{[\text{PP}]}} \arrow[u, hook]    \\ &  & \underset{T\in\mathcal{T}}{\overline{\bigotimes}}\overline{\mathrm{Sk}}^{c}(T) \arrow[uu] \arrow[rr, "\underset{T\in\mathcal{T}}{\overline{\bigotimes}}Tr_{T}^{c}"]         &  & \hat{\mathcal{G}}_{\mathcal{T}} \arrow[u, "\mu"']                  \end{tikzcd}
\]

\subsection*{Acknowledgement}
We are indebted to Hiroaki Karuo, Yi Khing Law and Daniel Mathews for
many enlightening discussions. The project also received support through
the AcRF Tier 1 grants RG 17/23 and RG 106/25 from the Singapore Ministry
of Education.

\section{Skein modules of boundary marked 3-manifolds\label{sec:Skein module}}

\subsection{Skein modules of boundary marked 3-manifolds\label{subsec:Skein module basics}}

We recall the basic definitions and properties of skein modules of
3-manifolds. We follow Panitch \& Park \cite{PP1} closely. In particular, we will adopt their approach 
to stated skein modules of 3-manifolds. Throughout, $R$ will be a commutative
ring containing distinguished invertible elements $A^{\frac{1}{2}}$
and $(-A^{2})^{\frac{1}{2}}$.

\begin{defn}
\label{def: boundary marked 3-manifolds}Let $Y$ be a 3-manifold.
A \emph{boundary marking} of $Y$ is a smoothly embedded oriented
graph $\Gamma\subset\partial Y$, such that each vertex is either
a sink or source. 
\end{defn}
In the last definition, we do not require the graph $\Gamma$ to be
connected, and we allow some (or even all) connected components of
$\Gamma$ to be open intervals. 

\begin{defn}
\label{def: stated skein modules}Let $(Y,\Gamma)$ be a boundary
marked 3-manifold. The corresponding \emph{stated skein module} $\mathrm{Sk}(Y,\Gamma)$
is the free $R$-module spanned by isotopy classes of unoriented
ribbon tangles in $Y$, each of whose boundary components lies 
in $\Gamma\backslash V(\Gamma)$ and carries a \emph{state}, which is a sign $\epsilon\in\{\pm1\}$,
modulo the following skein relations\footnote{The red oriented edges in the diagrams are the boundary marking edges}:
\begin{equation}
\tag{S1}
\includegraphics[valign=c , scale=0.2]{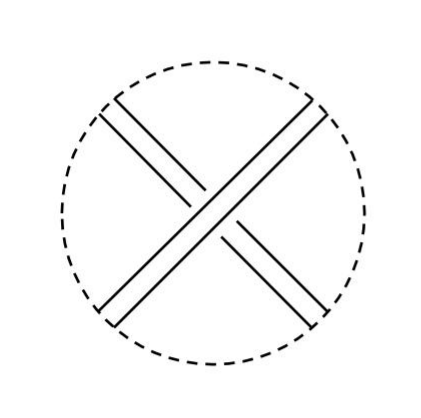}=A\includegraphics[valign=c , scale=0.2]{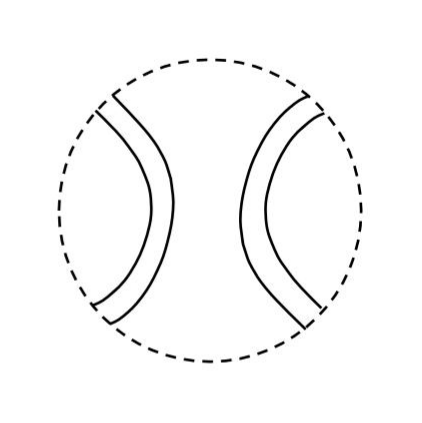}=A^{-1}\includegraphics[valign=c , scale=0.18]{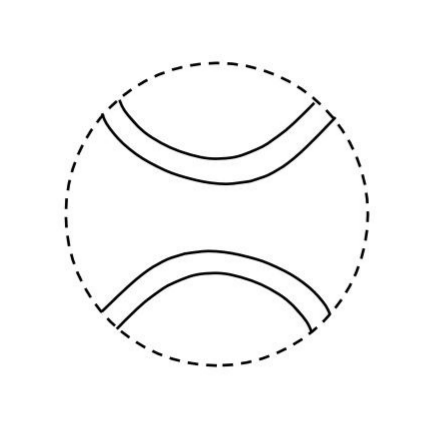}\label{eq:S1}
\end{equation}
\begin{equation}
\tag{S2}
\includegraphics[valign=c , scale=0.2]{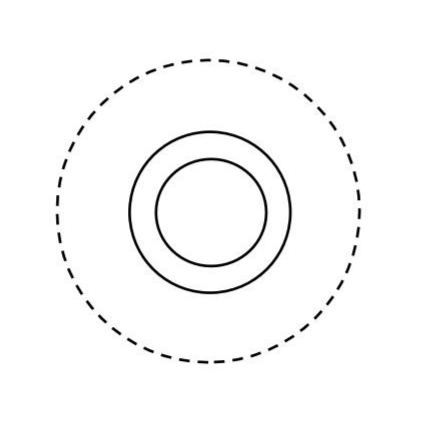}=-A^{2}-A^{-2}\label{eq:S2}
\end{equation}
\begin{equation}
\tag{S3}
\includegraphics[valign=c , scale=0.2]{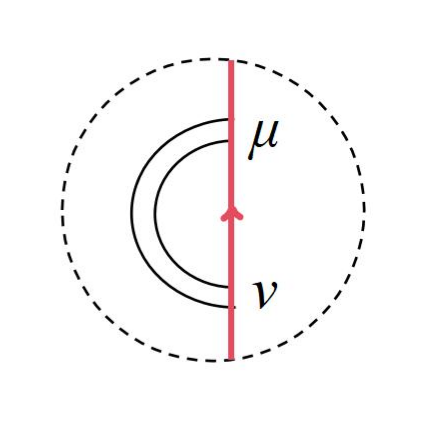}=\delta_{\mu,-\nu}(-A^{2})^{\frac{\mu}{2}},\ \ \mu,\nu\in\{\pm1\}\label{eq:S3}
\end{equation}
\begin{equation}
\tag{S4}
\includegraphics[valign=c , scale=0.2]{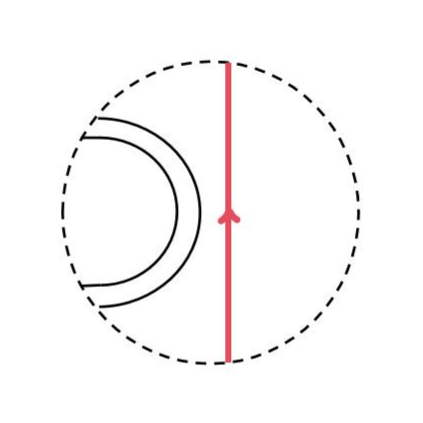}=\sum_{\mu\in\{\pm1\}}(-A^{2})^{\frac{\mu}{2}}\includegraphics[valign=c , scale=0.2]{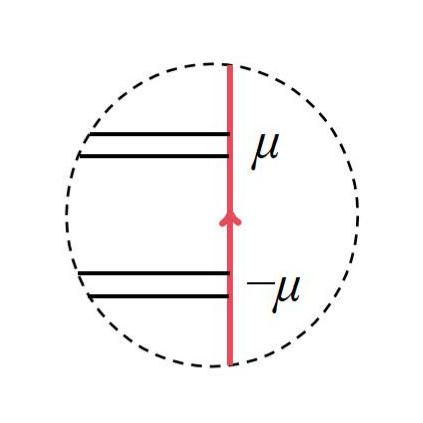}\label{eq:S4}
\end{equation}
\begin{equation}
\tag{S5}
\includegraphics[valign=c , scale=0.2]{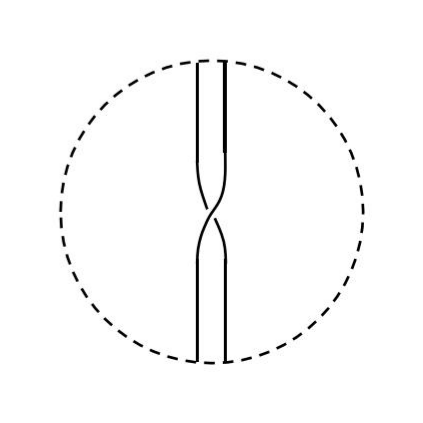}=(-A^{3})^{\frac{1}{2}}\includegraphics[valign=c , scale=0.18]{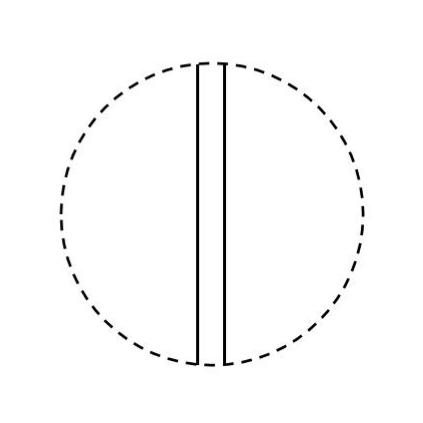}\label{eq:S5}
\end{equation}

\end{defn}

When the marking $\Gamma$ is understood from the context, we will
often write $\mathrm{Sk}(Y)$ instead of $\mathrm{Sk}(Y,\Gamma)$.

Let $\Sigma$ be a \emph{punctured bordered surface} (see \cite{CL}).
We can give the 3-manifold $\Sigma\times I$ a natural boundary marking.
Fix a marked point for each boundary edge of $\Sigma$ and let $M$
be the set of marked points, then $M\times I$ is a boundary marking
on $\Sigma\times I$, oriented by the positive direction of the interval
$I$. Note that there is exactly one marking edge in each component
of $\partial\Sigma\times I$. The \emph{stated skein algebra} $\mathrm{SkAlg}(\Sigma)$
is the $R$-module $\mathrm{Sk}(\Sigma\times I,M\times I)$ together with
a mulplication $\cup$ given by stacking in the $I$-direction, so $\alpha\cup\beta$ means that $\alpha$ has larger $t$ coordinates than $\beta$, where $t\in I$. We
shall often abbreviate $\alpha\cup\beta$ as $\alpha\beta$. It is
not hard to see that the notion of stated skein algebra of punctured
bordered surface given above agrees with the one defined in \cite{CL}
and \cite{GY1}.

An important class of punctured bordered surface is given by the $n$\emph{-gon}
$D_{n}$, which is a closed disc with $n$ puncture points on its boundary. Its skein algebra $\mathrm{SkAlg}(D_{n})$ gives a local
model for the skein module $\mathrm{Sk}(Y,\Gamma)$ of the 3-manifold
$Y$ near a vertex of $\Gamma$, and the latter can be equipped with
a module structure over $\mathrm{SkAlg}(D_{n})$. To describe this module
structure, let $v\in V(\Gamma)$ be a sink of degree $n$ 
and let $\alpha\in\mathrm{SkAlg}(D_{n})$
and $w\in\mathrm{Sk}(Y,\Gamma)$. We define $\alpha\cup w\in\mathrm{Sk}(Y,\Gamma)$,
abbreviated as $\alpha w$, in the following way: first we ``blow
up'' the vertex to an $n$-gon $D_{v}$, then we stack a $D_{n}\times I$
on top of it by identifying $D_{n}\times\{0\}$ with $D_{v}$ (in
a way so that markings ``connect''), this allows us to form the
union $\alpha\cup w$, then we ``blow down'' $D_{n}\times\{1\}$
to a vertex. We illustrate the construction in Figure \ref{fig:Illustration of module sturcture}. 
\begin{figure}[h]
  \centering
  \includegraphics[scale=0.2]{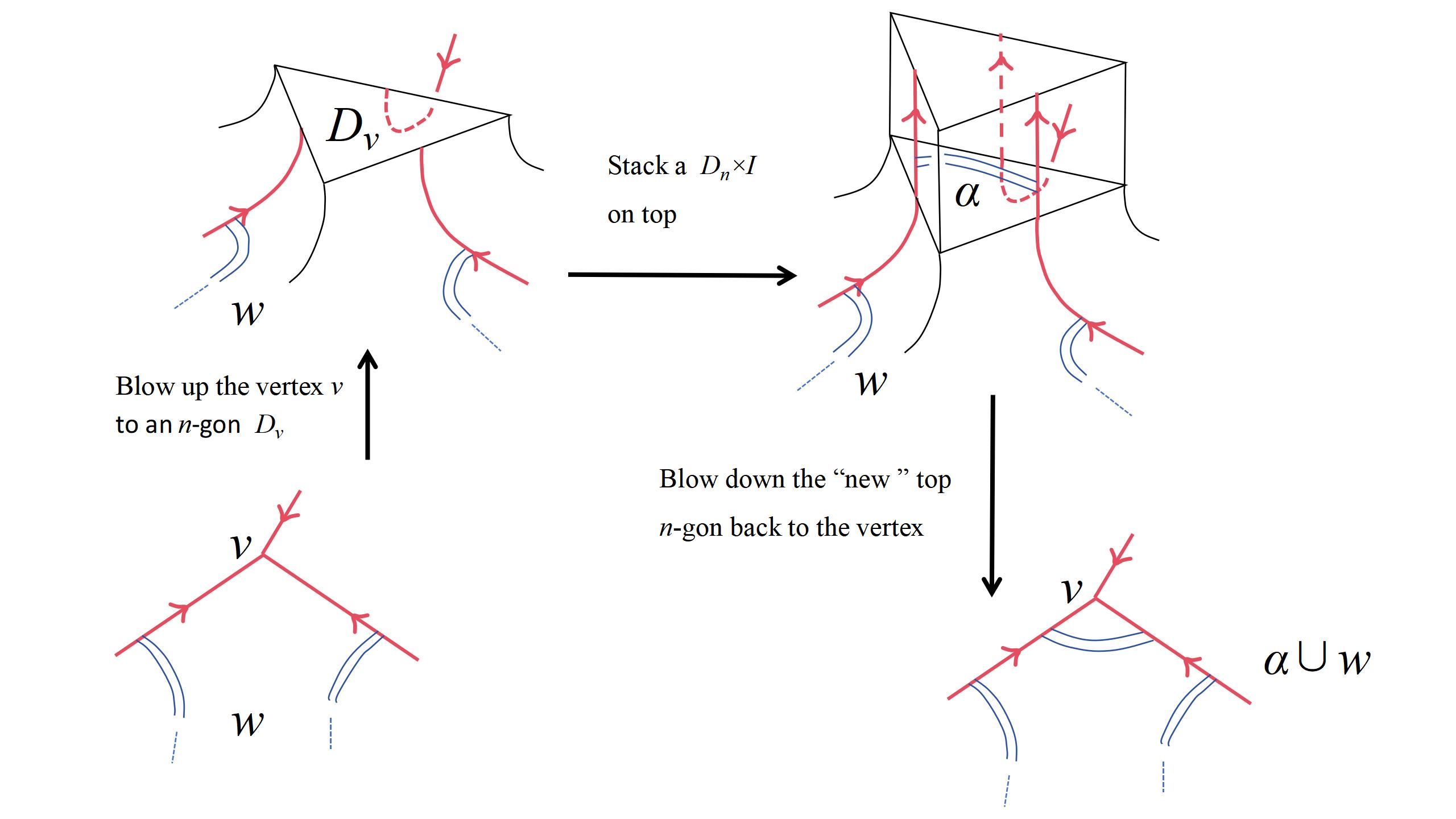} 
  \caption{An illustration of the left $\mathrm{SkAlg}(D_{n})$-module structure on $\mathrm{Sk}(Y,\Gamma)$ given by a sink of degree $n$}
  \label{fig:Illustration of module sturcture}
\end{figure}
This gives us a left $\mathrm{SkAlg}(D_{n})$-module structure on
$\mathrm{Sk}(Y,\Gamma)$. In a similar fashion, if $v\in V(\Gamma)$
is a source of degree $n$, then $\mathrm{Sk}(Y,\Gamma)$ can be equipped
with a right $\mathrm{SkAlg}(D_{n})$-module structure. Let $V^{-}(\Gamma)$
and $V^{+}(\Gamma)$ be the sets of sinks and sources respectively,
and let $D_{v}$ be the $n$-gon associated to the vertex $v$, then
we have equipped the skein module $\mathrm{Sk}(Y,\Gamma)$ with a 
\[
\bigotimes_{v\in V^{-}(\Gamma)}\mathrm{SkAlg}(D_{v})\text{-}\bigotimes_{v\in V^{+}(\Gamma)}\mathrm{SkAlg}(D_{v})
\]
bimodule structure.

\begin{rem}
\label{rem:remark on module structure notation}Let $\Sigma_{1}$
and $\Sigma_{2}$ be punctured bordered surfaces, there is a canonical
isomorphism of algebras $\mathrm{SkAlg}(\Sigma_{1})\otimes\mathrm{SkAlg}(\Sigma_{2})\cong\mathrm{SkAlg}(\Sigma_{1}\coprod\Sigma_{2})$
given by $\alpha_{1}\otimes\alpha_{2}\mapsto\alpha_{1}\cup\alpha_{2}$,
where $\alpha_{i}\in\mathrm{SkAlg}(\Sigma_{i})$. Moreover, in this
case we have $\alpha_{1}\cup\alpha_{2}=\alpha_{2}\cup\alpha_{1}$.
In light of this, whenever we have skeins coming from different copies
of punctured bordered surfaces, we will use $\otimes$-product between
them and $\cup$-product between them interchangeably, and both
are abbreviated as concatenation. In particular, this is applied to
the $n$-gons associated to vertices. For example, if $v,v^{\prime}\in V^{-}(\Gamma)$,
$\alpha,\beta\in\mathrm{SkAlg}(D_{v})$, $\gamma\in\mathrm{SkAlg}(D_{v^{\prime}})$
and $w\in\mathrm{Sk}(Y,\Gamma)$, then we can write either $((\alpha\cup\beta)\otimes\gamma)\cup w$
or $\alpha\cup\beta\cup\gamma\cup w$, and both are abbreviated as
$a\beta\gamma w$, which is also equal to $\gamma\alpha\beta w$. 
\end{rem}
Later we will introduce a different product structure on some skein algebras,
therefore to prevent confusion we fix a convention here: \underline{\emph{in expressions, concatenation
always means} $\cup$}.

In what follows, we will need a reduced version of skein modules (algebras).
Let $\Sigma$ be a punctured bordered surface. An element of $\mathrm{SkAlg}(\Sigma)$
is called a \emph{bad arc} if it is represented by an ``untwisted''
trivial ribbon arc connecting two adjacent marking edges with mixed
states on endpoints such that when the two markings edges are pointing
towards the readers, the endpoint with $-$ state is clockwise from the endpoint with $+$ state, see Figure \ref{fig:bad arc}.
\begin{figure}[h]
  \centering
  \includegraphics[scale=0.2]{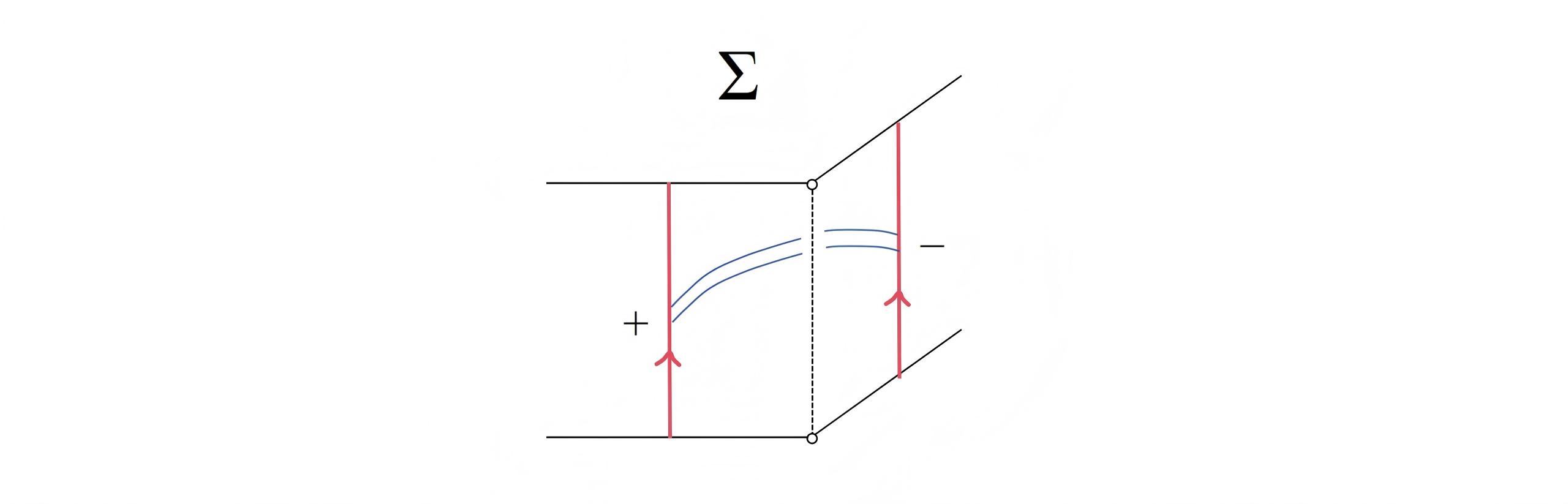} 
  \caption{}
  \label{fig:bad arc}
\end{figure}

It turns out that the left, the right and the two-sided ideal of $\mathrm{SkAlg}(\Sigma)$
generated by bad arcs in $\Sigma$ agree with each other (see \cite[Corollary 4.6]{LY}),
and we denote this ideal by $I^{b}$. The \emph{reduced skein algebra}
$\overline{\mathrm{SkAlg}}(\Sigma)$ is defined to be the quotient algebra
\[
\overline{\mathrm{SkAlg}}(\Sigma):=\mathrm{SkAlg}(\Sigma)
/I^{b}.
\]
If $(Y,\Gamma)$ is a boundary marked 3-manifold, we can also define
its reduced skein module $\overline{\mathrm{Sk}}(Y,\Gamma)$ (\cite[Definition 3.33]{PP1}).
Let $v\in V(\Gamma)$ be a vertex of degree $n$, let $D_{v}$ be
the $n$-gon associated to $v$. Let $I_{v}^{b}$ be the ideal of
$\mathrm{SkAlg}(D_{v})$ generated by the bad arcs, then $\overline{\mathrm{Sk}}(Y,\Gamma)$
is defined to be the $R$-module quotient
\[
\overline{\mathrm{Sk}}(Y,\Gamma):=\frac{\mathrm{Sk}(Y,\Gamma)}{\sum_{v\in V^{-}(\Gamma)}I_{v}^{b}\mathrm{Sk}(Y,\Gamma)+\sum_{v\in V^{+}(\Gamma)}\mathrm{Sk}(Y,\Gamma)I_{v}^{b}}.
\]
That is, $\overline{\mathrm{Sk}}(Y,\Gamma)$ is defined by setting elements
of the form $bw$ to $0$, where $b$ is a bad arc near a sink and
$w\in\mathrm{Sk}(Y,\Gamma)$ is arbitrary; similarly we set elements
of the form $wb$ to $0$ where $b$ is a bad arc near a source. Note
that the 
\[
\bigotimes_{v\in V^{-}(\Gamma)}\mathrm{SkAlg}(D_{v})\text{-}\bigotimes_{v\in V^{+}(\Gamma)}\mathrm{SkAlg}(D_{v})
\]
bimodule structure on $\mathrm{Sk}(Y,\Gamma)$ naturally induces a
\[
\bigotimes_{v\in V^{-}(\Gamma)}\overline{\mathrm{SkAlg}}(D_{v})\text{-}\bigotimes_{v\in V^{+}(\Gamma)}\overline{\mathrm{SkAlg}}(D_{v})
\]
bimodule structure on the reduced skein module $\overline{\mathrm{Sk}}(Y,\Gamma)$.

The next result we need from \cite{PP1} is the key structural theorem giving an algebraic presentation of
the skein module of a boundary marked 3-ball, which serves as the
foundation of our work. Before stating the theorem, we need to place
some restriction on the boundary marking. 

\begin{defn}
\label{def:combinatorial folation}
(i) An \emph{elementary quadrilateral} $Q$ is
a quadrilateral with two opposite vertices punctured and an oriented edge connecting the two unpunctured vertices, called the \emph{marking
edge} of $Q$.
\\
(ii) Let $\Sigma$ be a punctured bordered
surface (possibly with empty boundary). A \emph{combinatorial foliation}
of $\Sigma$ is a decomposition of $\Sigma$ into elementary
quadrilaterals.
\\
(iii) The boundary marking $\Gamma$ of the 3-manifold $Y$ is said to be
\emph{admissible} if there is a combinatorial foliation of $\partial Y$
such that the set of marking edges of elementary quadrilaterals is exactly
the set $E(\Gamma)$ of edges of the boundary marking.
\end{defn}
The reason such a structure is called a foliation is that in
each elementary quadrilateral there is a natural canonical 
foliation, which is shown
on the left of Figure \ref{fig:foliations on elementary Q and face}. The \emph{generic} leaves are given by ideal arcs connecting the two
punctured vertices, and there are also four \emph{singular} leaves corresponding
to the four edges of $Q$.
\begin{figure}[!ht]
  \includegraphics[scale=0.15]{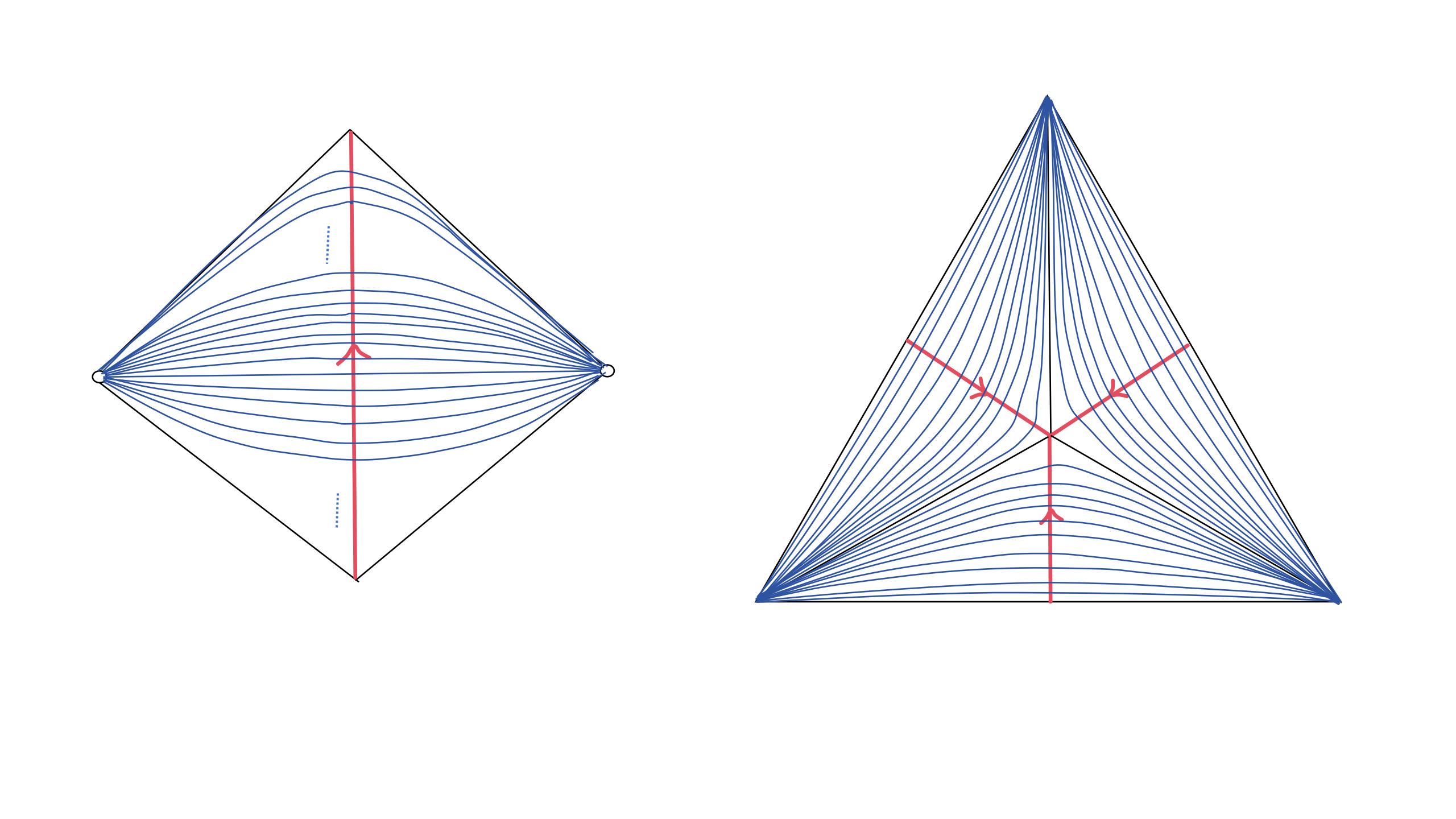} 
  \caption{Left: the canonical foliation on an elementary quadrilateral. Right: an ideal triangle decomposed into three elementary quadrilaterals and the induced foliation}
  \label{fig:foliations on elementary Q and face}
\end{figure}

The prototypical example of a 3-manifold with admissible boundary marking
is given by an ideal tetrahedron $T$ with the canonical
boundary marking shown in Figure \ref{fig:ideal T with bm only}. Note that
each face of $T$ can be decomposed into three elementary quadrilaterals,
with marking edges of the quadrilateral being the canonical marking edges in the boundary marking of the tetrahedron (see the picture on the
right of Figure \ref{fig:foliations on elementary Q and face}). Other
examples of 3-manifold with admissible boundary marking that will be
important to us later are the face cones and the face suspensions along with their canonical boundary markings
(see Figure \ref{fig:face cone with bm} and \ref{fig:face suspension with bm}).
\begin{figure}[!ht]
  \centering
  \includegraphics[scale=0.15]{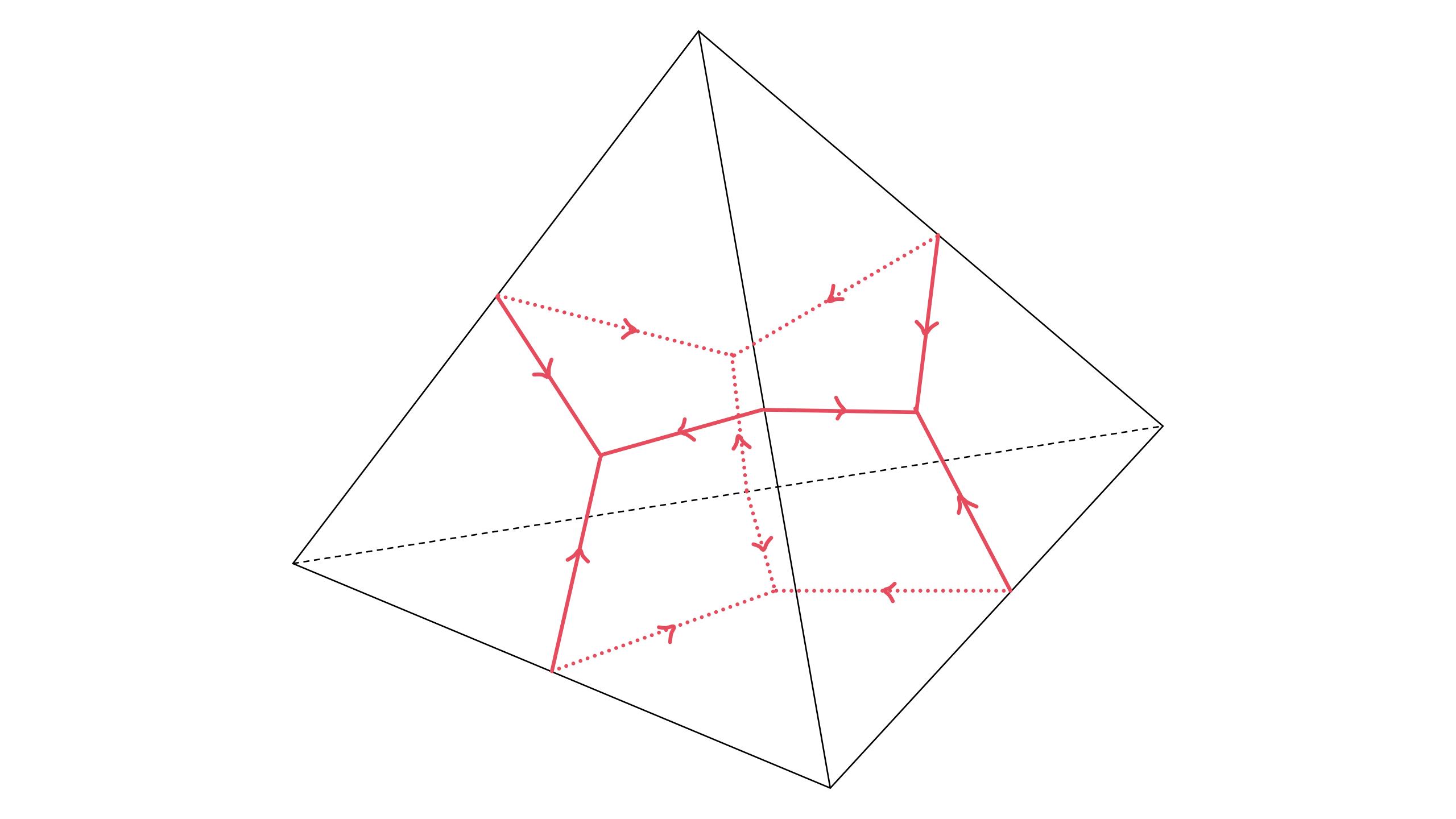} 
  \caption{Ideal tetrahedron with the canonical boundary marking}
  \label{fig:ideal T with bm only}
\end{figure}

Now we can state the desired structural theorem describing the skein modules of
$3$-balls. We present here only the version for reduced skein modules, which is sufficient for our purposes.

\begingroup
\allowdisplaybreaks
\begin{thm}
\label{thm:skein module of 3-ball}
\emph{(\cite[Corollary4.6]{PP1})
}Let $B$ be a 3-ball with an admissible boundary marking $\Gamma$.
As a 
\[
\bigotimes_{v\in V^{-}(\Gamma)}\overline{\mathrm{SkAlg}}(D_{v})\text{-}\bigotimes_{w\in V^{+}(\Gamma)}\overline{\mathrm{SkAlg}}(D_{w})
\]
-bimodule, $\overline{\mathrm{Sk}}(B,\Gamma)$ is cyclic\footnote{Let $R$ and $S$ be rings, an $R$-$S$-bimodule
$M$ is said to be \emph{cyclic} and generated by $m\in M$ if the association $r\otimes s\mapsto rms$ defines a surjective $R$-$S$-bimodule homomorphism $R\otimes S\twoheadrightarrow M$. In this case, the kernel of this surjective homomorphism is the $R$-$S$-sub-bimodule of $R\otimes S$ given explicitly by $\mathrm{Ann}(m)=\{\sum r_{i}\otimes s_{i}\bigm| \sum r_{i}ms_{i}=0 \}$; moreover, $M$ is canonically isomorphic to the $R$-$S$-bimodule quotient $R\otimes S\bigm/\mathrm{Ann}(m)$.} and generated by
the empty skein $[\emptyset]$. Therefore $\overline{\mathrm{Sk}}(B,\Gamma)$
has the presentation
\[
\bigotimes_{v\in V^{-}(\Gamma)}\overline{\mathrm{SkAlg}}(D_{v})\otimes\bigotimes_{w\in V^{+}(\Gamma)}\overline{\mathrm{SkAlg}}(D_{w})\biggm/\mathrm{Ann}(\emptyset),
\]
where $\mathrm{Ann}([\emptyset])$, as a $\bigotimes_{v\in V^{-}(\Gamma)}\overline{\mathrm{SkAlg}}(D_{v})\text{-}\bigotimes_{w\in V^{+}(\Gamma)}\overline{\mathrm{SkAlg}}(D_{w})$-sub-bimodule of
\[
\bigotimes_{v\in V^{-}(\Gamma)}\overline{\mathrm{SkAlg}}(D_{v})\otimes\bigotimes_{w\in V^{+}(\Gamma)}\overline{\mathrm{SkAlg}}(D_{w}),
\]
is generated by the following elements. Each puncture of $\partial B$
gives us a pair of elements:\footnote{Note that because the boundary marking is admissible, each puncture
of $\partial B$ is the center of a $2n$-gon whose boundary is formed
by marking edges, with alternating orientions. The pictures only show the case
$n=3$.}

\begin{equation*}
\includegraphics[valign=c , scale=0.22]{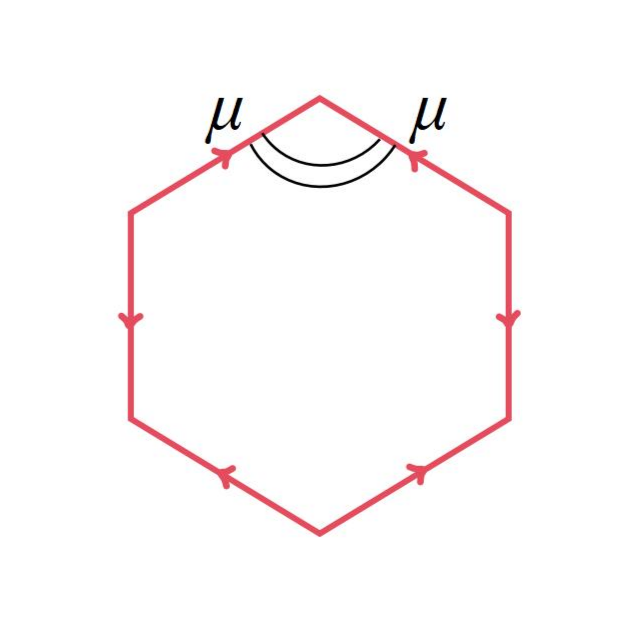}-(-A^{2})^{-\mu(n-1)}\includegraphics[valign=c , scale=0.22]{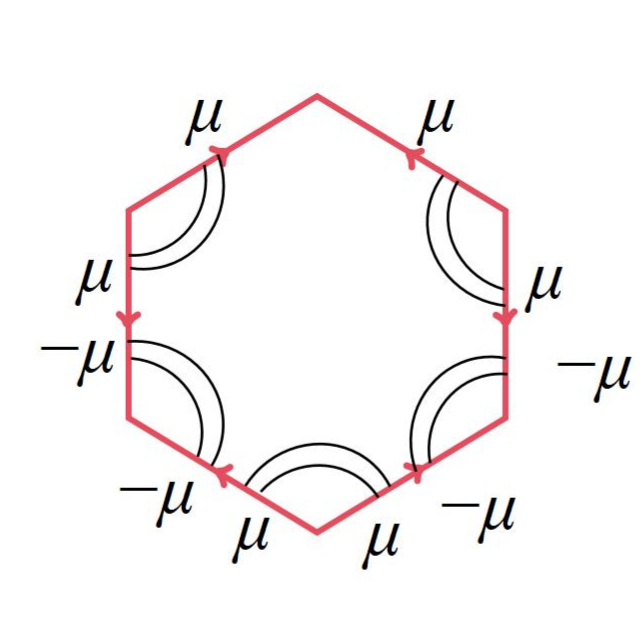}
\end{equation*}
and
\begin{multline*}
\includegraphics[valign=c , scale=0.22]{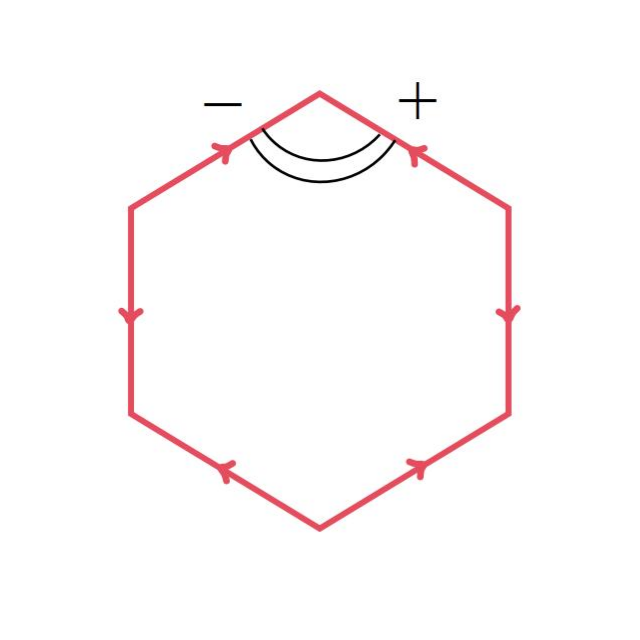}-(-A^{2})^{-n+1}\includegraphics[valign=c , scale=0.22]{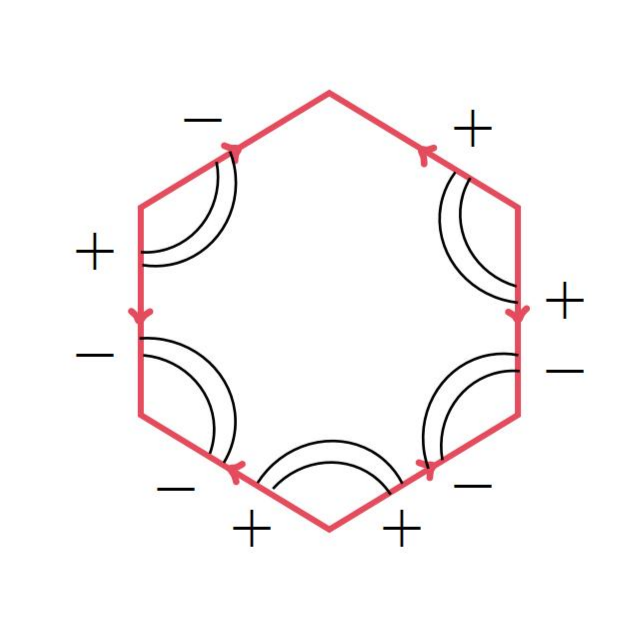}\\
+(-A^{2})^{-n+2}\includegraphics[valign=c , scale=0.22]{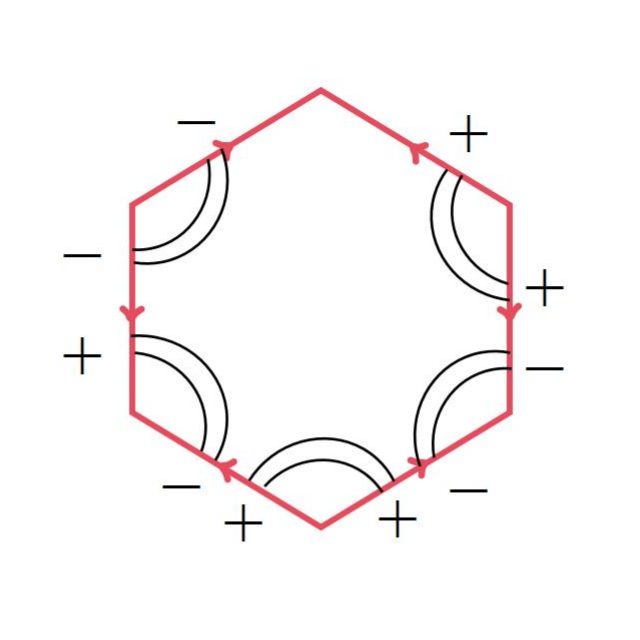}
+\dots +(-A^{2})^{n-1}\includegraphics[valign=c , scale=0.22]{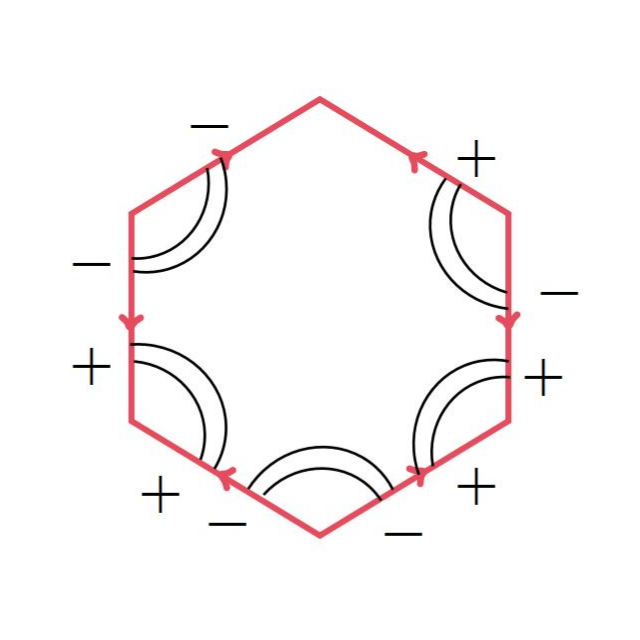}\ .
\end{multline*}
\end{thm}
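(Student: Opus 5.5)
We would prove Theorem~\ref{thm:skein module of 3-ball} in three stages: cyclicity, verification that the listed elements annihilate $[\emptyset]$, and the converse statement that these elements generate all of $\mathrm{Ann}([\emptyset])$.

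\emph{Cyclicity.} We would use the admissibility of $\Gamma$. Take a ribbon tangle $w$ in $B$ with stated endpoints on $\Gamma\setminus V(\Gamma)$. Put $w$ in general position with respect to the combinatorial foliation of $\partial B$, extended radially to a product-like structure near $\partial B$. Then push $w$ toward the boundary. By the light-bulb trick in the ball, and by applying (S1)--(S2) to resolve crossings, $w$ becomes a combination of tangles that lie in a collar $\partial B\times I$, so only boundary parallel arcs and trivial circles remain. Each such arc can be slid along generic leaves until it sits inside a small neighbourhood of a single marking vertex. There it is precisely the image of the empty skein under an element of $\overline{\mathrm{SkAlg}}(D_v)$ acting on the left or right. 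Relations (S3)--(S5) let us cut an arc that passes near a marking edge into pieces localized at the vertices. This gives surjectivity of
\[
\bigotimes_{v}\overline{\mathrm{SkAlg}}(D_v)\otimes\bigotimes_{w}\overline{\mathrm{SkAlg}}(D_w)\to\overline{\mathrm{Sk}}(B,\Gamma).
\]

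\emph{The listed elements annihilate $[\emptyset]$.} Around each puncture of $\partial B$ the marking edges form a $2n$-gon with alternating orientations. An arc encircling the puncture can be isotoped across the puncture region, since in the ball the puncture is just a point on the sphere. Its two presentations, as a product of corner arcs at the sinks versus at the sources, then differ by the framing and height-exchange factors. Computing these with (S5) and the height-exchange formula at each of the $n-1$ intermediate vertices gives the first relation, with coefficient $(-A^2)^{-\mu(n-1)}$. The second relation comes from cutting a trivial circle linking the puncture, via (S4), at each marking edge in turn. This produces the alternating sum with powers $(-A^2)^{k}$.

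\emph{Completeness.} This is the main obstacle: showing that $\mathrm{Ann}([\emptyset])$ is generated by these elements. Our plan is to build an inverse map. Define $\Psi$ from isotopy classes of stated tangles to the quotient $Q$ of the tensor product by the listed relations. To do this, choose a generic position of the tangle relative to the foliation and read off the sequence of local corner pieces. We must then check that $\Psi$ is invariant under (i) the skein relations (S1)--(S5), which reduce to the local algebra structure of $\overline{\mathrm{SkAlg}}(D_n)$, and (ii) isotopy. For isotopy, the moves are passing a strand across a puncture and across a singular leaf, and these are exactly the two families of relations above; invariance under moves in the interior of quadrilaterals is automatic. The hard part is organizing the isotopy moves in a $3$-ball into this finite list, and we expect a Cerf-theoretic, movie-move argument for tangles relative to the foliation to do it. Once $\Psi$ is well defined, it is a two-sided inverse to the evident surjection $Q\to\overline{\mathrm{Sk}}(B,\Gamma)$. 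The result also follows directly from \cite[Corollary 4.6]{PP1}, which is where we would ultimately defer the detailed isotopy bookkeeping.
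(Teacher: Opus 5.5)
The paper does not prove this statement at all. It is imported verbatim from \cite[Corollary 4.6]{PP1}, so your closing sentence is exactly the paper's justification. Read as a self-contained argument, however, your sketch has a gap and an error.

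The gap is the completeness of the listed generators. That is the whole content of the cited result, and you only announce it. The plan you outline for it also rests on a wrong dictionary. The two families of relations do not correspond to ``crossing a puncture'' and ``crossing a singular leaf'': both arise from the single move of sweeping an arc across a puncture, with equal or with mixed end states. Everything else would then have to be absorbed by the ring structure of the $\overline{\mathrm{SkAlg}}(D_v)$ together with the bad-arc quotient. This includes moves across marking vertices, height exchanges and Reidemeister moves, and that bookkeeping is exactly what is left undone. (In the cyclicity step, too, sliding along generic leaves moves points toward punctures, not toward marking vertices. Localisation at vertices comes from cutting with (S4) at each marking edge crossed.)

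The error is in your verification of the relations. The product of sink corner arcs and the product of source corner arcs pair the $2n$ endpoints differently. So they are not two presentations of one tangle differing by framing and height exchange.

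The actual mechanism is as follows.
\begin{itemize}
\item Take the corner arc at a sink facing the puncture, with end states $(\epsilon_1,\epsilon_2)$. Sweep it across the puncture, which is legitimate since the puncture is a boundary point, so that it runs the long way round.
\item Cut it with (S4) at the $2n-2$ marking edges it now passes. Discard bad arcs, and also mixed arcs at bivalent sources, which vanish in $\overline{\mathrm{SkAlg}}(D_2)=R[x^{\pm1}]$. After this, the states along the long arc can switch at most once, and only in one direction.
\item For $\epsilon_1=\epsilon_2=\mu$ a single assignment survives. Its $2n-2$ factors $(-A^{2})^{\pm\frac12}$ from (S4) all have the same sign, which is the source of $(-A^{2})^{-\mu(n-1)}$. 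Its diagram consists of $n-1$ sink arcs and $n$ source arcs. Multiplying by the inverse sink arcs gives the sinks-versus-sources form, e.g.\ $v_{zz^{\prime}z^{\prime\prime}}$ in Corollary~\ref{cor:reduced skein module of T}.
\item For the non-bad mixed end states, the survivors have one switch at one of the $2n-1$ intermediate vertices, with exponents $n-1,\dots,-(n-1)$. This is the second relation. For a tetrahedron only the two intermediate sinks contribute, giving the three terms of $\ell_{zz^{\prime\prime}}$ with coefficients $(-A^{2})^{\mp1}$.
\end{itemize}

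A circle around the puncture cannot give the second relation. Cut it at all $2n$ marking edges: the one-way switching rule forces constant states around a closed loop, so only two terms survive. The identity obtained (with value $-A^{2}-A^{-2}$) already follows from the first relation. In particular it can never produce a term consisting of a single corner arc, such as $[\Gamma_{zy^{\prime}}\Gamma_{y^{\prime}z^{\prime\prime}}^{-1}]$ in $\ell_{zz^{\prime\prime}}$.
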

\endgroup
The set of generators of $\mathrm{Ann}([\emptyset])$ desribed above
is not unique. Indeed we can replace any of the elements by a new element obtained by a rotation of all the diagrams
in the old element by an integer multiple of $2\pi/n$.

Throughout we will also need the notion of \emph{quantum torus} and \emph{Weyl-ordered product}. 

\begin{defn}
\label{def: quantum torus and Weyl-ordering}Let $x_{1},\dots,x_{n}$
be a collection of formal variables and $\Lambda$ an $n\times n$
\emph{skew-symmetric} matrix with integer coefficents. The quantum
torus $\mathbb{T}_{\Lambda}(x_{1},\dots,x_{n})$ is the noncommutative
$R$-algebra defined as the quotient
\[
\mathbb{T}_{\Lambda}(x_{1},\dots,x_{n})=\frac{R\langle x_{1}^{\pm1},\dots,x_{n}^{\pm1}\rangle}{\langle x_{i}x_{j}=A^{\Lambda_{ij}}x_{j}x_{i}\rangle},
\]
where $R\langle x_{1}^{\pm1},\dots,x_{n}^{\pm1}\rangle$ is the \emph{free}
laurent $R$-algebra generated by 
the formal variables and $\langle x_{i}x_{j}=A^{\Lambda_{ij}}x_{j}x_{i}\rangle$
is the two-sided ideal generated by $x_{i}x_{j}-A^{\Lambda_{ij}}x_{j}x_{i}$.
The \emph{Weyl-ordered} product $\left[x_{i_{1}}x_{i_{2}}\dots x_{i_{k}}\right]$
is defined to be
\[
\left[x_{i_{1}}x_{i_{2}}\dots x_{i_{k}}\right]=A^{-\frac{1}{2}\sum_{1\leq p<q\leq k}\Lambda_{i_{p}i_{q}}}x_{i_{1}}x_{i_{2}}\dots x_{i_{k}}.
\]
Note that the Weyl-ordered product is independent of the ordering of the
variables inside the square brackets.

\end{defn}

\subsection{Ideally triangulated boundary marked 3-manifolds\label{subsec:Ideally triangulated bm 3-manifolds}}

In this work, we shall focus on a special type of boundary marked
3-manifold. Let's fix some notations first:
\begin{itemize}
\item Let $T$ be a single ideal tetrahedron, we denote by ${\bf v}(T)$,
$\mathbf{e}(T)$ and $\mathbf{f}(T)$ be the set of (ideal) vertices,
edges and faces of $T$ respectively.
\item Let $\mathcal{T}=\{T_{1},\dots,T_{n}\}$ be an ideal triangulation
of the $3$-manifold $Y$. It gives us a set $\mathcal{T}^{(1)}$
of \emph{edges} and a set $\mathcal{T}^{(2)}$ of \emph{faces} in
$(Y,\mathcal{T})$: an edge (resp. a face) of $(Y,\mathcal{T})$ is
the image under the quotient map $T_{1}\coprod\dots\coprod T_{n}\twoheadrightarrow Y$
of an edge (resp. a face) of some ideal tetrahedron $T_{i}$.
\item In the setting of an ideal triangulation $\mathcal{T}=\{T_{1},\dots,T_{n}\}$, to
avoid confusion, elements of $\mathbf{v}(T_{i})$, $\mathbf{e}(T_{i})$
and $\mathbf{f}(T_{i})$ will then be called \emph{bare vertices},
\emph{bare edges} and\emph{ bare faces}, respectively, of $T_{i}$, following the language of \cite{PP1}.
If the discussion is only about a single ideal tetrahedron $T$, we
can safely drop the adjective ``bare'' and just say vertices, edges
and faces of $T$.
\end{itemize}

\begin{defn}
\label{def:ideally triangulated boundary marked 3-manifolds}
By an \emph{ideally triangulated boundary marked 3-manifold} we mean
a pair $(Y,\mathcal{T})$ where $Y$ is a boundary marked 3-manifold and
$\mathcal{T}$ is an ideal triangulation of $Y$ satisfying the following
conditions:

\begin{enumerate}
\item Each component of the boundary $\partial Y$ is equipped with an ideal
triangulation induced from $\mathcal{T}$; namely, we require that
$\partial Y$ be triangulated by ideal triangles which are faces of
ideal tetrahedra in $\mathcal{T}$. Triangles or edges in this induced
triangulation on $\partial Y$ will be referred to as \emph{boundary
faces }or \emph{boundary edge}s respectively; whereas triangles and
edges of $\mathcal{T}$ which are not in the boundary will be referred
to as \emph{internal faces} and \emph{internal edges} respectively.
\item The boundary marking on $\partial Y$ is \emph{canonical}, meaning
that in
every boundary face $f$ there are exactly three marking edges. These edges have
starting-points the barycenters of the three edges of $f$, which are degree 2 sources, and end-points
the barycenter of $f$, which is a degree 3 sink.
\end{enumerate}
\end{defn}

A single ideal tetrahedron $T$ together with its canonical boundary
marking (Figure \ref{fig:ideal T with bm only}) is an example of an ideally triangulated
boundary marked 3-manifold.

Now let $(Y,\mathcal{T})$ be an ideally triangulated boundary marked
3-manifold, and let $\mathcal{F}$ be the induced triangulation on $\partial Y$.
Let $\mathcal{F}^{(1)}$ be the set of boundary edges and $\mathcal{F}^{(2)}$
be the set of boundary faces. By the earlier discussion we know that
the skein module $\overline{\mathrm{Sk}}(Y)$ is a 
\[
\underset{f\in\mathcal{F}^{(2)}}{\bigotimes}\overline{\mathrm{SkAlg}}(D_{3})\text{-}\underset{e\in\mathcal{F}^{(1)}}{\bigotimes}\overline{\mathrm{SkAlg}}(D_{2})
\]
bimodule. It turns out that both $\overline{\mathrm{SkAlg}}(D_{3})$
and $\overline{\mathrm{SkAlg}}(D_{2})$ have simple explicit algebraic
descriptions. Let us label the three vertices of the triangle $D_{3}$
by $\alpha$, $\beta$ and $\gamma$ and the three opposite edges
by $a$, $b$ and $c$ respectively as in Figure \ref{fig:triangle and bigon}(1).
By an abuse of notation we also denote by $\alpha$ the element of $\overline{\mathrm{SkAlg}}(D_{3})$
represented by an ``untwisted'' trivial ribbon arc connecting the
two marking edges inside the boundary faces $b\times I$ and $c\times I$
of $D_{3}\times I$ with both endpoints in $+$ states, and by $\alpha^{-1}$
the same ribbon arc with both endpoints in $-$ states; in the same
fashion we also have elements $\beta^{\pm1}$ and $\gamma^{\pm1}$
in $\overline{\mathrm{SkAlg}}(D_{3})$.

For the bigon $D_{2}$ we denote by $x$ the ``untwisted'' trivial
ribbon arc connecting the only two marking edges with both endpoints
in $+$ states, and by $x^{-1}$ the same arc with both endpoints
in $-$ states, as in Figure \ref{fig:triangle and bigon}(2).

\begin{figure}[h]
  \includegraphics[scale=0.2]{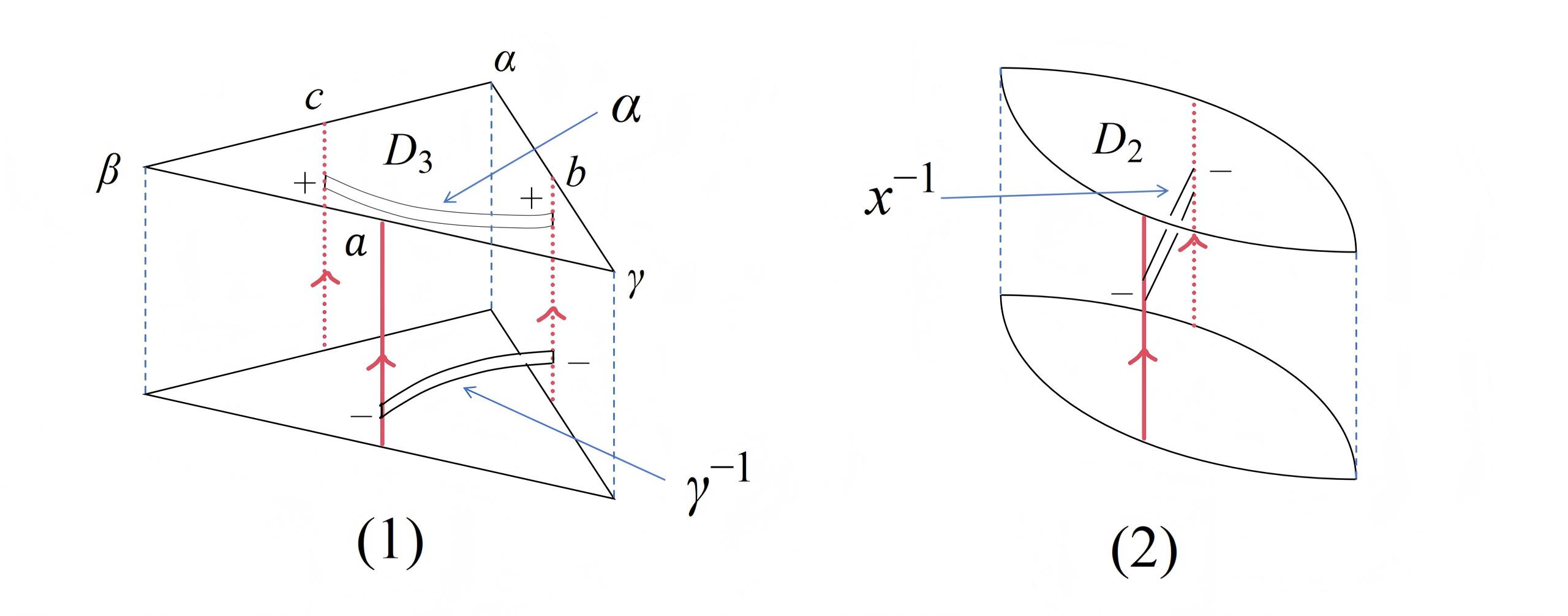} 
  \caption{}
  \label{fig:triangle and bigon}
\end{figure}

We have the following
\begin{thm}
\emph{\label{thm:skein algebras of triangles and bigons}(}\cite{BW},
\cite{CL}\emph{)} With the above notations, the elements $\alpha^{-1}$,
$\beta^{-1}$ and $\gamma^{-1}$ are multiplicative inverses of $\alpha$,
$\beta$ and $\gamma$ respectively in the algebra $\overline{\mathrm{SkAlg}}(D_{3})$,
which is given by a quantum torus: 
\begin{equation}
\overline{\mathrm{SkAlg}}(D_{3})\cong\mathbb{T}:=\frac{R\langle\alpha^{\pm1},\beta^{\pm1},\gamma^{\pm1}\rangle}{\langle\beta\alpha=A\alpha\beta,\ \gamma\beta=A\beta\gamma,\ \alpha\gamma=A\gamma\alpha\rangle}.\label{eq:SkAld(D_3)}
\end{equation}
The element $x^{-1}$ is the multiplicative inverse of $x$ in the
algebra $\overline{\mathrm{SkAlg}}(D_{2})$, which is given by the Laurent
polynomial ring:
\begin{equation}
\overline{\mathrm{SkAlg}}(D_{2})\cong\mathbb{B}:=R[x,x^{-1}].\label{eq:SkAlg(D_2)}
\end{equation}
\end{thm}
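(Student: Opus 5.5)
The statement to prove is Theorem \ref{thm:skein algebras of triangles and bigons}. I would obtain it from the structural result Theorem \ref{thm:skein module of 3-ball}, together with the identification $\overline{\mathrm{SkAlg}}(D_n)=\overline{\mathrm{Sk}}(D_n\times I, M\times I)$. First I would set up spanning sets and relations. The ball $D_3\times I$ with marking $M\times I$ is admissible: each marking edge sits in a rectangle face, and the combinatorial foliation is obtained by cutting $\partial(D_3\times I)$ into elementary quadrilaterals around the three punctures. Viewing the skein algebra as a module over itself, it is generated by the empty skein, and by the Chekhov--L\^e / L\^e--Yu basis theorem it is spanned by simple stated diagrams.

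\textbf{Triangle, inverses.} Each simple diagram in $D_3$ with no closed components is a union of parallel corner arcs around the three ideal vertices. Using (S4) and the height-exchange relation, I would order the states so that each corner arc family reduces to products of $\alpha^{\pm1}$, $\beta^{\pm1}$, $\gamma^{\pm1}$ modulo bad arcs. To show $\alpha\alpha^{-1}=1$ in the reduced algebra, I would stack the $+$ arc over the $-$ arc and resolve near a marking edge with the stated relation (S4). This gives two terms with mixed states at the same edge; (S3) turns one of them into a multiple of the empty skein, and the other is a bad arc times something, hence $0$ in $\overline{\mathrm{SkAlg}}$. The constant works out to $1$ thanks to the $(-A^2)^{\pm1/2}$ normalizations. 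The other order, $\alpha^{-1}\alpha=1$, works the same way, except that the mixed-state term has the opposite orientation and is a bad arc only after the reflection used in defining bad arcs.

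\textbf{Triangle, commutation relations.} For $\beta\alpha=A\alpha\beta$, the arcs $\alpha$ and $\beta$ share exactly one marking edge (on side $c$). Exchanging heights there is a single crossing of stated endpoints, which is computed by the boundary height-exchange relation. This follows from (S1), (S4), (S5) and gives a factor $A$ for equal $+$ states; the other states follow after inverting.

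\textbf{Triangle, isomorphism.} I would then define $\mathbb T\to\overline{\mathrm{SkAlg}}(D_3)$; it is surjective by the spanning argument. For injectivity I expect the main obstacle. I would avoid a direct argument and instead use known linear independence. Option one is the quantum trace embedding of Bonahon--Wong / L\^e into a quantum torus, under which the images of the monomials $\alpha^a\beta^b\gamma^c$ are distinct monomials. Option two is to read off from the basis theorem that the reduced algebra has a basis of non-bad simple diagrams, indexed by $\mathbb Z^3$ via corner-arc counts with signs. Either way the monomials are independent, so the map is an isomorphism.

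\textbf{Bigon.} The bigon case is identical but simpler. The single arc type gives $x^{\pm1}$, the same (S4)/(S3) computation with bad arcs gives $xx^{-1}=1$, and commutativity is automatic because there is only one generator. Independence of $x^n$ follows from the counit/basis theorem, giving $R[x^{\pm1}]$.
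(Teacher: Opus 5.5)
The paper does not prove this statement; it imports it from \cite{BW} and \cite{CL}. Your outline follows the standard route, and three parts of it are fine:
\begin{itemize}
\item the inverse computation: join the two arcs by a U-turn at one edge, expand by (S4), and evaluate the resulting trivial arc by (S3); the remaining term is a product of the two mixed-state arcs at that corner, one of which is bad;
\item the relation $\beta\alpha=A\alpha\beta$, from a single height exchange of two $+$ endpoints on $c$;
\item the bigon case, using the edge-state grading together with the counit of $\mathcal{O}_{A^2}(SL_2)$, which kills both mixed arcs.
\end{itemize}
Drop the appeal to Theorem~\ref{thm:skein module of 3-ball}. The marking $M\times I$ on $D_3\times I$ consists of three disjoint segments with univalent endpoints, so no puncture is encircled by marking edges and the marking is not admissible. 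That theorem also takes the algebras $\overline{\mathrm{SkAlg}}(D_v)$ as input. You never actually use it.

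The first genuine gap is \emph{surjectivity}. At each corner only one of the two mixed-state arcs is bad. The other is nonzero, and no reordering of states inside a corner family produces it. Take the non-bad arc at $\alpha$, with $+$ on $c$ and $-$ on $b$. Its edge-degree on $(a,b,c)$ is $(0,-1,1)$, whereas every power of $\alpha$ has degree $(0,n,n)$. You need the extra computation recorded in Remark~\ref{rem:mixed state element in terms of generators}:
\begin{itemize}
\item join $\beta$ and $\gamma^{-1}$ by a U-turn at $a$ and apply (S4);
\item the joined arc is isotopic to that mixed arc at $\alpha$;
\item the other (S4) term is a product of mixed arcs at $\beta$ and $\gamma$ that contains a bad arc;
\item hence the mixed arc equals $[\beta\gamma^{-1}]$.
\end{itemize}
With this, plus an induction absorbing the correction terms that the height-exchange relation produces for opposite states, the map $\mathbb{T}\to\overline{\mathrm{SkAlg}}(D_3)$ is onto.

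The second gap is \emph{injectivity}, and neither option works as written.
\begin{itemize}
\item Option one is circular if you mean the Costantino--L\^e or L\^e--Yu quantum trace. Those traces are assembled from ideal triangles precisely via $\overline{\mathrm{SkAlg}}(D_3)\cong\mathbb{T}$.
\item Bonahon--Wong's triangle state sum is an independent input, since they verify its compatibility with the skein relations directly. To use it you must match its conventions with (S1)--(S5) and check that it kills bad arcs. You must also observe that $\alpha^p\beta^q\gamma^r$ goes to a unit times the monomial with exponents $(q+r,p+r,p+q)$, and that these exponents determine $(p,q,r)$.
\item Option two cannot simply be read off. The Costantino--L\^e basis theorem is for the unreduced algebra. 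It yields a basis of the quotient only after you prove that the bad-arc ideal is spanned by basis diagrams containing a bad arc, which is the real content here.
\item The surviving increasingly stated diagrams are also not indexed by signed corner-arc counts: the single non-bad mixed arc is one of them. This is the surjectivity gap again.
\end{itemize}
A helpful reduction is available. The edge-state grading makes the bad-arc ideal homogeneous and puts the monomials of $\mathbb{T}$ in distinct degrees, and their images are units. So injectivity is equivalent to $c[\emptyset]\neq 0$ for every $0\neq c\in R$. Even this still needs an outside functional, such as Bonahon--Wong's trace.
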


\begin{rem}
\label{rem:mixed state element in terms of generators}
In $D_{3}\times I$, one of the two ``untwisted'' trivial ribbon arcs  with mixed states on its endpoints connecting two marking edges is a bad arc and thus becomes $0$ in the reduced skein algebra $\overline{\mathrm{SkAlg}}(D_{3})$ ; the other is a nonzero element in $\overline{\mathrm{SkAlg}}(D_{3})$. Taking the ribbon arc connecting the marking edges in $b\times I$ and $c\times I$ for instance, with $+$ state on the endpoint on the marking edge in $c\times I$ and $-$ state on the endpoint on marking edge in $b\times I$, a simple calculation using the skein relations shows that this element is given by $\left[ \beta \gamma ^{-1}\right]$.
\end{rem}

We will frequently refer to these skein algebras associated to
various boundary faces or triangles, so let us fix some notations.
\begin{defn}
\label{def:boundary face algebra, boundary edge algebra}
Let $(Y,\mathcal{T})$ be an ideally triangulated boundary marked
3-manifold and let $\mathcal{F}$ be the induced ideal triangulation on $\partial Y$.
\begin{enumerate}
    \item The triangle algebra asscociated to the sink at the barycenter of
a boundary face $f\in\mathcal{F}^{(2)}$ will be denoted by $\mathbb{T}_{f}$ and will be called the \emph{boundary face algebra} associated to $f$.
    \item The bigon algebra associated to the bivalent source at the barycenter
of a boundary edge $e\in\mathcal{F}^{(1)}$ will be denoted by $\mathbb{B}_{e}$ and will be called the \emph{boundary edge algebra} associated to $e$. 
\end{enumerate}
\end{defn}

In these notations, $\overline{\mathrm{Sk}}(Y)$ is a 
$
\underset{f\in\mathcal{F}^{(2)}}{\bigotimes}\mathbb{T}_{f}\text{-}\underset{e\in\mathcal{F}^{(1)}}{\bigotimes}\mathbb{B}_{e}$
bimodule.

\subsection{Reduced skein module of an ideal tetrahedron\label{subsec:Reduced skein module of T}}

We consider the ideal tetrahedron $T$ equipped with the canonical
boundary marking shown in Figure \ref{fig:ideal T with bm only}.
In this subsection we will give an explicit algebraic presentation
of the skein module $\overline{\mathrm{Sk}}(T)$. By Theorem \ref{thm:skein module of 3-ball}
and Theorem \ref{thm:skein algebras of triangles and bigons} we know
that $\overline{\mathrm{Sk}}(T)$ is a cyclic $\underset{f\in\mathbf{f}(T)}{\bigotimes}\mathbb{T}_{f}\text{-}\underset{e\in\mathbf{e}(T)}{\bigotimes}\mathbb{B}_{e}=\mathbb{T}^{\otimes4}\text{-}\mathbb{B}^{\otimes6}$-bimodule
generated by the empty skein $[\emptyset]$. We will write the generators
of $\mathrm{Ann}([\emptyset])$ provided by Theorem \ref{thm:skein module of 3-ball}
as algebraic expressions instead of skein diagrams. For this purpose
we need to introduce notation for the generators of the various skein algebras
involved. Recall that the edges of the tetrahedron $T$ are labeled
by the shape parameters\footnote{We are dealing with a single ideal tetrahedron, so we can drop the subcript $T$ from the shape parameters} $z,z^{\prime},z^{\prime\prime},y,y^{\prime},y^{\prime\prime}$.
We will also label a marking edge by $z^{\boxempty}$ (or $y^{\boxempty}$)
if its starting-point is in the edge labeled $z^{\boxempty}$ (resp.
$y^{\boxempty}$). Note that in this way two marking edges with the
same starting-points will have the same labeling but they have different
endpoints and are in different faces of $T$, see Figure \ref{fig:ideal T with bm and labeling of generators}.

\begin{figure}[h]
  \includegraphics[scale=0.2]{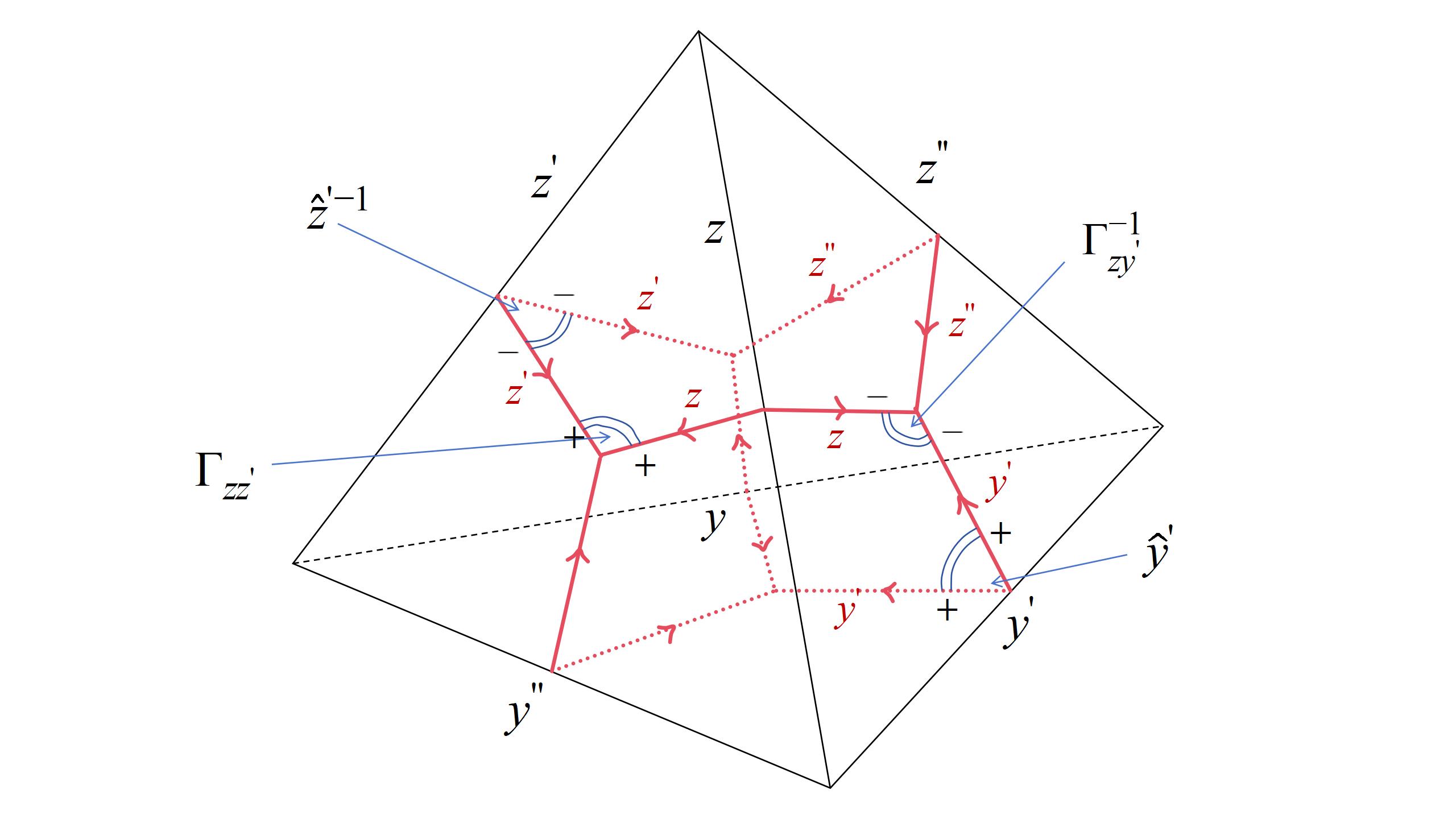} 
  \caption{Labeling of generators of skein algebras associated to vertices}
  \label{fig:ideal T with bm and labeling of generators}
\end{figure}

We adopt the following naming convention on the generators of the various boundary face and boundary edge algebras:

By Theorem \ref{thm:skein algebras of triangles and bigons}, each
boundary face algebra $\mathbb{T}_f$ is generated by
``untwisted'' trivial ribbon arc near the sink at the barycenter of 
$f$ connecting two neighboring marking edges converging at the sink.
We denote the arc which connects the two marking edges labeled $a$
and $b$ and are in $++$ states by $\Gamma_{ab}$, $a,b\in\{z,z^{\prime},z^{\prime\prime},y,y^{\prime},y^{\prime\prime}\}$.
For example, with such labeling of generators, the triangle algebra
$\mathbb{T}_f$ where $f$ is the face of $T$ bounded by edges
labeled $z,z^{\prime},y^{\prime\prime}$ is given by
\[
\frac{R\langle\Gamma_{zz^{\prime}}^{\pm1},\Gamma_{z^{\prime}y^{\prime\prime}}^{\pm1},\Gamma_{y^{\prime\prime}z}^{\pm1}\rangle}{\langle\Gamma_{z^{\prime}y^{\prime\prime}}\Gamma_{zz^{\prime}}=A\Gamma_{zz^{\prime}}\Gamma_{z^{\prime}y^{\prime\prime}},\ \Gamma_{y^{\prime\prime}z}\Gamma_{z^{\prime}y^{\prime\prime}}=A\Gamma_{z^{\prime}y^{\prime\prime}}\Gamma_{y^{\prime\prime}z},\ \Gamma_{zz^{\prime}}\Gamma_{y^{\prime\prime}z}=A\Gamma_{y^{\prime\prime}z}\Gamma_{zz^{\prime}}\rangle}.
\]

Similarly, each boundary edge algebra $\mathbb{B}_e$ 
is generated by ``untwisted'' trivial ribbon arc near the source at the barycenter of $e$ connecting the two marking edges emanating from the source.
If the boundary edge $e$ is labeled $a$, $a\in\{z,z^{\prime},z^{\prime\prime},y,y^{\prime},y^{\prime\prime}\}$, then the generator of $\mathbb{B}_e$ in $++$ states will be denoted
$\hat{a}$ and thus $\mathbb{B}_e=R[\hat{a},\hat{a}^{-1}]$ and
\[
\underset{e\in\mathbf{e}(T)}{\bigotimes}\mathbb{B}_{e}=R[{\hat{z}}^{\pm1},\hat{z}^{\prime\pm1},\hat{z}^{\prime\prime\pm1},\hat{y}^{\pm1},\hat{y}^{\prime\pm1},\hat{y}^{\prime\prime\pm1}].
\]
\\This labeling of generators is illustrated in Figure
\ref{fig:ideal T with bm and labeling of generators}.

\begin{rem}
In the notation $\Gamma_{ab}$, the ordering of $a$ and $b$ is immaterial. $\Gamma_{ab}$ is the same as $\Gamma_{ba}$ and both denote the ``untwisted'' trivial ribbon arc near the sink at the barycenter of a bare face connecting two neighboring marking egdes labeled $a$ and $b$, with both endpoints in $+$ states.
\end{rem}

Now we can write down the generators of $\mathrm{Ann}([\emptyset])$
in terms of the generators $\Gamma_{ab}$, $\hat{z}^{\boxempty}$ and $\hat{y}^{\boxempty}$.
\begin{cor}
\label{cor:reduced skein module of T}
The reduced skein module $\overline{\mathrm{Sk}}(T)$ of a single tetrahedron
$T$ is given by 
\[
\frac{\mathbb{T}^{\otimes4}\otimes\mathbb{B}^{\otimes6}}{\mathrm{Ann}(\emptyset)},
\]
where $\mathrm{Ann}(\emptyset)$ is generated as a $\mathbb{T}^{\otimes4}\otimes\mathbb{B}^{\otimes6}$-sub-bimodule of $\mathbb{T}^{\otimes4}\otimes\mathbb{B}^{\otimes6}$ by
four pairs of elements, one for each vertex of $T$. For simplicity
we only write down the pair of generators corresponding to the vertex
incident to edges labeled $z,z^{\prime},z^{\prime\prime}$:
\begin{eqnarray*}
v_{zz^{\prime}z^{\prime\prime}}& :=& (-A^{2})^{2}\Gamma_{zz^{\prime\prime}}\Gamma_{z^{\prime\prime}z^{\prime}}\Gamma_{z^{\prime}z}-\hat{z}\hat{z}^{\prime}\hat{z}^{\prime\prime} \\
\ell_{zz^{\prime\prime}} &:=&[\Gamma_{zy^{\prime}}\Gamma_{y^{\prime}z^{\prime\prime}}^{-1}]-(-A^{2})^{-1}[\Gamma_{z^{\prime\prime}y}\Gamma_{yz^{\prime}}^{-1}]\Gamma_{zz^{\prime}}^{-1}\hat{z}^{\prime\prime-1}\hat{z}^{\prime}\hat{z}-(-A^{2})\Gamma_{z^{\prime\prime}z^{\prime}}[\Gamma_{z^{\prime}y^{\prime\prime}}\Gamma_{y^{\prime\prime}z}^{-1}]\hat{z}^{\prime\prime-1}\hat{z}^{\prime-1}\hat{z}
\end{eqnarray*}

In a similar fashion one can write down the other three pairs of generators.
\end{cor}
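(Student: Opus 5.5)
The plan is to specialize Theorem \ref{thm:skein module of 3-ball} to the 3-ball $T$ with its canonical marking, which is admissible by the combinatorial foliation of Figure \ref{fig:foliations on elementary Q and face}. Theorem \ref{thm:skein algebras of triangles and bigons} identifies the four sink algebras with $\mathbb{T}$ and the six source algebras with $\mathbb{B}$. That theorem then gives cyclicity, generation by $[\emptyset]$, and the presentation as $\mathbb{T}^{\otimes4}\otimes\mathbb{B}^{\otimes6}/\mathrm{Ann}(\emptyset)$. Moreover, $\mathrm{Ann}(\emptyset)$ is generated by two elements per puncture of $\partial T$. The punctures of $\partial T$ are exactly the four ideal vertices. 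Around each one the marking edges form a hexagon ($n=3$), alternating between the three sources on the incident edges and the three sinks on the incident faces. So the remaining work is to translate the two pictured generators at the vertex $zz^{\prime}z^{\prime\prime}$ into words in the $\Gamma_{ab}$ and $\hat{a}$.

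For the first generator I take the sign $\mu=+1$ version and consider the skein loop encircling the puncture that runs through the hexagon with all states $+$. Near each sink, the loop restricts to the arc $\Gamma_{ab}$ between the two marking edges labeled by the edges adjacent to that vertex: $\Gamma_{zz^{\prime\prime}}$, $\Gamma_{z^{\prime\prime}z^{\prime}}$, $\Gamma_{z^{\prime}z}$. Near each source, it restricts to $\hat{z}$, $\hat{z}^{\prime}$ or $\hat{z}^{\prime\prime}$. The theorem's relation then says that the three sink arcs acting on $[\emptyset]$ from the left equal $(-A^{2})^{-(n-1)}=(-A^{2})^{-2}$ times the three source arcs acting from the right. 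Multiplying through gives $v_{zz^{\prime}z^{\prime\prime}}$. I will need to check that the sink arcs appear in the stated cyclic order, which I would read off from the orientation convention of Figure \ref{fig:ideal T wth shape parameters}. The $\mu=-1$ relation follows from this one by inverting, since the generators are invertible, so it adds nothing new.

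The second generator is the substantive step. Its terms are the diagrams in which a single arc crosses the hexagon, with mixed states at the two ends where it turns from one sink across another. Each such mixed-state arc near a sink must be rewritten in terms of the generators. By Remark \ref{rem:mixed state element in terms of generators}, the non-bad mixed arc equals a Weyl-ordered product such as $[\Gamma_{zy^{\prime}}\Gamma_{y^{\prime}z^{\prime\prime}}^{-1}]$, and the other mixed arc vanishes in the reduced setting. These arcs naturally involve the third marking edge of each face, namely the $y$-labelled one. The source portions become $\hat{a}^{\pm1}$ according to the states, with $-$ states giving inverses by Theorem \ref{thm:skein algebras of triangles and bigons}. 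Of the $n=3$ terms with coefficients $1,(-A^{2})^{-2},(-A^{2})^{-1},\dots$, some drop out as bad arcs. The survivors should be the three terms of $\ell_{zz^{\prime\prime}}$; I would normalize by an overall power of $-A^{2}$ and rotate as permitted by the remark after Theorem \ref{thm:skein module of 3-ball}.

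The main obstacle is the bookkeeping of powers of $A$. This includes the Weyl normalization, the order in which sink elements are stacked, and which of the two mixed-state arcs is bad given the clockwise convention of Figure \ref{fig:bad arc}. I would resolve it by computing one term carefully with relations \eqref{eq:S3}--\eqref{eq:S5} and fixing the others by rotational symmetry. The remaining three vertices are handled identically, giving the other three pairs of generators.
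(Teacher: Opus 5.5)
Your proposal follows the paper's route. The paper gives no separate argument: it specializes Theorem~\ref{thm:skein module of 3-ball} to the four hexagons ($n=3$) around the ideal vertices of $T$ and transcribes the pictures into the $\Gamma_{ab}$, $\hat{a}$ notation, using Remark~\ref{rem:mixed state element in terms of generators} for the mixed-state sink arcs. Your remark that the $\mu=-1$ relation is redundant is also correct, since it is $-1$ times $v_{zz^{\prime}z^{\prime\prime}}$ multiplied on the left by the inverse sink product and on the right by the inverse source product.

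Two small corrections. The factors of $\Gamma_{zz^{\prime\prime}}\Gamma_{z^{\prime\prime}z^{\prime}}\Gamma_{z^{\prime}z}$ (and of $\hat{z}\hat{z}^{\prime}\hat{z}^{\prime\prime}$) lie in different tensor factors and commute, so there is no cyclic order to check. The three terms of $\ell_{zz^{\prime\prime}}$ are not rotations of one another: one is the short mixed arc in the face with edges $z,z^{\prime\prime},y^{\prime}$, and the other two are the ``long way round'' configurations differing in the state at the source on $z^{\prime}$. So their relative coefficients $-(-A^{2})^{\mp1}$ must be read off from the cited relation rather than fixed by symmetry.
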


\begin{rem}
\label{rem:freedom in the choice of ell}
Instead of $\ell_{zz^{\prime\prime}}$, we can also use one of the
two other elements $\ell_{z^{\prime\prime}z^{\prime}}$ and $\ell_{z^{\prime}z}$
in the form of $\ell_{zz^{\prime\prime}}$. They are the elements
obtained by rotate the diagrams in $\ell_{zz^{\prime\prime}}$ by
$120^{\circ}$ or $240^{\circ}$. As we shall see, this eventually corresponds to the freedom of choosing (quantum) lagrangian relations in the quantum gluing module.
\end{rem}

\subsection{Corner-reduction\label{subsec:Corner reduction}}

Motivated by the corner-reduction construction of Garoufalidis \& Yu,
we consider a further reduction of the reduced skein module $\overline{\mathrm{Sk}}(Y)$
of an ideally triangulated boundary marked 3-manifold $(Y,\mathcal{T})$.
For this purpose we need to introduce a twist of the product structures
on the various boundary face or edge algebras. Let
$\mathcal{F}$ be the induced ideal triangulation of the boundary and $f\in\mathcal{F}^{(2)}$
a boundary face of $Y$, and denote by $M_{f}=\{e_{1},e_{2},e_{3}\}$
the set of three marking edges in $f$. 
We recall that the skein module
$\overline{\mathrm{Sk}}(Y)$ is $\mathbb{Z}^{M_{f}}$-graded (that is,
if $t\in\overline{\mathrm{Sk}}(Y)$ is a stated tangle, $d_{f}(t)\in\mathbb{Z}^{M_{f}}$
assigns the marking edge $e_{i}$ the sum of states of endpoints of
$t$ on $e_{i}$). This will induce a $\mathbb{Z}^{M_{f}}$-grading
on the algebra $\mathbb{T}_{f}$ via the natural maps 
\[
\mathbb{T}_{f}\rightarrow\overline{\mathrm{Sk}}(Y),\ \kappa\mapsto\kappa[\emptyset];
\]
similarly if $e\in\mathcal{F}^{(1)}$ is a boundary edge, then via
the natural map 
\[
\mathbb{B}_{e}\rightarrow\overline{\mathrm{Sk}}(Y),\ x\mapsto[\emptyset]x,
\]
we can equip $\mathbb{B}_{e}$ with a $\mathbb{Z}^{M_{f}}$-grading.
\begin{rem}
In fact we can give both $\mathbb{T}_{f}$ and $\mathbb{B}_{e}$ a
$\prod_{f\in\mathcal{F}^{(2)}}\mathbb{Z}^{M_{f}}$-grading in an obvious
fashion: for $\mathbb{T}_{f}$, we define its $\mathbb{Z}^{M_{f^{\prime}}}$-degree
to be zero if $f^{\prime}\neq f$; for $\mathbb{B}_{e}$, we define
its $\mathbb{Z}^{M_{f^{\prime}}}$-degree to be $0$ if $f^{\prime}$
is a boundary face not adjacent to the boundary edge $e$.
\end{rem}

We now put a skew-symmetric bilinear form on $\mathbb{Z}^{M_{f}}$.
Suppose $(e_{1},e_{2},e_{3})$ is a \emph{clockwise} ordering of the
three marking edges in $M_{f}$, clockwise with respect to an outward pointing normal vector. More specifically, it means that the outward point normal vector is determined by the ``left hand rule'' by the ordered triple $(e_{1},e_{2},e_{3})$. This clockwise labeling is illustrated in Figure \ref{fig:cl labeling of marking edges}
when $Y=T$ is an ideal tetrahedron. This gives us an identification
$\mathbb{Z}^{M_{f}}\cong\mathbb{Z}^{3}$ by $\mathbb{Z}^{M_{f}}\ni d\mapsto(d_{1},d_{2},d_{3})\in\mathbb{Z}^{3}$,
where $d_{i}$ is the integer assigned to the edge $e_{i}$ by $d$. 

\begin{figure}[h]
  \includegraphics[scale=0.15]{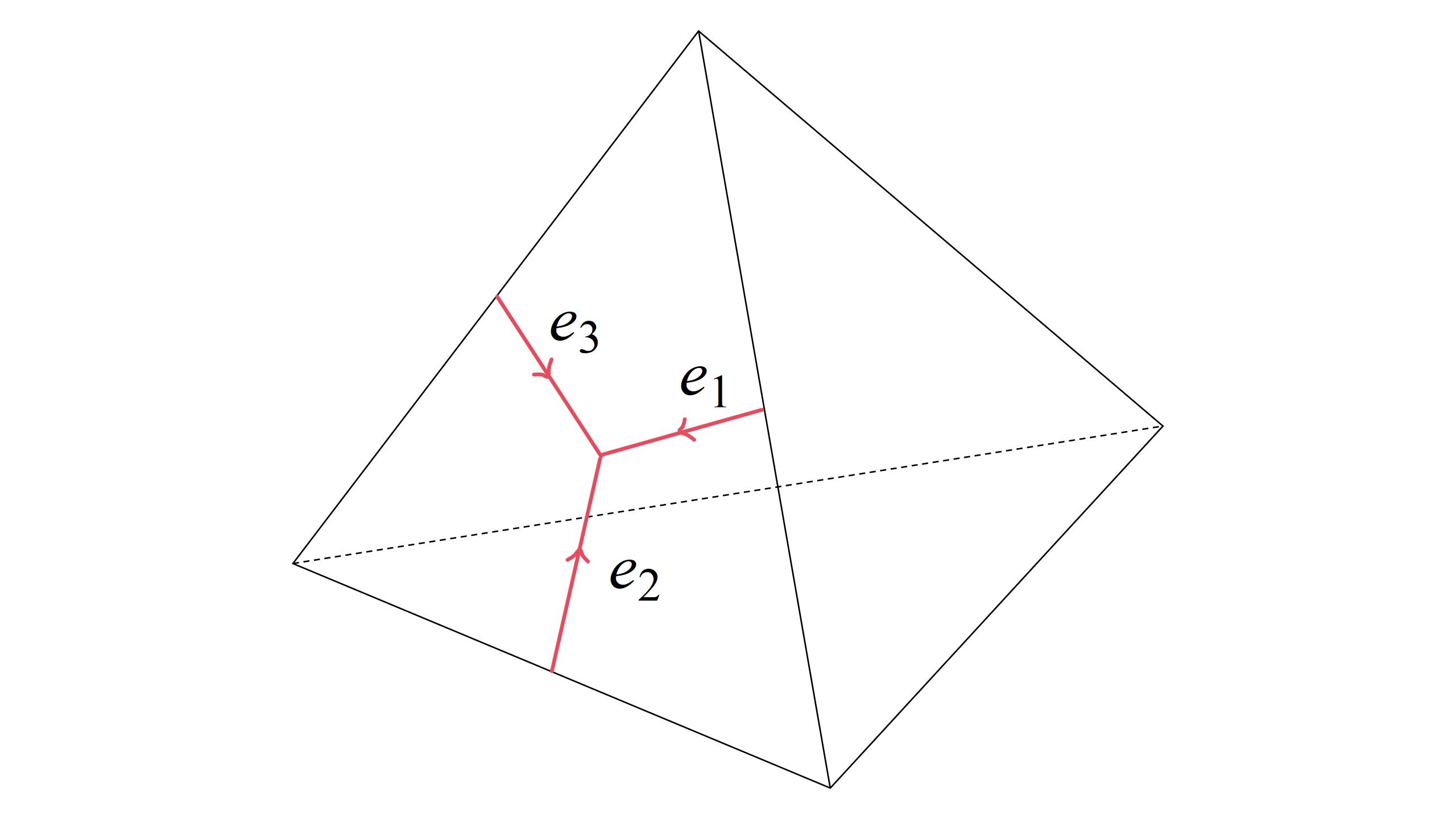} 
  \caption{Clockwise labeling of marking edges in a face of an ideal tetrahedron.}
  \label{fig:cl labeling of marking edges}
\end{figure}

Now the skew-symmetric bilinear form $\langle\ ,\ \rangle_{f}$ on $\mathbb{Z}^{M_{f}}$
is defined to be the pullback by the above identification of the skew-symmetric
bilnear form on $\mathbb{Z}^{3}$ defined by the matrix 
\[
\left(\begin{array}{ccc}
0 & 1 & -1\\
-1 & 0 & 1\\
1 & -1 & 0
\end{array}\right).
\]
Note that $\langle\ ,\ \rangle_{f}$ is independent of the choice of clockwise
ordering on $M_{f}$ used to make identification $\mathbb{Z}^{M_{f}}\cong\mathbb{Z}^{3}$.

Now we are ready to describe the twisted product, which we call the $\cdot$-product. In what follows, if an element $x\in\overline{\mathrm{Sk}}(T)$ or $\bigotimes_{f\in\mathcal{F}^{(2)}}\mathbb{T}_{f}$ or $\bigotimes_{e\in\ \mathcal{F}^{(1)}}\mathbb{B}_{e}$ is $\prod_{f\in\mathcal{F}^{(2)}}\mathbb{Z}^{M_{f}}$-homogeneous,
we write $d(x)$ for the total degree 
\[
\left(d_{f}(x)\right)_{f\in\mathcal{F}^{(2)}}\in\prod_{f\in\mathcal{F}^{(2)}}\mathbb{Z}^{M_{f}};
\]
if $x$ and $x^{\prime}$ are both $\prod_{f\in\mathcal{F}^{(2)}}\mathbb{Z}^{M_{f}}$-homogeneous,
we write $\langle d(x),d(x^{\prime})\rangle$ for $\underset{f\in\mathcal{F}^{(2)}} {\sum}\langle d_{f}(x),d_{f}(x^{\prime})\rangle_{f}$.

\begin{defn}
\label{def: defn of cdot-product}

(i)  Let $\kappa,\kappa^{\prime}\in\bigotimes_{f\in\mathcal{F}^{(2)}}\mathbb{T}_{f}$
be $\prod_{f\in\mathcal{F}^{(2)}}\mathbb{Z}^{M_{f}}$-homogeneous
elements. We let
\[
\kappa\cdot\kappa^{\prime}=A^{-\frac{1}{2}\langle d(\kappa),d(\kappa^{\prime})\rangle}\kappa\cup\kappa^{\prime},
\]
and extend to all elements of $\bigotimes_{f\in\mathcal{F}^{(2)}}\mathbb{T}_{f}$
by bilinearity.

(ii) Let $x,x^{\prime}\in\underset{e\in\mathcal{F}^{(1)}}{\bigotimes}\mathbb{B}_{e}$
be $\prod_{f\in\mathcal{F}^{(2)}}\mathbb{Z}^{M_{f}}$-homogeneous. We define
\[
x\cdot x^{\prime}=A^{-\frac{1}{2}\langle d(x),d(x^{\prime})\rangle}x\cup x^{\prime},
\]
and extend to all elements of $\underset{e\in\mathcal{F}^{(1)}}{\bigotimes}\mathbb{B}_{e}$
by bilinearity.

(iii) We can also twist the natural action of $\mathbb{T}_{f}$
and $\mathbb{B}_{e}$ on $\overline{\mathrm{Sk}}(Y)$. Let $\kappa\in\bigotimes_{f\in\mathcal{F}^{(2)}}\mathbb{T}_{f},\ x\in\bigotimes_{e\in\mathcal{F}^{(1)}}\mathbb{B}_{e}$
and $w\in\overline{\mathrm{Sk}}(Y)$ be $\prod_{f\in\mathcal{F}^{(2)}}\mathbb{Z}^{M_{f}}$-homogeneous.
We define
\[
\kappa\cdot w=A^{-\frac{1}{2}\langle d(\kappa),d(w)\rangle}\kappa\cup w,
\]
and
\[
w\cdot x=A^{-\frac{1}{2}\langle d(w),d(x)\rangle}w\cup x
\]
and extend by bilinearity. Under such ``$\cdot$-action'', $\overline{\mathrm{Sk}}(Y)$
becomes a $(\bigotimes_{f\in\mathcal{F}^{(2)}}\mathbb{T}_{f},\cdot)\text{-}(\bigotimes_{e\in\mathcal{F}^{(1)}}\mathbb{B}_{e},\cdot)$-bimodule,
namely 
\[
(\kappa\cdot w)\cdot x=\kappa\cdot(w\cdot x),
\]
as can be easily checked. 

\end{defn}

 Let us take a closer look at these product structures.
 \begin{itemize}
 \item The $\cdot$-product on  $\bigotimes_{f\in\mathcal{F}^{(2)}}\mathbb{T}_{f}$ restricts to each copy of $\mathbb{T}_{f}$
in the obvious way. Observe that if $\kappa,\kappa^{\prime}\in\mathbb{T}_{f}$
are monomials in the quantum torus $\mathbb{T}_{f}$, then
\[
\kappa\cdot\kappa^{\prime}=A^{-\frac{1}{2}\langle d_{f}(\kappa),d_{f}(\kappa^{\prime})\rangle_{f}}\kappa\cup\kappa^{\prime}=\left[\kappa\cup\kappa^{\prime}\right].
\]
This shows that the algebra $(\mathbb{T}_{f},\cdot)$ can be identified
with the Laurent polynomial algebra $R[\alpha^{\pm1},\beta^{\pm1},\gamma^{\pm1}]$
(when $\mathbb{T}_{f}$ is presented as in (\ref{eq:SkAld(D_3)})).
On the other hand, if $\kappa\in\mathbb{T}_{f}$ and $\kappa^{\prime}\in\mathbb{T}_{f^{\prime}}$
where $f$ and $f^{\prime}$ are different boundary faces, then we
have
\[
\kappa\cdot\kappa^{\prime}=\kappa\cup\kappa^{\prime}=\kappa^{\prime}\cup\kappa=\kappa^{\prime}\cdot\kappa.
\]
In particular, we have $(\bigotimes_{f\in\mathcal{F}^{(2)}}\mathbb{T}_{f},\cdot)=\bigotimes_{f\in\mathcal{F}^{(2)}}(\mathbb{T}_{f},\cdot)$.
 \item For the $\cdot$-product structure on $\underset{e\in\mathcal{F}^{(1)}}{\bigotimes}\mathbb{B}_{e}$, if $x\in\mathbb{B}_{e}$ and $x^{\prime}\in\mathbb{B}_{e^{\prime}}$
are monomials, and $e$ and $e^{\prime}$ are edges of the same boundary face $f$, then we have 
\[
x\cup x^{\prime}=A^{\frac{1}{2}\langle d_{f}(x),d_{f}(x^{\prime})\rangle_{f}}x\cdot x^{\prime}=[x\cdot x^{\prime}];
\]
whereas
if $e$ and $e^{\prime}$ are not edges of the same boundary face, then
\[
x\cup x^{\prime}=x\cdot x^{\prime}=x^{\prime}\cdot x=x^{\prime}\cup x.
\]
\end{itemize}

We are now in a position to define the desired further reduction
of the skein modules. 
\begin{defn}
\label{def:corner reduced module}
(i) For any boundary face $f$ of $Y$,
let us consider the ideal $I_{f}^{c}$ of the commutative algebra
$(\mathbb{T}_{f},\cdot)$ generated, in the presentation (\ref{eq:SkAld(D_3)})
of $\mathbb{T}_{f}$, by the elements
\[
\alpha-(-A^{2})^{-\frac{1}{2}},\ \beta-(-A^{2})^{-\frac{1}{2}},\ \gamma-(-A^{2})^{-\frac{1}{2}}.
\]
By abuse of notation, we also denote by $I_{f}^{c}$ the natural extension
of $I_{f}^{c}$ in the tensor product algebra $\bigotimes_{f\in\mathcal{F}^{(2)}}(\mathbb{T}_{f},\cdot)$,
let $I^{c}$ be the ideal of $\bigotimes_{f\in\mathcal{F}^{(2)}}(\mathbb{T}_{f},\cdot)$
given by the sum
\[
I^{c}=\sum_{f\in\mathcal{F}^{(2)}}I_{f}^{c}.
\]
(ii) The \emph{Corner-reduced skein module} $\overline{\mathrm{Sk}}^{c}(Y)$
of an ideally triangulated boundary marked 3-manifold $(Y,\mathcal{T})$
is defined to be the $R$-module quotient 
\[
\overline{\mathrm{Sk}}^{c}(Y)=\frac{\overline{\mathrm{Sk}}(Y)}{I^{c}\cdot\overline{\mathrm{Sk}}(Y)}.
\]
\end{defn}

\subsection{The case of a single tetrahedron\label{subsec:corner reduction in the case of a singe tetrahedron}}

For a single ideal tetrahedon $T$ with canonical boundary marking,
recall that we have the algebra $\mathbb{T}^{\otimes4}=\bigotimes_{f\in\mathbf{f}(T)}\mathbb{T}_{f}$
associated to the faces of $T$ and the algebra $\mathbb{B}^{\otimes6}=\bigotimes_{e\in\mathbf{e}(T)}\mathbb{B}_{e}$ associated to the edges of $T$.
The $\cdot$-product structure on these algebras and corner-reduction can be made more explicit in this case. In the notation of subsection
\ref{subsec:Reduced skein module of T}, we have the following lemma, whose proof is a straightforward calculation.
\begin{lem}
\label{lem:presntation of (T^tensor4,cdot)  and (B^tensor6, cdot)}
(i) The
algebra $(\mathbb{T}^{\otimes4},\cdot)$ is the Laurent polynomial
algebra 
\[
R[T]:=R\left[\Gamma_{ab}^{\pm1}\biggm|\begin{array}{c}
a,b\in\{z,z^{\prime},z^{\prime\prime},y,y^{\prime},y^{\prime\prime}\}\\
\text{\ensuremath{a} and \ensuremath{b} are not labels of opposite edges}
\end{array}\right].
\]
A monomial in $(\mathbb{T}^{\otimes4},\cdot)=R[T]$ equals the Weyl-ordering
of the same monomial under the usual $\cup$-product in the common underlying set $\mathbb{T}^{\otimes4}$. For example
\[
\Gamma_{zy^{\prime\prime}}\cdot\Gamma_{y^{\prime\prime}y}^{-1}=[\Gamma_{zy^{\prime\prime}}\Gamma_{y^{\prime\prime}y}^{-1}].
\]

(ii) The algebra $(\mathbb{B}^{\otimes6},\cdot)$ 
is isomorphic to the quantum torus
\begin{equation}
\mathbb{T}\left\langle T\right\rangle:=\frac{R\langle\hat{z}^{\pm1},\hat{z}^{\prime\pm1},\hat{z}^{\prime\prime\pm1},\hat{y}^{\pm1},\hat{y}^{\prime\pm1},\hat{y}^{\prime\prime\pm1}\rangle}{\left\langle \begin{array}{c}
\hat{a}\cdot\hat{b}^{\prime}=A\hat{b}^{\prime}\cdot\hat{a}\\
\hat{a}^{\prime}\cdot\hat{b}^{\prime\prime}=A\hat{b}^{\prime\prime}\cdot\hat{a}^{\prime}\\
\hat{a}^{\prime\prime}\cdot\hat{b}=A\hat{b}\cdot\hat{a}^{\prime\prime}
\end{array}\Bigg|a,b\in\{z,y\}\right\rangle }.\label{eq:quantum torus T(T)}
\end{equation}
In this case a monomial in $\mathbb{B}^{\otimes6}=R[\hat{z}^{\pm1},\hat{z}^{\prime\pm1},\hat{z}^{\prime\prime\pm1},\hat{y}^{\pm1},\hat{y}^{\prime\pm1},\hat{y}^{\prime\prime\pm1}],$
i.e. with the respect to the $\cup$-product, equals the Weyl-ordering of the same monomial under the $\cdot$-product.
For example 
\[
\hat{z}^{\prime}\hat{y}^{\prime\prime3}\hat{z}^{-2}=[\hat{z}^{\prime}\cdot\hat{y}^{\prime\prime3}\cdot\hat{z}^{-2}].
\]
\end{lem}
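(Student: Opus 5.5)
The plan is to compute everything from two ingredients: the $\cup$-structure given by Theorem \ref{thm:skein algebras of triangles and bigons}, and the degrees $d(\cdot)$ of the generators together with the forms $\langle\ ,\ \rangle_f$. A general observation applies to any algebra whose $\cup$-product on monomials is $q$-commuting with scalars $A^{\Lambda}$. If the grading satisfies $\Lambda_{ij}=\langle d(x_i),d(x_j)\rangle$, then the $\cdot$-product is commutative on these generators. If instead $\Lambda_{ij}=0$, then $\cdot$ twists by $A^{-\frac12\langle d(x_i),d(x_j)\rangle}$. In both cases $\cdot$ is associative, since $\langle\ ,\ \rangle$ is bilinear in the additive grading; this is the usual 2-cocycle twist. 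The Weyl-ordering statements then follow formally: for monomials one has $x_{i_1}\cdot\ldots\cdot x_{i_k}=A^{-\frac12\sum_{p<q}\langle d_{i_p},d_{i_q}\rangle}x_{i_1}\cdots x_{i_k}$. This reads as "the $\cup$-monomial is the Weyl-ordering for $\cdot$", or conversely, depending on which product is the commutative one.

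\textbf{Part (i).} The four faces are different, so the corresponding $\mathbb{T}_f$ $\cup$-commute and have zero cross pairing, because their degrees live in different $\mathbb{Z}^{M_f}$. It therefore suffices to treat one $\mathbb{T}_f$, and this is the computation already displayed after Definition \ref{def: defn of cdot-product}.

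For that face, take the clockwise marking edges $(e_1,e_2,e_3)$. The generator $\alpha$ joining two of them with $++$ states has degree $(0,1,1)$ up to cyclic relabeling; similarly $\beta=(1,0,1)$ and $\gamma=(1,1,0)$. Compute, for instance, $\langle(1,0,1),(0,1,1)\rangle$ with the given matrix. I expect it to be $\pm1$, matching $\beta\alpha=A\alpha\beta$. I will check the sign against the clockwise convention of Figure \ref{fig:cl labeling of marking edges}. Once it matches, $\alpha\cdot\beta=\beta\cdot\alpha$, so $(\mathbb{T}^{\otimes4},\cdot)$ is the Laurent polynomial ring on the twelve $\Gamma_{ab}$. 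These are exactly the pairs of non-opposite edge labels, three per face, and each non-opposite pair of edges shares exactly one face, so the index set is correct. The Weyl-ordering statement is the monomial formula above.

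\textbf{Part (ii).} $\mathbb{B}^{\otimes6}$ is commutative under $\cup$. The generator $\hat a$ of edge $e$ has $++$ states on its two marking edges. These lie in the two faces adjacent to $e$, one marking edge in each, so $d(\hat a)$ is $1$ on those two marking edges. For two edges $e,e'$ the pairing $\langle d(\hat a),d(\hat b)\rangle$ is the sum over the faces containing both. Opposite edges share no face, so the pairing is $0$ and they commute. Two adjacent edges share exactly one face $f$, and there the pairing is $\pm1$ according to the clockwise order of their marking edges in $f$. The $\cdot$-relation is then $\hat a\cdot\hat b=A^{-\langle d(\hat a),d(\hat b)\rangle}\hat b\cdot\hat a$.

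The remaining task is to check that this yields the listed relations: $z$-type before $z'$-type, $z'$ before $z''$, and $z''$ before $z$, with the same rule for $y$'s in the corresponding slots. This check uses the labeling of Figure \ref{fig:ideal T with shape parameters}. At vertex $v$ the edges $z,z',z''$ are clockwise. Each face at $v$ contains two of them in clockwise order, and the face opposite $v$ contains $y,y',y''$. One then verifies, face by face, that the clockwise order of marking edges induces the cyclic order $(\cdot,\cdot',\cdot'')$. I would do this for one face, say the one bounded by $z,z',y''$, and then invoke the symmetry of the labeling. This orientation and sign bookkeeping is the main obstacle, and if a sign comes out inverted it will be absorbed by the convention $\hat a\cdot\hat b'=A\hat b'\cdot\hat a$. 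Finally, the isomorphism with $\mathbb{T}\langle T\rangle$ follows because the $\cdot$-twist of a Laurent polynomial ring by a skew form is the quantum torus with that form. The Weyl-ordering identity is again the monomial formula, now read in the reverse direction.
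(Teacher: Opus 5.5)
Your plan is the direct computation the paper has in mind; the paper itself only says ``straightforward calculation''. The formal parts are sound:
\begin{itemize}
\item the twisted product is associative;
\item the monomial formula gives both Weyl-ordering statements;
\item the degrees of $\Gamma_{ab}$ and $\hat a$ are as you say;
\item the pairing vanishes across distinct faces and between opposite edges;
\item you may reduce to one face, since the rotations by $\pi$ about axes through midpoints of opposite edges preserve orientation and the type of every edge, and act transitively on faces.
\end{itemize}

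The point to correct is your fallback that an inverted sign ``will be absorbed by the convention $\hat a\cdot\hat b'=A\hat b'\cdot\hat a$''. Nothing is free here. The form $\langle\ ,\ \rangle_f$, the labeling of Figure~\ref{fig:ideal T with shape parameters} and the $\cup$-presentation of $\mathbb{T}_f$ are all already fixed. A wrong sign would make the stated presentation in (ii) false, and in (i) it would give $\alpha\cdot\beta=A^{\pm 2}\beta\cdot\alpha$ instead of commutativity. The sign check is the whole content of the lemma and has to be carried out.

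It does come out right. Because $z,z',z''$ are clockwise around $v$ seen from outside, the face bounded by $z,z',y''$ is traversed clockwise from outside as: $v$, then the far end of $z$, then the far end of $z'$. So its marking edges are clockwise in the order $(m_z,m_{y''},m_{z'})$. Hence
\[
\langle d(\hat z),d(\hat z')\rangle=\langle d(\hat z'),d(\hat y'')\rangle=\langle d(\hat y''),d(\hat z)\rangle=-1 .
\]
Your rule $\hat a\cdot\hat b=A^{-\langle d(\hat a),d(\hat b)\rangle}\hat b\cdot\hat a$ then gives
\[
\hat z\cdot\hat z'=A\hat z'\cdot\hat z,\qquad \hat z'\cdot\hat y''=A\hat y''\cdot\hat z',\qquad \hat y''\cdot\hat z=A\hat z\cdot\hat y'' ,
\]
which are exactly representatives of the three families of relations in the statement.

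For (i), take $\alpha=\Gamma_{zz'}$ and $\beta=\Gamma_{z'y''}$. Then $\langle d(\alpha),d(\beta)\rangle=-1+1-1=-1$. This matches $\Gamma_{zz'}\Gamma_{z'y''}=A^{-1}\Gamma_{z'y''}\Gamma_{zz'}$ from the presentation in Subsection~\ref{subsec:Reduced skein module of T}, so $\alpha\cdot\beta=\beta\cdot\alpha$.

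Finally, state explicitly that $\cdot$-monomials are invertible scalar multiples of $\cup$-monomials and hence form an $R$-basis. This rules out any further relations, and with it the argument is complete.
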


For consistency, the multiplications in the algebras $R[T]$ and $\mathbb{T}\langle T\rangle$ will always be written as $\cdot$.

By construction, the skein module $\overline{\mathrm{Sk}}(T)$, which
is a $\mathbb{T}^{\otimes4}\text{-}\mathbb{B}^{\otimes6}$-bimodule under
$\cup$-action, becomes a $(\mathbb{T}^{\otimes4},\cdot)\text{-}(\mathbb{B}^{\otimes6},\cdot)=R\left[T\right]\text{-}\mathbb{T}\left\langle T\right\rangle$-bimodule
under $\cdot$-action. Let us realize the $R\left[T\right]\text{-}\mathbb{T}\left\langle T\right\rangle$-bimodule
structure on $\overline{\mathrm{Sk}}(T)$ in an alternative way.
Let's first introduce a $R\left[T\right]\text{-}\mathbb{T}\left\langle T\right\rangle$-bimodule
structure on the $R$-module $\mathbb{T}^{\otimes4}\otimes\mathbb{B}^{\otimes6}=R\left[T\right]\otimes\mathbb{T}\left\langle T\right\rangle$
as follows:

\begin{enumerate}
\item Let $\Gamma^{\prime},\Gamma\in R\left[T\right]$
and $x\in\mathbb{T}\left\langle T\right\rangle$ be $\prod_{f\in\mathbf{f}(T)}\mathbb{Z}^{M_{f}}$-homogeneous
(for example when they are monomials), define
\begin{align*}
\Gamma^{\prime}\cdot\left(\Gamma\otimes x\right) & =A^{-\frac{1}{2}\langle d(\Gamma^{\prime}),d(x)\rangle}(\Gamma^{\prime}\cdot\Gamma)\otimes x\\
 & =A^{-\frac{1}{2}\langle d(\Gamma^{\prime}),d(\Gamma)\rangle-\frac{1}{2}\langle d(\Gamma^{\prime}),d(x)\rangle}\Gamma^{\prime}\Gamma\otimes x
\end{align*}
and extend by bilinearity. This defines a left $R\left[T\right]$-module
structure on $R\left[T\right]\otimes\mathbb{T}\left\langle T\right\rangle$.
\item Let $\Gamma\in R\left[T\right]$ and $x,x^{\prime}\in\mathbb{T}\left\langle T\right\rangle$
be $\prod_{f\in\mathbf{f}(T)}\mathbb{Z}^{M_{f}}$-homogeneous, define
\begin{align*}
\left(\Gamma\otimes x\right)\cdot x^{\prime} & =A^{-\frac{1}{2}\langle d(\Gamma),d(x^{\prime})\rangle}\Gamma\otimes(x\cdot x^{\prime})\\
 & =A^{-\frac{1}{2}\langle d(\Gamma),d(x^{\prime})\rangle-\frac{1}{2}\langle d(x),d(x^{\prime})\rangle}\Gamma\otimes xx^{\prime}
\end{align*}
and extend by bilinearity. This defines a right $\mathbb{T}\left\langle T\right\rangle$-module
structure on $R\left[T\right]\otimes\mathbb{T}\left\langle T\right\rangle$.
\item A straightforward calculation shows that if $\Gamma^{\prime},\Gamma\in R\left[T\right]$
and $x,x^{\prime}\in\mathbb{T}\left\langle T\right\rangle$ are $\prod_{f\in\mathbf{f}(T)}\mathbb{Z}^{M_{f}}$-homogeneous,
we have
\begin{multline*}
\left(\Gamma^{\prime}\cdot\left(\Gamma\otimes x\right)\right)\cdot x^{\prime}=\Gamma^{\prime}\cdot\left(\left(\Gamma\otimes x\right)\cdot x^{\prime}\right)\\
\\
=A^{-\frac{1}{2}\left(\langle d(\Gamma^{\prime}),d(x)\rangle+\langle d(\Gamma^{\prime}),d(x^{\prime})\rangle+\langle d(\Gamma),d(x^{\prime})\rangle\right)}\left(\Gamma^{\prime}\cdot\Gamma\right)\otimes\left(x\cdot x^{\prime}\right)\\
\\
=A^{-\frac{1}{2}\left(d(\Gamma^{\prime}),d(\Gamma)\rangle+\langle d(\Gamma^{\prime}),d(x)\rangle+\langle d(\Gamma^{\prime}),d(x^{\prime})\rangle+\langle d(\Gamma),d(x^{\prime})\rangle+\langle d(x),d(x^{\prime})\rangle\right)}\\
\times\Gamma^{\prime}\Gamma\otimes xx^{\prime}.
\end{multline*}
Hence $R\left[T\right]\otimes\mathbb{T}\left\langle T\right\rangle$ is a
$R\left[T\right]\text{-}\mathbb{T}\left\langle T\right\rangle$-bimodule in such
$\cdot$-actions.
\footnote{Note that this $R\left[T \right]\text{-}\mathbb{T}\left\langle T\right\rangle$-bimodule structure on $R\left[T \right]\otimes\mathbb{T}\left\langle T\right\rangle$ is not the natural $R\left[T \right]\text{-}\mathbb{T}\left\langle T\right\rangle$-bimodule structure on $R\left[T \right]\otimes\mathbb{T}\left\langle T\right\rangle$ coming from the tensor product. }
\end{enumerate}

Let's make an important observation here. Because the $\cdot$-product
and $\cup$-product differ by a multiplication of a scalar on homogeneous
elements, if $\{w_{i}\}$ is a collection of $\prod_{f\in\mathbf{f}(T)}\mathbb{Z}^{M_{f}}$-homogeneous
elements of the $R$-module $\mathbb{T}^{\otimes4}\otimes\mathbb{B}^{\otimes6}=R\left[T\right]\otimes\mathbb{T}\left\langle T\right\rangle$,
then the $\mathbb{T}^{\otimes4}\text{-}\mathbb{B}^{\otimes6}$-sub-bimodule of 
$\mathbb{T}^{\otimes4}\otimes \mathbb{B}^{\otimes6}$ generated by $\{w_{i}\}$ in $\cup$-product is the same as
the $R\left[T \right]\text{-}\mathbb{T}\left\langle T\right\rangle$-sub-bimodule of $R\left[T \right]\otimes\mathbb{T}\left\langle T\right\rangle$\footnote{In the $R\left[T \right]\text{-}\mathbb{T}\left\langle T\right\rangle$-bimodule structure of $R\left[T \right]\otimes\mathbb{T}\left\langle T\right\rangle$ introduced by Point (3) above.
}
generated by $\{w_{i}\}$ in $\cdot$-product; they are the 
same as $R$-submodule of the $R$-module $\mathbb{T}^{\otimes4}\otimes\mathbb{B}^{\otimes6}=R\left[T \right]\otimes\mathbb{T}\left\langle T\right\rangle$.

\begin{lem}
\label{lem:generators of Ann(emptyset) are homogeneous, so it is also an ideal in dot-product}The
elements $v_{zz^{\prime}z^{\prime\prime}}$, $\ell_{zz^{\prime\prime}}$ and
similarly the other generators of $\mathrm{Ann}([\emptyset])$ we gave
in Corollary \ref{cor:reduced skein module of T} are $\prod_{f\in\mathbf{f}(T)}\mathbb{Z}^{M_{f}}$-homogeneous,
thus $\mathrm{Ann}([\emptyset])$ can be viewed as the $R\left[T \right]\text{-}\mathbb{T}\left\langle T\right\rangle$-sub-bimodule of $R\left[T \right]\otimes\mathbb{T}\left\langle T\right\rangle$
generated by these elements. 
\end{lem}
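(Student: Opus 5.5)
The proof has two parts. First we check homogeneity of each listed generator. Then we invoke the observation made just before the statement, which converts $\cup$-generated sub-bimodules into $\cdot$-generated ones.

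For homogeneity, we compute the total degree $d=(d_f)_{f\in\mathbf{f}(T)}$ of every monomial in each generator. A monomial $\Gamma_{ab}^{\pm1}$ lies in a single face algebra $\mathbb{T}_f$. Its degree is $\pm1$ on the two marking edges labeled $a,b$ in $f$ and $0$ elsewhere. A monomial $\hat a^{\pm1}$ lies in the edge algebra $\mathbb{B}_e$. Its degree is $\pm1$ on the two marking edges labeled $a$, one in each of the two faces adjacent to $e$. Since $d$ is additive on $\cup$-products and Weyl-ordering only rescales by powers of $A$, the degree of each term is the sum of the degrees of its factors.

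For $v_{zz'z''}$, the term $\Gamma_{zz''}\Gamma_{z''z'}\Gamma_{z'z}$ lives in the three faces around the vertex $zz'z''$. In each of these faces it assigns $+1$ to the two marking edges adjacent to that vertex. The term $\hat z\hat z'\hat z''$ assigns the same: each face containing the vertex contains exactly two of the edges $z,z',z''$, and each $\hat a$ contributes $+1$ there. Both terms vanish on the fourth face $(y,y',y'')$, so $v_{zz'z''}$ is homogeneous.

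For $\ell_{zz''}$, we compare the three terms face by face.
\begin{itemize}
\item In the face $(z,y',z'')$ (the first term's face), the first term gives $(+1,-1)$ on the markings $(z,z'')$. In the second term, $\hat z''^{-1}\hat z$ gives $-1$ on $z''$ and $+1$ on $z$, which matches. In the third term, $\hat z''^{-1}\hat z'^{-1}\hat z$ restricted to this face again gives $+1$ on $z$ and $-1$ on $z''$.
\item In the face $(z'',y,z')$, the second term's $[\Gamma_{z''y}\Gamma_{yz'}^{-1}]$ gives $(+1,-1)$ on $(z'',z')$. This must cancel against $\hat z''^{-1}\hat z'$ restricted to that face, which gives $-1$ on $z''$ and $+1$ on $z'$, so the second term has total degree $0$ there. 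The third term has $\Gamma_{z''z'}$ (giving $+1,+1$) together with $\hat z''^{-1}\hat z'^{-1}$ (giving $-1,-1$), so its total is also $0$. The first term is $0$ there as well.
\item The face $(z,z',y'')$ is handled the same way: the term $\Gamma_{zz'}^{-1}$ cancels the contributions of $\hat z,\hat z'$.
\end{itemize}
So all terms have equal degree. The remaining three pairs of generators follow by the tetrahedral symmetry, or by repeating the same bookkeeping.

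The main obstacle is keeping correct track of which face each marking edge and each $\hat a$ contributes to, and the sign conventions for states. The rest is formal.

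Finally, Corollary~\ref{cor:reduced skein module of T} presents $\mathrm{Ann}([\emptyset])$ as the $\mathbb{T}^{\otimes4}$-$\mathbb{B}^{\otimes6}$-sub-bimodule generated under $\cup$ by these homogeneous elements. On homogeneous elements the $\cdot$-action differs from the $\cup$-action only by invertible scalars $A^{\pm\frac12\langle\,,\rangle}$. Hence the $\cup$-generated and $\cdot$-generated sub-bimodules coincide as $R$-submodules, by the observation preceding the lemma. Therefore $\mathrm{Ann}([\emptyset])$ is the $R[T]$-$\mathbb{T}\langle T\rangle$-sub-bimodule generated by these elements.
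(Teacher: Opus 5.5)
Your proposal is correct and takes the same approach as the paper. The paper's proof also reduces the lemma to the degree identities $d(\Gamma_{zz^{\prime\prime}}\Gamma_{z^{\prime\prime}z^{\prime}}\Gamma_{z^{\prime}z})=d(\hat{z}\hat{z}^{\prime}\hat{z}^{\prime\prime})$ and the equality of the degrees of the three terms of $\ell_{zz^{\prime\prime}}$, which it calls a straightforward computation, and then uses the preceding observation that for homogeneous generators the $\cup$-generated and $\cdot$-generated sub-bimodules coincide; your face-by-face bookkeeping writes that computation out in full, and it is correct.
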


\begin{proof}
Recall that 
\[
v_{zz^{\prime}z^{\prime\prime}}:=(-A^{2})^{2}\Gamma_{zz^{\prime\prime}}\Gamma_{z^{\prime\prime}z^{\prime}}\Gamma_{z^{\prime}z}-\hat{z}\hat{z}^{\prime}\hat{z}^{\prime\prime}
\]
\begin{align*}
\ell_{zz^{\prime\prime}}\colon & =[\Gamma_{zy^{\prime}}\Gamma_{y^{\prime}z^{\prime\prime}}^{-1}]-(-A^{2})^{-1}[\Gamma_{z^{\prime\prime}y}\Gamma_{yz^{\prime}}^{-1}]\Gamma_{zz^{\prime}}^{-1}\hat{z}^{\prime\prime-1}\hat{z}^{\prime}\hat{z}\\
 & -(-A^{2})\Gamma_{z^{\prime\prime}z^{\prime}}[\Gamma_{z^{\prime}y^{\prime\prime}}\Gamma_{y^{\prime\prime}z}^{-1}]\hat{z}^{\prime\prime-1}\hat{z}^{\prime-1}\hat{z},
\end{align*}
so we only need to check that
\[
d(\Gamma_{zz^{\prime\prime}}\Gamma_{z^{\prime\prime}z^{\prime}}\Gamma_{z^{\prime}z})=d(\hat{z}\hat{z}^{\prime}\hat{z}^{\prime\prime})
\]
and that
\begin{align*}
d(\Gamma_{zy^{\prime}}\Gamma_{y^{\prime}z^{\prime\prime}}^{-1}) & =d(\Gamma_{z^{\prime\prime}y}\Gamma_{yz^{\prime}}^{-1}\Gamma_{zz^{\prime}}^{-1})+d(\hat{z}^{\prime\prime-1}\hat{z}^{\prime}\hat{z})\\
 & =d(\Gamma_{z^{\prime\prime}z^{\prime}}\Gamma_{z^{\prime}y^{\prime\prime}}\Gamma_{y^{\prime\prime}z}^{-1})+d(\hat{z}^{\prime\prime-1}\hat{z}^{\prime-1}\hat{z}),
\end{align*}
which is a straightforward computation. (In fact, these degree identity
can be easily seen from the diagrams of $v_{zz^{\prime}z^{\prime\prime}}$
and $\ell_{zz^{\prime\prime}}$)
\end{proof}

Therefore $\overline{\mathrm{Sk}}(T)$ is the quotient of $R\left[T \right]\otimes\mathbb{T}\left\langle T\right\rangle$
by a $R\left[T \right]\text{-}\mathbb{T}\left\langle T\right\rangle$-sub-bimodule.
This gives the $R\left[T \right]\text{-}\mathbb{T}\left\langle T\right\rangle$-bimodule
structure on $\overline{\mathrm{Sk}}(T)$ in an alternative manner,
as a quotient of the $R\left[T \right]\text{-}\mathbb{T}\left\langle T\right\rangle$-bimodule
$R\left[T \right]\otimes\mathbb{T}\left\langle T\right\rangle$.

The ideal $I^{c}$ of $\left(\mathbb{T}^{\otimes4},\cdot\right)$
which we considered when forming the corner-reduced module $\overline{\mathrm{Sk}}^{c}(T)$
(see Definition \ref{def:corner reduced module} (i)) is the ideal 
\begin{equation}
\label{eq:I^c for a single ideal T}
\left\langle \Gamma_{ab}-(-A^{2})^{-\frac{1}{2}}\biggm|\begin{array}{c}
a,b\in\{z,z^{\prime},z^{\prime\prime},y,y^{\prime},y^{\prime\prime}\}\\
\text{\ensuremath{a} and \ensuremath{b} are not labels of opposite edges}
\end{array}\right\rangle 
\end{equation}
of the polynomial algebra $\left(\mathbb{T}^{\otimes4},\cdot\right)=R\left[T \right]$.
By definition, $\overline{\mathrm{Sk}}^{c}(T)$ is the quotient of $\overline{\mathrm{Sk}}(T)$
by the $R$-submodule $I^{c}\cdot\overline{\mathrm{Sk}}(T)$, which
is also easily seen to be a $R\left[T \right]\text{-}\mathbb{T}\left\langle T\right\rangle$-sub-bimodule
of $\overline{\mathrm{Sk}}(T)$, therefore $\overline{\mathrm{Sk}}^{c}(T)$
is a quotient $R\left[T \right]\text{-}\mathbb{T}\left\langle T\right\rangle$-bimodule
of $\overline{\mathrm{Sk}}(T)$ and thus of $R\left[T \right]\otimes\mathbb{T}\left\langle T\right\rangle$.
Moreover, quotient by $I^{c}\cdot\overline{\mathrm{Sk}}(T)$ identifies
the left $\cdot$-multiplication of $\Gamma_{ab}$ with scalar multiplication
by $(-A^{2})^{-\frac{1}{2}}$, this in some sense ``kills'' the
$R\left[T \right]$-factor and thus $\overline{\mathrm{Sk}}^{c}(T)$
is a quotient of $\mathbb{T}\left\langle T\right\rangle$ only. This intuition will be made
precise later in Section \ref{sec:Quantum Trace Map} when we construct our
quantum trace map on $\overline{\mathrm{Sk}}^{c}(T)$.

\subsection{Splitting of the 3-manifolds into ideal tetrahedra\label{subsec:Splitting homomorphism}}
In this subsection, we introduce a version of splitting homomorphism, induced by splitting of the 3-manifold $Y$ into ideal tetrahedra,
tailored to corner-reduced skein modules, which is a modification
of the construction in \cite{PP1}. Recall that we have the notion of combinatorial foliation introduced
earlier. If $\mathcal{T}$ is an ideal triangulation of the 3-manifold
$Y$, any \emph{internal} face $f\in\mathcal{T}^{(2)}$ is decomposed
into three elementary quadrilaterals, and the foliations on each of
them give rise to a foliation on $f$, see the picture on the right
of Figure \ref{fig:foliations on elementary Q and face}. (For our
purpose, we only need to consider this foliation on internal faces.)

A ribbon tangle $\ell$ in the boundary marked 3-manifold $Y$ is said to be in \emph{general
position} if it satisfies the following:
\begin{enumerate}
\item Its intersection with every internal face $f\in\mathcal{T}^{(2)}$
is transverse,
\item at the intersection point, the foliation is transverse to the 2-d
tangent plane to the ribbon tangle,
\item no singular leaf meets $\ell$, and
\item no leaf of the foliation meets $\ell$ at two or more points.
\end{enumerate}
Now, given a ribbon link $\ell$ in $Y$ in general position, and an assignment of a state $\pm$ to every intersection of the link with a face, we get an
element
\[
\underset{T\in\mathcal{T}}{\otimes}\ell_{T}^{\overrightarrow{\epsilon}}\in\bigotimes_{T\in\mathcal{T}}\overline{\mathrm{Sk}}(T)
\]
 in the following manner: since $\ell$ is in general position, one
can isotope $\ell$ by moving its part near a face along the leaves
of the foliation so that the intersection of $\ell$ with every internal
face $f$ falls on a marking edge, in a way so that the ribbon surface
of $\ell$ is tangent to the marking edges. Let $\ell_{T}$ be the
part of $\ell$ in $T$, which is now a ribbon tangle in $T$ with endpoints
on the canonical boundary marking of $T$; $\overrightarrow{\epsilon}$
is the assignment of ``compatible states'' for every $\ell_{T},\ T\in\mathcal{T}$
(meaning if $p_{i}$ is endpoint of $\ell_{T_{i}}$ and $p_{j}$ is
an endpoint of $\ell_{T_{j}}$ which are identified upon gluing $T_{i}$
and $T_{j}$, then we have $\overrightarrow{\epsilon}(p_{i})=\overrightarrow{\epsilon}(p_{j})$).
This construction gives us an element $\ell_{T}^{\overrightarrow{\epsilon}}\in\overline{\mathrm{Sk}}(T)$
for each $T\in\mathcal{T}$. We wish to use this construction to define
a splitting homomorphism on the skein module $\mathrm{Sk}(Y)$ of the
3-manifold $Y$. For this we need to understand how an isotopy of
$\ell$ affects the element $\underset{T\in\mathcal{T}}{\otimes}\ell_{T}^{\overrightarrow{\epsilon}}$.
Let's start with some necessary definitions.
\begin{defn}
\label{def:balanced elements}
Let $(Y,\mathcal{T})$ be an ideally triangulated 3-manifold, an element
of the form
\[
\underset{T\in\mathcal{T}}{\otimes}x_{T}\in\bigotimes_{T\in\mathcal{T}}\overline{\mathrm{Sk}}(T)
\]
 is \emph{balanced} if given bare faces $f_{i}\in\mathbf{f}(T_{i})$
and $f_{j}\in\mathbf{f}(T_{j})$ of some tetrahedra $T_{i}$ and $T_{j}$
($T_{i}$ and $T_{j}$ are not necessarily distinct) such that $f_{i}$
and $f_{j}$ are identified to a face $f\in\mathcal{T}^{(2)}$, we
have $d_{f_{i}}(x_{T_{i}})=d_{f_{j}}(x_{T_{j}})$ in the following
sense:

If we label the marking edges in $f_{i}$ and $f_{j}$ by $e_{1},e_{2},e_{3}$
in the way so that the same label $e_{k}$ $(k=1,2,3)$ is given to
the marking edge in $f_{i}$ and the marking edge in $f_{j}$ that
are identified after we glue $f_{i}$ and $f_{j}$ (see Figure \ref{fig:matching labeling of marking edges}),
we have $d_{f_{i}}(x_{T_{i}})=d_{f_{j}}(x_{T_{j}})\in\mathbb{Z}^{\{e_{1},e_{2},e_{3}\}}$. 

In other word, we require $x_{T_{i}}$ and $x_{T_{j}}$ to have the same
sum of states of endpoints on marking edges that are identified upon
gluing. 
\end{defn}

\begin{figure}[h]
  \includegraphics[scale=0.2]{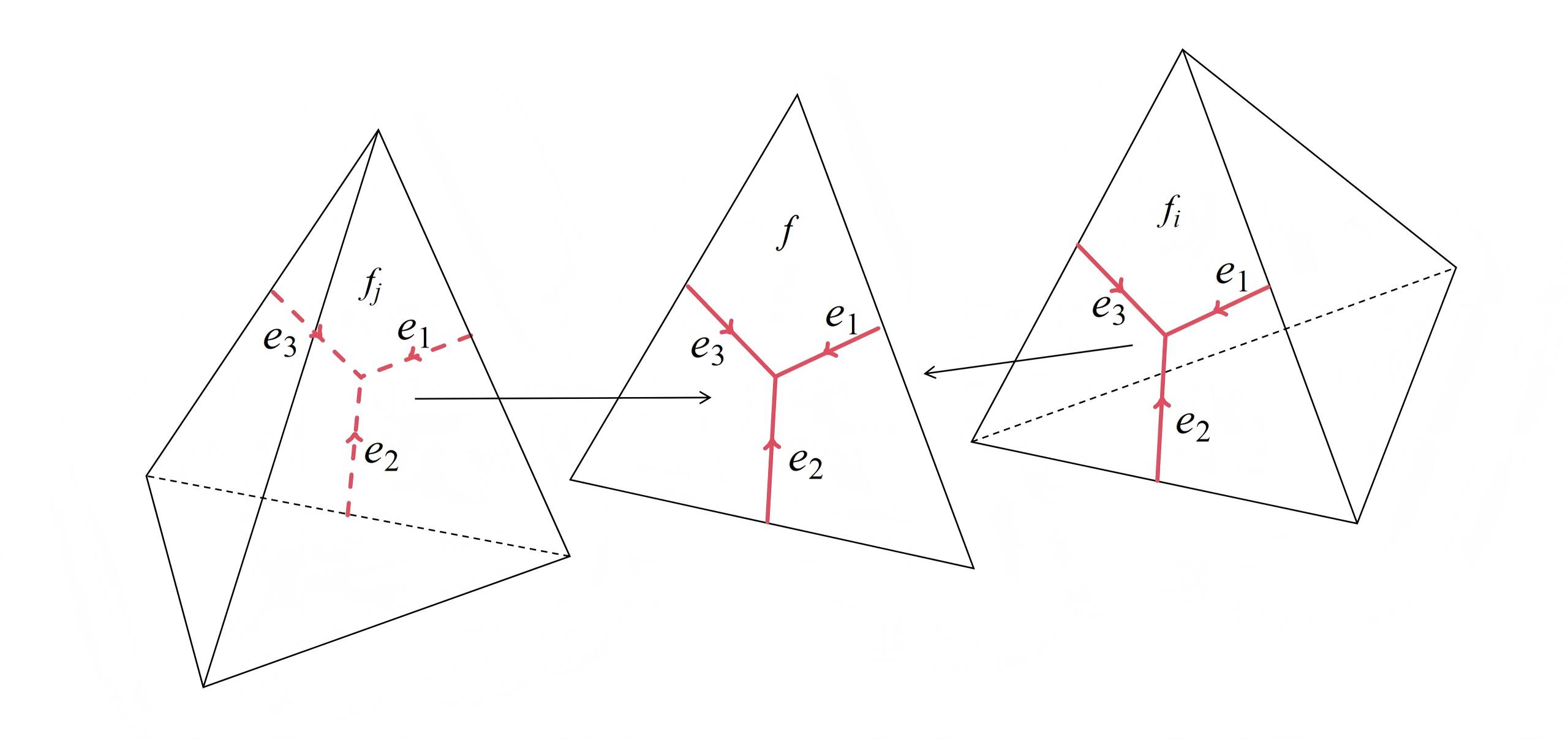} 
  \caption{}
  \label{fig:matching labeling of marking edges}
\end{figure}

For example, given any stated tangle $\ell$ in general position, the element$\underset{T\in\mathcal{T}}{\otimes}\ell_{T}^{\overrightarrow{\epsilon}}\in\bigotimes_{T\in\mathcal{T}}\overline{\mathrm{Sk}}(T)$ is balanced.
\begin{rem}
\label{rem:matching face, opposite skew-symmetric form}
Observe that if $x=\underset{T\in\mathcal{T}}{\otimes}x_{T}$ and
$y=\underset{T\in\mathcal{T}}{\otimes}y_{T}$ are balanced elements
of $\bigotimes_{T\in\mathcal{T}}\overline{\mathrm{Sk}}(T)$, then for
any pair of bare faces $f_{i}$ and $f_{j}$ that are identified to
a face $f\in\mathcal{T}^{(2)}$ we have
\[
\langle d_{f_{i}}(x),d_{f_{i}}(y)\rangle_{f_i}=-\langle d_{f_{j}}(x),d_{f_{j}}(y)\rangle_{f_j},
\]
where $\langle\ ,\ \rangle_{f_i}$ and $\langle\ ,\ \rangle_{f_J}$ are the skew-symmetric bilinear form we
associate to $\mathbb{Z}^{E_{f_{i}}}$ and $\mathbb{Z}^{E_{f_{j}}}$, respectively.
This is because if we have labeled the markings edges in $f_{i}$ by $e_{1},e_{2},e_{3}$
in a clockwise manner with respect to an outward pointing normal vector of $f_{i}$, then a matching
labeling of the marking edges in $f_{j}$ by $e_{1},e_{2},e_{3}$ would
be \emph{counterclockwise} with respect to an outward pointing normal vector of $f_{j}$. Thus we have
$d_{f_{i}}(x)=d_{f_{j}}(x)$ and $d_{f_{i}}(y)=d_{f_{j}}(y)$ as elements
of $\mathbb{Z}^{3}$ but the matrix of the skew-symmetric form with resect to the basis $e_{1},e_{2},e_{3}$ for
$\langle\ ,\ \rangle_{f_i}$ is $\left(\begin{array}{ccc}
0 & 1 & -1\\
-1 & 0 & 1\\
1 & -1 & 0
\end{array}\right)$ while the one for $\langle\ ,\ \rangle_{f_j}$ is $\left(\begin{array}{ccc}
0 & -1 & 1\\
1 & 0 & -1\\
-1 & 1 & 0
\end{array}\right)$.
\end{rem}

\begin{defn}
\label{def:reduced tensor product}

(i) Let $e\in\mathcal{T}^{(1)}$ be an \emph{internal} edge, $T_{1},T_{2},\dots,T_{k}$
be the sequence of ideal tetrahedra glued around $e$ (there could
be repetition in this sequence). Suppose the bare edge of $T_{i}$
identified to $e$ is labeled $e_{i}$ (by the labeling convention
we had in \ref{subsec:Reduced skein module of T}, $e_{i}$ is one of the
$z_{T_{i}}^{\boxempty}$ or $y_{T_{i}}^{\boxempty}$) and recall that
we have the generator $x_{e_{i}}$ of the bigon algebra $\mathbb{B}_{e_{i}}$
associated to the bivalent source on the edge $e_{i}$. We define
an element (see Figure \ref{fig:skein diagram of r(e)})
\[
\hat{e}=\hat{e}_{1}\hat{e}_{2}\dots \hat{e}_{k}\in\bigotimes_{T\in\mathcal{T}}\left(\bigotimes_{e\in\mathbf{e}(T)}\mathbb{B}_{e}\right)
\]

(ii) Let $\mathfrak{R}_{E}$ be the $R$-submodule of $\bigotimes_{T\in\mathcal{T}}\overline{\mathrm{Sk}}(T)$
spanned by elements of the form 
\[
\left(\underset{T\in\mathcal{T}}{\otimes}x_{T}\right)\cup\left(\hat{e}^{\epsilon}-(-A^{2})^{\epsilon}\right),\ \epsilon\in\{\pm1\}
\]
where $\underset{T\in\mathcal{T}}{\otimes}x_{T}$ is balanced, $e\in\mathcal{T}^{(1)}$
an \emph{internal} edge and $\hat{e}$ is the element associated with $e$ defined above.
(Here we are using that for each $T\in\mathcal{T}$, $\overline{\mathrm{Sk}}(T)$
is a right $\bigotimes_{e\in\mathbf{e}(T)}\mathbb{B}_{e}$-module
under $\cup$)

(iii) Let $\mathfrak{R}_{E}^{c}$ be the $R$-submodule of the tensor
product of corner-reduced modules $\bigotimes_{T\in\mathcal{T}}\overline{\mathrm{Sk}}^{c}(T)$
given by the image of $\mathfrak{R}_{E}$ under the natural map
\[
\bigotimes_{T\in\mathcal{T}}\overline{\mathrm{Sk}}(T)\rightarrow\bigotimes_{T\in\mathcal{T}}\overline{\mathrm{Sk}}^{c}(T)
\]

(iv) The \emph{reduced tensor product} $\overline{\bigotimes}_{T\in\mathcal{T}}\overline{\mathrm{Sk}}^{c}(T)$
is defined to be the $R$-module quotient
\[
\overline{\bigotimes_{T\in\mathcal{T}}}\overline{\mathrm{Sk}}^{c}(T)=\bigotimes_{T\in\mathcal{T}}\overline{\mathrm{Sk}}^{c}(T)\biggm/\mathfrak{R}_{E}^{c}
\]

\end{defn}

\begin{rem}
Alternatively, we can think of $\overline{\bigotimes}_{T\in\mathcal{T}}\overline{\mathrm{Sk}}^{c}(T)$
as the $R$-module quotient
\[
\overline{\bigotimes_{T\in\mathcal{T}}}\overline{\mathrm{Sk}}^{c}(T)=\frac{\bigotimes_{T\in\mathcal{T}}\overline{\mathrm{Sk}}(T)}{\mathfrak{R}_{E}+I^{c}\cdot\bigotimes_{T\in\mathcal{T}}\overline{\mathrm{Sk}}(T)},
\]
where $I^{c}$ is the ideal  of the commutative algebra $R[T]$
given by (\ref{eq:I^c for a single ideal T}). This is convenient when one want to utilize the gradings provided
by the boundary marking, which makes sense in $\bigotimes_{T\in\mathcal{T}}\overline{\mathrm{Sk}}(T)$ but not in $\bigotimes_{T\in\mathcal{T}}\overline{\mathrm{Sk}}^{c}(T)$
as the ideal $I^{c}$ is not homogeneous. 
\end{rem}

\begin{figure}[h]
  \includegraphics[scale=0.2]{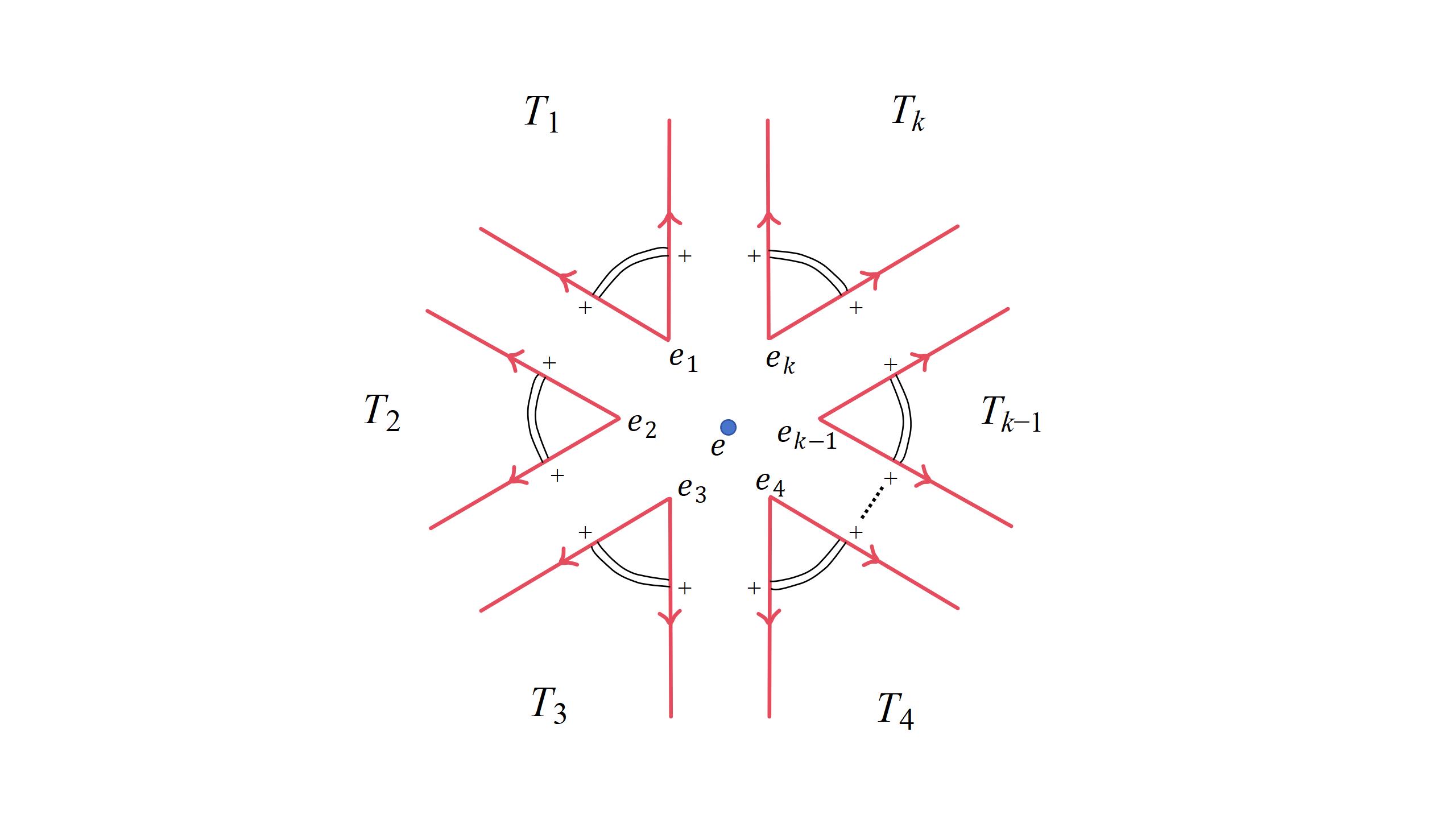} 
  \caption{Skein diagram of the element $\hat{e}$, the edge class $e$ is represented as a blue dot in the center}
  \label{fig:skein diagram of r(e)}
\end{figure}
\begin{thm}
\label{thm:splitting homomorphism}
Let $(Y,\mathcal{T})$ be an ideally triangulated boundary marked
3-manifold. The assignment to any stated ribbon tangle $\ell$ in
general position of the element
\[
\sigma(\ell):=\sum_{\overrightarrow{\epsilon}}\left(\underset{T\in\mathcal{T}}{\otimes}\ell_{T}^{\overrightarrow{\epsilon}}\right)\in\overline{\bigotimes}_{T\in\mathcal{T}}\overline{\mathrm{Sk}}^{c}(T)
\]
where the sum is over all compatible states $\overrightarrow{\epsilon}$
gives a well-defined $R$-module homomorphism
\[
\sigma\colon\overline{\mathrm{Sk}}^{c}(Y)\rightarrow\overline{\bigotimes_{T\in\mathcal{T}}}\overline{\mathrm{Sk}}^{c}(T).
\]
\end{thm}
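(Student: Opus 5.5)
The plan is to follow the strategy of Panitch \& Park's splitting theorem \cite[Corollary 3.37]{PP1} and show that every relation defining $\overline{\mathrm{Sk}}^{c}(Y)$ maps to zero. There are three layers of relations to check. First, $\sigma(\ell)$ must be independent of the isotopy class of $\ell$ among general-position representatives. Second, $\sigma$ must respect the skein relations (S1)--(S5) and the bad-arc reduction. Third, $\sigma$ must respect the corner-reduction relations $I^{c}\cdot\overline{\mathrm{Sk}}(Y)$.

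The second and third layers are local. Skein relations are supported in a ball that can be isotoped into the interior of a single tetrahedron, away from internal faces, or near a boundary marking of $Y$. In either case they hold termwise in the corresponding factor $\overline{\mathrm{Sk}}(T)$. Bad arcs and the generators $\alpha-(-A^{2})^{-\frac12}$ at a boundary sink of $Y$ live near a boundary face of $Y$, which is also a bare face of a single tetrahedron $T$. So $\sigma(b\,w)=b\,\sigma(w)$, with $b$ acting on the $T$-factor. For the $\cdot$-action one also has to match the twisting scalars $A^{-\frac12\langle d(\kappa),d(w)\rangle}$. These agree because the grading $d_f$ of a tangle in $Y$ equals the $d_f$-grading of its $T$-piece for every term $\overrightarrow{\epsilon}$. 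Hence $I^{c}\cdot\overline{\mathrm{Sk}}(Y)$ maps into $I^{c}\cdot\bigotimes_T\overline{\mathrm{Sk}}(T)$, which is killed in the target by the alternative description of $\overline{\bigotimes}\,\overline{\mathrm{Sk}}^{c}(T)$ in the remark after Definition \ref{def:reduced tensor product}.

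The main work is isotopy invariance. By a standard general-position argument, any two general-position representatives are related by isotopies through general position together with finitely many codimension-one events:
\begin{enumerate}
\item a strand passes through a singular leaf of the foliation on an internal face, i.e.\ crosses a marking edge region near a sink at a face barycenter;
\item two intersection points of $\ell$ with a face exchange order along a leaf, i.e.\ lie on a common leaf momentarily;
\item a strand passes through an internal edge $e$ of $\mathcal{T}$;
\item a local max/min creates or cancels a pair of intersection points with a face, i.e.\ a tangency.
\end{enumerate}

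Event (4) follows from (S3)/(S4): summing over compatible states of the new pair reproduces the original. Event (2) is the height-exchange relation along a marking edge, which holds via the $\cup$-action of the bigon algebras in each factor. Event (3) is where $\mathfrak{R}_E$ enters. Pushing a strand across $e$ changes $\bigotimes\ell_T$ by multiplying by the product $\hat e^{\pm1}$ of the bigon generators of all bare edges around $e$, up to the scalar $(-A^{2})^{\pm1}$. After summing over states, the discrepancy is exactly an element $(\otimes x_T)\cup(\hat e^{\epsilon}-(-A^{2})^{\epsilon})$ with $\otimes x_T$ balanced.

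Event (1) is the step I expect to be the main obstacle. In \cite{PP1} it is exactly what forces the face-gluing relations in the reduced tensor product. Here those relations are absent, so they must be shown to vanish after corner-reduction. Crossing the singular leaf through a face barycenter changes $\ell_{T_i}$ by left multiplication by some $\Gamma\in\mathbb{T}_{f_i}$ and changes $\ell_{T_j}$ by the corresponding element of $\mathbb{T}_{f_j}$, on matching marking edges. Balancedness together with Remark \ref{rem:matching face, opposite skew-symmetric form} shows the $\cdot$-twists on the two sides are opposite. So the $\cup$-expression equals the $\cdot$-expression $\Gamma\cdot x_{T_i}\otimes\Gamma'\cdot x_{T_j}$ with no leftover power of $A$. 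In $\overline{\mathrm{Sk}}^{c}$ each monomial $\Gamma$ acts by the scalar $(-A^{2})^{-\deg/2}$, and the total degrees on the two sides cancel. Hence both sides reduce to the same element, and the face relation becomes trivial. The sign and $A$-power bookkeeping in this computation is what I would verify first, in the case where the strand's two state choices give a mixed-state arc and Remark \ref{rem:mixed state element in terms of generators} must be used.
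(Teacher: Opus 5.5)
Your route is the same as the paper's. You reduce isotopy invariance to elementary moves and dispose of the routine ones by skein relations. You treat the passage through a singular leaf at a face barycenter by corner reduction together with Remark~\ref{rem:matching face, opposite skew-symmetric form}, and the passage through an internal edge by $\mathfrak{R}_{E}$. Then you check bad arcs and $I^{c}$.

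The gap is in event (1), at exactly the bookkeeping you postponed, and it is more than a sign issue: as set up, your argument gives the wrong answer. If the configurations before and after are $x_{T_i}\otimes x_{T_j}$ and $\Gamma x_{T_i}\otimes\Gamma' x_{T_j}$, and both are balanced, then $d_{f_i}(\Gamma)=d_{f_j}(\Gamma')$ in matching labels. So for $\Gamma=\Gamma'=\Gamma_{\pm\pm}$ the two corner-reduction scalars are \emph{equal}, each $(-A^{2})^{\mp\frac12}$. Their product is $(-A^{2})^{\mp1}\neq 1$: the degrees on the two sides add rather than cancel.

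Invariance holds only because the local expansion of $\sigma(\ell)$ and $\sigma(\ell')$ near the sink, done with the skein relations at the marking, attaches coefficients $(-A^{2})^{\frac12}$ and $(-A^{2})^{-\frac12}$ to matching terms on the two sides. Those coefficients absorb the extra factor. The paper's term-by-term identity is
\[
\bigl((-A^{2})^{\frac12}\,\Gamma_{++}\cup\ell_{1}\bigr)\otimes\ell_{2}=\ell_{1}\otimes\bigl((-A^{2})^{-\frac12}\,\Gamma_{--}\cup\ell_{2}\bigr)\quad\text{in }\overline{\mathrm{Sk}}^{c}(T_{1})\otimes\overline{\mathrm{Sk}}^{c}(T_{2}).
\]
On each side the explicit coefficient cancels the corner scalar of $\Gamma_{\pm\pm}$; here $\Gamma_{--}$ is the $\cdot$-inverse of $\Gamma_{++}$, so it acts by $(-A^{2})^{\frac12}$. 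What remains is $A^{\frac12\langle d_{f_2}(\ell_2),d_{f_1}(\ell_1)\rangle_{f_1}}\ell_1\otimes\ell_2$ on the left and $A^{\frac12\langle d_{f_1}(\ell_1),d_{f_2}(\ell_2)\rangle_{f_2}}\ell_1\otimes\ell_2$ on the right. These exponents are equal because the forms on $f_1$ and $f_2$ are opposite in matching coordinates. So you must carry out the local expansion explicitly; the bare statement ``$\ell_{T_i}$ changes by $\Gamma$ and $\ell_{T_j}$ by the corresponding element'' does not suffice.

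There are two smaller omissions.
\begin{enumerate}
\item Your list of codimension-one events leaves out the half-twist of the ribbon at a face, i.e.\ failure of general-position condition (2). It is routine via the skein relations, but it must be included.
\item A bad arc at a \emph{source} on a boundary edge $e$ is not contained in one tetrahedron: it winds around $e$ through every tetrahedron incident to $e$. So $\sigma(w\,b)=\sigma(w)\,b$ in a single factor is not available. Instead, in every compatible-state term of the image, some piece is a mixed-state arc at a bivalent source. Such an arc is zero in the reduced module, since $\overline{\mathrm{SkAlg}}(D_{2})=R[x^{\pm1}]$ has no elements of that degree.
\end{enumerate}
Your treatment of sinks and of $I^{c}$ is fine as stated.
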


Note that by composing $\sigma$ with the natural quotient maps $\mathrm{Sk}(Y)\rightarrow\overline{\mathrm{Sk}}(Y)\rightarrow\overline{\mathrm{Sk}}^{c}(Y)$,
we also have splitting homomorphisms defined on $\overline{\mathrm{Sk}}(Y)$
and on $\mathrm{Sk}(Y)$. 

\begingroup
\allowdisplaybreaks
\begin{proof}
The proof is similar to that of Theorem 3.20 in \cite{PP1}, therefore
we will skip some details and only focus on the part that differs with \cite{PP1}. First we prove that the assignment $\ell\mapsto\sigma(\ell)$
above gives a well-defined map on the level of $\mathrm{Sk}(Y)$, we
then show that it descends to $\overline{\mathrm{Sk}}(Y)$ and $\overline{\mathrm{Sk}}^{c}(Y)$
via the quotient maps. 

To show that $\sigma$ is well-defined on $\mathrm{Sk}(Y)$, it is sufficient
to show that $\sigma(\ell)$ is invariant under an isotopy of $\ell$,
after that we can always ``move'' the 3-ball, in which a skein relation
is performed, into the interior of some ideal tetrahedron by a suitable
ambient isotopy of $Y$, ensuring that the map $\sigma$ respect skein
relations and thus gives us a well-defined map $\mathrm{Sk}(Y)$.

So we let $\ell$ and $\ell^{\prime}$ be isotopic tangles in $Y$,
both in general position. The isotopy between $\ell$ and $\ell^{\prime}$
can be decomposed into a finite sequence of the following ``elementary
moves'': 

(\Romannum{1}) An isotopy in the class of (ribbon) tangles in general
positions.

(\Romannum{2}) Half twist of the ribbon tangle near some internal
face $f$. 

(\Romannum{3}) Height exchange.

(\Romannum{4}) Birth or annihilation of a pair of intersection points
with some internal face $f$.

(\Romannum{5}) An isotopy passing the tangle through a singular
leaf. 

See proof of Theorem 3.20 in \cite{PP1} for pictures illustrating
these moves, note that moves of type (\Romannum{2}), (\Romannum{3}),
(\Romannum{4}) and (\Romannum{5}) break down conditions (2),
(4), (1) and (3), respectively, of being in general positions. 

Just as in \cite{PP1}, if $\ell$ and $\ell^{\prime}$ are related
by moves of type (\Romannum{1})-(\Romannum{4}), then $\sigma(\ell)=\sigma(\ell^{\prime})$.
In fact, they are equal even as elements of $\bigotimes_{T\in\mathcal{T}}\mathrm{Sk}(T)$.
For type (\Romannum{1}) move this is obvious; for type (\Romannum{2}),
(\Romannum{3}) and (\Romannum{4}) moves this follows from skein
relations.

Now we focus on type (\Romannum{5}) move. There are two cases, depending
on whether the singular leaf is in the interior of some internal face
$f$ or it is half of some internal edge $e$. 

\begin{figure}[h]
  \includegraphics[scale=0.2]{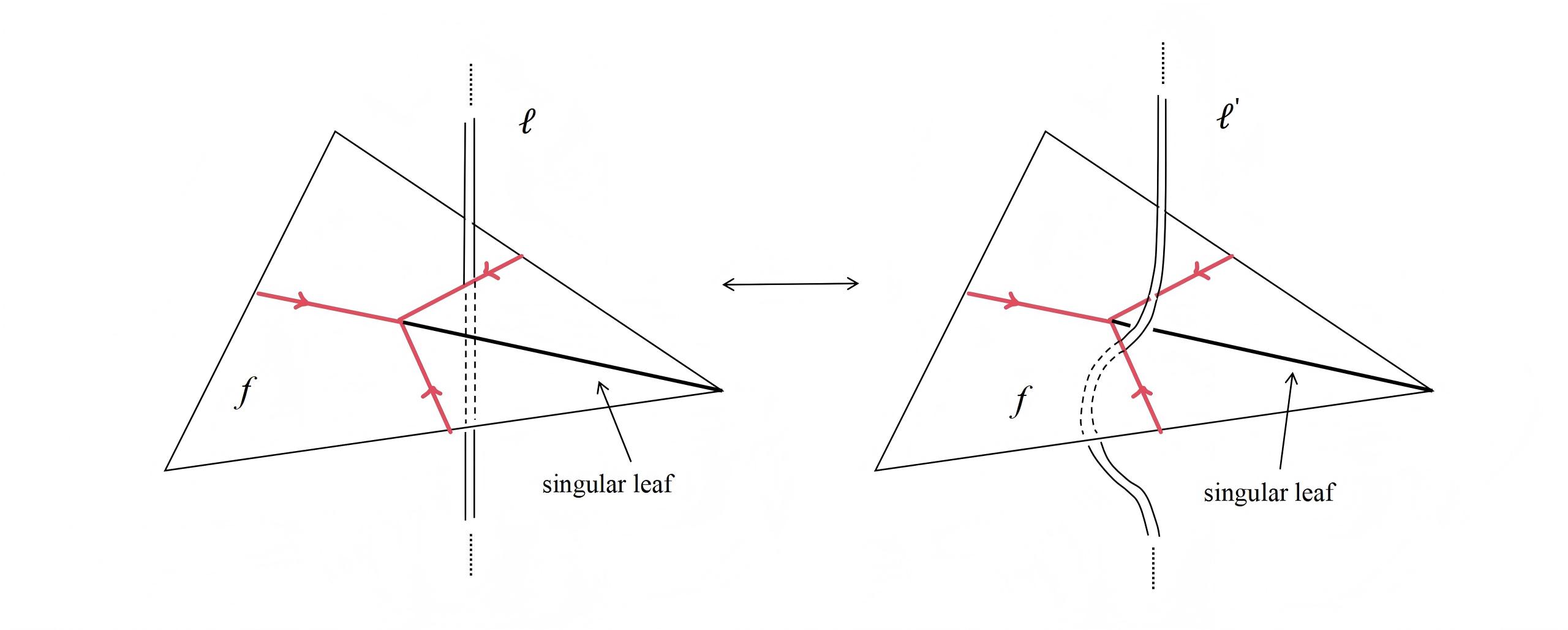} 
  \caption{$\ell$ and $\ell^{\prime}$ are related by an isotopy across a singular leaf which is in the interior of an internal face $f$}
  \label{fig:Move l across a singular leaf (1)}
\end{figure}

\begin{figure}[h]
  \includegraphics[scale=0.2]{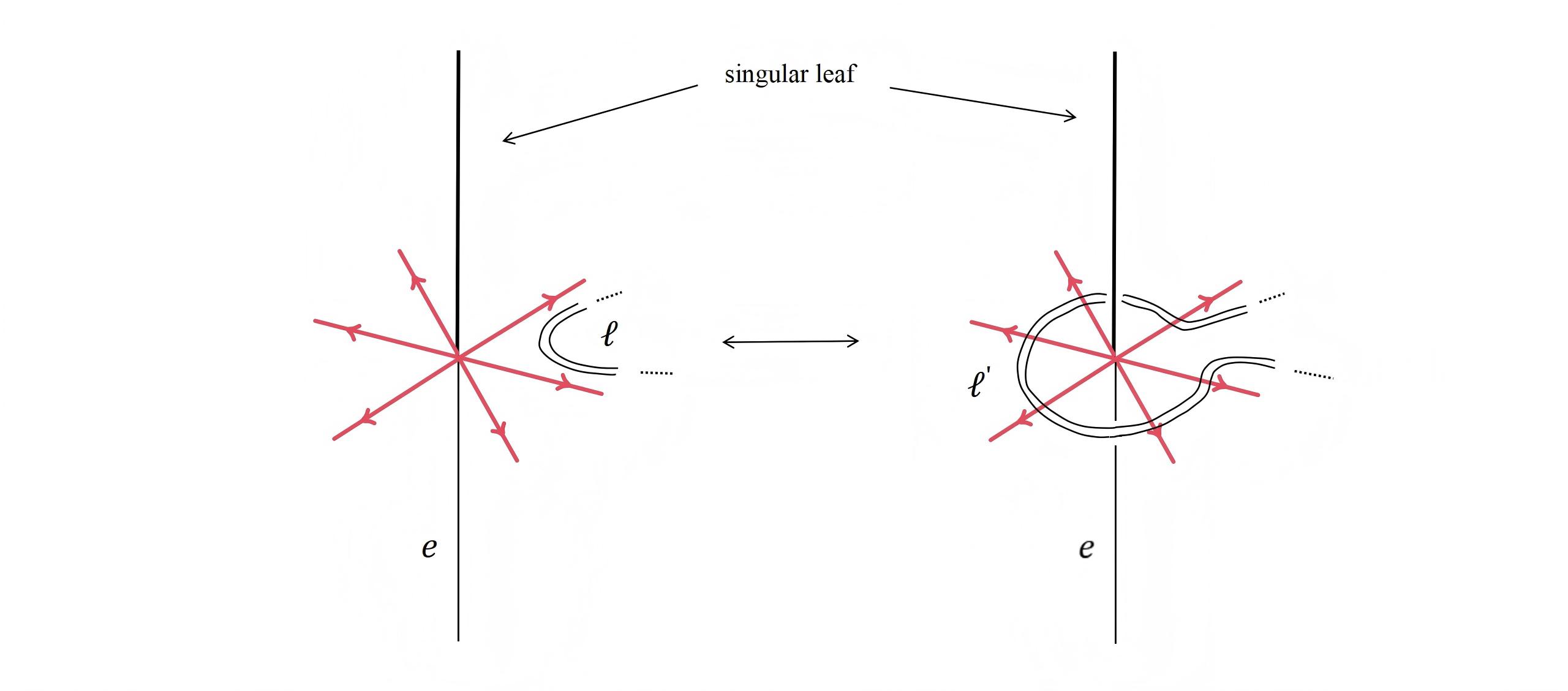} 
  \caption{$\ell$ and $\ell^{\prime}$ are related by an isotopy across a singular leaf which is half of an internal edge $e$}
  \label{fig:Move l across a singular leaf (2)}
\end{figure}
In Figure \ref{fig:Move l across a singular leaf (1)} and \ref{fig:Move l across a singular leaf (2)},
we show the movement of $\ell$ under a type (\Romannum{5}) isotopy.
By virtue of invariance under type (\Romannum{1})-(\Romannum{4})
moves, we may assume that the part of $\ell$ moved by this type (\Romannum{5})
isotopy is close to a vertex of the marking edges. More precisely:
we assume there is a 3-ball centered at the vertex of the marking
so that $\ell$ intersects this 3-ball only in the part moved by the
type (\Romannum{5}) isotopy. (If $\ell$ moves across a singular
leaf in the interior of an internal face $f$, then the vertex is
the sink at the barycenter of $f$; if $\ell$ moves across a singular
leaf which is half of an internal edge $e$, then the vertex is the
source at the barycenter of $e$). Moreover, we can also assume that
the tangles before and after the isotopy are in general positions.
We analyze the two cases separately. 

Case 1: If $\ell$ and $\ell^{\prime}$ are related by a type (\Romannum{5})
isotopy as in the left and right hand side of Figure \ref{fig:Move l across a singular leaf (1)}.
We compute, using skein relations:
\vspace{-1cm}
\begin{eqnarray*}
\sigma(\ell) & =&(-A^{3})^{-\frac{1}{2}}\Biggl[(-A^{2})^{\frac{1}{2}}\includegraphics[valign=c , scale=0.15]{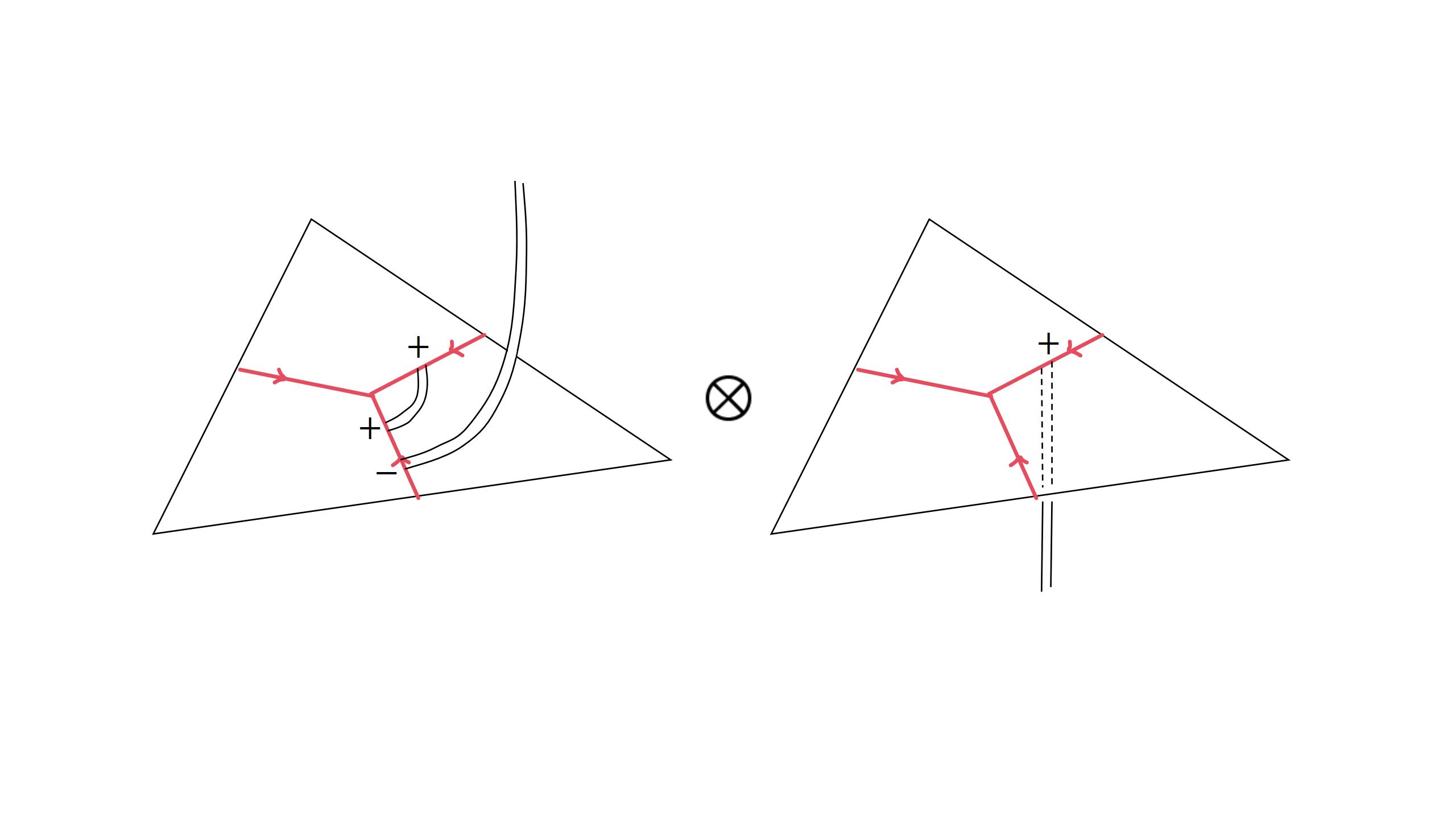}\\[-2cm]
& &+(-A^{2})^{-\frac{1}{2}}\includegraphics[valign=c , scale=0.15]{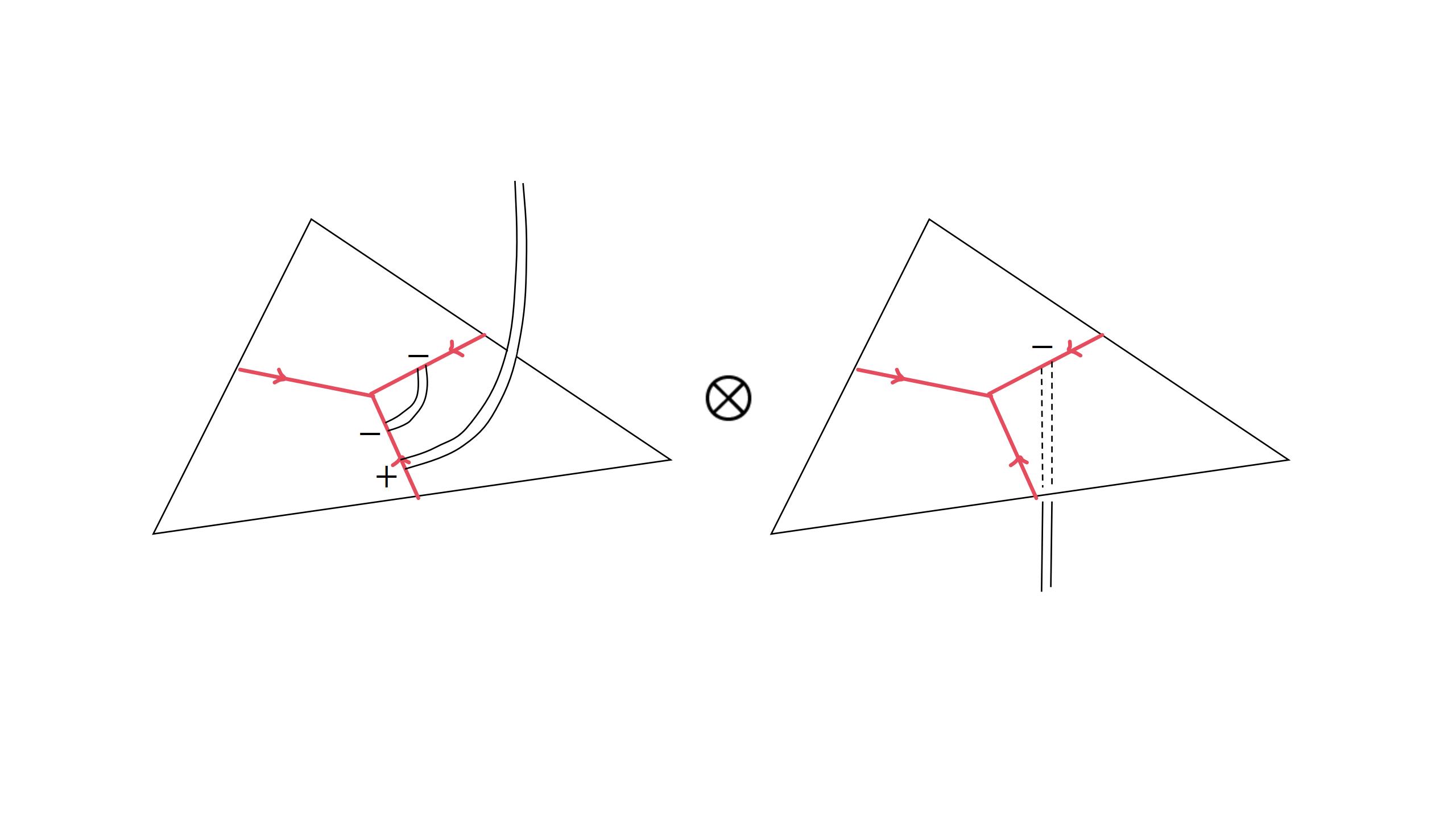}\\[-2cm]
& &+(-A^{2})^{\frac{1}{2}}\includegraphics[valign=c , scale=0.15]{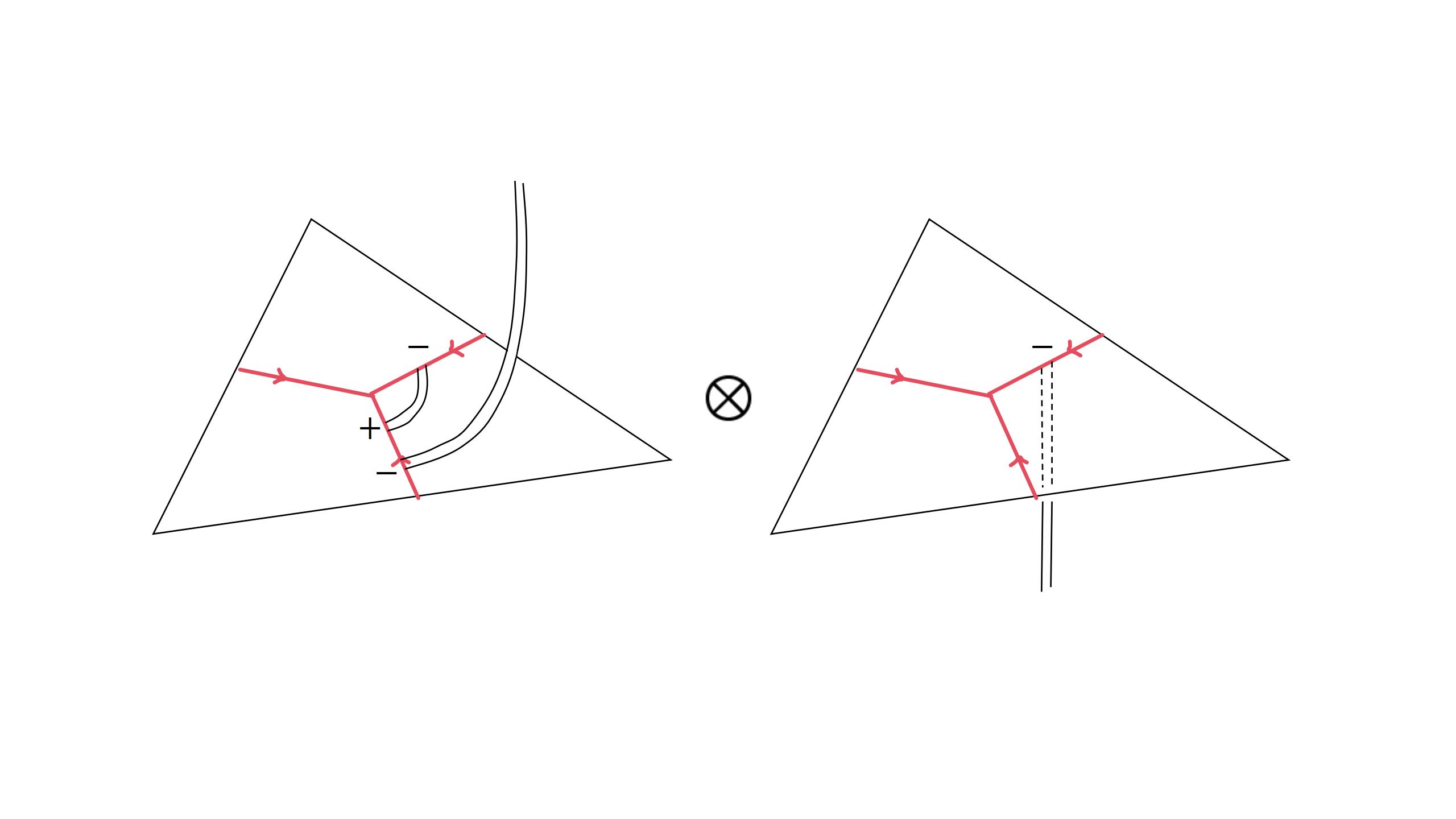}\Biggr]
\end{eqnarray*}
\vspace{-2cm}
and
\begin{eqnarray*}
\sigma(\ell^\prime) & =&(-A^{3})^{-\frac{1}{2}}\Biggl[(-A^{2})^{-\frac{1}{2}}\includegraphics[valign=c , scale=0.15]{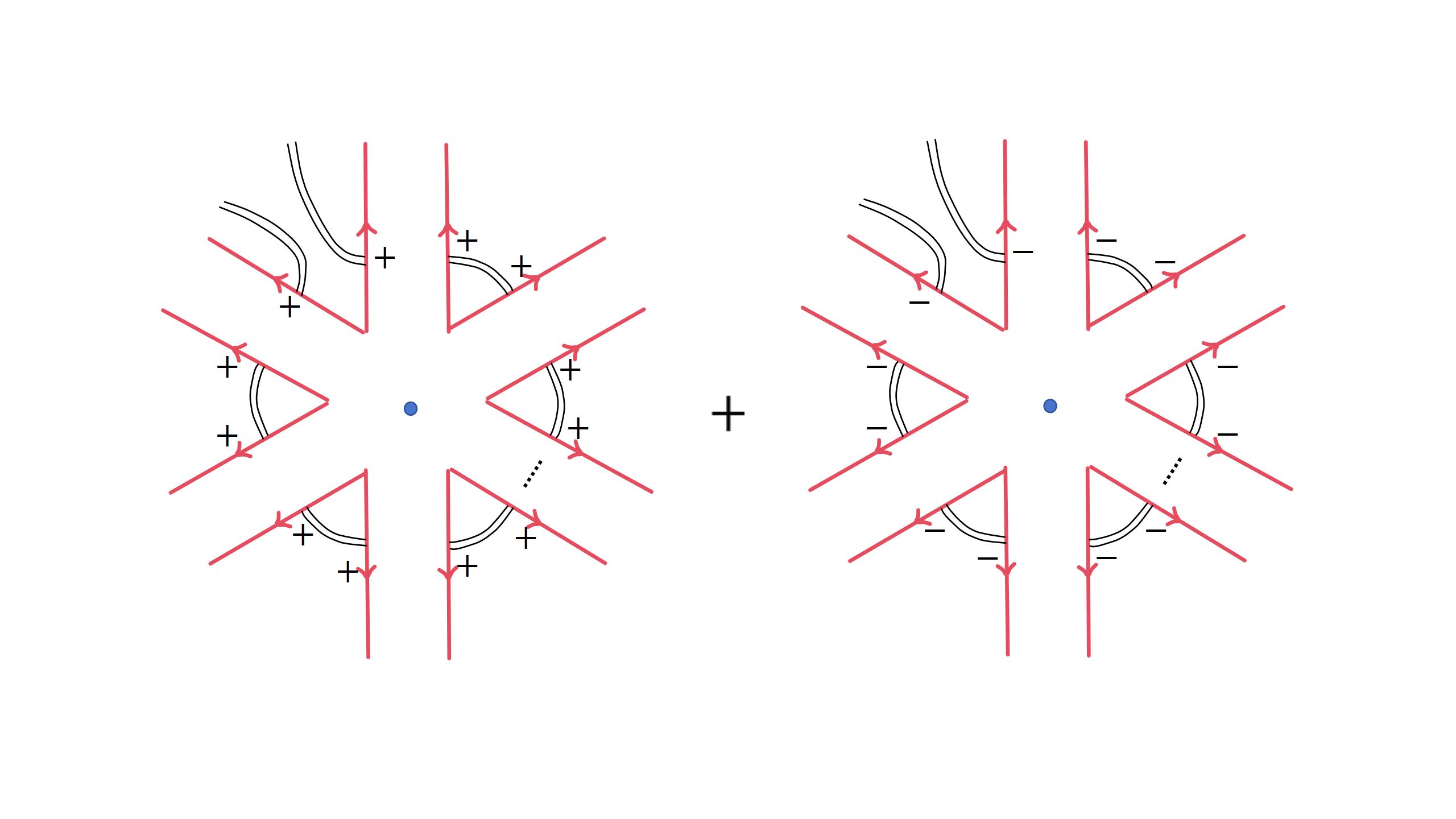}\\[-1cm]
& &+(-A^{2})^{\frac{1}{2}}\includegraphics[valign=c , scale=0.15]{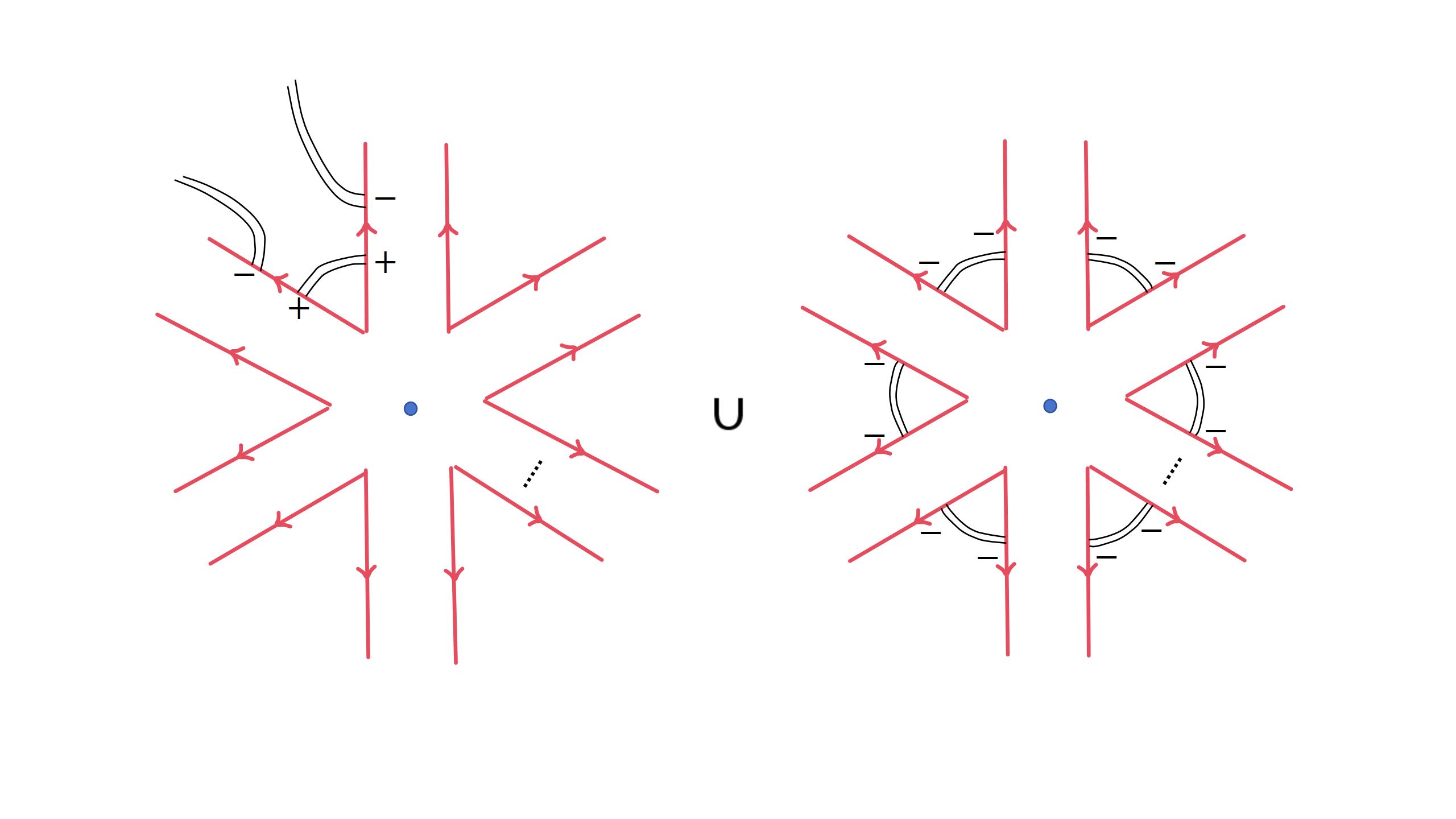}\\[-2cm]
& &+(-A^{2})^{\frac{1}{2}}\includegraphics[valign=c , scale=0.15]{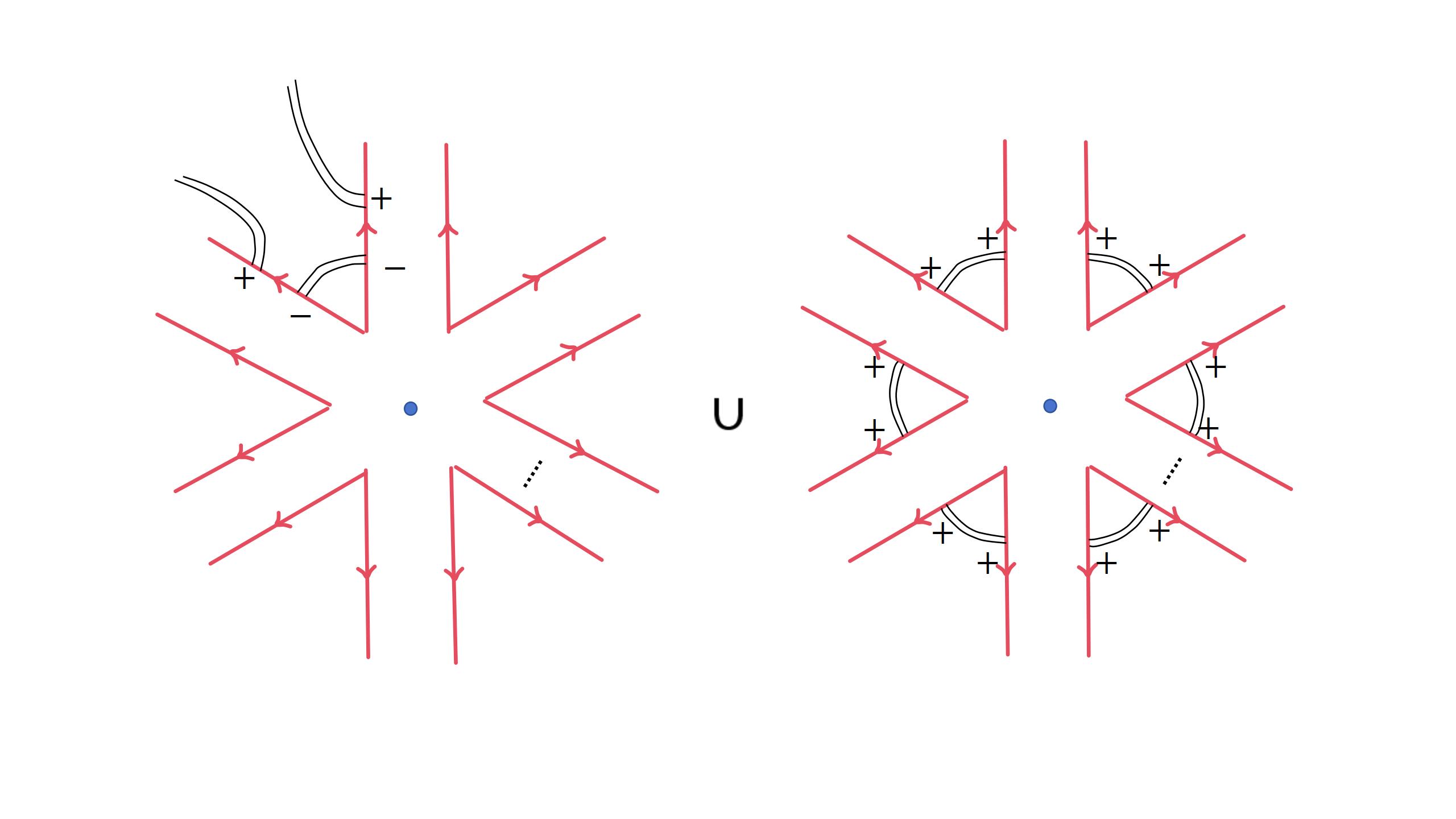}\Biggr]
\end{eqnarray*}
\vspace{-1cm}

We show that $\sigma(\ell)=\sigma(\ell^{\prime})$ as elements of
$\bigotimes_{T\in\mathcal{T}}\overline{\mathrm{Sk}}^{c}(T)$ (we are
using the usual tensor product here, which means that in this case we do not
need to pass to the reduced tensor product). In fact, in $\bigotimes_{T\in\mathcal{T}}\overline{\mathrm{Sk}}^{c}(T)$,
we have
\begin{align*}
 & (-A^{2})^{\frac{1}{2}}\includegraphics[valign=c , scale=0.15]{l1.jpg}\\[-2cm]
= & (-A^{2})^{-\frac{1}{2}}\includegraphics[valign=c , scale=0.15]{L-prime-1.jpg}
\end{align*}
and similarly the identities between the other two pairs of terms on both sides.
To prove this, let $T_{1}$ be the tetrahedron ``above'' the face
$f$ and $T_{2}$ the tetrahedron ``below'' $f$. Let $f_{1}$ and $f_{2}$ be bare faces of $T_{1}$ and $T_{2}$, respectively, that are identified to the face $f$. We further
assume that the isotopy from $\ell$ to $\ell^{\prime}$ happens inside
the union of $T_{1}$ and $T_{2}$. Therefore $\sigma(\ell)$ and
$\sigma(\ell^{\prime})$ differ at most in $\overline{\mathrm{Sk}}^{c}(T_{1})\otimes\overline{\mathrm{Sk}}^{c}(T_{2})$.
So for simplicity we denote
\[
\includegraphics[valign=c , scale=0.32]{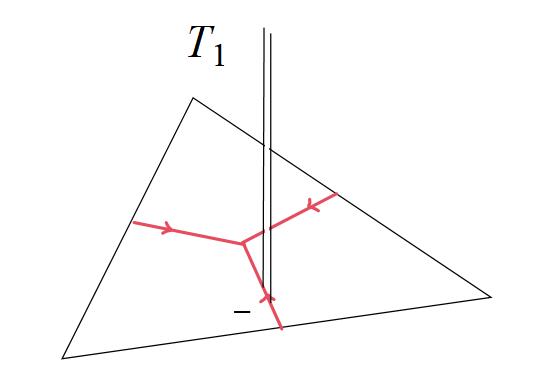}\in\overline{\mathrm{Sk}}(T_{1})\text{ by }\ell_{1},\ \includegraphics[valign=c , scale=0.32]{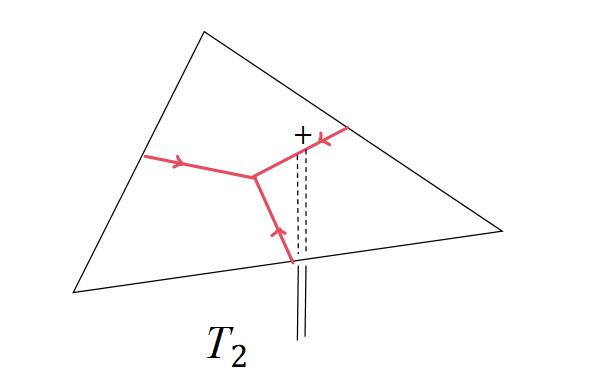}\in\overline{\mathrm{Sk}}(T_{2})\text{ by }\ell_{2},
\]
\[
\includegraphics[valign=c , scale=0.32]{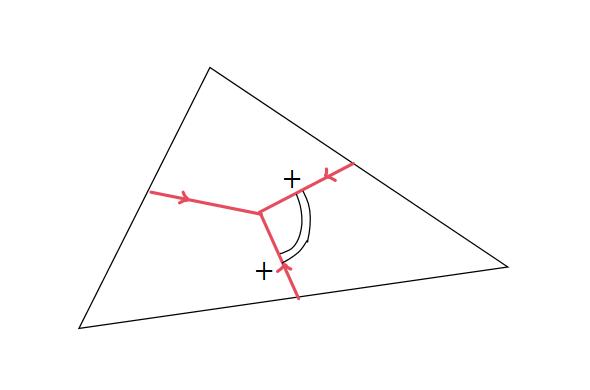}\in\mathbb{T}_{f_{1}}\text{ by }\Gamma_{++},\ \includegraphics[valign=c , scale=0.32]{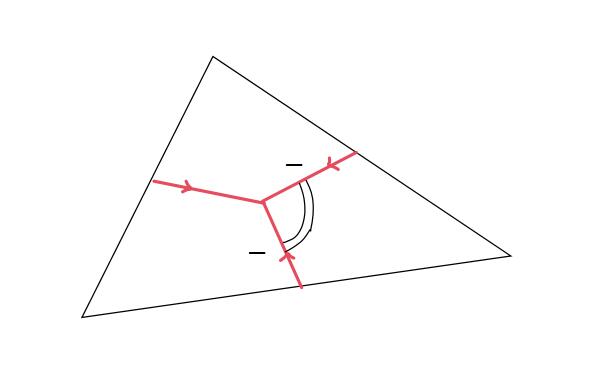}\in\mathbb{T}_{f_{2}}\text{ by }\Gamma_{--}
\]
and we need to show that 
\[
\left((-A^{2})^{\frac{1}{2}}\Gamma_{++}\cup\ell_{1}\right)\otimes\ell_{2}=\ell_{1}\otimes\left((-A^{2})^{-\frac{1}{2}}\Gamma_{--}\cup\ell_{2}\right)
\]
holds in $\overline{\mathrm{Sk}}^{c}(T_{1})\otimes\overline{\mathrm{Sk}}^{c}(T_{2})$.
Inside the corner-reduced module $\overline{\mathrm{Sk}}^{c}(T_{1})$,
we have
\begin{align*}
(-A^{2})^{\frac{1}{2}}\Gamma_{++}\cup\ell_{1} & =(-A^{2})^{\frac{1}{2}}A^{\frac{1}{2}\langle d_{f_{1}}(\Gamma_{++}),d_{f_{1}}(\ell_{1})\rangle_{f_{1}}}\Gamma_{++}\cdot\ell_{1}\\
 & =A^{\frac{1}{2}\langle d_{f_{1}}(\Gamma_{++}),d_{f_{1}}(\ell_{1})\rangle_{f_{1}}}\ell_{1}\\
 & =A^{\frac{1}{2}\langle d_{f_{1}}(\Gamma_{++})+d_{f_{1}}(\ell_{1}),d_{f_{1}}(\ell_{1})\rangle_{f_{1}}}\ell_{1}\\
 & =A^{\frac{1}{2}\langle d_{f_{2}}(\ell_{2}),d_{f_{1}}(\ell_{1})\rangle_{f_{1}}}\ell_{1}.
\end{align*}
In the above calculation, we have used the fact that in $\overline{\mathrm{Sk}}^{c}(T_{1})$,
the left $\cdot$-multiplication of $\Gamma_{++}$ is identified with
multiplication of the scalar $(-A^{2})^{-\frac{1}{2}}$. We have also implicitly used a "matching labeling" of the marking edges in $f_{1}$ and $f_{2}$, see Figure \ref{fig:matching labeling of marking edges}, therefore we have common identifications $\mathbb{Z}^{M_{f_{1}}}=\mathbb{Z}^{3}=\mathbb{Z}^{M_{f_{2}}}$ and therefore the identities $d_{f_{1}}(\Gamma_{++})+d_{f_{1}}(\ell_{1})=d_{f_{2}}(\ell_{2})$ and $\langle d_{f_{2}}(\ell_{2}),d_{f_{1}}(\ell_{1})\rangle_{f_{1}}$. Similar computation shows that in $\overline{\mathrm{Sk}}^{c}(T_{2})$ we have
\[
(-A^{2})^{-\frac{1}{2}}\Gamma_{--}\cup\ell_{1}=A^{\frac{1}{2}\langle d_{f_{1}}(\ell_{1}),d_{f_{2}}(\ell_{2})\rangle_{f_{2}}}\ell_{2}
\]
Thus the desired identity is reduced to
\[
A^{\frac{1}{2}\langle d_{f_{2}}(\ell_{2}),d_{f_{1}}(\ell_{1})\rangle_{f_{1}}}\ell_{1}\otimes\ell_{2}=A^{\frac{1}{2}\langle d_{f_{1}}(\ell_{1}),d_{f_{2}}(\ell_{2})\rangle_{f_{2}}}\ell_{1}\otimes\ell_{2}.
\]
However, we have
\begin{align*}
\langle d_{f_{2}}(\ell_{2}),d_{f_{1}}(\ell_{1})\rangle_{f_{1}} & =-\langle d_{f_{1}}(\ell_{1}),d_{f_{2}}(\ell_{2})\rangle_{f_{2}}\\
 & =\langle d_{f_{1}}(\ell_{1}),d_{f_{2}}(\ell_{2})\rangle_{f_{1}},
\end{align*}
where, in the last identity, we used the fact that when using matching labelings of marking edges, the matrix of the skew-bilinear form associated to the two matching bare faces have opposite signs (see the discussion in Remark \ref{rem:matching face, opposite skew-symmetric form}). This handles Case 1.

Case 2: If $\ell$ and $\ell^{\prime}$ are related by a type (\Romannum{5})
isotopy as in the left and right hand side of Figure \ref{fig:Move l across a singular leaf (2)}.
We have
\begin{align*}
\sigma(\ell^{\prime}) & =\includegraphics[valign=c , scale=0.15]{L-prime-1.jpg}\\[-1.2cm]
 & =\includegraphics[valign=c , scale=0.15]{L-prime-2.jpg}\\[-1.2cm]
 & +\includegraphics[valign=c , scale=0.15]{L-prime-3.jpg}.
\end{align*}
Now the elements 
\[
\includegraphics[valign=c , scale=0.31]{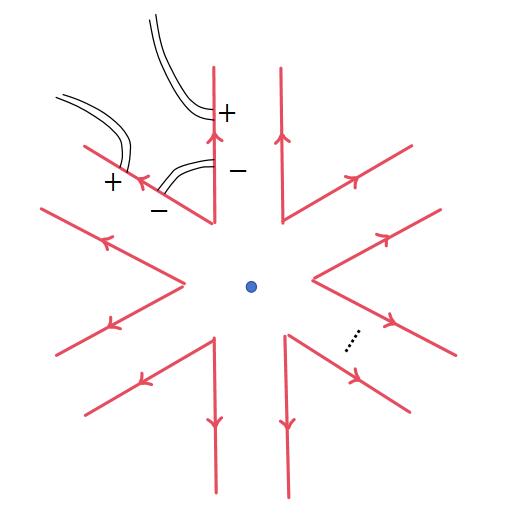}
\text{ and }
\includegraphics[valign=c , scale=0.31]{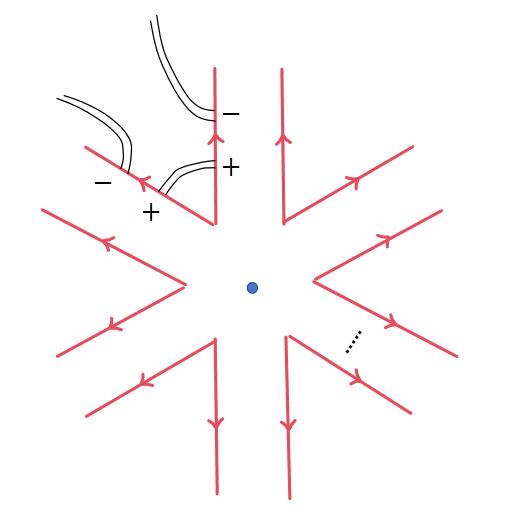}
\]
have the same degree on every marking edges as the element $\sigma(\ell)$,
therefore they are both balanced (because $\sigma(\ell)$ is balanced),
thus modulo elements of the $R$-submodule $\mathfrak{R}_{E}$ (see
Definition \ref{def:reduced tensor product}) we find that 
\begin{align*}
\sigma(\ell^{\prime}) & =(-A^2)\includegraphics[valign=c , scale=0.31]{L-prime-4.jpg}+(-A^2)^{-1}\includegraphics[valign=c , scale=0.31]{L-prime-5.jpg}\\
 & =\includegraphics[valign=c , scale=0.31]{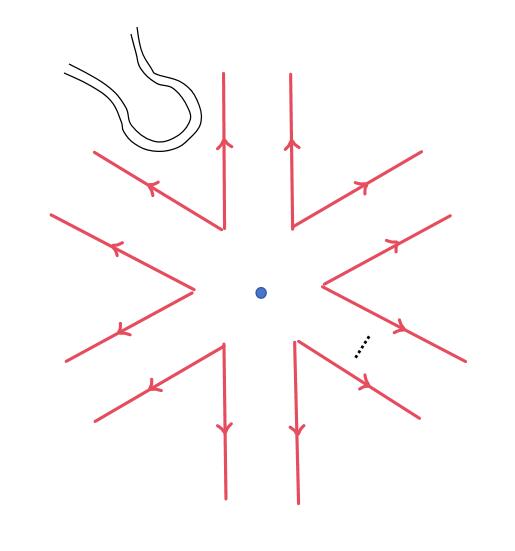}=\sigma(\ell).
\end{align*}

Now we have proved that the map $\sigma$ is well-defined on isotopy
classes of stated tangles in $Y$ and thus gives us a well-defined
splitting homomorphism
\[
\sigma\colon\mathrm{Sk}(Y)\rightarrow\overline{\bigotimes_{T\in\mathcal{T}}}\overline{\mathrm{Sk}}^{c}(T).
\]

To see that it descends to the reduced skein module $\overline{\mathrm{Sk}}(Y)$,
we let $b$ be a bad arc associated to a vertex of the boundary marking,
say a source; then by definition of $\sigma$ we see that for any
stated tangle $\ell$ in $Y$, we have $\sigma(\ell\cup b)=\sigma(\ell)\cup\sigma(b)$.
Now one of the tensor components of $\sigma(b)$ must be a bad arc, therefore
the element $\sigma(\ell)\cup\sigma(b)=0\in\overline{\bigotimes}_{T\in\mathcal{T}}\overline{\mathrm{Sk}}^{c}(T)$. Hence, $\sigma$ descends to $\overline{\mathrm{Sk}}(Y)$.

To see that $\sigma$ further descends to the corner-reduced skein
module $\overline{\mathrm{Sk}}^{c}(Y)$, we let $\Gamma$ be any generator
of the algebra $\mathbb{T}_{f}$ associated to a boundary face $f$. We have
\[
\sigma\left(\Gamma\cdot\ell-(-A^{2})^{-\frac{1}{2}}\ell\right)=\Gamma\cdot\sigma(\ell)-(-A^{2})^{-\frac{1}{2}}\sigma(\ell)=0.
\]
Therefore, $\sigma$ respects the defining relation of the corner-reduced
module and thus descends to $\overline{\mathrm{Sk}}^{c}(Y)$
\end{proof}
\endgroup

\begin{rem}
By the general machinery of splitting homomorphism developed in \cite{PP1},
the decomposition of $Y$ into ideal tetrahedra induces a splitting
homomorphism (see \cite[Corollary 3.29]{PP1})
\[
\sigma\colon\overline{\mathrm{Sk}}(Y)\rightarrow\underset{T\in\mathcal{T}}{\overline{\bigotimes}}\overline{\mathrm{Sk}}(T)
\]
in the reduced skein module setting. The reduced tensor product $\underset{T\in\mathcal{T}}{\overline{\bigotimes}}\overline{\mathrm{Sk}}(T)$
in this setting is the quotient of the usual tensor product $\underset{T\in\mathcal{T}}{\bigotimes}\overline{\mathrm{Sk}}(T)$
by the relations coming from both gluing around the internal edges of the triangulation $\mathcal{T}$ and the relations coming from pairing faces of ideal tetrahedra. However,
in the corner-reduced setting, the reduced tensor product $\underset{T\in\mathcal{T}}{\overline{\bigotimes}}\overline{\mathrm{Sk}}^{c}(T)$
only involves relations coming from gluing around the internal edges.
\end{rem}

\section{Quantum Trace Map\label{sec:Quantum Trace Map}}

\subsection{Quantum trace map for a single tetrahedron\label{subsec:quantum trace map on T}}
We construct the quantum trace map 
\[
Tr_{T}\colon\overline{\mathrm{Sk}}(T)\rightarrow\hat{\mathcal{G}}(T)
\]
for a single tetrahedron $T$ as promised in the introduction. We
then show that it induces an isomorphism from the corner-reduced module
$\overline{\mathrm{Sk}}^{c}(T)$ onto $\mathcal{\hat{\mathcal{G}}}(T)$.

First we carefully 
define the codomain $\hat{\mathcal{G}}(T)$.
\begin{defn}
\label{def: quantum module of T}
Let $T$ be an ideal tetrahedron, with its edges labeled by shape
parameters $z^{\boxempty},y^{\boxempty}$ as in \ref{subsec:Reduced skein module of T}.
The \emph{quantum module} $\hat{\mathcal{G}}(T)$ of $T$ is defined to
be the $R$-module quotient
\[
\hat{\mathcal{G}}(T):=\frac{\mathbb{T}\left\langle T\right\rangle}{V_{T}+L_{T}}
\]
where
\begin{itemize}
\item $\mathbb{T}\left\langle T\right\rangle$ is the quantum torus $(\mathbb{B}^{\otimes6},\cdot)=\left(\bigotimes_{e\in\mathbf{e}(T)}\mathbb{B}_{e},\cdot\right)$,
which we have seen in Lemma \ref{lem:presntation of (T^tensor4,cdot)  and (B^tensor6, cdot)}
with presentation
\[
\mathbb{T}\left\langle T\right\rangle:=\frac{R\langle\hat{z}^{\pm1},\hat{z}^{\prime\pm1},\hat{z}^{\prime\prime\pm1},\hat{y}^{\pm1},\hat{y}^{\prime\pm1},\hat{y}^{\prime\prime\pm1}\rangle}{\left\langle \begin{array}{c}
\hat{a}\cdot\hat{b}^{\prime}=A\hat{b}^{\prime}\cdot\hat{a}\\
\hat{a}^{\prime}\cdot\hat{b}^{\prime\prime}=A\hat{b}^{\prime\prime}\cdot\hat{a}^{\prime}\\
\hat{a}^{\prime\prime}\cdot\hat{b}=A\hat{b}\cdot\hat{a}^{\prime\prime}
\end{array}\Bigg|a,b\in\{z,y\}\right\rangle }.
\]
\item $V_{T}$ is the right ideal of $\mathbb{T}\left\langle T\right\rangle$ generated by \emph{vertex
relations}:
\begin{align*}
&[\hat{z}\cdot\hat{z}^{\prime}\cdot\hat{z}^{\prime\prime}]-(-A^{2})^{\frac{1}{2}}, \ \ [\hat{z}\cdot\hat{y}^{\prime}\cdot\hat{y}^{\prime\prime}]-(-A^{2})^{\frac{1}{2}},\\
&[\hat{y}\cdot\hat{z}^{\prime}\cdot\hat{y}^{\prime\prime}]-(-A^{2})^{\frac{1}{2}},\ \ [\hat{y}\cdot\hat{y}^{\prime}\cdot\hat{z}^{\prime\prime}]-(-A^{2})^{\frac{1}{2}}.
\end{align*}
(Recall that the Weyl-ordering $[\hat{z}\cdot\hat{z}^{\prime}\cdot\hat{z}^{\prime\prime}]$,
for instance, is nothing but the $\cup$-product of skeins $\hat{z}\hat{z}^{\prime}\hat{z}^{\prime\prime}=\hat{z}\cup\hat{z}^{\prime}\cup\hat{z}^{\prime\prime}$.)
\item $L_{T}$ is the right ideal generated by \emph{lagrangian relation}:
\[
\hat{z}^{2}+\hat{z}^{\prime\prime-2}-1.
\]
\end{itemize}
\end{defn}
By definition, elements of $\hat{\mathcal{G}}(T)$ are given by $R$-linear
combinations of monomials (under $\cdot$-product) in the variables
$\hat{z}^{\boxempty}$ and $\hat{y}^{\boxempty}$. Note that if one
takes $R=\mathbb{C}$ and specialize at $A=1$, then the algebra $\hat{\mathcal{G}}_{A=1}(T)$
is the quotient of the Laurent polynomial algebras whose variables are
the shape parameters $z,z^{\prime},z^{\prime\prime},y,y^{\prime},y^{\prime\prime}$
satisfying the classical vertex and lagrangian equations ((\ref{eq:classical vertex equation})
and (\ref{eq:classical lagrangian equation})). For this reason, the
elements $\hat{z}^{\boxempty}$ and $\hat{y}^{\boxempty}$ (which
are skeins by definition) shall be referred to as the \emph{quantized
shape parameters} of the ideal tetrahedron $T$.

Some comments about the quantum module $\hat{\mathcal{G}}(T)$:
\begin{enumerate}
\item Although $\hat{\mathcal{G}}(T)$ is defined as an $R$-module, it is also a right
$\mathbb{T}\left\langle T\right\rangle$-module as it is defined as a quotient of $\mathbb{T}\left\langle T\right\rangle$
by a right $\mathbb{T}\left\langle T\right\rangle$ ideal. 
\item The vertex relations are central in the algebra $\mathbb{T}\left\langle T\right\rangle$,
for example one can check that $[\hat{z}\cdot\hat{z}^{\prime}\cdot\hat{z}^{\prime\prime}]$
commutes with every generator of $\mathbb{T}\left\langle T\right\rangle$. 
\item It is possible to use a different lagrangian relation in the definition.
Each vertex contributes three lagrangian relations. However, they are
all equivalent to the one given above by manipulating vertex relations. 
\end{enumerate}

Recall that $\overline{\mathrm{Sk}}(T)$ is a cyclic $\mathbb{T}^{\otimes4}\text{-}\mathbb{B}^{\otimes6}$-bimodule
generated by the empty skein $[\emptyset]$. We start by describing
a map on the level of the $R$-module $\mathbb{T}^{\otimes4}\otimes\mathbb{B}^{\otimes6}$.
Recall that as $R$-modules we have 
\[
\mathbb{T}^{\otimes4}\otimes\mathbb{B}^{\otimes6}=\left(\mathbb{T}^{\otimes4},\cdot\right)\otimes\left(\mathbb{B}^{\otimes6},\cdot\right)=R\left[T \right]\otimes\mathbb{T}\left\langle T\right\rangle.
\]
\begin{defn}
\label{def: definition of formula P}
We define the following maps:
\begin{enumerate}
    \item Let 
$Q\colon R\left[T \right]\rightarrow R$
be the algebra homomorphism which maps each generator $\Gamma_{ab}$
to the scalar $(-A^{2})^{-\frac{1}{2}}$. 
    \item Let $
P\colon R\left[T \right]\otimes\mathbb{T}\left\langle T\right\rangle\rightarrow\mathbb{T}\left\langle T\right\rangle
$
be the $R$-module homomorphism given by the formula
\begin{equation}
P(\Gamma\otimes x)=A^{\frac{1}{2}\langle d(\Gamma),d(x)\rangle}Q(\Gamma)x\label{eq:formula of P}
\end{equation}
when $\Gamma$ and $x$ are monomials (hence they are homogeneous
and have well-defined $\prod_{f\in\mathbf{f}(T)}\mathbb{Z}^{M_{f}}$-degrees)
and extend by bilinearity. 

\end{enumerate}

\end{defn}

The significance of the factor $A^{\frac{1}{2}\langle d(\Gamma),d(x)\rangle}$
in the above definition is explained by the following lemma. Recall
that $R\left[T \right]\otimes\mathbb{T}\left\langle T\right\rangle$ is a
$R\left[T \right]\text{-}\mathbb{T}\left\langle T\right\rangle$-bimodule with
the bimodule structure given in \ref{subsec:corner reduction in the case of a singe tetrahedron},
while
$\mathbb{T}\left\langle T\right\rangle$ is also a $R\left[T \right]\text{-}\mathbb{T}\left\langle T\right\rangle$-bimodule
with the left $R\left[T \right]$-module structure
given by the algebra homomorphism $Q\colon R\left[T \right]\rightarrow R$.

\begin{lem}
\label{lem:P is a bimodule homomorphism}
P is a $R\left[T \right]\text{-}\mathbb{T}\left\langle T\right\rangle$-bimodule
homomorphism.
\end{lem}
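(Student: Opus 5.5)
Since $P$ is $R$-linear and both sides of each identity are bilinear in their arguments, it suffices to check the two module compatibilities on monomials. The monomials $\Gamma\otimes x$, with $\Gamma\in R\left[T \right]$ and $x\in\mathbb{T}\left\langle T\right\rangle$ monomials, span $R\left[T \right]\otimes\mathbb{T}\left\langle T\right\rangle$ as an $R$-module. We test against monomials $\Gamma^{\prime}\in R\left[T \right]$ and $x^{\prime}\in\mathbb{T}\left\langle T\right\rangle$, all of which are $\prod_{f\in\mathbf{f}(T)}\mathbb{Z}^{M_{f}}$-homogeneous. We use two facts throughout. First, degrees are additive under both the $\cup$- and $\cdot$-products; this holds because $\cdot$ differs from $\cup$ only by a scalar on homogeneous elements and the grading is a grading of the $\cup$-structure. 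Second, $Q$ is an algebra homomorphism on the commutative algebra $(R\left[T \right],\cdot)$.

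\emph{Left compatibility.} We must show $P(\Gamma^{\prime}\cdot(\Gamma\otimes x))=Q(\Gamma^{\prime})P(\Gamma\otimes x)$. By the definition of the left action in point (1), the left side equals
\[
A^{-\frac{1}{2}\langle d(\Gamma^{\prime}),d(x)\rangle}P\bigl((\Gamma^{\prime}\cdot\Gamma)\otimes x\bigr).
\]
Here $\Gamma^{\prime}\cdot\Gamma$ is a monomial of degree $d(\Gamma^{\prime})+d(\Gamma)$. Applying (\ref{eq:formula of P}) turns this into
\[
A^{-\frac{1}{2}\langle d(\Gamma^{\prime}),d(x)\rangle+\frac{1}{2}\langle d(\Gamma^{\prime})+d(\Gamma),d(x)\rangle}Q(\Gamma^{\prime})Q(\Gamma)x .
\]
By bilinearity of $\langle\ ,\ \rangle$ the exponent collapses to $\frac{1}{2}\langle d(\Gamma),d(x)\rangle$, which gives $Q(\Gamma^{\prime})P(\Gamma\otimes x)$ as required.

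\emph{Right compatibility.} We must show $P((\Gamma\otimes x)\cdot x^{\prime})=P(\Gamma\otimes x)\cdot x^{\prime}$. By point (2), the left side equals
\[
A^{-\frac{1}{2}\langle d(\Gamma),d(x^{\prime})\rangle}P\bigl(\Gamma\otimes(x\cdot x^{\prime})\bigr).
\]
Since $x\cdot x^{\prime}$ is homogeneous of degree $d(x)+d(x^{\prime})$, applying (\ref{eq:formula of P}) yields
\[
A^{-\frac{1}{2}\langle d(\Gamma),d(x^{\prime})\rangle+\frac{1}{2}\langle d(\Gamma),d(x)+d(x^{\prime})\rangle}Q(\Gamma)\,x\cdot x^{\prime}
=A^{\frac{1}{2}\langle d(\Gamma),d(x)\rangle}Q(\Gamma)\,x\cdot x^{\prime}.
\]
This is exactly $P(\Gamma\otimes x)\cdot x^{\prime}$, because the right $\mathbb{T}\left\langle T\right\rangle$-module structure on $\mathbb{T}\left\langle T\right\rangle$ is right $\cdot$-multiplication and scalars pass through it.

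\emph{Main obstacle.} The only point needing care is well-definedness, i.e. that (\ref{eq:formula of P}) extends consistently by bilinearity. A monomial $x$ in the quantum torus is determined only up to the scalar identifications forced by the commutation relations, so $x$ and any reordering of it are scalar multiples of one another. Both such forms have the same degree, so (\ref{eq:formula of P}) is linear in $x$ with degree-dependent coefficients. I would therefore define $P$ on the $R$-basis of $R\left[T \right]\otimes\mathbb{T}\left\langle T\right\rangle$ given by tensors of standard (Laurent) monomials. Then the formula holds automatically for arbitrary rescaled or reordered monomials, since the exponent depends only on $(d(\Gamma),d(x))$. With this settled, the two degree-bookkeeping computations above are the whole proof.
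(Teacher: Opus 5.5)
Your proof is correct and takes essentially the same route as the paper. Both arguments reduce to homogeneous monomials and collapse the $A$-exponents using bilinearity of $\langle\ ,\ \rangle$, additivity of degrees, and multiplicativity of $Q$. The differences are minor. The paper verifies the combined identity $P(\Gamma^{\prime}\cdot(\Gamma\otimes x)\cdot x^{\prime})=Q(\Gamma^{\prime})P(\Gamma\otimes x)\cdot x^{\prime}$ in a single computation, whereas you check left and right linearity separately. You also add a justification that $P$ is well defined on rescaled or reordered monomials, which the paper leaves implicit.
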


\begingroup
\allowdisplaybreaks
\begin{proof}
Let $\Gamma^{\prime},\Gamma\in R\left[T \right]$
and $x,x^{\prime}\in\mathbb{T}\left\langle T\right\rangle$ be $\prod_{f\in\mathbf{f}(T)}\mathbb{Z}^{M_{f}}$-homogeneous.
By the definition of the $R\left[T \right]\text{-}\mathbb{T}\left\langle T\right\rangle$-bimodule
structure on $R\left[T \right]\otimes\mathbb{T}\left\langle T\right\rangle$,
we have
\[\Gamma^{\prime}\cdot\left(\Gamma\otimes x\right)\cdot x^{\prime}
=A^{-\frac{1}{2}\left(\langle d(\Gamma^{\prime}),d(x)\rangle+\langle d(\Gamma^{\prime}),d(x^{\prime})\rangle+\langle d(\Gamma),d(x^{\prime})\rangle\right)}(\Gamma^{\prime}\cdot\Gamma)\otimes(x\cdot x^{\prime}).
\]
Therefore
\begin{eqnarray*}
\lefteqn{P\left(\Gamma^{\prime}\cdot\left(\Gamma\otimes x\right)\cdot x^{\prime}\right)}\\ & = &
A^{-\frac{1}{2}\left(\langle d(\Gamma^{\prime}),d(x)\rangle+\langle d(\Gamma^{\prime}),d(x^{\prime})\rangle+\langle d(\Gamma),d(x^{\prime})\rangle\right)}A^{\frac{1}{2}\langle d(\Gamma^{\prime}\Gamma),d(xx^{\prime})\rangle}Q\left(\Gamma^{\prime}\cdot\Gamma\right)(x\cdot x^{\prime})\\
& =& A^{\frac{1}{2}\langle d(\Gamma),d(x)\rangle}Q(\Gamma^{\prime})Q(\Gamma)(x\cdot x^{\prime})
=Q(\Gamma^{\prime})P\left(\Gamma\otimes x\right)\cdot x^{\prime}.
\end{eqnarray*}
\end{proof}
\endgroup

Now the right ideals $V_{T}$ and $L_{T}$ of $\mathbb{T}\langle T\rangle$
are also naturally left $R[T]$-submodules of $\mathbb{T}\langle T\rangle$
via the algebra homomorphism $Q\colon R[T]\rightarrow R$. Therefore
$V_{T}+L_{T}$ is also a $R[T]\text{-}\mathbb{T}\langle T\rangle$-sub-bimodule
of $\mathbb{T}\langle T\rangle$, and $\hat{\mathcal{G}}(T)$
is also a quotient $R[T]\text{-}\mathbb{T}\langle T\rangle$-bimodule. 

\begin{prop}
\label{prop:P descends to Tr}
The $R\left[T \right]\text{-}\mathbb{T}\left\langle T\right\rangle$-bimodule
homomorphism 
$
P\colon R\left[T \right]\otimes\mathbb{T}\left\langle T\right\rangle\rightarrow\mathbb{T}\left\langle T\right\rangle
$
descends to a well-defined $R\left[T \right]\text{-}\mathbb{T}\left\langle T\right\rangle$-bimodule
homomorphism
\[
Tr_{T}\colon\overline{\mathrm{Sk}}(T)\rightarrow\hat{\mathcal{G}}(T).
\]
\end{prop}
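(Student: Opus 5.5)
Since $\overline{\mathrm{Sk}}(T)$ is the quotient of $R[T]\otimes\mathbb{T}\langle T\rangle$ by $\mathrm{Ann}([\emptyset])$, and Lemma \ref{lem:generators of Ann(emptyset) are homogeneous, so it is also an ideal in dot-product} says $\mathrm{Ann}([\emptyset])$ is the $R[T]\text{-}\mathbb{T}\langle T\rangle$-sub-bimodule (for the $\cdot$-actions) generated by the homogeneous elements $v_{abc}$ and $\ell_{ab}$ attached to the four vertices of $T$, it suffices to show that the composite of $P$ with the projection $\mathbb{T}\langle T\rangle\to\hat{\mathcal{G}}(T)$ kills $\mathrm{Ann}([\emptyset])$. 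By Lemma \ref{lem:P is a bimodule homomorphism}, $P$ is a bimodule homomorphism, and $V_T+L_T$ is a sub-bimodule of $\mathbb{T}\langle T\rangle$ (left action through $Q$). Hence $P(\mathrm{Ann}([\emptyset]))$ is the sub-bimodule generated by the images $P(v_{abc})$ and $P(\ell_{ab})$. So the whole proposition reduces to checking that these eight images lie in $V_T+L_T$. The descended map is then automatically a bimodule homomorphism.

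The first step is the vertex generators. For $v_{zz'z''}=(-A^2)^2\Gamma_{zz''}\Gamma_{z''z'}\Gamma_{z'z}-\hat z\hat z'\hat z''$, I would rewrite the $\cup$-product of the three $\Gamma$'s as a scalar times the $\cdot$-monomial. These three $\Gamma$'s lie in different faces, so they commute and the scalar is $1$. Applying $Q$ gives $(-A^2)^{-3/2}$, and the prefactor $A^{\frac12\langle d(\Gamma),d(1)\rangle}$ is trivial since the $\mathbb{T}\langle T\rangle$-part is $1$. The second term is $1\otimes[\hat z\cdot\hat z'\cdot\hat z'']$, and $P$ sends it to itself. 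So
\[
P(v_{zz'z''})=(-A^2)^{1/2}-[\hat z\cdot\hat z'\cdot\hat z''],
\]
which is minus a vertex relation. The other three vertices are handled the same way by symmetry.

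The second step, which I expect to be the main obstacle, is the lagrangian generators. Take $\ell_{zz''}$. Each of its three terms has the form (Weyl-ordered $\Gamma$ monomial) $\otimes$ ($\cup$-monomial in the hatted variables), with $\cup$-monomials in different faces. I would convert each term into $\Gamma\otimes x$ with $x$ a Weyl-ordered $\cdot$-monomial, using Lemma \ref{lem:presntation of (T^tensor4,cdot)  and (B^tensor6, cdot)}. Then I apply $P$: $Q$ sends a $\Gamma$-monomial of total exponent $k$ to $(-A^2)^{-k/2}$, and the twist contributes $A^{\frac12\langle d(\Gamma),d(x)\rangle}$. The degrees $d_f$ must be read off from the figure, including which marking edge carries which state for $\hat z^{\pm1}$ and so on.
\begin{itemize}
\item The first term has $\Gamma$-exponent $0$ and $x=1$, so it maps to $1$.
\item The second term has $\Gamma$-exponent $-1$, which gives $(-A^2)^{1/2}$. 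After the twist it should give $-[\hat z''^{-1}\cdot\hat z'\cdot\hat z]$ up to a power of $A$.
\item The third term similarly gives a multiple of $[\hat z''^{-1}\cdot\hat z'^{-1}\cdot\hat z]$.
\end{itemize}

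The target is to show that, modulo the right ideal $V_T$, the result equals a unit times $\hat z^2+\hat z''^{-2}-1$ (or one of the equivalent lagrangian forms). The tool is centrality of the vertex elements: it allows $[\hat z\cdot\hat z'\cdot\hat z'']$ to be replaced by $(-A^2)^{1/2}$ after right multiplication. Concretely, I would right-multiply by a suitable monomial such as $\hat z\cdot\hat z'$ and use $\hat z'\equiv(-A^2)^{1/2}[\hat z\cdot\hat z'']^{-1}$ to eliminate $\hat z'$. The difficulty is bookkeeping of the exact powers of $A$ and $(-A^2)^{1/2}$ coming from the twist and the Weyl orderings. I would organise this by computing $\langle d(\Gamma),d(x)\rangle$ face by face, using the clockwise convention of Figure \ref{fig:cl labeling of marking edges}. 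If the constants match only up to the choice among the three lagrangian forms (Remark \ref{rem:freedom in the choice of ell}), I would use the comment after Definition \ref{def: quantum module of T} that all these forms are equivalent modulo $V_T$. The remaining three vertices follow by the tetrahedral symmetry relabeling $z\leftrightarrow y$ on pairs of opposite edges, which preserves the presentation of $\mathbb{T}\langle T\rangle$.
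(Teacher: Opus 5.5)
Your proposal is correct and follows the paper's proof essentially step for step: you use the homogeneity of the generators of $\mathrm{Ann}([\emptyset])$ and the bimodule property of $P$ to reduce to checking $v_{zz^{\prime}z^{\prime\prime}}$ and $\ell_{zz^{\prime\prime}}$, you obtain $P(v_{zz^{\prime}z^{\prime\prime}})=(-A^{2})^{\frac{1}{2}}-[\hat{z}\cdot\hat{z}^{\prime}\cdot\hat{z}^{\prime\prime}]$, and you reduce $P(\ell_{zz^{\prime\prime}})$ to the lagrangian using the central vertex element. The bookkeeping you anticipate turns out to be trivial: both twist exponents $\langle d(\Gamma),d(x)\rangle$ in $\ell_{zz^{\prime\prime}}$ vanish (on each face either $d_{f}(\Gamma)=0$ or $d_{f}(\Gamma)=-d_{f}(x)$), so exactly $P(\ell_{zz^{\prime\prime}})=1-(-A^{2})^{-\frac{1}{2}}[\hat{z}^{\prime\prime-1}\cdot\hat{z}^{\prime}\cdot\hat{z}]-(-A^{2})^{\frac{1}{2}}[\hat{z}^{\prime\prime-1}\cdot\hat{z}^{\prime-1}\cdot\hat{z}]$, and replacing the central element $[\hat{z}\cdot\hat{z}^{\prime}\cdot\hat{z}^{\prime\prime}]$ by $(-A^{2})^{\frac{1}{2}}$ turns this into $1-\hat{z}^{\prime\prime-2}-\hat{z}^{2}$, with no need for the alternative lagrangian forms.
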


\begingroup
\allowdisplaybreaks
\begin{proof}
By Lemma \ref{lem:generators of Ann(emptyset) are homogeneous, so it is also an ideal in dot-product},
we only need to
show that $P$ maps the generators
\begin{eqnarray*}
v_{zz^{\prime}z^{\prime\prime}} & := & (-A^{2})^{2}\Gamma_{zz^{\prime\prime}}\Gamma_{z^{\prime\prime}z^{\prime}}\Gamma_{z^{\prime}z}-\hat{z}\hat{z}^{\prime}\hat{z}^{\prime\prime} \\
\ell_{zz^{\prime\prime}} & := &[\Gamma_{zy^{\prime}}\Gamma_{y^{\prime}z^{\prime\prime}}^{-1}]-(-A^{2})^{-1}[\Gamma_{z^{\prime\prime}y}\Gamma_{yz^{\prime}}^{-1}]\Gamma_{zz^{\prime}}^{-1}\hat{z}^{\prime\prime-1}\hat{z}^{\prime}\hat{z}
\\
& & -(-A^{2})\Gamma_{z^{\prime\prime}z^{\prime}}[\Gamma_{z^{\prime}y^{\prime\prime}}\Gamma_{y^{\prime\prime}z}^{-1}]\hat{z}^{\prime\prime-1}\hat{z}^{\prime-1}\hat{z}
\end{eqnarray*}
into $V_{T}+L_{T}$. For $v_{zz^{\prime}z^{\prime\prime}}$, we have
\begin{eqnarray*}
P(v_{zz^{\prime}z^{\prime\prime}}) &= &(-A^{2})^{2}Q(\Gamma_{zz^{\prime\prime}}\Gamma_{z^{\prime\prime}z^{\prime}}\Gamma_{z^{\prime}z})-\hat{z}\hat{z}^{\prime}\hat{z}^{\prime\prime} \\
&=&(-A^{2})^{2}Q(\Gamma_{zz^{\prime\prime}}\cdot\Gamma_{z^{\prime\prime}z^{\prime}}\cdot\Gamma_{z^{\prime}z})-\left[\hat{z}\cdot\hat{z}^{\prime}\cdot\hat{z}^{\prime\prime}\right]\\
&=&(-A^{2})^{\frac{1}{2}}-\left[\hat{z}\cdot\hat{z}^{\prime}\cdot\hat{z}^{\prime\prime}\right]\in V_{T}.
\end{eqnarray*}
In the above, we used $\hat{z}\hat{z}^{\prime}\hat{z}^{\prime\prime}=[\hat{z}\cdot\hat{z}^{\prime}\cdot\hat{z}^{\prime\prime}]$;
moreover $\Gamma_{zz^{\prime\prime}}\Gamma_{z^{\prime\prime}z^{\prime}}\Gamma_{z^{\prime}z}=\Gamma_{zz^{\prime\prime}}\cdot\Gamma_{z^{\prime\prime}z^{\prime}}\cdot\Gamma_{z^{\prime}z}$
because $\Gamma_{zz^{\prime\prime}}$, $\Gamma_{z^{\prime\prime}z^{\prime}}$
and $\Gamma_{z^{\prime}z}$ are three generators of 
boundary
face algebras $\mathbb{T}$ associated to different faces of $T$.

For $\ell_{zz^{\prime\prime}}$ we have $P(\ell_{zz^{\prime\prime}})$
equals
\begin{eqnarray*}
&&Q(\left[\Gamma_{zy^{\prime}}\Gamma_{y^{\prime}z^{\prime\prime}}^{-1}\right])-(-A^{2})^{-1}A^{\frac{1}{2}\langle d(\Gamma_{z^{\prime\prime}y}\Gamma_{yz^{\prime}}^{-1}\Gamma_{zz^{\prime}}^{-1}),d(\hat{z}^{\prime\prime-1}\hat{z}^{\prime}\hat{z})\rangle}Q(\left[\Gamma_{z^{\prime\prime}y}\Gamma_{yz^{\prime}}^{-1}\right]\Gamma_{zz^{\prime}}^{-1})\hat{z}^{\prime\prime-1}\hat{z}^{\prime}\hat{z}\\ & &
\ \ \ \ -(-A^{2})A^{\frac{1}{2}\langle d(\Gamma_{z^{\prime\prime}z^{\prime}}\Gamma_{z^{\prime}y^{\prime\prime}}\Gamma_{y^{\prime\prime}z}^{-1},d(\hat{z}^{\prime\prime-1}\hat{z}^{\prime-1}\hat{z})\rangle}Q(\Gamma_{z^{\prime\prime}z^{\prime}}\left[\Gamma_{z^{\prime}y^{\prime\prime}}\Gamma_{y^{\prime\prime}z}^{-1}\right])\hat{z}^{\prime\prime-1}\hat{z}^{\prime-1}\hat{z}\\
&=&Q(\Gamma_{zy^{\prime}}\cdot\Gamma_{y^{\prime}z^{\prime\prime}}^{-1})-(-A^{2})^{-1}Q(\Gamma_{z^{\prime\prime}y}\cdot\Gamma_{yz^{\prime}}^{-1}\cdot\Gamma_{zz^{\prime}}^{-1})\left[\hat{z}^{\prime\prime-1}\cdot\hat{z}^{\prime}\cdot\hat{z}\right]\\ & & 
\ \ \ \ -(-A^{2})Q(\Gamma_{z^{\prime\prime}z^{\prime}}\left[\Gamma_{z^{\prime}y^{\prime\prime}}\Gamma_{y^{\prime\prime}z}^{-1}\right])\hat{z}^{\prime\prime-1}\hat{z}^{\prime-1}\hat{z}\\
&=& 1-(-A^{2})^{-\frac{1}{2}}\left[\hat{z}^{\prime\prime-1}\cdot\hat{z}^{\prime}\cdot\hat{z}\right]-(-A^{2})^{\frac{1}{2}}\left[\hat{z}^{\prime\prime-1}\cdot\hat{z}^{\prime-1}\cdot\hat{z}\right].
\end{eqnarray*}
In the above, we used that 
\[
\langle d(\Gamma_{z^{\prime\prime}y}\Gamma_{yz^{\prime}}^{-1}\Gamma_{zz^{\prime}}^{-1}),d(\hat{z}^{\prime\prime-1}\hat{z}^{\prime}\hat{z})\rangle=\langle d(\Gamma_{z^{\prime\prime}z^{\prime}}\Gamma_{z^{\prime}y^{\prime\prime}}\Gamma_{y^{\prime\prime}z}^{-1}),d(\hat{z}^{\prime\prime-1}\hat{z}^{\prime-1}\hat{z})\rangle=0,
\]
which can be checked by a straightforward computation or looking at
their skein diagrams. Now the element 
\[
1-(-A^{2})^{-\frac{1}{2}}[\hat{z}^{\prime\prime-1}\cdot\hat{z}^{\prime}\cdot\hat{z}]-(-A^{2})^{\frac{1}{2}}[\hat{z}^{\prime\prime-1}\cdot\hat{z}^{\prime-1}\cdot\hat{z}]
\]
is in the ideal $V_{T}+L_{T}$. Indeed, if one manipulates this element
using the vertex relation $(-A^{2})^{\frac{1}{2}}-\left[\hat{z}\cdot\hat{z}^{\prime}\cdot\hat{z}^{\prime\prime}\right]$,
it becomes the lagrangian
\[
1-\hat{z}^{\prime\prime-2}-\hat{z}^{2}.
\]
\end{proof}
\endgroup

\begin{defn}
\label{def:Quantum trace map on T}The map 
\[
Tr_{T}\colon\overline{\mathrm{Sk}}(T)\rightarrow\hat{\mathcal{G}}(T)
\]
given by Proposition \ref{prop:P descends to Tr} is called the \emph{quantum
trace map for the ideal tetrahedron} $T$. It is a $R\left[T \right]\text{-}\mathbb{T}\left\langle T\right\rangle$-bimodule homomorphism.
\end{defn}

The following theorem gives the structural result for the corner-reduced
skein module $\overline{\mathrm{Sk}}^{c}(T)$, which was promised at
the end of \ref{subsec:corner reduction in the case of a singe tetrahedron}.

\begin{thm}
\label{thm:Tr induce isomorphism between corner reduced module and quantum module}
The quantum trace map $Tr_{T}$ is surjective, and its kernel is exactly the $R\left[T \right]\text{-}\mathbb{T}\left\langle T\right\rangle$-sub-bimodule
$I^{c}\cdot\overline{\mathrm{Sk}}(T)$; in other words,
it descends to an isomorphism of $R\left[T \right]\text{-}\mathbb{T}\left\langle T\right\rangle$-bimodules
\[
Tr_{T}^{c}\colon\overline{\mathrm{Sk}}^{c}(T)\overset{\cong}{\rightarrow}\hat{\mathcal{G}}(T).
\]
\end{thm}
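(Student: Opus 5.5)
Surjectivity is immediate. The composite $\mathbb{T}\langle T\rangle\to R[T]\otimes\mathbb{T}\langle T\rangle\to\overline{\mathrm{Sk}}(T)\xrightarrow{Tr_T}\hat{\mathcal{G}}(T)$, $x\mapsto 1\otimes x\mapsto [\emptyset]\cdot x$, sends $x$ to $P(1\otimes x)=x$ modulo $V_T+L_T$. It is therefore the quotient map, which is onto.

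For the kernel, the inclusion $I^{c}\cdot\overline{\mathrm{Sk}}(T)\subseteq\ker Tr_T$ follows from Lemma \ref{lem:P is a bimodule homomorphism}. Indeed, $Tr_T\bigl((\Gamma_{ab}-(-A^{2})^{-\frac12})\cdot w\bigr)=\bigl(Q(\Gamma_{ab})-(-A^{2})^{-\frac12}\bigr)Tr_T(w)=0$. Hence $Tr_T$ factors through $Tr_T^{c}\colon\overline{\mathrm{Sk}}^{c}(T)\to\hat{\mathcal{G}}(T)$.

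For the reverse inclusion, I will construct an inverse
\[
S\colon\hat{\mathcal{G}}(T)\to\overline{\mathrm{Sk}}^{c}(T),\qquad x\mapsto[\emptyset]\cdot x .
\]
As a map $\mathbb{T}\langle T\rangle\to\overline{\mathrm{Sk}}^{c}(T)$ it is a right $\mathbb{T}\langle T\rangle$-module map. Since $V_T+L_T$ is the right ideal generated by the four vertex relations and the lagrangian, it suffices to check that $[\emptyset]$ times each generator vanishes in $\overline{\mathrm{Sk}}^{c}(T)$. These checks simply run the computation of Proposition \ref{prop:P descends to Tr} backwards.

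For the vertex relation, $v_{zz'z''}\in\mathrm{Ann}([\emptyset])$ gives
\[
[\emptyset]\,\hat z\hat z'\hat z''=(-A^{2})^{2}\,\Gamma_{zz''}\Gamma_{z''z'}\Gamma_{z'z}[\emptyset].
\]
In the corner-reduced module the right-hand side is $(-A^{2})^{2}(-A^{2})^{-\frac32}[\emptyset]=(-A^{2})^{\frac12}[\emptyset]$, once the $\cup$-actions are converted to $\cdot$-actions. The degree-twist factors must be tracked here; they are the same ones that vanished or appeared in the proof of Proposition \ref{prop:P descends to Tr}. The other three vertex relations follow by symmetry.

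For the lagrangian relation, $\ell_{zz''}$ gives in $\overline{\mathrm{Sk}}^{c}(T)$ the identity
\[
[\emptyset]\cdot\Bigl(1-(-A^{2})^{-\frac12}[\hat z''^{-1}\cdot\hat z'\cdot\hat z]-(-A^{2})^{\frac12}[\hat z''^{-1}\cdot\hat z'^{-1}\cdot\hat z]\Bigr)=0 .
\]
To derive it, I move every $\Gamma$ factor to the left, replace each one by $(-A^{2})^{-\frac12}$, and use the vanishing pairings noted in that proof. Combining this with the already established vertex relation, via right multiplication by suitable monomials, yields $[\emptyset]\cdot(\hat z^{2}+\hat z''^{-2}-1)=0$. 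Thus $S$ is well defined.

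It remains to show that $S$ and $Tr_T^{c}$ are mutually inverse. We have $Tr^c_T\circ S=\mathrm{id}$, because $P(1\otimes x)=x$. For the other composite, $\overline{\mathrm{Sk}}(T)$ is cyclic as an $R[T]$-$\mathbb{T}\langle T\rangle$-bimodule under the $\cdot$-actions; this follows from Theorem \ref{thm:skein module of 3-ball} together with the remark that $\cup$- and $\cdot$-generated sub-bimodules coincide. So every element of $\overline{\mathrm{Sk}}^{c}(T)$ is a combination of terms $\Gamma\cdot[\emptyset]\cdot x$, and such a term equals $Q(\Gamma)[\emptyset]\cdot x$ in $\overline{\mathrm{Sk}}^{c}(T)$, since $\Gamma_{ab}^{-1}$ then acts by $(-A^{2})^{\frac12}$. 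Hence $S\circ Tr_T^{c}(\Gamma\cdot[\emptyset]\cdot x)=S(Q(\Gamma)x)=\Gamma\cdot[\emptyset]\cdot x$, so $S\circ Tr_T^{c}=\mathrm{id}$. This gives $\ker Tr_T=I^{c}\cdot\overline{\mathrm{Sk}}(T)$, and $Tr^c_T$ is a bimodule isomorphism.

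The main obstacle is the bookkeeping of the $A^{\pm\frac12\langle\cdot,\cdot\rangle}$ factors when converting $\Gamma$'s acting by $\cup$ into $\cdot$-actions, and then into scalars in $\overline{\mathrm{Sk}}^{c}(T)$. I would handle this by working in $R[T]\otimes\mathbb{T}\langle T\rangle$ modulo $\mathrm{Ann}(\emptyset)+I^{c}\cdot(\,\cdot\,)$ and reusing the degree identities of Lemma \ref{lem:generators of Ann(emptyset) are homogeneous, so it is also an ideal in dot-product}.
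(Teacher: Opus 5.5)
Your proposal is correct and follows essentially the same route as the paper: your inverse $S\colon x\mapsto[\emptyset]\cdot x$ is exactly the paper's map $F^{c}$ (induced by $x\mapsto 1\otimes x$), and you verify that it kills the vertex and lagrangian generators and is a two-sided inverse to $Tr_T^{c}$ by the same computations. The only differences are cosmetic. You state surjectivity as a separate step, and you write spanning elements as $\Gamma\cdot[\emptyset]\cdot x$ rather than $\Gamma\otimes x$, which absorbs the twist factor $A^{\frac{1}{2}\langle d(\Gamma),d(x)\rangle}$ into the bimodule action.
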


In the following proof and subsequent discussions, we will be constantly
dealing with a module $M$ and its quotients or even quotients of
quotients, to ensure smoothness of discussion and avoid cumbersome
notation, we will adopt the following abuse of languages. Let $N$
be a quotient module of $M$:
\begin{itemize}
    \item The element of $N$ represented by $m\in M$ will still
be denoted as $m$. For example we will write $\Gamma\otimes x\in\overline{\mathrm{Sk}}^{c}(T)$
where $\Gamma\in\mathbb{T}^{\otimes4}$ and $x\in\mathbb{B}^{\otimes6}$,
and by that we really mean the class of $\Gamma\otimes x\in\mathbb{T}^{\otimes4}\otimes\mathbb{B}^{\otimes6}$
in $\overline{\mathrm{Sk}}^{c}(T)$.

    \item For $m_{1},m_{2}\in M$, we write ``$m_{1}=m_{2}\ \text{in}\ N$''
or ``in $N$, we have $m_{1}=m_{2}$'' to mean that
$m_{1}$ and $m_{2}$ have the same class in $N$.
\end{itemize}

\begingroup
\allowdisplaybreaks
\begin{proof}
First we find that $Tr_{T}$ maps the $R[T]$-$\mathbb{T}\langle T\rangle$-sub-bimodule
$I^{c}\cdot\overline{\mathrm{Sk}}(T)$ to $\{0\}$. This is in fact quite
obvious, recall that the ideal $I^{c}$ of $R[T]$ is generated by
elements of the form
\[
\Gamma_{ab}-(-A^{2})^{-\frac{1}{2}}.
\]
Moreover, $Tr_{T}$ is left $R[T]$-linear, thus for any $w\in\overline{\mathrm{Sk}}(T)$
we have
\begin{align*}
Tr_{T}\left(\left(\Gamma_{ab}-(-A^{2})^{-\frac{1}{2}}\right)\cdot w\right) & =Q\left(\Gamma_{ab}-(-A^{2})^{-\frac{1}{2}}\right)Tr_{T}(w)\\
 & =\left(Q(\Gamma_{ab})-(-A^{2})^{-\frac{1}{2}}\right)Tr_{T}(w)\\
 & =0.
\end{align*}
This shows that $Tr_{T}$ maps $I^{c}\cdot\overline{\mathrm{Sk}}(T)$
to $\{0\}$ and hence it induces a well-defined $R[T]$-$\mathbb{T}\langle T\rangle$-bimodule
homomorphism
\[
Tr_{T}^{c}\colon\overline{\mathrm{Sk}}^{c}(T)=\frac{\overline{\mathrm{Sk}}(T)}{I^{c}\cdot\overline{\mathrm{Sk}}(T)}\rightarrow\hat{\mathcal{G}}(T).
\]

To prove that $Tr_{T}^{c}$ is a bijection, we construct the inverse
map. Let $F$ be composition of the maps
\[
F\colon\mathbb{T}\left\langle T\right\rangle\rightarrow R\left[T \right]\otimes\mathbb{T}\left\langle T\right\rangle\twoheadrightarrow\overline{\mathrm{Sk}}^{c}(T),
\]
where $\mathbb{T}\left\langle T\right\rangle\rightarrow R\left[T \right]\otimes\mathbb{T}\left\langle T\right\rangle$
is the natural map $x\mapsto1\otimes x$. In other word $F$ maps the element $x\in\mathbb{T}\left\langle T\right\rangle$
to the element of $\overline{\mathrm{Sk}}^{c}(T)$ represented by $1\otimes x$ in $R\left[T \right]\otimes\mathbb{T}\left\langle T\right\rangle$.

We claim that $F$ maps $V_{T}+L_{T}$ to $\{0\}$.
Indeed, $F$ is a right $\mathbb{T}\left\langle T\right\rangle$-module homomorphism because both the natual map $\mathbb{T}\left\langle T\right\rangle\rightarrow R\left[T \right]\otimes\mathbb{T}\left\langle T\right\rangle$ and the quotient map $R\left[T \right]\otimes\mathbb{T}\left\langle T\right\rangle\twoheadrightarrow\overline{\mathrm{Sk}}^{c}(T)$ are right $\mathbb{T}\left\langle T\right\rangle$-module homomorphisms.
Hence to show $F$ maps $V_{T}+L_{T}$ to $\{0\}$ we only needs to show that it maps the generators of $V_{T}+L_{T}$ to $0$. In $\overline{\mathrm{Sk}}^{c}(T)$, the following holds:
\begin{align*}
F\left([\hat{z}\cdot\hat{z}^{\prime}\cdot\hat{z}^{\prime\prime}]-(-A^{2})^{\frac{1}{2}}\right)
&=[\hat{z}\cdot\hat{z}^{\prime}\cdot\hat{z}^{\prime\prime}]-(-A^{2})^{\frac{1}{2}}
\\
&=\hat{z}\hat{z}^{\prime}\hat{z}^{\prime\prime}-(-A^{2})^{2}\Gamma_{zz^{\prime\prime}}\Gamma_{z^{\prime\prime}z^{\prime}}\Gamma_{z^{\prime}z}
\\
&=0
\end{align*}
In the above, we have used the fact that $[\hat{z}\cdot\hat{z}^{\prime}\cdot\hat{z}^{\prime\prime}]=\hat{z}\hat{z}^{\prime}\hat{z}^{\prime\prime}$
and the fact that inside $\overline{\mathrm{Sk}}^{c} (T)$, we have
\[
\Gamma_{zz^{\prime\prime}}\Gamma_{z^{\prime\prime}z^{\prime}}\Gamma_{z^{\prime}z}
=\Gamma_{zz^{\prime\prime}}\cdot \Gamma_{z^{\prime\prime}z^{\prime}}\cdot \Gamma_{z^{\prime}z}
=(-A^{2})^{-\frac{3}{2}}.
\]

The other generator
\[
\bigl(1-\hat{z}^{\prime\prime-2}-\hat{z}^{2}\bigr)
\]
 is equivalent to 
\[
1-(-A^{2})^{-\frac{1}{2}}[\hat{z}^{\prime\prime-1}\cdot\hat{z}^{\prime}\cdot\hat{z}]-(-A^{2})^{\frac{1}{2}}[\hat{z}^{\prime\prime-1}\cdot\hat{z}^{\prime-1}\cdot\hat{z}],
\]
and we have, in $\overline{\mathrm{Sk}}^c (T)$:
\begin{eqnarray*}
\lefteqn{F\left(1-(-A^{2})^{-\frac{1}{2}}[\hat{z}^{\prime\prime-1}\cdot\hat{z}^{\prime}\cdot\hat{z}]-(-A^{2})^{\frac{1}{2}}[\hat{z}^{\prime\prime-1}\cdot\hat{z}^{\prime-1}\cdot\hat{z}]\right)}
\\
&=&
1-(-A^{2})^{-\frac{1}{2}}[\hat{z}^{\prime\prime-1}\cdot\hat{z}^{\prime}\cdot\hat{z}]-(-A^{2})^{\frac{1}{2}}[\hat{z}^{\prime\prime-1}\cdot\hat{z}^{\prime-1}\cdot\hat{z}]
\\
&=&\Gamma_{zy^{\prime}}\cdot\Gamma_{y^{\prime}z^{\prime\prime}}^{-1}-(-A^{2})^{-1}(\Gamma_{z^{\prime\prime}y}\cdot\Gamma_{yz^{\prime}}^{-1}\cdot\Gamma_{zz^{\prime}}^{-1})\cdot[\hat{z}^{\prime\prime-1}\cdot\hat{z}^{\prime}\cdot\hat{z}]
\\
& &-(-A^{2})(\Gamma_{z^{\prime\prime}z^{\prime}}\cdot\Gamma_{z^{\prime}y^{\prime\prime}}\cdot\Gamma_{y^{\prime\prime}z}^{-1})\cdot[\hat{z}^{\prime\prime-1}\cdot\hat{z}^{\prime-1}\cdot\hat{z}]
\\
&=&[\Gamma_{zy^{\prime}}\Gamma_{y^{\prime}z^{\prime\prime}}^{-1}]-(-A^{2})^{-1}[\Gamma_{z^{\prime\prime}y}\Gamma_{yz^{\prime}}^{-1}]\Gamma_{zz^{\prime}}^{-1}\hat{z}^{\prime\prime-1}\hat{z}^{\prime}\hat{z}\\
& &-(-A^{2})\Gamma_{z^{\prime\prime}z^{\prime}}[\Gamma_{z^{\prime}y^{\prime\prime}}\Gamma_{y^{\prime\prime}z}^{-1}]\hat{z}^{\prime\prime-1}\hat{z}^{\prime-1}\hat{z}\bigr)\\
&=&0
\end{eqnarray*}
In the above, we used the fact that 
\[
\langle d(\Gamma_{z^{\prime\prime}y}\Gamma_{yz^{\prime}}^{-1}\Gamma_{zz^{\prime}}^{-1}),d(\hat{z}^{\prime\prime-1}\hat{z}^{\prime}\hat{z})\rangle=\langle d(\Gamma_{z^{\prime\prime}z^{\prime}}\Gamma_{z^{\prime}y^{\prime\prime}}\Gamma_{y^{\prime\prime}z}^{-1}),d(\hat{z}^{\prime\prime-1}\hat{z}^{\prime-1}\hat{z})\rangle=0.
\]

We have shown that $F\colon\mathbb{T}\left\langle T\right\rangle\rightarrow\overline{\mathrm{Sk}}^{c}(T)$
maps $V_{T}+L_{T}$ to $\{0\}$, hence it descends to a right $\mathbb{T}\langle T\rangle$-module
homomorphism
\[
F^{c}\colon\hat{\mathcal{G}}(T)\rightarrow\overline{\mathrm{Sk}}^{c}(T).
\]
We now show that $F^{c}$ is the inverse map of $Tr_{T}^{c}$.
Let $x\in\hat{\mathcal{G}}(T)$ be represented by some $x\in\mathbb{T}\left\langle T\right\rangle$,
then in $\hat{\mathcal{G}}(T)$, we have
\begin{align*}
Tr_{T}^{c}F^{c}(x) & =Tr_{T}^{c}F(x)\\
 & =Tr_{T}^{c}(1\otimes x)\\
 & =P(1\otimes x)\\
 & =x.
\end{align*}
Conversely, elements of $\overline{\mathrm{Sk}}^{c}(T)$ are given by
sums of elements of the form $\Gamma\otimes x$ where $\Gamma\in R\left[\Gamma_{ab}^{\pm 1} \right]$
and $x\in \mathbb{T}\left\langle T\right\rangle$ are homogeneous, then in $\overline{\mathrm{Sk}}^c (T)$, we have
\begin{align*}
F^{c}Tr_{T}^{c}(\Gamma\otimes x) & =F^{c}\bigl(P(\Gamma\otimes x)\bigr)\\
 &=A^{\frac{1}{2}\langle d(\Gamma),d(x)\rangle}Q(\Gamma)(1\otimes x)\\
 & =A^{\frac{1}{2}\langle d(\Gamma),d(x)\rangle}\Gamma \cdot (1\otimes x)\\
 & =A^{\frac{1}{2}\langle d(\Gamma),d(x)\rangle}A^{-\frac{1}{2}\langle d(\Gamma),d(x)\rangle}\Gamma\otimes x\\
 & =\Gamma\otimes x
\end{align*}
Therefore the maps $Tr_{T}^{c}$ and $F^c$ are inverse of each other and thus $Tr_{T}^{c}$ is a bijection.
\end{proof}
\endgroup

\subsection{Quantum trace map for the 3-manifold.
\label{subsec:quantum trace map on Y}}
Let $(Y,\mathcal{T})$ be an ideally triangulated boundary marked
3-manifold, we use the splitting homomorphism to combine the quantum
trace map $Tr_{T}$ for each ideal tetrahedron $T\in\mathcal{T}$
into a quantum trace map for the 3-manifold $Y$. First, let us properly
define the codomain. Recall in Definition \ref{def:reduced tensor product}
we have the element $\hat{e}\in\bigotimes_{T\in\mathcal{T}}\left(\bigotimes_{e\in\mathbf{e}(T)}\mathbb{B}_{e}\right)=\bigotimes_{T\in\mathcal{T}}\mathbb{T}\left\langle T\right\rangle$
for an internal edge $e\in\mathcal{T}^{(1)}$, which is given by
\begin{equation}
\label{eqn:definition of e^hat}
\hat{e}=[\hat{e}_{1}\cdot\hat{e}_{2}\cdot\ \dots\ \cdot\hat{e}_{k}]=\hat{e}_{1}\hat{e}_{2}\dots\hat{e}_{k}
\end{equation}
where $e_{1},e_{2},\dots,e_{k}$ are the labeling of the bare edges identified
to $e$. 

\begin{defn}
\label{def: quantum gluing module}
(i) Let $E$ be the \emph{left} ideal of the
quantum torus $\bigotimes_{T\in\mathcal{T}}\mathbb{T}\left\langle T\right\rangle$ generated
by the \emph{edge relations}:
\[
\hat{e}-(-A^{2}).
\]
We have one such relation for each \emph{internal} edge $e\in\mathcal{T}^{(1)}$.
(It is clear that $E$ also contains the element $\hat{e}^{-1}-(-A^{2})^{-1}$
for each $e\in\mathcal{T}^{(1)}$.)

(ii) Let $\overline{E}$ be the $R$-submodule of $\bigotimes_{T\in\mathcal{T}}\hat{\mathcal{G}}(T)$
given by be the image of $E$ under the natural map $\bigotimes_{T\in\mathcal{T}}\mathbb{T}\left\langle T\right\rangle\rightarrow\bigotimes_{T\in\mathcal{T}}\hat{\mathcal{G}}(T)$.

(iii) The \emph{quantum gluing module}  $\hat{\mathcal{G}}_{\mathcal{T}}$
of the ideally triangulated boundary marked 3-manifold $(Y,\mathcal{T})$
is the $R$-module quotient
\[
\hat{\mathcal{G}}_{\mathcal{T}}=\bigotimes_{T\in\mathcal{T}}\hat{\mathcal{G}}(T)\biggm/\overline{E}.
\]
\end{defn}
Note again, if we set $A=1$, the edge relation $\hat{e}-(-A^{2})$
reduces to the classical edge equation (\ref{eq:classical edge equation}).
\begin{rem}
\label{rem: alternative description of quantum guling module}
If, by abuse of notation, we denote by $V_{T}$ and $L_{T}$ the
natural extensions into $\bigotimes_{T\in\mathcal{T}}\mathbb{T}\left\langle T\right\rangle$
of the right ideals $V_{T}$ and $L_{T}$ of $\mathbb{T}\left\langle T\right\rangle$ respectively,
we can also express $\hat{\mathcal{G}}_{\mathcal{T}}$ as the $R$-module quotient
\[
\hat{\mathcal{G}}_{\mathcal{T}}=\frac{\bigotimes_{T\in\mathcal{T}}\mathbb{T}\left\langle T\right\rangle}{E+\sum_{T\in\mathcal{T}}(V_{T}+L_{T})}.
\]

\end{rem}

\begin{prop}
\label{prop:quantum trace for T glue}
Let $(Y,\mathcal{T})$ be an ideally triangulated boundary marked
3-manifold. The map
\[
\bigotimes_{T\in\mathcal{T}}Tr_{T}^{c}\colon\bigotimes_{T\in\mathcal{T}}\overline{\mathrm{Sk}}^{c}(T)\rightarrow\bigotimes_{T\in\mathcal{T}}\hat{\mathcal{G}}(T)
\]
 maps the $R$-submodule $\mathfrak{R}_{E}^{c}$ of $\bigotimes_{T\in\mathcal{T}}\overline{\mathrm{Sk}}^{c}(T)$
(see Definition \ref{def:reduced tensor product}) into the $R$-submodule
$\overline{E}$ of $\bigotimes_{T\in\mathcal{T}}\hat{\mathcal{G}}(T)$.
Therefore it induces a well-defined $R$-module homomorphism
\[
\overline{\bigotimes_{T\in\mathcal{T}}}Tr_{T}^{c}\colon\overline{\bigotimes_{T\in\mathcal{T}}}\overline{\mathrm{Sk}}^{c}(T)\rightarrow\hat{\mathcal{G}}_{\mathcal{T}}.
\]
\end{prop}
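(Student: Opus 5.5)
We must show that each spanning element of $\mathfrak{R}_{E}$, namely $\left(\otimes_{T}x_{T}\right)\cup\left(\hat{e}^{\epsilon}-(-A^{2})^{\epsilon}\right)$ with $\otimes_{T}x_{T}$ balanced, is sent by $\bigotimes_{T}Tr_{T}$ into $\overline{E}$. By linearity we may assume each $x_{T}$ is the class of a monomial $\Gamma_{T}\otimes x_{T}'$ in $R[T]\otimes\mathbb{T}\langle T\rangle$, with $\Gamma_{T}$ and $x_{T}'$ homogeneous. Balancedness holds for the total element, so we cannot assume it for each summand separately. However, the computation below depends only on the degrees $d(x_{T})$, and we can group the terms by degree. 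Since $\mathfrak{R}_E$ is spanned by the homogeneous pieces, we may reduce to a single homogeneous tensor $\otimes_T x_T$ that is balanced.

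The strategy is to move the $\cup$-action of $\hat{e}$ into the $\cdot$-action and then apply the right $\mathbb{T}\langle T\rangle$-linearity of $Tr_{T}$ from Proposition \ref{prop:P descends to Tr}. Write $\hat{e}=\hat{e}_{1}\cdots\hat{e}_{k}$, where $\hat{e}_{i}$ is a generator in $\mathbb{B}_{e_{i}}\subset\mathbb{T}\langle T_{i}\rangle$. Grouping the factors by tetrahedron, $x\cup\hat{e}$ becomes $\otimes_{T}(x_{T}\cup \hat{e}|_{T})$, where $\hat{e}|_{T}$ is the $\cup$-product of the $\hat{e}_{i}$ lying in $T$.

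Converting $\cup$ to $\cdot$ introduces a factor $A^{\frac12\langle d(x_T),d(\hat e|_T)\rangle}$ for the module action. Two further factors come from reordering the $\hat e_i$ and from the Weyl-ordering relation $\hat e|_T=[\,\cdot\,]$ of Lemma \ref{lem:presntation of (T^tensor4,cdot)  and (B^tensor6, cdot)}. The last two combine, via (\ref{eqn:definition of e^hat}), into $\hat{e}$ read in $\bigotimes_T\mathbb{T}\langle T\rangle$. Hence
\[
\textstyle\bigotimes_{T}Tr_{T}\bigl(x\cup\hat{e}\bigr)=A^{\frac12\sum_{T}\langle d(x_{T}),d(\hat{e}|_{T})\rangle}\Bigl(\bigotimes_{T}Tr_{T}(x_{T})\Bigr)\cdot\hat{e}.
\]

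The key step, and the main obstacle, is to show that the exponent $\sum_{T}\langle d(x_{T}),d(\hat{e}|_{T})\rangle$ vanishes. This is where balancedness enters. The degree of $\hat{e}$ is supported on the bare faces adjacent to $e$. Each such bare face $f_{i}$ is glued to another bare face $f_{j}$ that is also adjacent to $e$, since consecutive tetrahedra around $e$ share a face containing $e$. I would check, using the labeling of Figure \ref{fig:matching labeling of marking edges}, that the $\hat e$-contributions on $f_i$ and $f_j$ agree under the matching identification $\mathbb{Z}^{M_{f_i}}=\mathbb{Z}^{M_{f_j}}$: both are a $+1$ on the marking edge emanating from $e$. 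By balancedness we also have $d_{f_{i}}(x)=d_{f_{j}}(x)$. Remark \ref{rem:matching face, opposite skew-symmetric form} then shows the two face contributions cancel. The delicate bookkeeping is orienting the two bigon arcs at each edge correctly, and I would verify this on the local picture of Figure \ref{fig:skein diagram of r(e)}. The same argument works for $\hat{e}^{-1}$.

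Granting the vanishing, the image of the spanning element equals $\bigl(\otimes_T Tr_T(x_T)\bigr)\cdot(\hat{e}^{\epsilon}-(-A^2)^{\epsilon})$. This has the form $\tau\cdot(\hat e^{\epsilon}-(-A^2)^{\epsilon})$ with $\tau\in\bigotimes_T\mathbb{T}\langle T\rangle$, and since the edge relation element is central up to scalars in a quantum torus, one might expect it to lie in the left ideal $E$ immediately. That is not quite true in general, because $\hat e$ does not commute with everything. Instead we lift: choose a representative $\tau$ and use the quantum torus relation $\tau\cdot\hat e=A^{c}\hat e\cdot\tau$ for monomials $\tau$. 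So $\tau\cdot(\hat e-(-A^2))=A^{c}\hat e\cdot\tau-(-A^2)\tau$. This lies in $E$ plus the right ideals $V_T+L_T$ only if $c=0$. So we need to check that the commutation exponent $c$ between $\tau$ and $\hat e$ is zero. This is again $\sum_T\langle d(x_T'),d(\hat e|_T)\rangle$ twisted by the $\mathbb{T}\langle T\rangle$ form, and it vanishes by the same face-pairing cancellation. It then follows that the element lies in $E$, and its image lies in $\overline E$ by Remark \ref{rem: alternative description of quantum guling module}.

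Since $\mathfrak{R}^{c}_{E}$ is the image of $\mathfrak R_E$, and $\bigotimes_T Tr_T^c$ factors through this image, the induced map on the quotient is well defined.
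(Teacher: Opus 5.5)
Up to the point where you obtain $\bigl(\otimes_{T}Tr_{T}^{c}(x_{T})\bigr)\cdot(\hat{e}^{\epsilon}-(-A^{2})^{\epsilon})$, your argument is the paper's. Both $\otimes_{T}x_{T}$ and $\hat{e}^{\epsilon}$ are balanced, so Remark~\ref{rem:matching face, opposite skew-symmetric form} kills the exponent and $\cup$ equals $\cdot$. Right $\mathbb{T}\langle T\rangle$-linearity of $Tr_{T}^{c}$ then gives that expression.

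The error is in your last paragraph. By Definition~\ref{def: quantum gluing module}, $E$ is the \emph{left} ideal generated by $\hat{e}-(-A^{2})$, and it contains $\hat{e}^{-1}-(-A^{2})^{-1}$. So for any lift $\tau$ you have $\tau\cdot(\hat{e}^{\epsilon}-(-A^{2})^{\epsilon})\in E$ outright. No commutation of $\tau$ past $\hat{e}$ is needed, and the paper's proof stops exactly here. Your worry that ``$\hat{e}$ does not commute with everything'' would matter only for a right ideal. Your claim that the element lies in $E+\sum_{T}(V_{T}+L_{T})$ only if $c=0$ is false.

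The step you use to close the detour, $c=0$, is also false in general, so the argument as written breaks there. The lift $\tau$ only sees the $\mathbb{B}$-parts $x'_{T}$, while balancedness constrains the total degrees $d(\Gamma_{T})+d(x'_{T})$. The face-pairing cancellation therefore does not apply to $d(x'_{T})$ alone.

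Here is a concrete counterexample. The degree identity behind $\ell_{zz^{\prime\prime}}$ in Corollary~\ref{cor:reduced skein module of T} shows that the monomial $[\Gamma_{z^{\prime\prime}y}\Gamma_{yz^{\prime}}^{-1}]\Gamma_{zz^{\prime}}^{-1}\otimes\hat{z}^{\prime\prime-1}\hat{z}^{\prime}\hat{z}$ has total degree supported on the face bounded by $z$, $z^{\prime\prime}$, $y^{\prime}$. That degree is $+1$ on the marking edge from $z$ and $-1$ on the one from $z^{\prime\prime}$. So it can be balanced by a mixed-state arc in the neighbouring tetrahedron, with all other factors empty. Its $\tau$-part $M=\hat{z}^{\prime\prime-1}\cdot\hat{z}^{\prime}\cdot\hat{z}$ satisfies $\hat{z}\cdot M=A^{2}M\cdot\hat{z}$. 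Hence $c\neq0$ for the edge class of $z$, whenever that class contains no other bare edge of this tetrahedron.

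The fix is to delete the lifting/commutation step and conclude directly from the left-ideal property.
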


\begin{proof}
Recall that $\mathfrak{R}_{E}^{c}$ is spanned by elements that can
be represented by 
\[
\left(\underset{T\in\mathcal{T}}{\otimes}x_{T}\right)\cup\left(\hat{e}^{\epsilon}-(-A^{2})^{\epsilon}\right)\in\mathfrak{R}_{E},\ \epsilon\in\{\pm\},
\]
where $\underset{T\in\mathcal{T}}{\otimes}x_{T}\in\bigotimes_{T\in\mathcal{T}}\overline{\mathrm{Sk}}(T)$
is balanced and $e$ an internal edge. Note that $\hat{e}^{\epsilon}-(-A^{2})^{\epsilon}$
is also balanced, therefore we have (see Remark \ref{rem:matching face, opposite skew-symmetric form})
\[
\langle d(\underset{T\in\mathcal{T}}{\otimes}x_{T}),d(\hat{e}^{\epsilon}-(-A^{2})^{\epsilon})\rangle=0.
\]
Thus 
\[
\left(\underset{T\in\mathcal{T}}{\otimes}x_{T}\right)\cup\left(\hat{e}^{\epsilon}-(-A^{2})^{\epsilon}\right)=\left(\underset{T\in\mathcal{T}}{\otimes}x_{T}\right)\cdot\left(\hat{e}^{\epsilon}-(-A^{2})^{\epsilon}\right).
\]
Now because each $Tr_T^{c}$ is a right $\mathbb{T}\left\langle T\right\rangle$-module homomorphism, the above element is mapped by $\bigotimes_{T\in\mathcal{T}}Tr_{T}$ to
\[
\left(\underset{T\in\mathcal{T}}{\otimes}Tr_{T}^{c}(x_{T})\right)\cdot(\hat{e}^{\epsilon}-(-A^{2})^{\epsilon}),
\]
which is in the left ideal $E$.
\end{proof}

We are finally in the position to give the definition of our quantum
trace map for the 3-manifold $Y$.
\begin{defn}
\label{def:quantum trace map for Y}
\emph{(Quantum trace map of ideally triangulated boundary marked 3-manifolds)} Let $(Y,\mathcal{T})$ be an ideally
triangulated boundary marked 3-manifold. The \emph{quantum trace map}
of $(Y,\mathcal{T})$ is the $R$-module homomorphism given by the
composition
\[
Tr_{\mathcal{T}}\colon\overline{\mathrm{Sk}}(Y)\stackrel{\sigma}{\rightarrow}\overline{\bigotimes_{T\in\mathcal{T}}}\overline{\mathrm{Sk}}^{c}(T)\stackrel{\overline{\bigotimes}_{T\in\mathcal{T}}Tr_{T}^{c}}{\xrightarrow{\hspace{1.8cm}}}\hat{\mathcal{G}}_{\mathcal{T}}
\]
where $\sigma$ is the splitting map given in Theorem \ref{thm:splitting homomorphism}
and $\overline{\bigotimes}_{T\in\mathcal{T}}Tr_{T}^{c}$ is the map
provided by Proposition \ref{prop:quantum trace for T glue}.
\end{defn}

\section{Equivalence with the Quantum trace map of Garoufalidis \& Yu\label{sec:equivalence with GY}}

In this subsection, we show that our construction of quantum trace
map agrees with that in \cite{GY1} in the case when $Y$ is a manifold
without boundary (with cusps), up to some conventions. 

The standing assumption throughout this section will be that the cusped 3-manifold $Y$ has empty boundary, in this case we have 
\[
\overline{\mathrm{Sk}}^{c}(Y)=\overline{\mathrm{Sk}}(Y)=\mathrm{Sk}(Y).
\]

We present, step by step but briefly, the construction in
\cite{GY1} and along the way we prove that the construction yields
the same map as ours. We refer the reader to \cite[Section 4 and 5]{GY1}
for details of Garoufalidis \& Yu's construction.

\subsection{Lantern surface and compatibility of splitting homomorphisms\label{subsec:Skein module of lantern surface and compatibility of splitting }}

One starts with a lantern surface $\mathbb{L}_{T}$ (a sphere with four discs
removed) properly embedded in the ideal tetrahedron $T$ so that every
face of $T$ contains exactly one boundary circle of $\mathbb{L}_{T}$.
In Figure \ref{fig:ideal T with an embedded lantern}, we illustrate
the embedding of a ``thickened'' lantern $\mathbb{L}_{T}\times(-1,1)$ in $T$.
\begin{figure}[h]
  \includegraphics[scale=0.2]{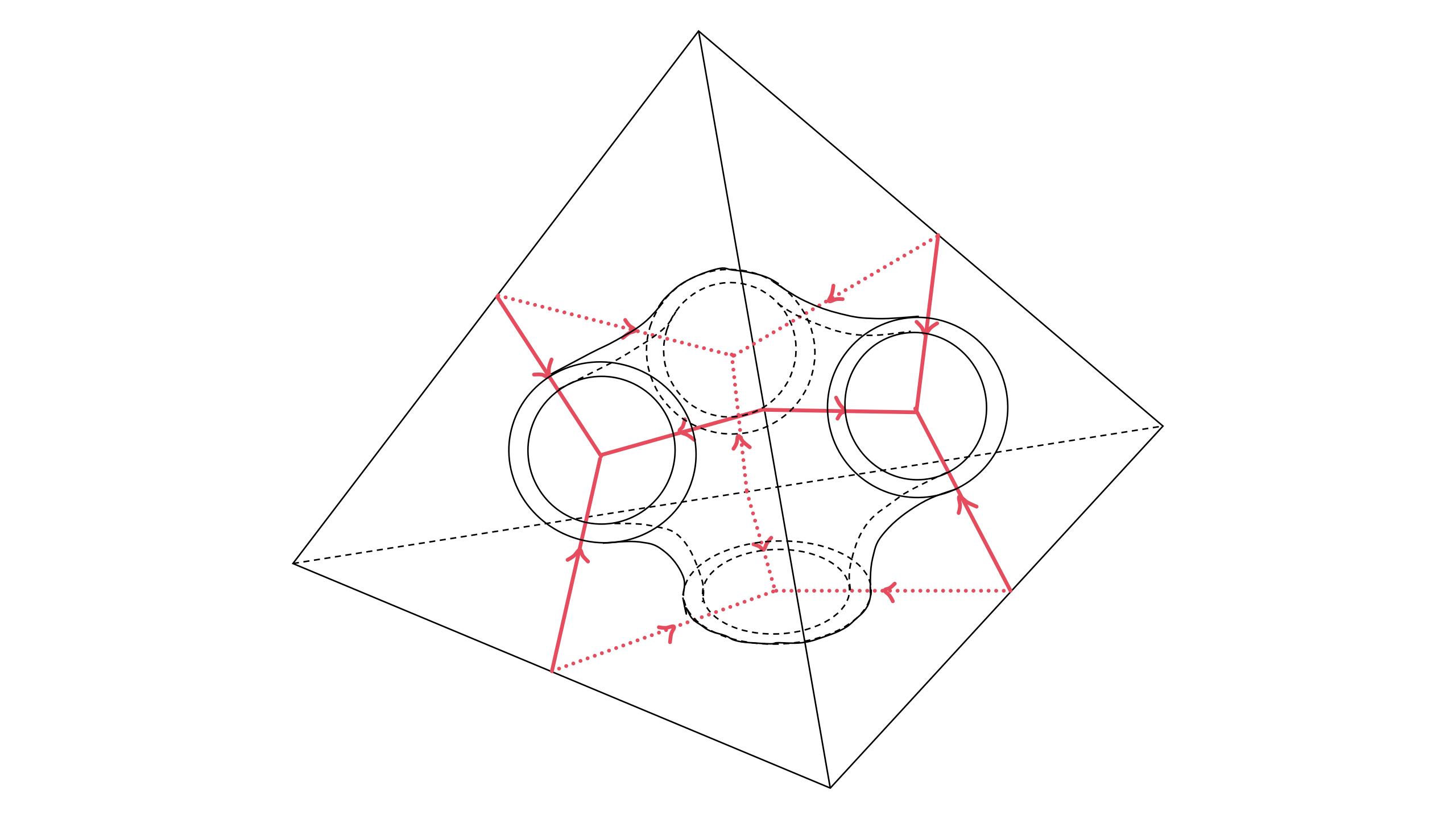} 
  \caption{ideal tetrahedron with an embedded thickened lantern surface}
  \label{fig:ideal T with an embedded lantern}
\end{figure}
We require that $\mathbb{L}_{T}$ be positioned in $T$ in a way so
that the trivalent sinks of the boundary marking of $T$ are at the
``center'' of the boundary circles of $\mathbb{L}_{T}$. When we glue the ideal tetrahedra along their faces to form the 3-manifold $Y$, the lantern surfaces in each ideal tetrahedra will glue along their boundary circles and gives us a closed surface $\Sigma_{\mathcal{T}}$ embedded in $Y$. This surface is called the \emph{dual surface} of $(Y,\mathcal{T})$.

In each ideal tetrahedron $T$,
the 3-manifold $\mathbb{L}_{T}\times(-1,1)$ is induced with a boundary marking,
given by the intersection of the boundary marking of $T$ with $\partial\mathbb{L}_{T}\times(-1,1)$.
Note that there are exactly three marking edges in each boundary component
of $\mathbb{L}_{T}\times(-1,1)$, see Figure \ref{fig:induced boundary marking on lantern}. 
\begin{figure}[h]
  \includegraphics[scale=0.15]{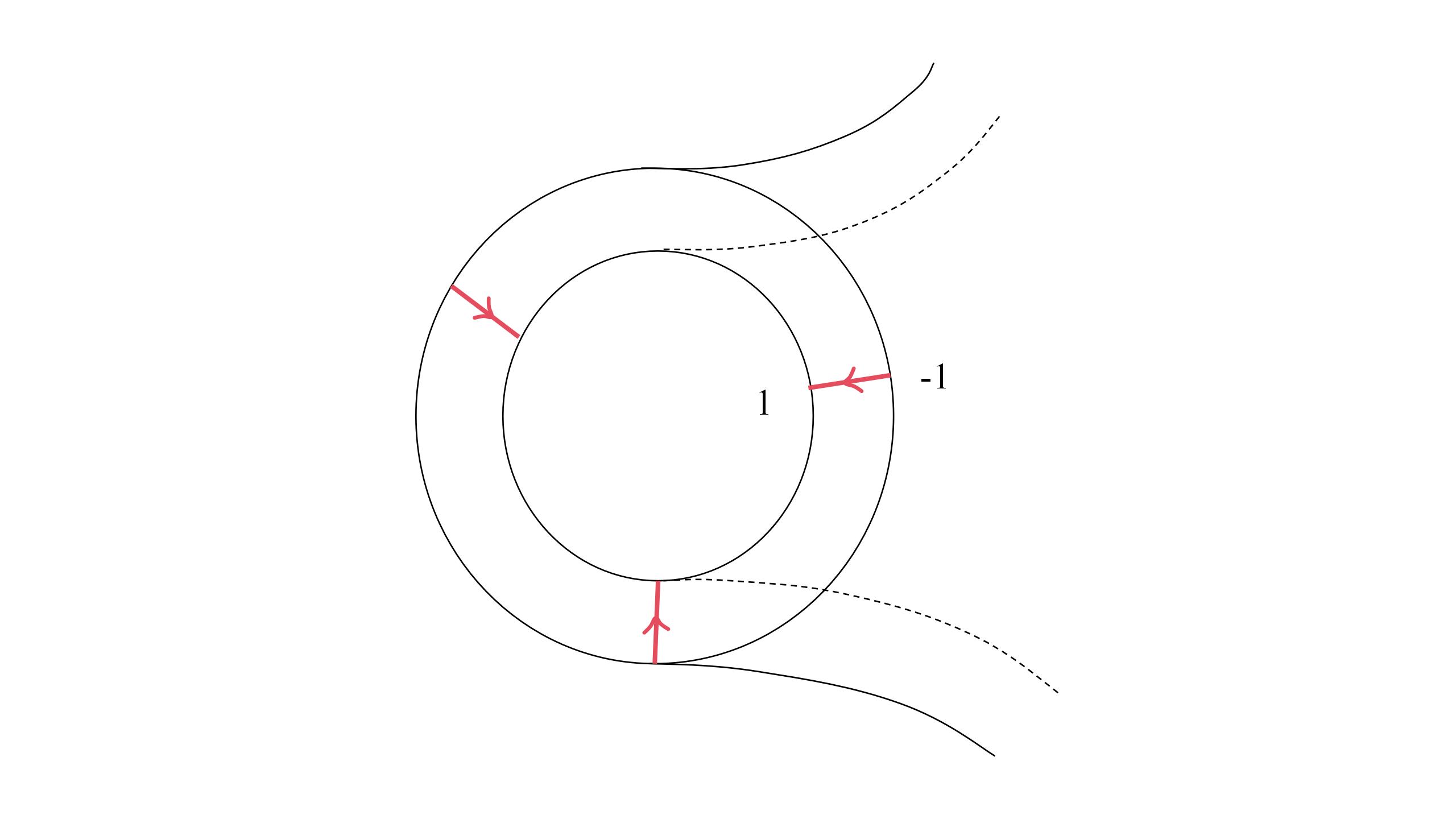} 
  \caption{induced marking on a boundary component of $\mathbb{L}_{T}\times(-1,1)$}
  \label{fig:induced boundary marking on lantern}
\end{figure}

Moreover, we require that the natural orientation of the interval $(-1,1)$ given by increasing of the coordinate agree with the direction of the marking edge. With this induced boundary marking on $\mathbb{L}_{T}\times(-1,1)$, we can consider the (reduced) skein algebra $\overline{\mathrm{SkAlg}}(\mathbb{L}_{T})$ of the lantern surface.  This algebra is opposite to the skein algebra
of lantern surface considered in \cite{GY1}, in fact, if we switch
the orientation of the boundary marking in our case, the resulting
skein algebra will be exactly the same as the one considered in \cite{GY1}. 

As mentioned earlier, our construction of corner-reduced skein module
is motivated by the corner-reduction construction in \cite{GY1},
which is a relation imposed on skein algebra of certain surfaces.
Let's discuss this construction of \cite{GY1} in the case of lantern
surface. Like what we did for our corner-reduction, the first step
is to twist the product structure on $\overline{\mathrm{SkAlg}}(\mathbb{L}_{T})$.
Recall that the skein module $\overline{\mathrm{Sk}}(T)$ has a $\prod_{f\in\mathbf{f}(T)}\mathbb{Z}^{M_{f}}$-grading,
and there is also a skew-symmetric bilinear form $\langle\ ,\ \rangle_{f}$
on each $\mathbb{Z}^{M_{f}}$. Now, the inclusion of the boundary
marked 3-manifolds $\iota_{T}\colon\mathbb{L}_{T}\times(-1,1)\hookrightarrow T$
induces a well-defined $R$-module homomorphism
\[
(\iota_{T})_{*}\colon\overline{\mathrm{SkAlg}}(\mathbb{L}_{T})=\overline{\mathrm{Sk}}(\mathbb{L}_{T}\times(-1,1))\rightarrow\overline{\mathrm{Sk}}(T),
\]
which induces $\prod_{f\in\mathbf{f}(T)}\mathbb{Z}^{M_{f}}$-grading
on $\overline{\mathrm{SkAlg}}(\mathbb{L}_{T})$ in the obvious way.
For $x,y\in\overline{\mathrm{SkAlg}}(\mathbb{L}_{T})$ which are $\prod_{f\in\mathbf{f}(T)}\mathbb{Z}^{M_{f}}$-homogeneous,
we define
\[
x\cdot y=A^{-\frac{1}{2}\langle d(x),d(y)\rangle}xy
\]
and extend by bilinearity to all elements of $\overline{\mathrm{SkAlg}}(\mathbb{L}_{T})$.

Recall in \ref{subsec:Reduced skein module of T}, we introduced the
element $\Gamma_{ab}\in\overline{\mathrm{Sk}}(T)$, which is given by an ``untwisted'' trival 
ribbon arc near a sink of the boundary marking of $T$ connecting the two marking edges 
emanating from the edges of $T$ labeled $a$ and $b$; by a small isotopy,
we can assume that this ribbon arc is contained in the embedded thickened
lantern from the outset, and this gives us elements of $\overline{\mathrm{SkAlg}}(\mathbb{L}_{T})$.
Let $\Gamma_{ab}^{\prime}\in\overline{\mathrm{SkAlg}}(\mathbb{L}_{T})$
be the element represented by this arc with both endpoints in $+$
states and $\Gamma_{ab}^{\prime\prime}\in\overline{\mathrm{SkAlg}}(\mathbb{L}_{T})$
be the nontrivial element represented by this arc with one endpoint
in $+$ state and the other in $-$ state. (Note that there is only
one choice of mixed states on the two endpoints of this arc
that gives a nontrivial element of $\overline{\mathrm{SkAlg}}(\mathbb{L}_{T})$,
the other choice gives a bad arc and hence is $0$ in $\overline{\mathrm{SkAlg}}(\mathbb{L}_{T})$).
See Figure \ref{fig:Gamma' in the lantern}.
\begin{figure}[h]
  \includegraphics[scale=0.2]{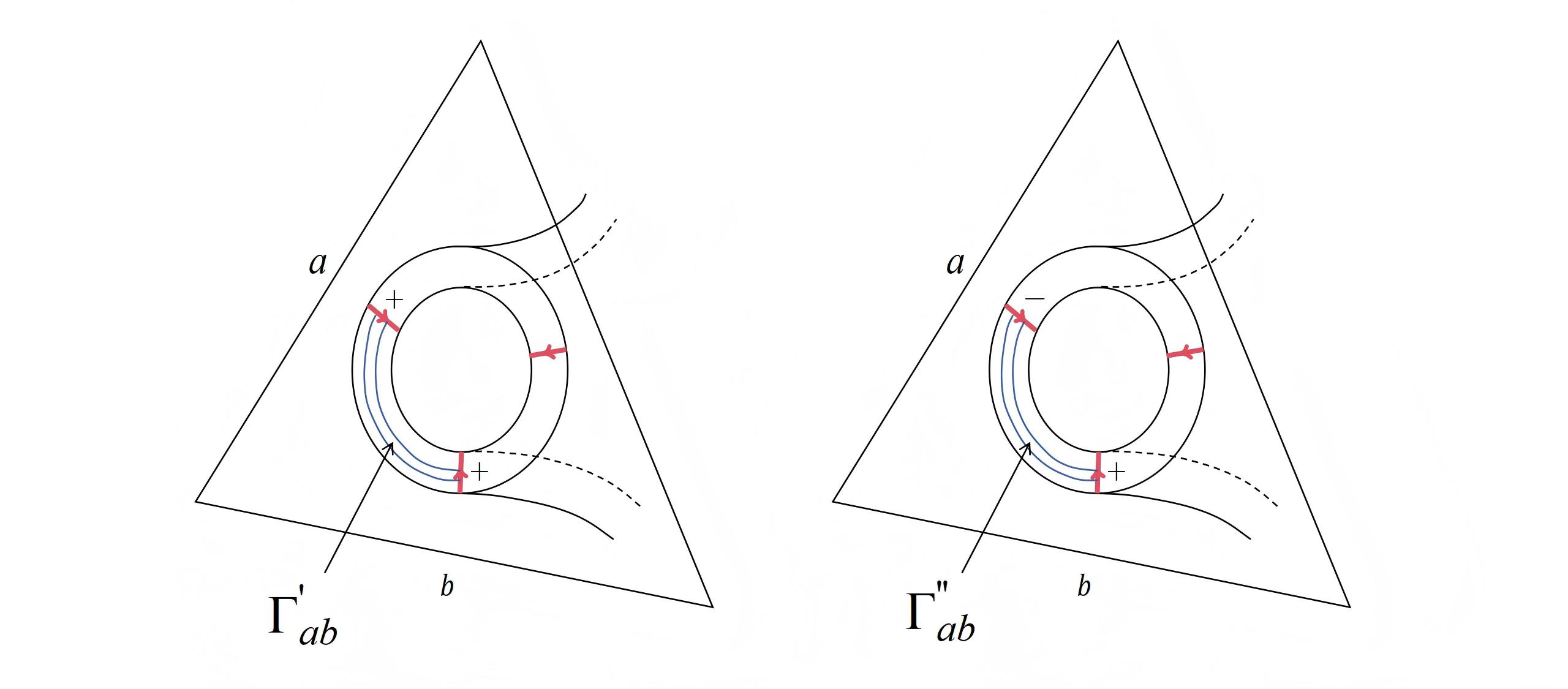} 
  \caption{}
  \label{fig:Gamma' in the lantern}
\end{figure}

\begin{defn}
\label{def:corner-reduced skein module of lantern surfaces}We consider
the right ideal $J^{c}$ of the algebra $(\overline{\mathrm{SkAlg}}(\mathbb{L}),\cdot)$
generated by the set of elements
\[
\left\{ \Gamma_{ab}^{\prime}-(-A^{2})^{-\frac{1}{2}},\ \Gamma_{ab}^{\prime\prime}-1\biggm|\begin{array}{c}
a,b\in\{z,z^{\prime},z^{\prime\prime},y,y^{\prime},y^{\prime\prime}\}\\
\text{\ensuremath{a} and \ensuremath{b} are not labels of opposite edges}
\end{array}\right\} .
\]
The \emph{corner-reduced skein module} $\overline{\mathrm{Sk}}^{c}(\mathbb{L})$
of the lantern is defined to be the quotient module
\[
\overline{\mathrm{Sk}}^{c}(\mathbb{L})=\overline{\mathrm{SkAlg}}(\mathbb{L})\biggm/J^{c}
\]
\end{defn}

\begin{rem}
The corner-reduced skein module of the lantern surface
given above is essentially the same as the one which appeared in \cite{GY1}
(although in \cite{GY1} one can consider corner-reduced module of
any so-called surface with triangular boundary, not just the lantern
surface). However since our skein algebra $\overline{\mathrm{SkAlg}}(\mathbb{L}_{T})$
has product opposite to the one in \cite{GY1}, we need to consider the right
ideal $J^{c}$, whereas in \cite{GY1} one quotients out the\emph{
left }ideal generated by the same set of elements.
\end{rem}

Before describing the next step of Garoufalidis \& Yu's construction,
let's first establish the following compatibility between the splitting
homomorphisms facilitated by inclusions.

\begin{lem}
\label{lem:lantern embeds in T induces map between corner reduced modules}
The inclusion $\mathbb{L}_{T}\hookrightarrow T$
induces a well-defined $R$-module homomorphism $\Phi_{T}\colon\overline{\mathrm{Sk}}^{c}(\mathbb{L}_{T})\rightarrow\overline{\mathrm{Sk}}^{c}(T)$
which commutes with the splitting homomorphisms. More precisely, the following
diagram commutes,
\begin{equation}
\label{eq:compatability of GY splitting with our splitting}
\begin{tikzcd}
\mathrm{Sk}(\Sigma_{\mathcal{T}}) \arrow[r, "\Theta"] \arrow[d, two heads, "\Phi"] & \bigotimes_{T\in\mathcal{T}}\overline{\mathrm{Sk}}^{c}(\mathbb{L}_{T}) \arrow[r, "\bigotimes\Phi_{T} "] & \bigotimes_{T\in\mathcal{T}}\overline{\mathrm{Sk}}^{c}(T) \arrow[d, two heads] \\
\mathrm{Sk}(Y) \arrow[rr, "\sigma"]                                     &                                                                                                       & \overline{\bigotimes}_{T\in\mathcal{T}}\overline{\mathrm{Sk}}^{c}(T)          
\end{tikzcd},
\end{equation}
where $\Phi\colon\mathrm{Sk}(\Sigma_{\mathcal{T}})\rightarrow\mathrm{Sk}(Y)$
is the map induced by the inclusion $\Sigma_{\mathcal{T}}\hookrightarrow Y$;
$\Theta$ is the splitting homomorphism in \cite{GY1} (See Theorem
4.16 of \cite{GY1}) and $\sigma\colon{\mathrm{Sk}}(Y)\rightarrow\overline{\bigotimes}_{T\in\mathcal{T}}\overline{\mathrm{Sk}}^{c}(T)$
is our splitting homomorphism given by Theorem \ref{thm:splitting homomorphism}.
\end{lem}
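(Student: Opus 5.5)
The plan has two parts: first show that $\Phi_{T}$ is well defined, then check that the square commutes.

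\textbf{Part 1: $\Phi_{T}$ is well defined.} The inclusion $\iota_{T}\colon\mathbb{L}_{T}\times(-1,1)\hookrightarrow T$ respects boundary markings, so it induces $(\iota_{T})_{*}\colon\mathrm{SkAlg}(\mathbb{L}_{T})\rightarrow\mathrm{Sk}(T)$. A bad arc of $\mathbb{L}_{T}$ near a marking vertex is sent to a bad arc near the corresponding vertex of the marking of $T$. Hence $(\iota_{T})_{*}$ descends to $\overline{\mathrm{SkAlg}}(\mathbb{L}_{T})\rightarrow\overline{\mathrm{Sk}}(T)$, and then composes with the quotient to $\overline{\mathrm{Sk}}^{c}(T)$.

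It remains to show that the right ideal $J^{c}$ maps into $I^{c}\cdot\overline{\mathrm{Sk}}(T)$. Since $J^{c}$ is a right ideal, its elements are sums of $x\cdot g$ with $x$ homogeneous and $g$ one of the generators $\Gamma_{ab}^{\prime}-(-A^{2})^{-\frac{1}{2}}$ or $\Gamma_{ab}^{\prime\prime}-1$. I first identify the stacking geometrically. The marking edges point in the increasing $(-1,1)$ direction, towards the sink. So in the product $xg$ of $\overline{\mathrm{SkAlg}}(\mathbb{L}_{T})$, the arc $g$ is the part adjacent to the sink. After pushing into $T$ this gives $(\iota_{T})_{*}(xg)=g\cup(\iota_{T})_{*}(x)$, where $g$ now acts through the left $\mathbb{T}_{f}$-action at the sink of the face $f$ containing the corner.

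Both $\cdot$-products are twisted by the same form $\langle\ ,\ \rangle$, since the grading on $\overline{\mathrm{SkAlg}}(\mathbb{L}_{T})$ is pulled back from $\overline{\mathrm{Sk}}(T)$. Therefore
\[
(\iota_{T})_{*}(x\cdot g)=A^{\kappa}\,g\cdot(\iota_{T})_{*}(x)
\]
for a scalar exponent $\kappa$ depending on $d(x)$ and $d(g)$. The exponent $\kappa$ is irrelevant, because we only need $g\cdot(\iota_{T})_{*}(x)$ to vanish in $\overline{\mathrm{Sk}}^{c}(T)$:
\begin{itemize}
\item For $g=\Gamma_{ab}^{\prime}-(-A^{2})^{-\frac{1}{2}}$, the element $g\cdot(\iota_{T})_{*}(x)$ lies in $I^{c}\cdot\overline{\mathrm{Sk}}(T)$ by definition.
\item For $g=\Gamma_{ab}^{\prime\prime}-1$, Remark \ref{rem:mixed state element in terms of generators} identifies the mixed-state arc with a Weyl-ordered monomial $[\Gamma\Gamma^{-1}]$ in $\mathbb{T}_{f}$. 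In $(\mathbb{T}_{f},\cdot)$ this equals $\Gamma\cdot\Gamma^{-1}$. Modulo $I^{c}$ it becomes $(-A^{2})^{-\frac{1}{2}}(-A^{2})^{\frac{1}{2}}=1$, so again $g$ acts as zero.
\end{itemize}
Homogeneity of the generators is clear, since $\Gamma^{\prime}$, $\Gamma^{\prime\prime}$ and scalars have fixed degrees. This gives $\Phi_{T}$.

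\textbf{Part 2: commutativity of (\ref{eq:compatability of GY splitting with our splitting}).} It suffices to check this on a stated diagram $\alpha$ on $\Sigma_{\mathcal{T}}$, since these span $\mathrm{Sk}(\Sigma_{\mathcal{T}})$, transverse to the boundary circles of the lanterns. By Theorem 4.16 of \cite{GY1}, $\Theta(\alpha)$ is obtained by cutting $\alpha$ along these circles and summing over compatible states at the cut points. In $Y$, each circle lies on an internal face $f$ and surrounds its trivalent sink. The crossing points of $\Phi(\alpha)$ with $f$ are exactly the crossings with the circle.

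Thicken $\alpha$ slightly in $\Sigma_{\mathcal{T}}\times(-1,1)$ so that it is in general position for the foliation on $f$. Then slide each crossing point along a generic leaf onto the marking edge it already lies on. Here I use that each boundary circle meets the three marking edges of $f$ as in Figure \ref{fig:induced boundary marking on lantern}, and choose the sliding path inside $\mathbb{L}_{T}\times(-1,1)$. The resulting pieces $\Phi(\alpha)_{T}$ are precisely $(\iota_{T})_{*}$ of the pieces of $\alpha$. The sum over compatible states defining $\sigma$ is the same sum as in $\Theta$, so $\sigma(\Phi(\alpha))$ equals the image of $\bigl(\bigotimes\Phi_{T}\bigr)\Theta(\alpha)$ in the reduced tensor product.

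\textbf{Main obstacle.} I expect the hardest step to be the conventions at the cut. The cutting convention of \cite{GY1} uses the opposite product and a fixed height order of the endpoints on a boundary circle. After sliding to the marking edges, several endpoints on the same marking edge must be ordered compatibly with the direction of the marking, and the state conventions must agree. If they disagree, I would try to relate the two by height exchanges and half twists, exactly as in the type (\Romannum{2})--(\Romannum{3}) invariance in the proof of Theorem \ref{thm:splitting homomorphism}. Any leftover powers of $A$ should then be absorbed by corner reduction, as in Case 1 of that proof.
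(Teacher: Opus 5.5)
Your overall route is the paper's. You kill the generators of $J^{c}$ using $(\iota_{T})_{*}\Gamma_{ab}^{\prime}=\Gamma_{ab}$ and $(\iota_{T})_{*}\Gamma_{ab}^{\prime\prime}=\Gamma_{cb}\cdot\Gamma_{ac}^{-1}$. You then note that both routes around the square send a link in $\Sigma_{\mathcal{T}}\times(-1,1)$ in general position to the class of $\sum_{\overrightarrow{\epsilon}}\otimes_{T}\ell_{T}^{\overrightarrow{\epsilon}}$. Your Part 2 is exactly the paper's identification. The paper asserts it directly from the description of $\Theta$, with no correction terms, so your fallback about absorbing stray powers of $A$ is not needed. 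The problem is the central step of Part 1, which is wrong as written.

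\textbf{The ordering.} The right ideal generated by $g$ consists of sums $g\cdot x$; sums $x\cdot g$ form the left ideal. Moreover, $\alpha\cup\beta$ puts $\alpha$ at larger $t$, and increasing $t$ runs along the marking edges towards the sink. So in $x\cup g$ it is $x$, not $g$, that sits next to the sink. These two slips compensate each other in the picture.

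\textbf{Why the exponent is not irrelevant.} Contrary to your closing remark, the generators $\Gamma_{ab}^{\prime}-(-A^{2})^{-\frac{1}{2}}$ and $\Gamma_{ab}^{\prime\prime}-1$ are \emph{not} homogeneous, so there is no single $\kappa$. Grant your identification $(\iota_{T})_{*}(x\cup\Gamma_{ab}^{\prime})=\Gamma_{ab}\cup w$ with $w=(\iota_{T})_{*}(x)$. The two terms of $g$ then pick up different twists, and you get $A^{\langle d(\Gamma_{ab}),d(x)\rangle}\,\Gamma_{ab}\cdot w-(-A^{2})^{-\frac{1}{2}}w$. Its class in $\overline{\mathrm{Sk}}^{c}(T)$ is $\bigl(A^{\langle d(\Gamma_{ab}),d(x)\rangle}-1\bigr)(-A^{2})^{-\frac{1}{2}}w$, which need not vanish.

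\textbf{The repair} is the paper's computation. In $g\cdot x$ the arc $g$ is stacked on top, hence next to the sink. So $(\iota_{T})_{*}(\Gamma_{ab}^{\prime}\cup x)=\Gamma_{ab}\cup(\iota_{T})_{*}(x)$ through the left $\mathbb{T}_{f}$-action. The grading on $\overline{\mathrm{SkAlg}}(\mathbb{L}_{T})$ is pulled back from $\overline{\mathrm{Sk}}(T)$, so the two $\cdot$-twists coincide. This gives the exact identity $(\iota_{T})_{*}(g\cdot x)=(\iota_{T})_{*}(g)\cdot(\iota_{T})_{*}(x)$, term by term. Because no stray power of $A$ appears, the inhomogeneous generators can be handled by linearity, and your two bullet points then finish the argument.
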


\begingroup
\allowdisplaybreaks
\begin{proof}
First we show that the inclusion $\mathbb{L}_{T}\hookrightarrow T$
induces a well-defined $R$-module homomorphim $\Phi_{T}\colon\overline{\mathrm{Sk}}^{c}(\mathbb{L}_{T})\rightarrow\overline{\mathrm{Sk}}^{c}(T)$.
That is, we need to show that the composition of $R$-module homomorphisms
\[
\overline{\mathrm{SkAlg}}(\mathbb{L}_{T})=\overline{\mathrm{Sk}}(\mathbb{L}_{T}\times(-1,1))\stackrel{(\iota_{T})_{*}}{\longrightarrow}\overline{\mathrm{Sk}}(T)\twoheadrightarrow\overline{\mathrm{Sk}}^{c}(T)
\]
maps the right ideal $J^{c}$ of $\left(\overline{\mathrm{SkAlg}}(\mathbb{L}_{T}),\cdot\right)$
to $\{0\}$. By construction, under the inclusion $\iota_T\colon\mathbb{L}_{T}\times(-1,1)\hookrightarrow T$,
we have 
\[
(\iota_{T})_{*}(\Gamma_{ab}^{\prime})=\Gamma_{ab}
\]
 and 
\[
(\iota_{T})_{*}(\Gamma_{ab}^{\prime\prime})=[\Gamma_{ac}^{-1}\Gamma_{cb}]=\Gamma_{cb}\cdot\Gamma_{ac}^{-1}.
\]
Here, $c$ is the label of the third edge of the face whose two other
edges are labeled $a$ and $b$; we are also assuming $\Gamma_{ab}^{\prime\prime}$
has $+$ state on the marking edge emanating from edge labeled $b$
and $-$ state on the marking edge emanating from edge labeled $a$
(as in Figure \ref{fig:Gamma' in the lantern}. Therefore, for any $\kappa\in\overline{\mathrm{SkAlg}}(\mathbb{L}_{T})$, we have
\begin{align*}
(\iota_{T})_{*}\left((\Gamma_{ab}^{\prime}-(-A^{2})^{-\frac{1}{2}})\cdot\kappa\right)
 &=(\Gamma_{ab}-(-A^{2})^{-\frac{1}{2}})\cdot\left((\iota_{T})_{*}\kappa\right)\\
 &=0\ \text{in}\ \overline{\mathrm{Sk}}(T)
\end{align*}
 and 
\begin{align*}
(\iota_{T})_{*}\left((\Gamma_{ab}^{\prime\prime}-1)\cdot\kappa\right)
 &=(\Gamma_{cb}\cdot\Gamma_{ac}^{-1}-1)\cdot\left((\iota_{T})_{*}\kappa\right)\\
 &=0\ \text{in}\ \overline{\mathrm{Sk}}(T).
\end{align*}
This shows that $(\iota_{T})_{*}$ descends to a well-defined $R$-module homomorphism $\Phi_{T}\colon\overline{\mathrm{Sk}}^{c}(\mathbb{L}_{T})\rightarrow\overline{\mathrm{Sk}}^{c}(T)$.

To see that Diagram \ref{eq:compatability of GY splitting with our splitting}
commutes: We let $\ell$ be a ribbon link in $\Sigma_{\mathcal{T}}\times(-1,1)$,
up to isotopy in $\Sigma_{\mathcal{T}}\times(-1,1)$ we assume it
is in general position in the 3-manifold $Y$ (see \ref{subsec:Splitting homomorphism}
to recall the notion of general position). Then $\sigma\Phi(\ell)$
is the element in $\overline{\bigotimes}_{T\in\mathcal{T}}\overline{\mathrm{Sk}}^{c}(T)$
represented by $\sum_{\overrightarrow{\epsilon}}\left(\underset{T\in\mathcal{T}}{\otimes}\ell_{T}^{\overrightarrow{\epsilon}}\right)$,
where $\ell_{T}$ is the part of $\ell$ in $T$ (see Theorem \ref{thm:splitting homomorphism}).
On the other hand, $\Theta(\ell)$ is also represented by $\sum_{\overrightarrow{\epsilon}}\left(\underset{T\in\mathcal{T}}{\otimes}\ell_{T}^{\overrightarrow{\epsilon}}\right)$
(see \cite[4.5]{GY1} for the description of the splitting homomorphism
$\Theta$). Therefore, the image of a ribbon link $\ell$ under both
routes is the element of $\overline{\bigotimes}_{T\in\mathcal{T}}\overline{\mathrm{Sk}}^{c}(T)$
represented by $\sum_{\overrightarrow{\epsilon}}\left(\underset{T\in\mathcal{T}}{\otimes}\ell_{T}^{\overrightarrow{\epsilon}}\right)$.
\end{proof}
\endgroup

\begin{rem}
\label{rem:surjectivity of maps induced by inclusion}The maps $\Phi\colon\mathrm{SkAlg}(\Sigma_{\mathcal{T}})\rightarrow\mathrm{Sk}(Y)$
and $\Phi_{T}\colon\overline{\mathrm{Sk}}^{c}(\mathbb{L}_{T})\rightarrow\overline{\mathrm{Sk}}^{c}(T)$
are in fact surjective, see \cite[Proposition 4.3]{GY1}. (For the surjectivity of $\Phi_{T}$:
the map $\left(\iota_{T}\right)_{*}$ is surjective, while $\Phi_{T}$ is
induced by $\left(\iota_{T}\right)_{*}$ on quotient, so it must also
be surjective.)
\end{rem}

\subsection{Construction of $Tr_{T}^{[GY]}$ and its relation with our $Tr_{T}$ \label{subsec:Quantum trace map for a single lantern and relation to our quantum trace map for a single tetrahedron}}

Next in the construction of Garoufalidis \& Yu, a quantum trace map $Tr_{T}^{[GY]}$ from
the corner reduced module $\overline{\mathrm{Sk}}^{c}(\mathbb{L}_{T})$
of the lantern in the ideal tetrahedron $T$ to the quantum module
$\hat{\mathcal{G}}(T)$ of $T$ is constructed. 

By definition, $\overline{\mathrm{Sk}}^{c}(\mathbb{L}_{T})$ is a quotient
of $\mathrm{SkAlg}(\mathbb{L}_{T})$, however, using a counit trick (see \cite[Lemma 4.22]{GY1}),
one can express $\overline{\mathrm{Sk}}^{c}(\mathbb{L}_{T})$ as the
quotient of a simpler module. We'll now recall the details of this. Let $a\in\{z,z^{\prime},z^{\prime},y,y^{\prime},y^{\prime\prime}\}$
be a labeling of an edge of $T$, we let $x_{a}$ be the ribbon tangle
in $\mathbb{L}_{T}\times(-1,1)$ given by an ``untwisted'' trivial
ribbon arc connecting the two marking edges in $\partial\mathbb{L}_{T}\times(-1,1)$
which, after embedding in $T$, are parts of the two marking edges
emanating from the edge labeled $a$, see Figure \ref{fig:embedded lantern with standard arcs}.
\begin{figure}[h]
  \includegraphics[scale=0.2]{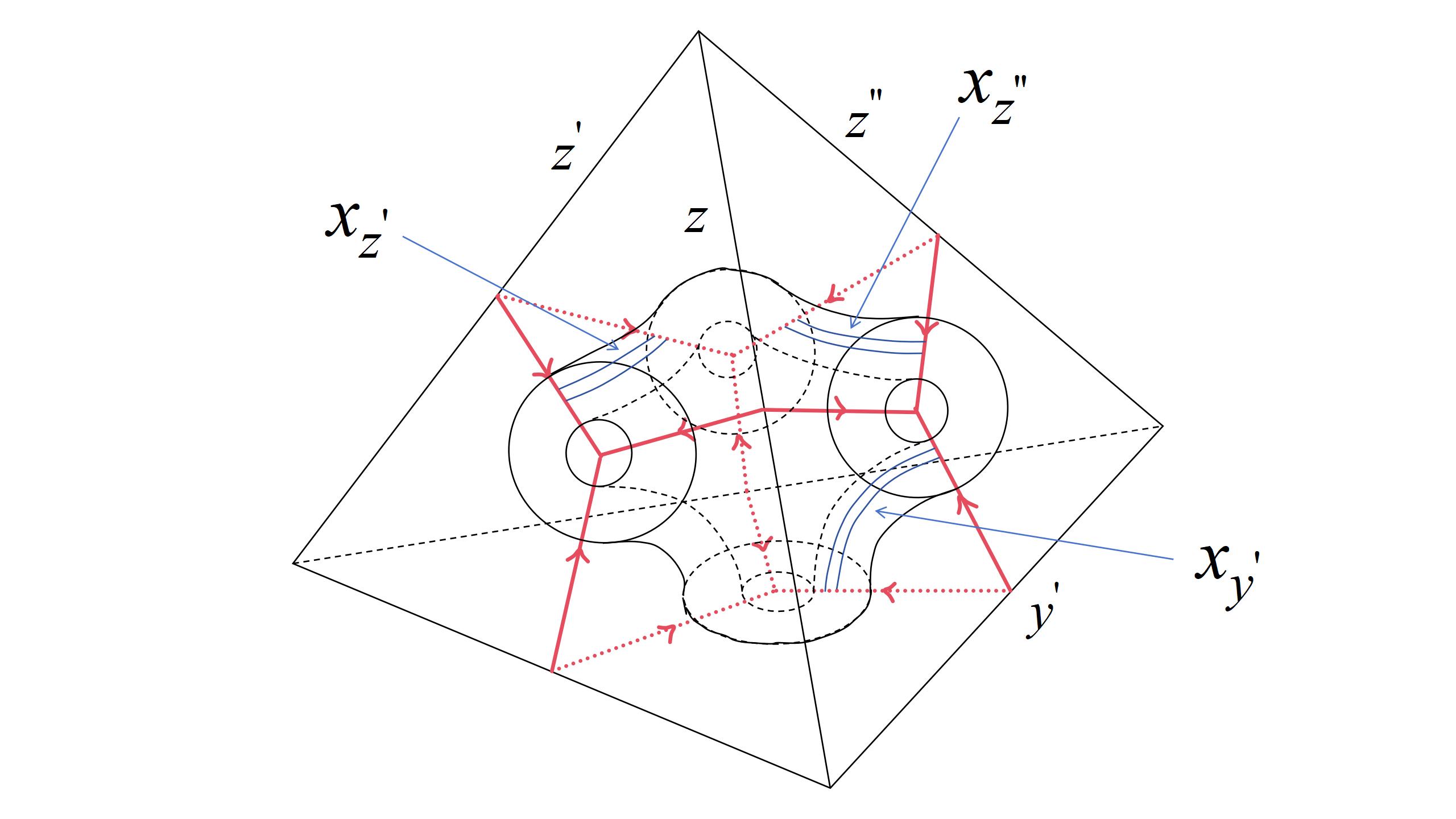} 
  \caption{}
  \label{fig:embedded lantern with standard arcs}
\end{figure}
Such ribbon arcs give us some nontrivial elements of (unreduced) skein
algebra $\mathrm{SkAlg}(\mathbb{L}_{T})$: we let $x_{a}^{++}$, $x_{a}^{--}$
be the ribbon tangle $x_{a}$ with both $+$ states and both $-$
states, respectively, assigned to its endpoints and let $x_{a}^{+-}$
and $x_{a}^{-+}$ be the one with mixed states assigned to endpoints;
for our purpose we do not need to specify which element is $x_{a}^{+-}$
or $x_{a}^{-+}$. For each $a\in\{z,z^{\prime},z^{\prime},y,y^{\prime},y^{\prime\prime}\}$,
the subalgebra of $\mathrm{SkAlg}(\mathbb{L}_{T})$ (in $\cup$-product)
generated by $x_{a}^{++},\ x_{a}^{--},\ x_{a}^{+-}\ \text{and}\ x_{a}^{-+}$
is isomorphic to $\mathcal{O}_{A^{2}}(SL_{2})$, 
such that the two mixed states elements $x_{a}^{+-}$ and $x_{a}^{-+}$
correspond to off-diagonal elements of $\mathcal{O}_{A^{2}}(SL_{2})$.
We denote by $\mathcal{S}^{0}\cong\left(\mathcal{O}_{A^{2}}(SL_{2})\right)^{\otimes6}$
the subalgebra of $\mathrm{SkAlg}(\mathbb{L}_{T})$ generated by
all 24 such elements (4 for each edge of $T$). Because $x_{a}^{+-}$
and $x_{a}^{-+}$ correspond to off-diagonal elements of $\mathcal{O}_{A^{2}}(SL_{2})$,
they commute with every element of $\mathcal{S}^{0}$ up to multiplication
of a power of $A$. The restriction of the quotient map $\mathrm{SkAlg}(\mathbb{L}_{T})\twoheadrightarrow\overline{\mathrm{Sk}}^{c}(\mathbb{L}_{T})$
to $\mathcal{S}^{0}$ is actually surjective (see \cite[Lemma 4.22]{GY1}), therefore $\overline{\mathrm{Sk}}^{c}(\mathbb{L}_{T})$
is a quotient of $\mathcal{S}^{0}$.

Observe that upon inclusion $\mathbb{L}_{T}\times(-1,1)\hookrightarrow T$,
the skeins $x_{a}^{++}$ and $x_{a}^{--}$ become skeins $\hat{a}$
and $\hat{a}^{-1}$ in $\overline{\mathrm{Sk}}(T)$ respectively; whereas
$x_{a}^{+-}$ and $x_{a}^{-+}$ become bad arcs, which are $0$
in $\overline{\mathrm{Sk}}(T)$. Thus it is natural to quotient out
these mixed-states elements, in fact we consider the\emph{ left }ideal $B$ of $\overline{\mathrm{SkAlg}}(\mathbb{L}_{T})$
generated by the elements $x_{a}^{+-}$ and $x_{a}^{-+}$, $a\in\{z,z^{\prime},z^{\prime},y,y^{\prime},y^{\prime\prime}\}$, and
note that it doesn't matter whether we consider it to be a left ideal
in $\cup$-product or $\cdot$-product because the generators
$x_{a}^{+-}$ and $x_{a}^{-+}$ are $\prod_{f\in\mathbf{f}(T)}\mathbb{Z}^{M_{f}}$-homogeneous.
Let $B^{c}$ be the $R$-submodule of $\overline{\mathrm{Sk}}^{c}(\mathbb{L}_{T})$
given by the image of $B$ under the quotient map $\overline{\mathrm{SkAlg}}(\mathbb{L}_{T})\twoheadrightarrow\overline{\mathrm{Sk}}^{c}(\mathbb{L}_{T})$
and define the $R$-module quotient
\[
\hat{\mathcal{G}}(\mathbb{L}_{T}):=\overline{\mathrm{Sk}}^{c}(\mathbb{L}_{T})\biggm/B^{c}.
\]
Because we know that $\overline{\mathrm{Sk}}^{c}(\mathbb{L}_{T})$ is
a quotient of $\mathcal{S}^{0}$, the $R$-module $\hat{\mathcal{G}}(\mathbb{L}_{T})$
is also a quotient of $\mathcal{S}^{0}$. By construction, the quotient
map $\mathcal{S}^{0}\twoheadrightarrow\hat{\mathcal{G}}(\mathbb{L}_{T})$
sends the left ideal $B^{0}$ of $\mathcal{S}^{0}$
generated by the mixed-states elements $x_{a}^{+-}$ and $x_{a}^{-+}$
(regardless of using $\cdot$-product or $\cup$-product) to $0$, thus
$\hat{\mathcal{G}}(\mathbb{L}_{T})$ is a quotient of $\mathcal{S}^{0}\bigm/B^{0}$.
Using the fact that $x_{a}^{++},\ x_{a}^{--},\ x_{a}^{+-}\ \text{and}\ x_{a}^{-+}$
are matrix elements of $\mathcal{O}_{A^{2}}(SL_{2})$, the ideal $B^{0}$
of $\mathcal{S}^{0}$ is actually \emph{2-sided} because $x_{a}^{+-}$
and $x_{a}^{-+}$ commute (in $\cdot$-product or $\cup$-product)
with elements of $\mathcal{S}^{0}$ up to scalar multiples. Moreover
we find that:
\begin{itemize}
\item In $\cup$-product, $\mathcal{S}^{0}\bigm/B^{0}$ is the Laurent
polynomial algebra 
\[
R\left[\left(x_{z}^{++}\right)^{\pm1},\left(x_{z^{\prime}}^{++}\right)^{\pm1},\left(x_{z^{\prime\prime}}^{++}\right)^{\pm1},\left(x_{y}^{++}\right)^{\pm1},\left(x_{y^{\prime}}^{++}\right)^{\pm1},\left(x_{y^{\prime\prime}}^{++}\right)^{\pm1}\right],
\]
where $\left(x_{a}^{++}\right)^{-1}=x_{a}^{-}$.
\item In $\cdot$-product, $\left(\mathcal{S}^{0},\cdot\right)\bigm/B^{0}$
becomes the quantum torus
\[
\mathbb{T}^{0}(T):=\frac{R\left\langle\left(x_{z}^{++}\right)^{\pm1},\left(x_{z^{\prime}}^{++}\right)^{\pm1},\left(x_{z^{\prime\prime}}^{++}\right)^{\pm1},\left(x_{y}^{++}\right)^{\pm1},\left(x_{y^{\prime}}^{++}\right)^{\pm1},\left(x_{y^{\prime\prime}}^{++}\right)^{\pm1}\right\rangle}{\left\langle \begin{array}{c}
x_{a}^{++}\cdot x_{b^{\prime}}^{++}=Ax_{b^{\prime}}^{++}\cdot x_{a}^{++}\\
x_{a^{\prime}}^{++}\cdot x_{b^{\prime\prime}}^{++}=Ax_{b^{\prime\prime}}^{++}\cdot x_{a^{\prime}}^{++}\\
x_{a^{\prime\prime}}^{++}\cdot x_{b}^{++}=Ax_{b}^{++}\cdot x_{a^{\prime\prime}}^{++}
\end{array}\Bigg|a,b\in\{z,y\}\right\rangle }.
\]
\end{itemize}
Of course, as $R$-modules, we have $\mathcal{S}^{0}\bigm/B^{0}=\left(\mathcal{S}^{0},\cdot\right)\bigm/B^{0}=\mathbb{T}^{0}(T)$,
and similarly we find that monomials in $\cup$-product are the Weyl-ordering
of the same monomial in $\cdot$-product. 

The next theorem is one
of the key results of \cite{GY1}. We phrase it in our notation, recall
that we also have the quantum torus $\mathbb{T}\langle T\rangle=\left(\mathbb{B}^{\otimes6},\cdot\right)$
given by the skein algebras associated with the six bivalent sources at the center
of the six edges of the ideal tetrahedron $T$ under $\cdot$-product (see Lemma \ref{lem:presntation of (T^tensor4,cdot)  and (B^tensor6, cdot)}
(ii)).

\begin{thm}
\emph{(\cite[Theorem 4.25]{GY1})} The isomorphism of quantum tori 
\[
\Psi\colon\mathbb{T}^{0}(T)\overset{\cong}{\longrightarrow}\mathbb{T}\left\langle T\right\rangle
\]
given by $x_{a}^{++}\mapsto\hat{a}$, $a\in\{z,z^{\prime},z^{\prime},y,y^{\prime},y^{\prime\prime}\}$
descends to an isomorphism of $R$-modules
\[
\tilde{\Psi}\colon\hat{\mathcal{G}}(\mathbb{L}_{T})\overset{\cong}{\longrightarrow}\hat{\mathcal{G}}(T).
\]
\end{thm}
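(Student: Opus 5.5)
The plan is to build the map $\hat{\mathcal{G}}(\mathbb{L}_{T})\to\hat{\mathcal{G}}(T)$ geometrically rather than by hand, by passing through the tetrahedron. Consider the composition
\[
\mathcal{S}^{0}\twoheadrightarrow\overline{\mathrm{Sk}}^{c}(\mathbb{L}_{T})\stackrel{\Phi_{T}}{\longrightarrow}\overline{\mathrm{Sk}}^{c}(T)\stackrel{Tr_{T}^{c}}{\longrightarrow}\hat{\mathcal{G}}(T),
\]
where $\Phi_{T}$ is the map of Lemma \ref{lem:lantern embeds in T induces map between corner reduced modules}. Under $(\iota_{T})_{*}$ the mixed-state arcs $x_{a}^{+-},x_{a}^{-+}$ become bad arcs, so they vanish in $\overline{\mathrm{Sk}}(T)$. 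Also, $(\iota_{T})_{*}$ preserves the $\prod_{f}\mathbb{Z}^{M_{f}}$-grading, and the $\cdot$-product on $\overline{\mathrm{SkAlg}}(\mathbb{L}_{T})$ is defined by pulling back that grading. Hence the left ideal $B$ is sent into the $\cdot$-left ideal generated by zero elements. So the composition kills $B^{c}$ and descends to an $R$-linear map $\hat{\mathcal{G}}(\mathbb{L}_{T})\to\hat{\mathcal{G}}(T)$.

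Next I would check that this map is induced by $\Psi$. On a $\cdot$-monomial $x$ in the $x_{a}^{++}$, $(\iota_{T})_{*}$ gives the element $1\otimes\Psi(x)$ of $R[T]\otimes\mathbb{T}\langle T\rangle$. The twisting factors agree because both $\cdot$-products are defined by the same form $\langle\ ,\ \rangle$ on the same degrees. Since $Tr^{c}_{T}(1\otimes y)=P(1\otimes y)=y$, the induced map sends the class of $x$ to the class of $\Psi(x)$. This also shows that $\Psi$ maps the kernel of $\mathbb{T}^{0}(T)\twoheadrightarrow\hat{\mathcal{G}}(\mathbb{L}_{T})$ into $V_{T}+L_{T}$; that is, $\tilde{\Psi}$ is well defined. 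Surjectivity is immediate, because $\Psi$ is onto $\mathbb{T}\langle T\rangle$. Alternatively, $\Phi_{T}$ is surjective (Remark \ref{rem:surjectivity of maps induced by inclusion}) and $Tr^{c}_{T}$ is an isomorphism (Theorem \ref{thm:Tr induce isomorphism between corner reduced module and quantum module}).

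For injectivity I would show that $\Psi^{-1}$ descends, i.e.\ that $\Psi^{-1}(V_{T}+L_{T})$ maps to zero in $\hat{\mathcal{G}}(\mathbb{L}_{T})$. Since $\hat{\mathcal{G}}(\mathbb{L}_{T})$ is a quotient of $\mathbb{T}^{0}(T)$ by a right $\cdot$-submodule, it suffices to treat the generators. For the vertex relation $[x_{z}^{++}\cdot x_{z'}^{++}\cdot x_{z''}^{++}]-(-A^{2})^{1/2}$, I would mimic the skein identity behind $v_{zz'z''}$ in Corollary \ref{cor:reduced skein module of T}, but now inside $\mathbb{L}_{T}\times(-1,1)$. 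The three arcs $x_{z},x_{z'},x_{z''}$ surround the vertex region of the lantern. Isotoping their product, with relation \eqref{eq:S4} applied on the three boundary circles, rewrites it as a product of the corner arcs $\Gamma'_{zz''},\Gamma'_{z''z'},\Gamma'_{z'z}$, plus terms involving mixed corner arcs, which reduce via $\Gamma''_{ab}=1$. Modulo the right ideal $J^{c}$, this product becomes $(-A^{2})^{-3/2}$ times the appropriate power. For the lagrangian relation I would similarly reproduce $\ell_{zz''}$ in the lantern: its three terms involve corner arcs $[\Gamma_{zy'}\Gamma_{y'z''}^{-1}]$, etc. Each of these lifts to products of $\Gamma'$ and $\Gamma''$ in $\mathbb{L}_{T}$, and $J^{c}$ collapses them to the same scalars as $Q$ does. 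Combined with the vertex relation, this gives the lagrangian $1-\hat{z}''^{-2}-\hat{z}^{2}$, exactly as in the proof of Proposition \ref{prop:P descends to Tr}.

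The main obstacle is this last step: verifying the vertex and lagrangian relations directly in the lantern. There we no longer have the 3-ball presentation of Theorem \ref{thm:skein module of 3-ball}, only surface skein relations plus $J^{c}$ and $B^{c}$. I would first try the counit trick of \cite[Lemma 4.22]{GY1}: cut the lantern near each face circle, so that each relevant product of $x_{a}$'s becomes an explicit combination of corner arcs. I would then carefully track the powers of $A$ coming from the $\cdot$-twist and from the ordering in the right ideal $J^{c}$, which is opposite to the convention of \cite{GY1}.
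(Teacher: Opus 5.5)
The paper does not prove this statement (it is quoted from \cite{GY1}), so your proposal has to stand on its own, and only its first half does.

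\textbf{Well-definedness and surjectivity are fine.} The composite $\mathcal{S}^{0}\to\overline{\mathrm{Sk}}^{c}(\mathbb{L}_{T})\xrightarrow{\Phi_T}\overline{\mathrm{Sk}}^{c}(T)\xrightarrow{Tr^{c}_T}\hat{\mathcal{G}}(T)$ sends a monomial to its $\Psi$-image, exactly as in the proof of Lemma~\ref{lem:compatibility of GY Tr and our Tr for a single T}. So $\Psi$ carries the kernel of $\mathbb{T}^{0}(T)\to\hat{\mathcal{G}}(\mathbb{L}_{T})$ into $V_T+L_T$.

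One correction: $B$ does not die because of grading compatibility, since $(\iota_T)_*$ is not multiplicative. It dies because in $s\,x_a^{\pm\mp}$ the mixed arc is lowest in the $(-1,1)$-direction, hence nearest the sources. It therefore slides into the bigon at the source on edge $a$ and becomes a bad arc acting on the right.

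Note also that, because $Tr^{c}_T$ is an isomorphism, injectivity of $\tilde\Psi$ is equivalent to $\ker\Phi_T\subseteq B^{c}$. That is precisely the fact the paper imports from \cite{GY1} in the remark after Lemma~\ref{lem:compatibility of GY Tr and our Tr for a single T}. All the content is therefore in your last step, which is not yet an argument.

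\textbf{The injectivity step, as described, would not work.} The multi-arc $x_z\cup x_{z^{\prime}}\cup x_{z^{\prime\prime}}$ and the three corner arcs at $v$ are non-isotopic in $\mathbb{L}_{T}$. So no isotopy plus (S4) turns one product into the other; a relation between them exists only modulo $J^{c}+B$. Moreover, the 3-ball relations behind $v_{zz^{\prime}z^{\prime\prime}}$ and $\ell_{zz^{\prime\prime}}$ are not available in the surface.

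The missing idea is to start from a single corner arc. Let $C_i$ be the boundary circle of $\mathbb{L}_{T}$ in $f_i$, with labels as in Figure~\ref{fig:ideal T with edges and faces all labeled}. The arcs $x_z,x_{z^{\prime}},x_{z^{\prime\prime}}$ cut off a disk $H_v\subset\mathbb{L}_{T}$ whose other sides are segments of $C_1,C_2,C_3$, each containing one boundary puncture $p_1,p_2,p_3$. Since $H_v$ is a disk, the corner arc at $p_3$ (joining the $z$- and $z^{\prime\prime}$-markings of $C_3$) is isotopic to the arc running the long way round $\partial H_v$. Cutting the latter by (S4) at the four markings it passes expresses the corner arc as a state sum of stacked products $x_z\Gamma_{p_1}x_{z^{\prime}}\Gamma_{p_2}x_{z^{\prime\prime}}$.

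Now reduce. Modulo $B$ the $x$'s are diagonal; modulo $J^{c}$ the corner arcs become $(-A^{2})^{\mp1/2}$ or $1$; and bad corner arcs at $p_1,p_2$ vanish. This leaves:
\begin{itemize}
\item For equal end-states, a single term survives, giving $[x_z\cdot x_{z^{\prime}}\cdot x_{z^{\prime\prime}}]\propto 1$.
\item For the end-states that make the $p_3$-arc the non-bad mixed arc, exactly two terms survive, with $x_{z^{\prime}}^{\pm1}$. This gives a relation of the same shape as $1-(-A^{2})^{-1/2}[\hat{z}^{\prime\prime-1}\cdot\hat{z}^{\prime}\cdot\hat{z}]-(-A^{2})^{1/2}[\hat{z}^{\prime\prime-1}\cdot\hat{z}^{\prime-1}\cdot\hat{z}]$, to which the lagrangian was reduced in Proposition~\ref{prop:P descends to Tr}. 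The three terms of $\ell_{zz^{\prime\prime}}$ have exactly this form.
\end{itemize}

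Two points still need proof.
\begin{itemize}
\item The kernel of $\mathbb{T}^{0}(T)\to\hat{\mathcal{G}}(\mathbb{L}_{T})$ must be a right ideal, so that checking generators suffices. Use $B\cdot\mathcal{S}^{0}\subseteq B$, which follows from the $A$-power commutation of $x_a^{\pm\mp}$ with monomials of $\mathcal{S}^{0}$.
\item The heights produced by (S4) must be arranged, or exchanged, so that corner arcs are the leftmost and mixed $x$'s the rightmost $\cdot$-factors. This is needed because $J^{c}$ is only a right ideal and $B$ only a left one.
\end{itemize}
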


Recall that $\hat{\mathcal{G}}(\mathbb{L}_{T})$ is also by definition
the quotient $R$-module $\overline{\mathrm{Sk}}^{c}(\mathbb{L}_{T})\bigm/B^{c}$.
Let 
\[
Tr_{T}^{[GY]}\colon\overline{\mathrm{Sk}}^{c}(\mathbb{L}_{T})\rightarrow\hat{\mathcal{G}}(T)
\]
 be the composition
\[
\overline{\mathrm{Sk}}^{c}(\mathbb{L}_{T})\twoheadrightarrow\overline{\mathrm{Sk}}^{c}(\mathbb{L}_{T})\bigm/B^{c}=\hat{\mathcal{G}}(\mathbb{L}_{T})\overset{\tilde{\Psi}_{T}}{\longrightarrow}\hat{\mathcal{G}}(T),
\]
and think of it as Garoufalidis \& Yu's version of quantum trace map
for a single tetrahedron. Let's show that this map is compatible with
our quantum trace map $Tr_{T}^{c}$ for the tetrahedron $T$ via the inclusion
$\iota_{T}\colon\mathbb{L}_{T}\hookrightarrow T$.

\begin{lem}
\label{lem:compatibility of GY Tr and our Tr for a single T}
We have the following commutative diagram:
\[
\begin{tikzcd} \overline{\mathrm{Sk}}^{c}(\mathbb{L}_{T}) \arrow[d, "\Phi_{T}"'] \arrow[rd, "{Tr_{T}^{[GY]}}"] &                      \\ \overline{\mathrm{Sk}}^{c}(T) \arrow[r, "Tr_{T}^{c}"']                                              & \hat{\mathcal{G}}(T) \end{tikzcd}
\]
where
$\Phi_{T}\colon\overline{\mathrm{Sk}}^{c}(\mathbb{L}_{T})\rightarrow\overline{\mathrm{Sk}}^{c}(T)$
is the $R$-module homomorphism induced by the inclusion $\mathbb{L}_{T}\hookrightarrow T$
provided by Lemma \ref{lem:lantern embeds in T induces map between corner reduced modules}.
\end{lem}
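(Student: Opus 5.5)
Both $Tr_T^{c}\circ\Phi_T$ and $Tr_T^{[GY]}$ are $R$-linear maps out of $\overline{\mathrm{Sk}}^{c}(\mathbb{L}_{T})$, so it suffices to compare them on a spanning set. The plan is to use the surjection $\mathcal{S}^{0}\twoheadrightarrow\overline{\mathrm{Sk}}^{c}(\mathbb{L}_{T})$ from \cite[Lemma 4.22]{GY1}. With it, we only need to check agreement on (images of) elements of $\mathcal{S}^{0}$, and in fact only on a spanning set of monomials in the $24$ generators $x_a^{\pm\pm}$ for $a\in\{z,z',z'',y,y',y''\}$. We write these monomials with the $\cdot$-product. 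They are homogeneous, so the $\cdot$-monomials differ from $\cup$-monomials only by powers of $A$, and both span.

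\textbf{Monomials containing a mixed-state generator.} Suppose a monomial $m$ contains some $x_a^{+-}$ or $x_a^{-+}$. Since these elements $A$-commute with all of $\mathcal{S}^{0}$, we can move the factor to the front, so $m$ is a scalar multiple of an element of the left ideal $B^{0}$. On the GY side, $m\in B$ maps into $B^{c}$, so $Tr_T^{[GY]}(m)=0$. On our side, $(\iota_T)_*$ is multiplicative with respect to stacking in the $(-1,1)$-direction. Since the marking edges point in the $(-1,1)$-direction, this stacking is compatible with the right $\mathbb{B}^{\otimes 6}$-action near each edge source. Hence $(\iota_T)_*(m)$ contains a bad arc at a bivalent source of $T$ and is $0$ in $\overline{\mathrm{Sk}}(T)$. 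The remark that $x_a^{\pm\mp}$ become bad arcs in $\overline{\mathrm{Sk}}(T)$ is exactly this input. So both sides vanish.

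\textbf{Pure-state monomials.} The remaining monomials lie in the subalgebra generated by $x_a^{++}$ and $x_a^{--}=(x_a^{++})^{-1}$, i.e.\ they are $\cdot$-monomials $m=x_{a_1}^{\epsilon_1}\cdot\ldots\cdot x_{a_k}^{\epsilon_k}$. Here $Tr_T^{[GY]}(m)$ is by definition the class of $\Psi(m)=\hat a_1^{\epsilon_1}\cdot\ldots\cdot\hat a_k^{\epsilon_k}$ in $\hat{\mathcal G}(T)$. For our side, we use that $(\iota_T)_*$ sends $x_a^{++}\mapsto\hat a=[\emptyset]\hat a$ and $x_a^{--}\mapsto \hat a^{-1}$, and is a homomorphism from $\cup$-stacking in the lantern to the right $\cup$-action of $\mathbb{B}^{\otimes 6}$. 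The twisting exponents agree, because both $\cdot$-products are defined from the same $\prod_f\mathbb{Z}^{M_f}$-grading, pulled back along $(\iota_T)_*$. Therefore $(\iota_T)_*(m)=[\emptyset]\cdot\Psi(m)=1\otimes\Psi(m)\in R[T]\otimes\mathbb{T}\langle T\rangle$. Applying $Tr_T^{c}$, i.e.\ $P$, gives $P(1\otimes\Psi(m))=Q(1)\Psi(m)=\Psi(m)$, which is the same class in $\hat{\mathcal G}(T)$.

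\textbf{Main obstacle.} The step I expect to be delicate is the multiplicativity claim: that stacking in $\mathbb{L}_T\times(-1,1)$ corresponds, under $(\iota_T)_*$, to the right $\cup$-action of the edge algebras on $\overline{\mathrm{Sk}}(T)$ in the correct order. This requires matching the orientation of $(-1,1)$ with the marking direction, which is exactly why this paper's lantern algebra is the opposite of GY's. I would verify this first on a single pair $x_a^{++}\cup x_b^{++}$ by an isotopy pushing both arcs to the edge sources. Then I would check that $\langle d(x_a^{++}),d(x_b^{++})\rangle$ computed in the lantern equals the exponent relating $\hat a\cup\hat b$ and $\hat a\cdot\hat b$ in Lemma \ref{lem:presntation of (T^tensor4,cdot)  and (B^tensor6, cdot)}(ii). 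Since the grading on the lantern is defined via $(\iota_T)_*$, this agreement should be automatic, and the general case follows by induction on monomial length.
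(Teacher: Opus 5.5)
Your proposal is correct and follows the same route as the paper. You use the surjection $\mathcal{S}^{0}\twoheadrightarrow\overline{\mathrm{Sk}}^{c}(\mathbb{L}_{T})$, and on monomials $m$ in the $x_a^{++},x_a^{--}$ you compare $\Psi(m)$ with $Tr_T^{c}(\Phi_T(m))=P(1\otimes\Psi(m))=\Psi(m)$. Your separate treatment of monomials containing $x_a^{+-}$ or $x_a^{-+}$ (these vanish on both sides, via $B^{c}$ on the GY side and via bad arcs at the edge sources on the other) covers a point the paper's proof passes over when it asserts that pure-state monomials alone span $\overline{\mathrm{Sk}}^{c}(\mathbb{L}_{T})$, so your version is, if anything, more complete.
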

\begin{proof}
Since $\overline{\mathrm{Sk}}^{c}(\mathbb{L}_{T})$ is an $R$-module
quotient of $\mathcal{S}^{0}$, it is the $R$-span of monomials
\[
\left(x_{z}^{++}\right)^{m}\cdot\left(x_{z^{\prime}}^{++}\right)^{n}\cdot\left(x_{z^{\prime\prime}}^{++}\right)^{\ell}\cdot\left(x_{y}^{++}\right)^{p}\cdot\left(x_{y^{\prime}}^{++}\right)^{q}\cdot\left(x_{y^{\prime\prime}}^{++}\right)^{r}.
\]
In $\hat{\mathcal{G}}(T)$, we have on the one hand, 
\begin{eqnarray*}
\lefteqn{Tr_{T}^{[GY]}\left(\left(x_{z}^{++}\right)^{m}\cdot\left(x_{z^{\prime}}^{++}\right)^{n}\cdot\left(x_{z^{\prime\prime}}^{++}\right)^{\ell}\cdot\left(x_{y}^{++}\right)^{p}\cdot\left(x_{y^{\prime}}^{++}\right)^{q}\cdot\left(x_{y^{\prime\prime}}^{++}\right)^{r}\right)}\\
&=&\Psi\left(\left(x_{z}^{++}\right)^{m}\cdot\left(x_{z^{\prime}}^{++}\right)^{n}\cdot\left(x_{z^{\prime\prime}}^{++}\right)^{\ell}\cdot\left(x_{y}^{++}\right)^{p}\cdot\left(x_{y^{\prime}}^{++}\right)^{q}\cdot\left(x_{y^{\prime\prime}}^{++}\right)^{r}\right)\\
&=&\hat{z}^{m}\cdot\hat{z}^{\prime n}\cdot\hat{z}^{\prime\prime \ell}\cdot\hat{y}^{p}\cdot\hat{y}^{\prime q}\cdot\hat{y}^{\prime\prime r};
\end{eqnarray*}
on the other hand:
\begin{eqnarray*}
\lefteqn{Tr_{T}^{c}\Phi_{T}\left(\left(x_{z}^{++}\right)^{m}\cdot\left(x_{z^{\prime}}^{++}\right)^{n}\cdot\left(x_{z^{\prime\prime}}^{++}\right)^{\ell}\cdot\left(x_{y}^{++}\right)^{p}\cdot\left(x_{y^{\prime}}^{++}\right)^{q}\cdot\left(x_{y^{\prime\prime}}^{++}\right)^{r}\right)}\\
&=&Tr_{T}^{c}\left(\hat{z}^{m}\cdot\hat{z}^{\prime n}\cdot\hat{z}^{\prime\prime \ell}\cdot\hat{y}^{p}\cdot\hat{y}^{\prime q}\cdot\hat{y}^{\prime\prime r}\right)\\
&=&\hat{z}^{m}\cdot\hat{z}^{\prime n}\cdot\hat{z}^{\prime\prime \ell}\cdot\hat{y}^{p}\cdot\hat{y}^{\prime q}\cdot\hat{y}^{\prime\prime r}.
\end{eqnarray*}
(In the above, we recall that $\Phi_{T}$ is the map induced by inclusion
$\mathbb{L}_{T}\times(-1,1)\hookrightarrow T$, under which $x_{a}^{++}$
becomes $\hat{a}$.)
This shows that $Tr_{T}^{[GY]}$ and $Tr_{T}^{c}\Phi_{T}$ agree on a
spanning set of $\overline{\mathrm{Sk}}^{c}(\mathbb{L}_{T})$.
\end{proof}

\begin{rem}
\label{rem:kernel of =00005CPhi_T}Since we know that $Tr_{T}^{c}$ is
an isomorphism (by Theorem \ref{thm:Tr induce isomorphism between corner reduced module and quantum module}) and $Tr_{T}^{[GY]}$ is the composition 
\[
\overline{\mathrm{Sk}}^{c}(\mathbb{L}_{T})\twoheadrightarrow\overline{\mathrm{Sk}}^{c}(\mathbb{L}_{T})\bigm/B^{c}=\hat{\mathcal{G}}(\mathbb{L}_{T})\overset{\cong}{\longrightarrow}\hat{\mathcal{G}}(T),
\]
we see that the kernel of the map  $\Phi_{T}\colon\overline{\mathrm{Sk}}^{c}(\mathbb{L}_{T})\twoheadrightarrow\overline{\mathrm{Sk}}^{c}(T)$ is nothing but the $R$-submodule
$B^{c}$ of $\overline{\mathrm{Sk}}^{c}(\mathbb{L}_{T})$.
\end{rem}

\subsection{Construction of $Tr_{\mathcal{T}}^{[GY]}$ and proof of \textmd{$Tr_{\mathcal{T}}^{[GY]}=Tr_{\mathcal{T}}$}\label{subsec:Construction Tr of GY and prove it agrees with our Tr}}

Next we can finally describe $Tr_{\mathcal{T}}^{[GY]}$, the quantum
trace map in \cite{GY1} on the ideally triangulated 3-manifold $(Y,\mathcal{T})$.
Recall that there is the splitting homomorphism $\Theta\colon\mathrm{SkAlg}(\Sigma_{\mathcal{T}})\rightarrow\bigotimes_{T\in\mathcal{T}}\overline{\mathrm{Sk}}^{c}(\mathbb{L}_{T})$;
the surjective $R$-module homomorphisms $\Phi\colon\mathrm{Sk}(\Sigma_{\mathcal{T}})\twoheadrightarrow\mathrm{Sk}(Y)$
and $\Phi_{T}\colon\overline{\mathrm{Sk}}^{c}(\mathbb{L}_{T})\twoheadrightarrow\overline{\mathrm{Sk}}^{c}(T)$
induced by inclusions. Also recall the quantum gluing module $\hat{\mathcal{G}}_{\mathcal{T}}$,
which is defined to be the $R$-module quotient of $\bigotimes_{T\in\mathcal{T}}\hat{\mathcal{G}}(T)$
by the edge relations (see Definition \ref{def: quantum gluing module}). 

\begin{thm}
\emph{(\cite[Theorem 5.3]{GY1})} The composition 
\[
\mathrm{Sk}(\Sigma_{\mathcal{T}})\overset{\Theta}{\longrightarrow}\bigotimes_{T\in\mathcal{T}}\overline{\mathrm{Sk}}^{c}(\mathbb{L}_{T})\overset{\bigotimes Tr_{T}^{[GY]}}{\longrightarrow}\bigotimes_{T\in\mathcal{T}}\hat{\mathcal{G}}(T)\twoheadrightarrow\hat{\mathcal{G}}_{\mathcal{T}}
\]
descends to a well-defined $R$-module homomorphism $Tr_{\mathcal{T}}^{[GY]}\colon\mathrm{Sk}(Y)\rightarrow\hat{\mathcal{G}}_{\mathcal{T}}$.
\end{thm}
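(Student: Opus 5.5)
Rather than re-running Garoufalidis \& Yu's handle-slide argument, I would deduce the statement from the comparison results already established: I would show that the composition factors as $Tr_{\mathcal{T}}\circ\Phi$. Since $\Phi\colon\mathrm{Sk}(\Sigma_{\mathcal{T}})\twoheadrightarrow\mathrm{Sk}(Y)$ is surjective (Remark \ref{rem:surjectivity of maps induced by inclusion}), such a factorization shows that the composition kills $\ker\Phi$. Hence it descends to $\mathrm{Sk}(Y)$, and the descended map is $Tr_{\mathcal{T}}$. Here we use that for $Y$ without boundary $\overline{\mathrm{Sk}}^{c}(Y)=\mathrm{Sk}(Y)$, so $Tr_{\mathcal{T}}$ of Definition \ref{def:quantum trace map for Y} is defined on $\mathrm{Sk}(Y)$.

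\emph{Step 1.} Let $\pi\colon\bigotimes_{T}\hat{\mathcal{G}}(T)\twoheadrightarrow\hat{\mathcal{G}}_{\mathcal{T}}$ be the quotient map. By Lemma \ref{lem:compatibility of GY Tr and our Tr for a single T} we have $Tr_{T}^{[GY]}=Tr_{T}^{c}\circ\Phi_{T}$ for every $T$. Taking tensor products, the composition in question equals
\[
\pi\circ\Bigl(\bigotimes_{T}Tr_{T}^{c}\Bigr)\circ\Bigl(\bigotimes_{T}\Phi_{T}\Bigr)\circ\Theta .
\]

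\emph{Step 2.} Let $q\colon\bigotimes_{T}\overline{\mathrm{Sk}}^{c}(T)\twoheadrightarrow\overline{\bigotimes}_{T}\overline{\mathrm{Sk}}^{c}(T)$ be the quotient by $\mathfrak{R}^{c}_{E}$. Proposition \ref{prop:quantum trace for T glue} says $\bigotimes_{T}Tr_{T}^{c}$ sends $\mathfrak{R}_{E}^{c}$ into $\overline{E}$. This gives
\[
\pi\circ\Bigl(\bigotimes_{T}Tr_{T}^{c}\Bigr)=\Bigl(\overline{\bigotimes}_{T}Tr_{T}^{c}\Bigr)\circ q .
\]
So the composition becomes $\bigl(\overline{\bigotimes}_{T}Tr_{T}^{c}\bigr)\circ q\circ\bigl(\bigotimes_{T}\Phi_{T}\bigr)\circ\Theta$.

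\emph{Step 3.} The commutative square \eqref{eq:compatability of GY splitting with our splitting} of Lemma \ref{lem:lantern embeds in T induces map between corner reduced modules} gives $q\circ(\bigotimes\Phi_{T})\circ\Theta=\sigma\circ\Phi$. Substituting, the composition equals $\bigl(\overline{\bigotimes}_{T}Tr_{T}^{c}\bigr)\circ\sigma\circ\Phi=Tr_{\mathcal{T}}\circ\Phi$. This proves the descent, and simultaneously identifies $Tr^{[GY]}_{\mathcal{T}}$ with $Tr_{\mathcal{T}}$.

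The substantive input is the commutativity of the square \eqref{eq:compatability of GY splitting with our splitting}, which rests on our splitting homomorphism $\sigma$ being well defined on $\mathrm{Sk}(Y)$ (Theorem \ref{thm:splitting homomorphism}). That theorem is where invariance under moves across singular leaves, the analogue of handle slides on $\Sigma_{\mathcal{T}}$, is handled. I therefore expect the main obstacle to be bookkeeping rather than a new computation. One must check that $\Theta$ of \cite{GY1} and our $\sigma\circ\Phi$ use the same state-sum conventions for the same general-position representative of a link in $\Sigma_{\mathcal{T}}\times(-1,1)$. In particular, the opposite orientation of the lantern marking relative to \cite{GY1} must not introduce a discrepancy, and the resulting difference between right and left ideals in $J^{c}$ and $E$ must be matched. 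If a sign or power of $A$ appeared there, I would first recheck the identification $(\iota_{T})_{*}(x_{a}^{++})=\hat{a}$ and the degree computation showing that balanced elements have vanishing pairing with the edge elements $\hat e$.
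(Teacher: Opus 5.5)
Your argument is correct and is not circular. The inputs you use are Lemma \ref{lem:compatibility of GY Tr and our Tr for a single T}, Proposition \ref{prop:quantum trace for T glue} and the square (\ref{eq:compatability of GY splitting with our splitting}). They rest only on the well-definedness of $\Theta$ and on \cite[Lemma 4.22, Theorem 4.25]{GY1}, never on \cite[Theorem 5.3]{GY1}.

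It is, however, a different route from the paper's treatment of this statement. The paper does not prove it; it imports it from \cite{GY1}. There the descent is obtained by using that $\ker\Phi$ is generated by handle slides and checking that the composite kills each one. The paper then runs exactly your diagram chase, together with that citation and the surjectivity of $\Phi$, to deduce the next theorem, $Tr_{\mathcal{T}}^{[GY]}=Tr_{\mathcal{T}}$.

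What you observed is that the chase alone already writes the composite as $Tr_{\mathcal{T}}\circ\Phi$. So it gives existence and the identification at once, and the citation becomes unnecessary. The gain is that no description of $\ker\Phi$ is needed. Handle-slide invariance is traded for two facts:
\begin{itemize}
\item the well-definedness of $\sigma$ on $\mathrm{Sk}(Y)$ (Theorem \ref{thm:splitting homomorphism}), whose two type (V) cases, across a face barycenter and around an internal edge, play the role of the two kinds of handle slide and are handled by corner reduction and by $\mathfrak{R}_{E}$ respectively;
\item Proposition \ref{prop:quantum trace for T glue}.
\end{itemize}
The cost is dependence on the 3d stated-skein machinery, notably the presentation of $\overline{\mathrm{Sk}}(T)$ from Theorem \ref{thm:skein module of 3-ball} on which $Tr_{T}^{c}$ is built. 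The argument of \cite{GY1} instead stays within surface skein algebras.

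Two minor points. First, factoring through $\Phi$ by itself forces vanishing on $\ker\Phi$; surjectivity of $\Phi$ is only needed for uniqueness of the descended map. Second, the orientation and left/right-ideal conventions you flag are already built into the paper's definitions of $\Theta$, $J^{c}$ and $Tr_{T}^{[GY]}$. So nothing further needs reconciling for the statement as formulated here.
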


The map $Tr_{\mathcal{T}}^{[GY]}$ given above is the quantum trace
map of Garoufalidis \& Yu. Strictly speaking, this is not exactly
the same as the one appeared in \cite{GY1}. In fact, apart from the
differences induced by different choices of labeling and orientations,
the quantum gluing module appeared in \cite[5.1]{GY1} is the
quotient of our quantum gluing module $\hat{\mathcal{G}}_{\mathcal{T}}$
by identifying $\hat{z}^{\boxempty}$ and $\hat{y}^{\boxempty}$ in
every tetrahedron; that is, one identifies quantized shape parameters
associated to opposite edges in each tetrahedron. However this is
only a mild quotient (in a sense 2 to 1) because in our quantum gluing
module $\hat{\mathcal{G}}_{\mathcal{T}}$ it holds that $\left(\hat{z}^{\boxempty}\right)^{2}=\left(\hat{y}^{\boxempty}\right)^{2}$
in every tetrahedron (this can be easily deduced from the vertex relations).
On the other hand, it is also remarked in \cite[Remark 5.1]{GY1}
that their construction push through without imposing the relation
$\hat{z}^{\boxempty}=\hat{y}^{\boxempty}$.

Now we can prove our first main comparison result. The following is a restatement
of Theorem \ref{thm: Main theorem 1, our Tr agrees with Tr of GY}.

\begin{thm}
$Tr_{\mathcal{T}}^{[GY]}=Tr_{\mathcal{T}}$.
\end{thm}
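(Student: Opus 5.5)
The plan is to exploit the surjectivity of $\Phi\colon\mathrm{Sk}(\Sigma_{\mathcal{T}})\twoheadrightarrow\mathrm{Sk}(Y)$ (Remark \ref{rem:surjectivity of maps induced by inclusion}). By the defining property of $Tr_{\mathcal{T}}^{[GY]}$ (\cite[Theorem 5.3]{GY1}), $Tr_{\mathcal{T}}^{[GY]}\circ\Phi$ equals the composition
\[
\mathrm{Sk}(\Sigma_{\mathcal{T}})\overset{\Theta}{\longrightarrow}\bigotimes_{T\in\mathcal{T}}\overline{\mathrm{Sk}}^{c}(\mathbb{L}_{T})\overset{\bigotimes Tr_{T}^{[GY]}}{\longrightarrow}\bigotimes_{T\in\mathcal{T}}\hat{\mathcal{G}}(T)\twoheadrightarrow\hat{\mathcal{G}}_{\mathcal{T}}.
\]
Since $\Phi$ is surjective, it therefore suffices to show that $Tr_{\mathcal{T}}\circ\Phi$ equals this same composition.

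We will compute $Tr_{\mathcal{T}}\circ\Phi=\left(\overline{\bigotimes}Tr_{T}^{c}\right)\circ\sigma\circ\Phi$ in three steps.

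\begin{enumerate}
\item By Lemma \ref{lem:lantern embeds in T induces map between corner reduced modules}, $\sigma\circ\Phi$ equals $\Theta$ followed by $\bigotimes\Phi_{T}$ and then the quotient map $q\colon\bigotimes_{T}\overline{\mathrm{Sk}}^{c}(T)\twoheadrightarrow\overline{\bigotimes}_{T}\overline{\mathrm{Sk}}^{c}(T)$.
\item By Proposition \ref{prop:quantum trace for T glue}, the map $\overline{\bigotimes}Tr_{T}^{c}$ is induced from $\bigotimes Tr_{T}^{c}$. Hence $\left(\overline{\bigotimes}Tr_{T}^{c}\right)\circ q=\pi\circ\bigotimes Tr_{T}^{c}$, where $\pi\colon\bigotimes_{T}\hat{\mathcal{G}}(T)\twoheadrightarrow\hat{\mathcal{G}}_{\mathcal{T}}$ is the quotient by $\overline{E}$.
\item We are then left with $\pi\circ\left(\bigotimes_{T}Tr_{T}^{c}\circ\Phi_{T}\right)\circ\Theta$. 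Applying Lemma \ref{lem:compatibility of GY Tr and our Tr for a single T} tensor factor by tensor factor replaces $Tr_{T}^{c}\circ\Phi_{T}$ with $Tr_{T}^{[GY]}$, and the result is exactly the composition displayed above.
\end{enumerate}

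This is precisely the diagram chase outlined in the introduction.

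Only one point needs care: checking that tensoring the commutative triangles of Lemma \ref{lem:compatibility of GY Tr and our Tr for a single T} over all $T$ is legitimate. These are $R$-module maps, and $(\bigotimes f_T)\circ(\bigotimes g_T)=\bigotimes(f_T\circ g_T)$ holds for $R$-linear maps, so this is immediate. We should also confirm that the quotient maps in Proposition \ref{prop:quantum trace for T glue} and Definition \ref{def: quantum gluing module} are the same maps used in \cite[Theorem 5.3]{GY1}. In both places $\hat{\mathcal{G}}_{\mathcal{T}}$ is $\bigotimes\hat{\mathcal{G}}(T)/\overline{E}$ in our conventions, so they agree.

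The genuine obstacles, namely well-definedness of $\Phi_{T}$ and commutativity of the splitting square, were already handled in the preceding lemmas. The remaining step is formal. Finally, since $\mathrm{Sk}(Y)=\overline{\mathrm{Sk}}^{c}(Y)$ here, no further identification of domains is required.
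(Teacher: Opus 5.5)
Your proposal is correct and follows the paper's proof essentially verbatim. You use the surjectivity of $\Phi$ to reduce the claim to commutativity of the outer diagram. That diagram is then assembled from three pieces: the splitting-compatibility pentagon of Lemma \ref{lem:lantern embeds in T induces map between corner reduced modules}, the single-tetrahedron triangle of Lemma \ref{lem:compatibility of GY Tr and our Tr for a single T}, and the descent square of Proposition \ref{prop:quantum trace for T glue}.
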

\begin{proof}
We only need to show the following diagram commutes
\[
\begin{tikzcd} \mathrm{Sk}(\Sigma_{\mathcal{T}}) \arrow[r, "\Theta"] \arrow[d, "\Phi"', two heads] & \bigotimes_{T\in\mathcal{T}}\overline{\mathrm{Sk}}^{c}(\mathbb{L}_{T}) \arrow[r, "{\bigotimes Tr_{T}^{[GY]}}"] & \bigotimes_{T\in\mathcal{T}}\hat{\mathcal{G}}(T) \arrow[r, two heads] & \hat{\mathcal{G}}_{\mathcal{T}} \\ \mathrm{Sk}(Y) \arrow[rrru, "Tr_{\mathcal{T}}"']                                    &                                                                                                              &                                                                       &                                 \end{tikzcd}
\]
Then by surjectivity of the map $\Phi$, we must have $Tr_{\mathcal{T}}^{[GY]}=Tr_{\mathcal{T}}$. But the desired commutative diagram is the boundary of the following
commutative diagram
\[
\begin{tikzcd} \mathrm{Sk}(\Sigma_{\mathcal{T}}) \arrow[d, "\Phi"', two heads] \arrow[r, "\Theta"] & \bigotimes_{T\in\mathcal{T}}\overline{\mathrm{Sk}}^{c}(\mathbb{L}_{T}) \arrow[d, "\bigotimes \Phi_{T}"'] \arrow[rd, "\bigotimes Tr_T^{[\text{GY}]} "] &                                                                       \\ \mathrm{Sk}(Y) \arrow[rd, "\sigma"']                                                & \bigotimes_{T\in\mathcal{T}}\overline{\mathrm{Sk}}^{c}(T) \arrow[d, two heads] \arrow[r, "\bigotimes Tr_{T}^{c}"']                          & \bigotimes_{T\in\mathcal{T}}\hat{\mathcal{G}}(T) \arrow[d, two heads] \\                                                                                   & \overline{\bigotimes}_{T\in\mathcal{T}}\overline{\mathrm{Sk}}^{c}(T) \arrow[r, "\overline{\bigotimes}Tr_{T}^{c}"']                          & \hat{\mathcal{G}}_{\mathcal{T}}                                       \end{tikzcd}
\]
The pentagon on the left commute by Lemma \ref{lem:lantern embeds in T induces map between corner reduced modules};
the upper right triangle commutes by Lemma \ref{lem:compatibility of GY Tr and our Tr for a single T};
the lower right square commutes by Proposition \ref{prop:quantum trace for T glue}.
\end{proof}

\section{Comparison with the Quantum Trace map of Panitch \& Park\label{sec:Compare With PP}}

In this subsection, we give an exact relation between our quantum
trace map and that in \cite{PP1} when $Y$ is a manifold without
boundary (with cusps). 
Therefore we will
assume that $Y$ is a 3-manifold  with empty boundary throughout this section.

The organization of the discussion is similar to Section \ref{sec:equivalence with GY}.
We describe, step by step, the construction in \cite{PP1}
, and along the way we explain how their construction can be compared with ours, culminating in the proof of Theorem \ref{thm: Main theorem 2, exact relation between our Tr and PP's Tr}. 

\subsection{\label{subsec: face cones and face suspensions}Face cones and Face
suspensions}

We start with some basic terminology 
related to ideal tetrahedra and ideal
triangulations.

\begin{defn}
\label{def: vertex cone, edge cone, face cone, face suspension}
Let
$T$ be a single ideal tetrahedron:
\begin{itemize}
\item For a bare vertex $v\in\mathbf{v}(T)$, the \emph{vertex cone} $Cv$
is the line segment connecting $v$ and the barycenter of $T$.  (See Figure \ref{fig:vertex cone, edge cone, face cone}(i))
\item For a bare edge $e\in\mathbf{e}(T)$, the \emph{edge cone} $Ce$ if
the triangle given by the union of all line segments between points
in $e$ and the barycenter of $T$.  (See Figure \ref{fig:vertex cone, edge cone, face cone}(ii))
\item For a bare face $f\in\mathbf{f}(T)$, the\emph{ face cone} $Cf$ is
the 3-ball given by the union of all line segments between points
in $f$ and the barycenter of $T$.  (See Figure \ref{fig:vertex cone, edge cone, face cone}(iii))
\end{itemize}
\begin{figure}[!h]
  \includegraphics[scale=0.2]{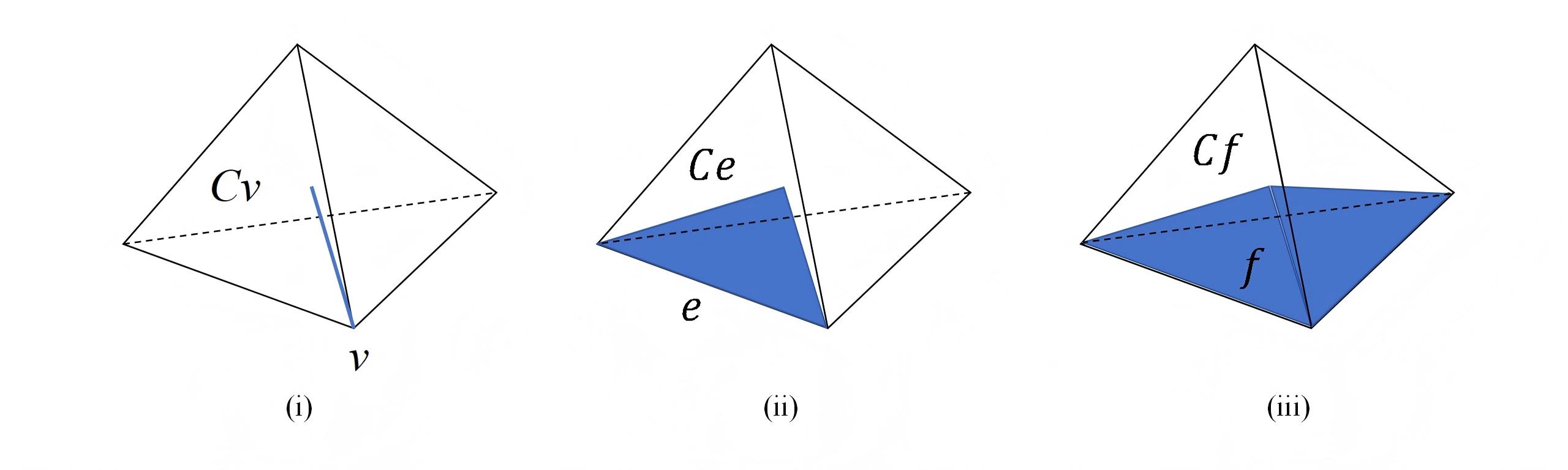} 
  \caption{}
  \label{fig:vertex cone, edge cone, face cone}
\end{figure}
For brevity of notation, if $\mathcal{T}$ is an ideal triangulation of the 3-manifold $Y$, we let $\mathbf{f}(\mathcal{T})=\underset{T\in \mathcal{T}}{\bigcup}\mathbf{f}(T)$. Namely, $\mathbf{f}(\mathcal{T})$ is the set of all bare faces of all ideal tetrahedra in the ideal triangulation $\mathcal{T}$. In this section, we are frequently exposed to situations involving both faces of the ideal triangulation and bare faces of ideal tetrahedra. To distinguish these two cases in notation, we fix an enumeration on the set of all bare faces $\mathbf{f}(\mathcal{T})$, so that $\mathbf{f}(\mathcal{T})=\{f_{1},f_{2},\dots,f_{4N}\}$ (assuming there are $N$ tetrahedra in $\mathcal{T}$). On the other hand, we will never put subscripts to elements of $\mathcal{T}^{(2)}$. Also, we do not enumerate the set $\mathcal{T}$ of ideal tetrahedra in this section. Therefore the subscript ``$i$'' in $f_{i}$ is irrelevant to the question of which tetrahedron $f_{i}$ is a bare face of.

Let $f\in\mathcal{T}^{(2)}$ be a face 
of an ideal triangulation $\mathcal{T}$, given by identifying the bare faces $f_{i}$
and $f_{j}$. The \emph{face suspension} $Sf$ is defined to be the
3-ball given by gluing the two face cones $Cf_{i}$ and $Cf_{j}$ along
the map that identifies $f_{i}$ and $f_{j}$. (See Figure \ref{fig:face suspension})
\begin{figure}[h]
  \includegraphics[scale=0.15]{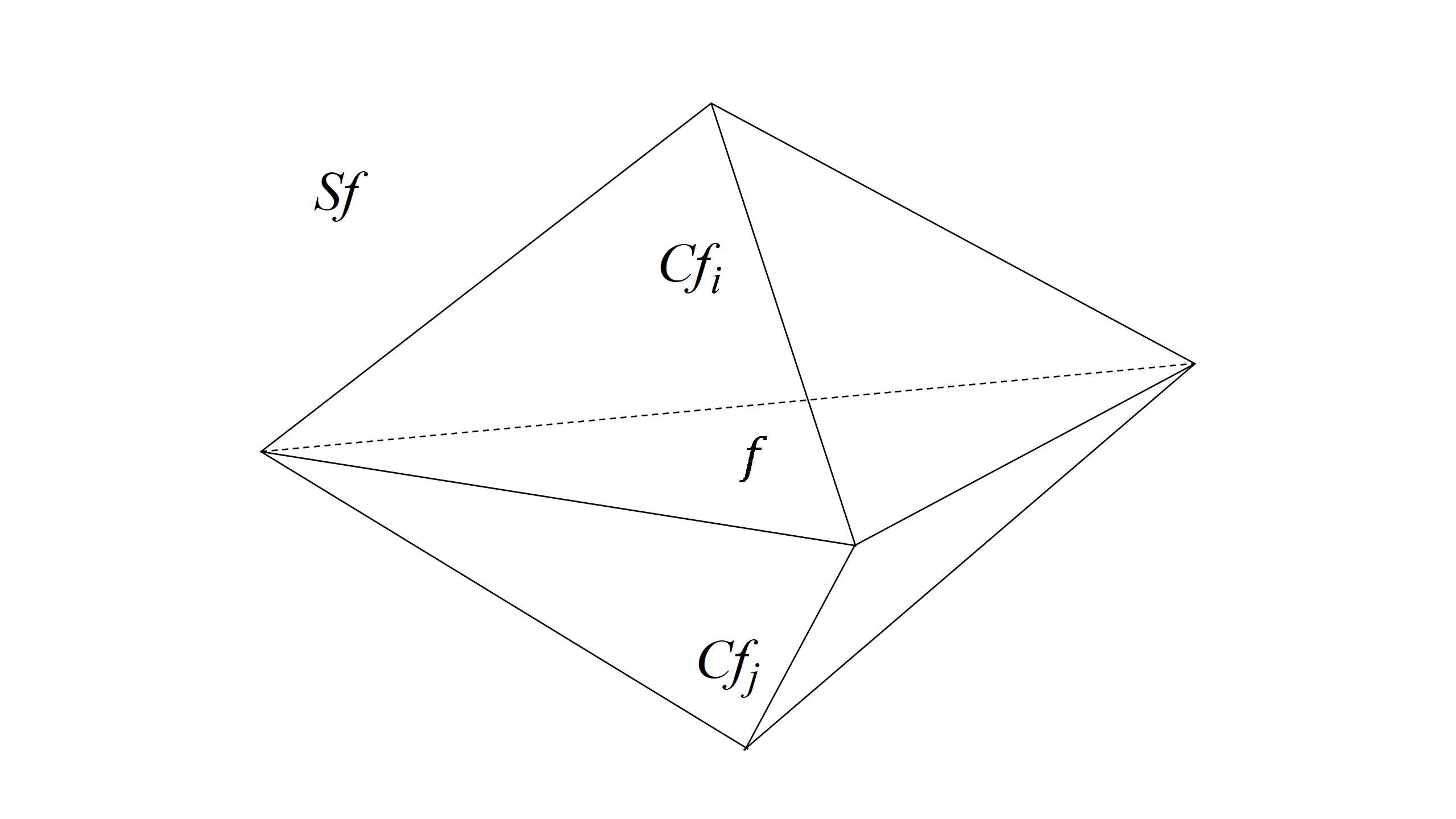} 
  \caption{}
  \label{fig:face suspension}
\end{figure}
\end{defn}

Next we describe the skein modules of the face cones and face suspensions.
The boundary markings are shown in Figure \ref{fig:face cone with bm}
and \ref{fig:face suspension with bm}.
\begin{figure}[h]
  \includegraphics[scale=0.15]{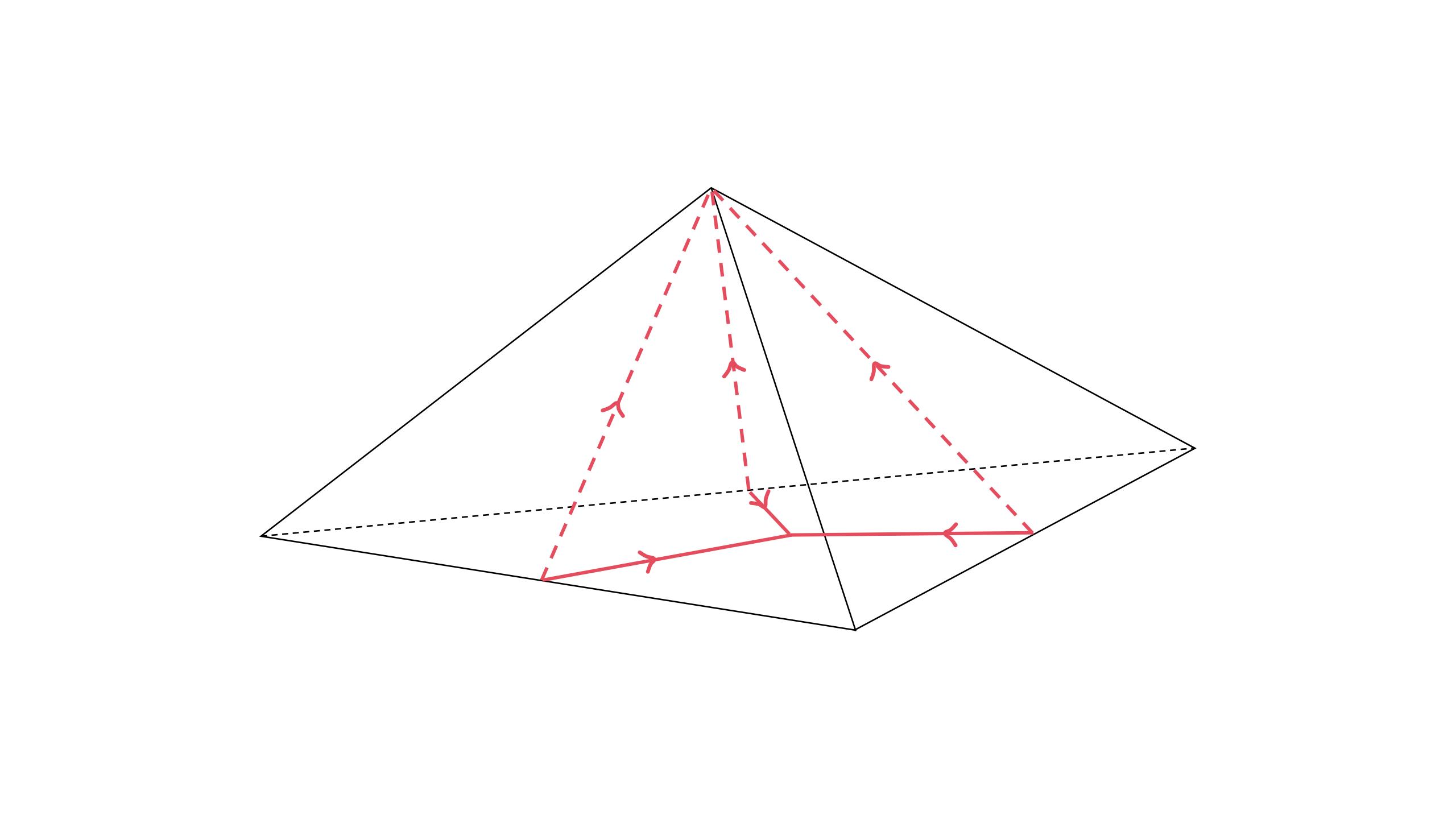} 
  \caption{Boundary marking on a face cone}
  \label{fig:face cone with bm}
\end{figure}
\begin{figure}[h]
  \includegraphics[scale=0.15]{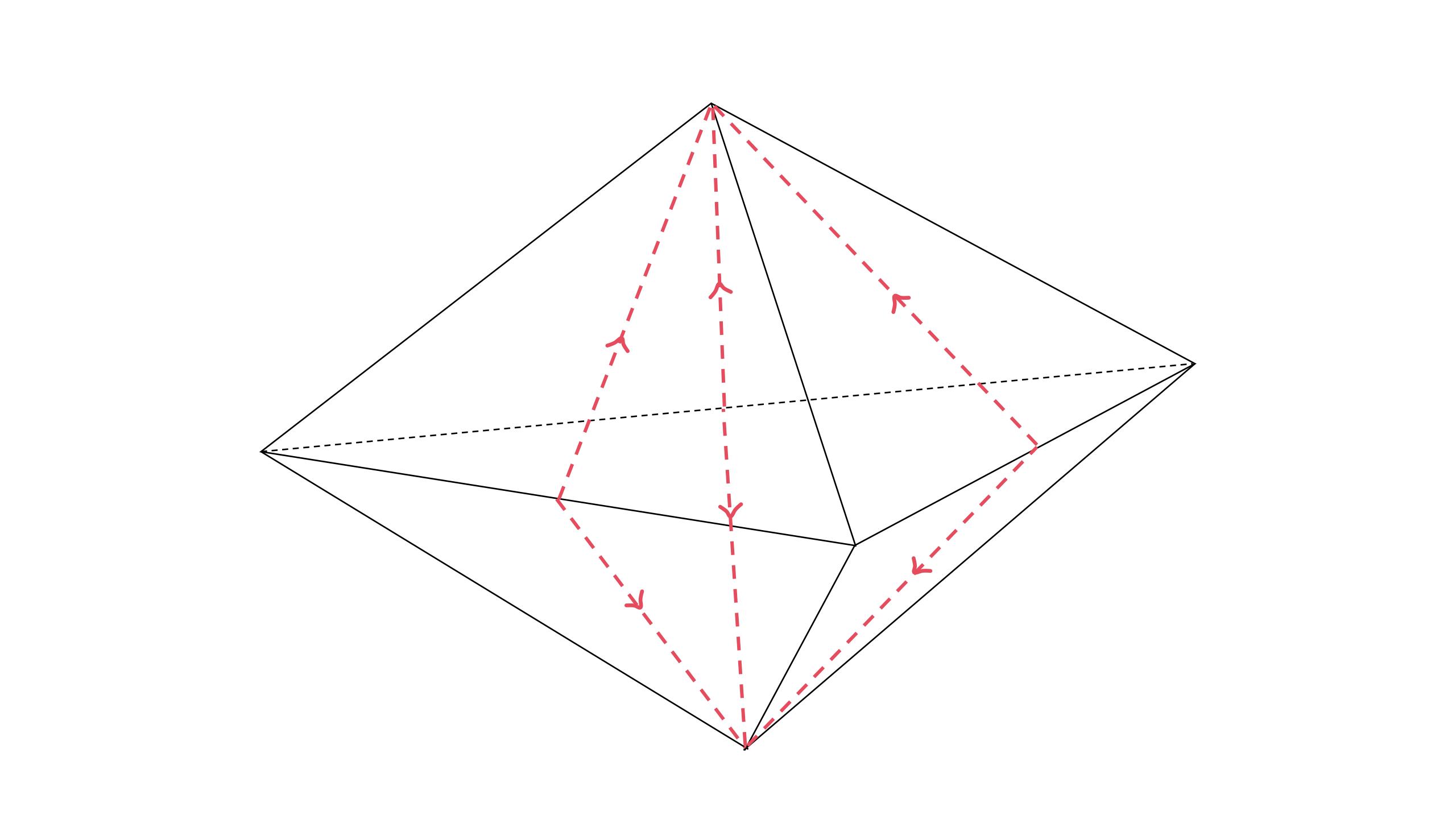} 
  \caption{Boundary marking on a face suspension}
  \label{fig:face suspension with bm}
\end{figure}
For the face cone $Cf_{i}$ of the bare face $f_{i}$ of an ideal tetrahedron,
its boundary is comprised of the bare face $f_{i}$ and three edge cones
of the three boundary edges of $f_{i}$; the boundary marking on $f_{i}$
is the one inherited from the canonical boundary marking on the ideal
tetrahedron, whereas the boundary marking on each edge cone is a single
oriented edge emanating from the barycenter of the edge to the cone
point of the face cone. For the face suspension $Sf$, its boundary
is comprised of six edge cones, two for each of the three
boundary edges of $f$, and the boundary marking on $Sf$ is so that in
each edge cone, it is a single oriented edge emanating from the barycenter
of the edge to the cone point. To distinguish in diagrams, we use solid
oriented edges for marking edges in the bare face $f_{i}$ and dashed
oriented edges for marking edges in the edge cones.

Note that as boundary marked 3-balls, a face cone is homeomorphic
to a face suspension. Moreover, in both cases the boundary markings
are admissible (see Definition \ref{def:combinatorial folation}(iii))
and thus Theorem \ref{thm:skein module of 3-ball} applies and shows
that the reduced skein modules of face cones and face suspensions
are both isomorphic to\footnote{The boundary markings in both cases consist of 2 trivalent sinks and
3 bivalent sources.} 
\[
\frac{\mathbb{T}^{\otimes2}\otimes\mathbb{B}^{\otimes3}}{\mathrm{Ann}([\emptyset])}.
\]
Like what we did in \ref{subsec:Reduced skein module of T}, we will
give explicit algebraic presentations of the reduced skein modules of face
cones and face suspensions. We begin by introducing notation
for the various generators. To start, we first label each marking edge by the following
rule, recalling that every bare edge of an ideal tetrahedron is labeled
by a shape parameter:
\begin{itemize}
\item For the boundary marking of the face cone $Cf_{i}$ of the bare face $f_{i}$:
the marking edge in the bare face $f_{i}$ emanating from the edge labeled
by $a$ will be labeled by $(a,f_{i})$; the marking edge in the edge
cone $Ce$ (being part of the boundary of $Cf_{i}$) of the boundary edge
$e$ of $f_{i}$ labeled by $a$ will be labeled by $\{a,f_{i}\}$ (see Figure \ref{fig:labeling of bm on face cone}).
\begin{figure}[h]
  \includegraphics[scale=0.2]{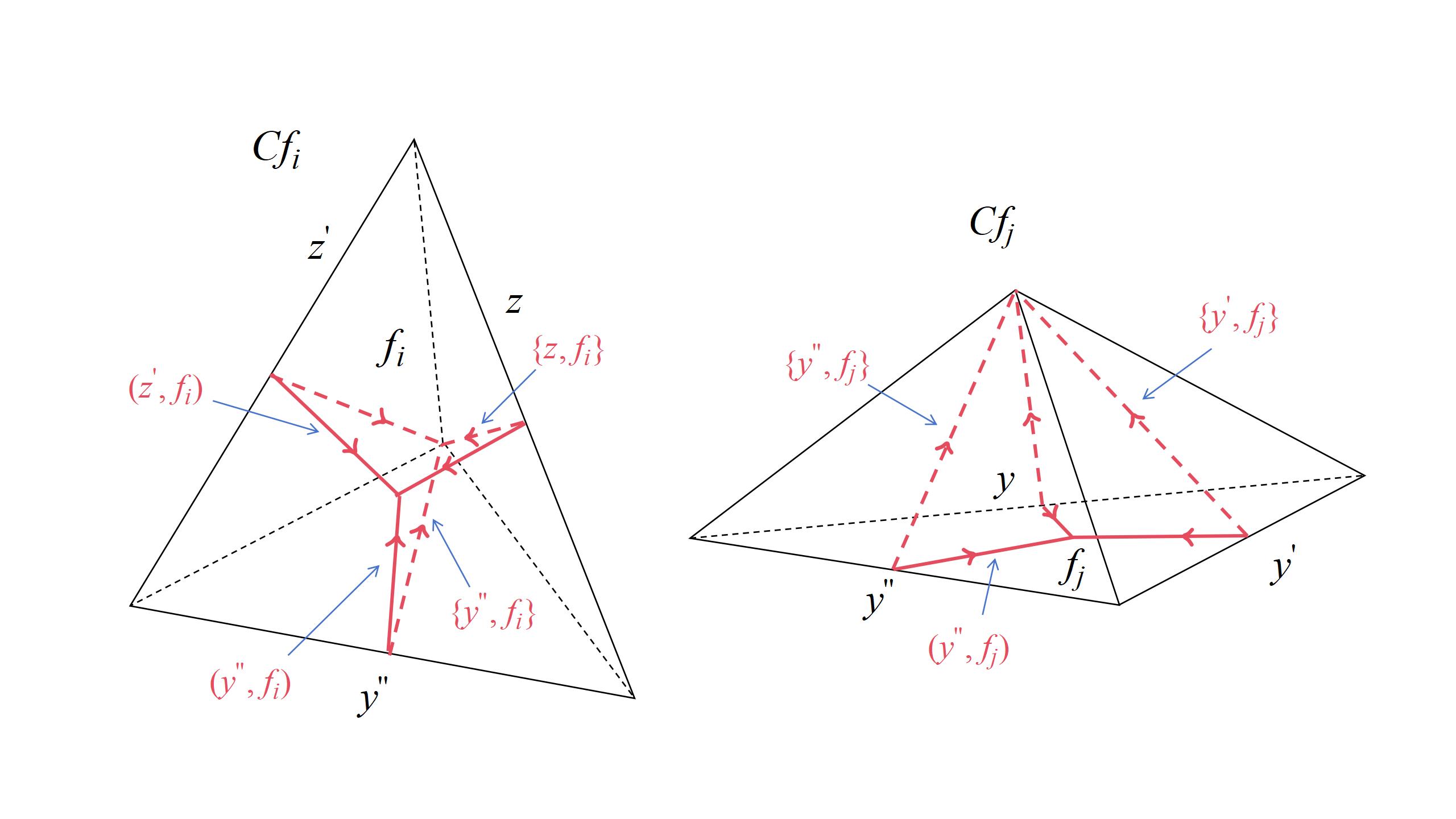} 
  \caption{Labeling of marking edges for face cone. Here $f_{i}$ and $f_{j}$ are different bare faces of the same ideal tetrahedron.}
  \label{fig:labeling of bm on face cone}
\end{figure}
\item For the boundary marking of the face suspension $Sf$ of a face
$f\in\mathcal{T}^{(2)}$: let $f_{i}$ and $f_{j}$ be the bare faces
identified to $f$, so that $Sf$ is formed by gluing the face cones
$Cf_{i}$ and $Cf_{j}$ along the map identifying $f_{i}$ and $f_{j}$.
To label the marking edges on $Sf$ we simply take the labels those edges get when we label them using the prescription just given for the face cones 
$Cf_{i}$
and $Cf_{j}$
(see Figure \ref{fig:labeling of bm on face suspension}).
\begin{figure}[h]
  \includegraphics[scale=0.2]{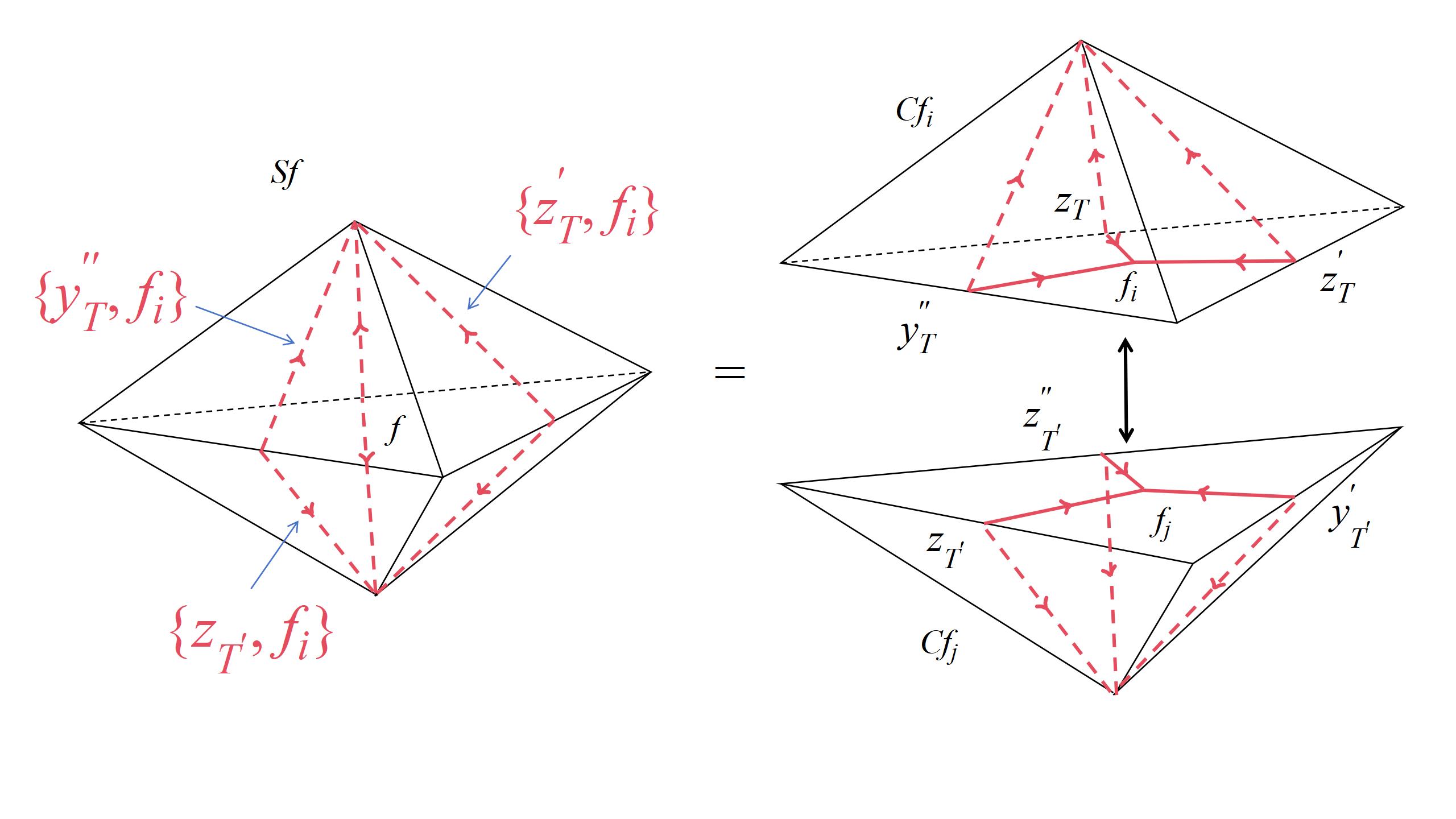} 
  \caption{An example of the labeling of marking edges for face suspension. Here $f_{i}$ and $f_{j}$ are the two bare faces identified to the face $f\in \mathcal{T}^{(2)}$. Moreover, $f_{i}$ is a bare face of the ideal tetrahedron $T$ and $f_{j}$ is a bare face of the ideal tetrahedron $T^{\prime}$}
  \label{fig:labeling of bm on face suspension}
\end{figure}
\end{itemize}

Now let's describe the generators of the various skein algebras associated
to the vertices of the boundary markings. They are demonstrated in
Figures \ref{fig:labeling of generators for face cone} and \ref{fig:labeling of generators for face suspension}:
\begin{enumerate}
\item For the face cone $Cf_{i}$ of the bare face $f_{i}$, the triangle algebra
associated with the trivalent sink at the barycenter of $f_{i}$ is nothing
but the boundary face algebra $\mathbb{T}_{f_i}$ (see Definition \ref{def:boundary face algebra, boundary edge algebra}).
We have already introduced notations for generators of $\mathbb{T}_{f_{i}}$
in \ref{subsec:Reduced skein module of T}, for example the ``untwisted''
trival ribbon arc connecting marking edges labeled $(a,f_{i})$ and $(b,f_{i})$
with both endpoints in $+$ states is denoted by\footnote{We do not need to specify the bare face $f_{i}$ in the notation because the
notation implies that it is the unique bare face abutting the two edges
labeled $a$ and $b$.} $\Gamma_{ab}$. The triangle algebra associated to the other trivalent sink (the
one at the cone point of the face cone) will be denoted as $\tilde{\mathbb{T}}_{f_{i}}$;
for its generators, we denote by $\tilde{\Gamma}_{ab}$ the ``untwisted''
trival ribbon arc connecting marking edges labeled $\{a,f_{i}\}$ and
$\{b,f_{i}\}$ with both endpoints in $+$ states. For the algebra $\mathbb{B}^{\otimes3}$
associated to the three bivalent sources at the barycenters of the
three edges of $f_{i}$, we denote the ``untwisted'' trivial ribbon arc
connecting marking edges labeled $(a,f_{i})$ and $\{a,f_{i}\}$ with both
endpoints in $+$ states is denoted by $x_{a,f_{i}}$.
\begin{figure}[h]
  \includegraphics[scale=0.2]{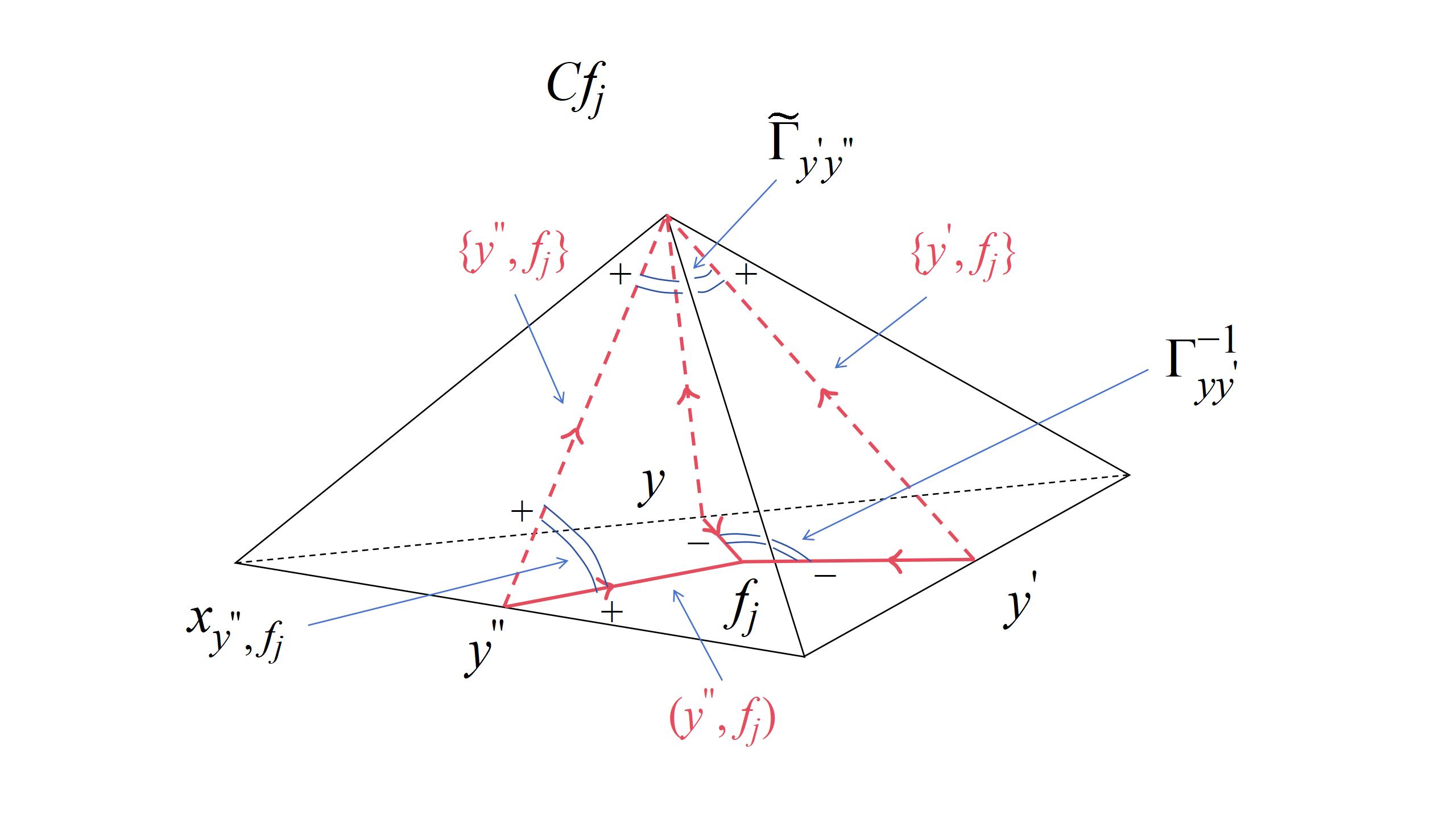} 
  \caption{Examples of generators of the skein algebras associated to the vertex of the boundary marking on the face cone $Cf_{j}$}
  \label{fig:labeling of generators for face cone}
\end{figure}
\item For the face suspension $Sf$, where $f\in\mathcal{T}^{(2)}$ is the
face given by identifying bare faces $f_{i}$ and $f_{j}$, the algebra
$\mathbb{T}^{\otimes2}$ associated to the two trivalent sinks of
the boundary marking (which are the two cone points of the face cones
$Cf_{i}$ and $Cf_{j}$) of $Sf$ is given by $\tilde{\mathbb{T}}_{f_{i}}\otimes\tilde{\mathbb{T}}_{f_{j}}$,
where $\tilde{\mathbb{T}}_{f_{i}}$ (resp. $\tilde{\mathbb{T}}_{f_{j}}$)
is the triangle algebra associated to the trivalent sink at the cone
point of the face cone $Cf_{i}$ (resp. $Cf_{j}$). For the algebra
$\mathbb{B}^{\otimes3}$ associated to the three bivalent sources
at the barycenters of the three edges of $f$ we denote the ``untwisted''
trival ribbon arc connecting marking edges emanating from the same source, one labeled $\{a,f_{i}\}$
and the other $\{b,f_{j}\}$, with both endpoints in $+$ states
by\footnote{This notation may appear to be cumberson, it is however very informative.
For example it implies that the face suspension $Sf$ is obtained
by gluing face cones $Cf_{i}$ and $Cf_{j}$, and when the bare faces
$f_{i}$ and $f_{j}$ are identified, the edge of $f_{i}$ labeled
$a$ is identified with the edge of $f_{j}$ labeled $b$,
namely they are identified to the same edge $e\in\mathcal{T}^{(1)}$.} $x_{a,f_{i};b,f_{j}}$. Note that $x_{a,f_{i};b,f_{j}}$ can be represented by a ribbon arc such that the part of the ribbon arc lying in $Cf_{i}$ (resp. $Cf_{j}$)
is the underlying ribbon arc of $x_{a,f_{i}}$ (resp. $x_{b,f_{j}}$).
\begin{figure}[h]
  \includegraphics[scale=0.2]{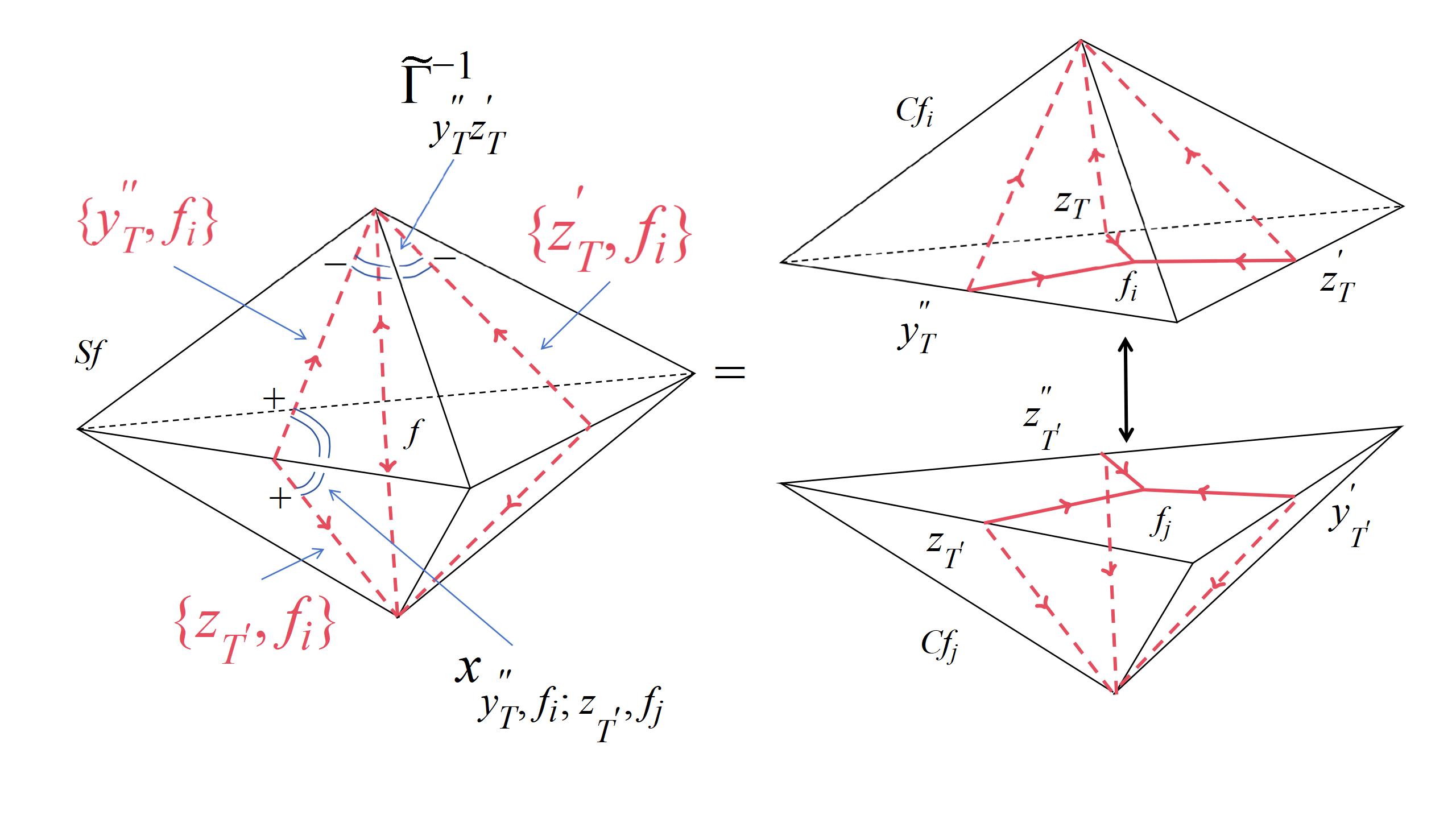} 
  \caption{Examples of the generators of the skein algebras associated to the vertices of the boundary markings on the face suspension $Sf$. Note that the notation $x_{y_{T}^{\prime\prime},f_i;\ z_{T^{\prime}},f_j}$ implies that the edge of $f_i$ labeled $y_{T}^{\prime\prime}$ is identified to the edge of $f_j$ labeled $z_{T^{\prime}}$ when $f_i$ and $f_j$ are identified.}
  \label{fig:labeling of generators for face suspension}
\end{figure}
\end{enumerate}

Now we give the explicit algebraic presentations of the reduced skein
modules $\overline{\mathrm{Sk}}(Cf_{i})$ and $\overline{\mathrm{Sk}}(Sf)$. These
are direct applications of Theorem \ref{thm:skein module of 3-ball}.
\begin{cor}
\label{cor:generators for skein modules of face cones and face suspensions}
(i) For the face cone $Cf_{i}$ of the bare face $f_{i}$, assuming the three
boundary edges of $f_{i}$ are labeled by shape parameters $a,b$ and
$c$ (where $a,b,c$ are in clockwise order with respect to an outward normal vector of the bare face $f_{i}$), then the reduced skein module $\overline{\mathrm{Sk}}(Cf_{i})$
has presentation
\[
\overline{\mathrm{Sk}}(Cf_{i}) =\frac{\mathbb{T}_{f_{i}}\otimes\tilde{\mathbb{T}}_{f_{i}}\otimes\left(\mathbb{B}^{\otimes3}\right)_{Cf_{i}}}{\mathrm{Ann}([\emptyset])},
\]
where $\left(\mathbb{B}^{\otimes3}\right)_{Cf_{i}}$ is the skein algebra associated to the three bivalent sources of the boundary marking on $Cf_{i}$ and is given by the Laurent polynomial algebra
\[
\left(\mathbb{B}^{\otimes3}\right)_{Cf_{i}}=R[x_{a,f_{i}}^{\pm1},\ x_{b,f_{i}}^{\pm1},\ x_{c,f_{i}}^{\pm1}];
\]
and $\mathbb{T}_{f_{i}}$ and $\tilde{\mathbb{T}}_{f_{i}}$ are the skein algebras associated to the trivalent sink at the center of $f_{i}$ and the cone point of $Cf_{i}$, respectively, and are isomorphic to the quantum tori given by 
\[
\mathbb{T}_{f_{i}}=\frac{R\langle\Gamma_{ab}^{\pm1},\Gamma_{bc}^{\pm1},\Gamma_{ca}^{\pm1}\rangle}{\left\langle \begin{array}{c}
\Gamma_{bc}\Gamma_{ab}=A^{-1}\Gamma_{ab}\Gamma_{bc},\\
\Gamma_{ca}\Gamma_{bc}=A^{-1}\Gamma_{bc}\Gamma_{ca},\\
\Gamma_{ab}\Gamma_{ca}=A^{-1}\Gamma_{ca}\Gamma_{ab}
\end{array}\right\rangle }\ ,\ \ 
\tilde{\mathbb{T}}_{f_{i}}=\frac{R\langle\tilde{\Gamma}_{ab}^{\pm1},\tilde{\Gamma}_{bc}^{\pm1},\tilde{\Gamma}_{ca}^{\pm1}\rangle}{\left\langle \begin{array}{c}
\tilde{\Gamma}_{bc}\tilde{\Gamma}_{ab}=A\tilde{\Gamma}_{ab}\tilde{\Gamma}_{bc},\\
\tilde{\Gamma}_{ca}\tilde{\Gamma}_{bc}=A\tilde{\Gamma}_{bc}\tilde{\Gamma}_{ca},\\
\tilde{\Gamma}_{ab}\tilde{\Gamma}_{ca}=A\tilde{\Gamma}_{ca}\tilde{\Gamma}_{ab}
\end{array}\right\rangle }.
\]
The annihilator $Ann([\emptyset])$, as a $\mathbb{T}_{f_{i}}\otimes\tilde{\mathbb{T}}_{f_{i}}\text{-}\left(\mathbb{B}^{\otimes3}\right)_{Cf_{i}}$-sub-bimodule of $\mathbb{T}_{f_{i}}\otimes\tilde{\mathbb{T}}_{f_{i}}\otimes\left(\mathbb{B}^{\otimes3}\right)_{Cf_{i}},$
 is generated by the following three elements
\begin{equation}\label{eq: generators of Ann for face cone}
(-A^{2})\Gamma_{ab}\tilde{\Gamma}_{ab}-x_{a,f_{i}}x_{b,f_{i}}\ ,\ \ 
(-A^{2})\Gamma_{bc}\tilde{\Gamma}_{bc}-x_{b,f_{i}}x_{c,f_{i}}\ ,\ \
(-A^{2})\Gamma_{ca}\tilde{\Gamma}_{ca}-x_{c,f_{i}}x_{a,f_{i}}.
\end{equation}

(ii) For the face suspension $Sf$ of the face $f\in\mathcal{T}^{(2)}$given
by identifying bare faces $f_{i}$ and $f_{j}$. Suppose that the
edges of $f_{i}$ (resp. $f_{j}$) are labeled $a_{i}$, $b_{i}$
and $c_{i}$ (resp. $a_{j}$, $b_{j}$ and $c_{j}$) such that when
$f_{i}$ and $f_{j}$ are identified, the edge labeled $a_{i}$ (resp.
$b_{i}$ or $c_{i}$) is identified with $a_{j}$ (resp. $b_{j}$
or $c_{j}$). Moreover we assume that the labeling $a_{i}$, $b_{i}$ and $c_{i}$ are
clockwise with respect to an outward normal vector of $f_{i}$. (Consequently, the labeling $a_{j}$, $b_{j}$ and $c_{j}$ will be counterclockwise with respect to an outward normal vector of $f_{j}$.)
Then the reduced skein module $\overline{\mathrm{Sk}}(Sf)$ has presentation
\[
\overline{\mathrm{Sk}}(Sf) 
  =\frac{\mathbb{\tilde{T}}_{f_{i}}\otimes\tilde{\mathbb{T}}_{f_{j}}\otimes\left(\mathbb{B}^{\otimes3}\right)_{Sf}}{\mathrm{Ann}([\emptyset])},
\]
where $\left(\mathbb{B}^{\otimes3}\right)_{Sf}$ is the skein algebra associated with the three bivalent sources of the boundary marking on $Sf$ and is given by the Laurent polynomial algebra
\[
\left(\mathbb{B}^{\otimes3}\right)_{Sf}=R[x_{a_{i},f_{i};a_{j},f_{j}}^{\pm1},x_{b_{i},f_{i};b_{j},f_{j}}^{\pm1},x_{c_{i},f_{i};c_{j},f_{j}}^{\pm1}];
\]
and $\tilde{\mathbb{T}}_{f_{i}}$ and $\tilde{\mathbb{T}}_{f_{j}}$
are skein algebras associated to the two cone points of $Sf$ and are isomorphic to the quantum tori given by 
\[
\mathbb{\tilde{T}}_{f_{i}}=\frac{R\langle\tilde{\Gamma}_{a_{i}b_{i}}^{\pm1},\tilde{\Gamma}_{b_{i}c_{i}}^{\pm1},\tilde{\Gamma}_{c_{i}a_{i}}^{\pm1}\rangle}{\left\langle \begin{array}{c}
\tilde{\Gamma}_{b_{i}c_{i}}\tilde{\Gamma}_{a_{i}b_{i}}=A\tilde{\Gamma}_{a_{i}b_{i}}\tilde{\Gamma}_{b_{i}c_{i}},\\
\tilde{\Gamma}_{c_{i}a_{i}}\tilde{\Gamma}_{b_{i}c_{i}}=A\tilde{\Gamma}_{b_{i}c_{i}}\tilde{\Gamma}_{c_{i}a_{i}},\\
\tilde{\Gamma}_{a_{i}b_{i}}\tilde{\Gamma}_{c_{i}a_{i}}=A\tilde{\Gamma}_{c_{i}a_{i}}\tilde{\Gamma}_{a_{i}b_{i}}
\end{array}\right\rangle }\ ,\ \ 
\tilde{\mathbb{T}}_{f_{j}}=\frac{R\langle\tilde{\Gamma}_{a_{j}b_{j}}^{\pm1},\tilde{\Gamma}_{b_{j}c_{j}}^{\pm1},\tilde{\Gamma}_{c_{j}a_{j}}^{\pm1}\rangle}{\left\langle \begin{array}{c}
\tilde{\Gamma}_{b_{j}c_{j}}\tilde{\Gamma}_{a_{j}b_{j}}=A^{-1}\tilde{\Gamma}_{a_{j}b_{j}}\tilde{\Gamma}_{b_{j}c_{j}},\\
\tilde{\Gamma}_{c_{j}a_{j}}\tilde{\Gamma}_{b_{j}c_{j}}=A^{-1}\tilde{\Gamma}_{b_{j}c_{j}}\tilde{\Gamma}_{c_{j}a_{j}},\\
\tilde{\Gamma}_{a_{j}b_{j}}\tilde{\Gamma}_{c_{j}a_{j}}=A^{-1}\tilde{\Gamma}_{c_{j}a_{j}}\tilde{\Gamma}_{a_{j}b_{j}}
\end{array}\right\rangle }.
\]
The annihilator $Ann([\emptyset])$, as a $\mathbb{\tilde{T}}_{f_{i}}\otimes\tilde{\mathbb{T}}_{f_{j}}\text{-}\left(\mathbb{B}^{\otimes3}\right)_{Sf}$-sub-bimodule of $\mathbb{\tilde{T}}_{f_{i}}\otimes\tilde{\mathbb{T}}_{f_{j}}\otimes\left(\mathbb{B}^{\otimes3}\right)_{Sf}$, is generated by the following three elements
\begin{equation}
\begin{array}{c}
(-A^{2})\tilde{\Gamma}_{a_{i}b_{i}}\tilde{\Gamma}_{a_{j}b_{j}}-x_{a_{i},f_{i};a_{j},f_{j}}x_{b_{i},f_{i};b_{j},f_{j}},\\
(-A^{2})\tilde{\Gamma}_{b_{i}c_{i}}\tilde{\Gamma}_{b_{j}c_{j}}-x_{b_{i},f_{i};b_{j},f_{j}}x_{c_{i},f_{i};c_{j},f_{j}},\\
(-A^{2})\tilde{\Gamma}_{c_{i}a_{i}}\tilde{\Gamma}_{c_{j}a_{j}}-x_{c_{i},f_{i};c_{j},f_{j}}x_{a_{i},f_{i};a_{j},f_{j}}.
\end{array}\label{eq:generators of Ann for face suspension}
\end{equation}
\end{cor}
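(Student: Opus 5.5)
The plan is to apply Theorem \ref{thm:skein module of 3-ball} directly, since both the face cone $Cf_{i}$ and the face suspension $Sf$ are 3-balls whose canonical boundary markings are admissible. Theorem \ref{thm:skein module of 3-ball} then says $\overline{\mathrm{Sk}}$ is cyclic, generated by $[\emptyset]$, over the tensor product of the reduced skein algebras at the vertices. There are two trivalent sinks and three bivalent sources. Theorem \ref{thm:skein algebras of triangles and bigons} identifies these algebras with quantum tori $\mathbb{T}$ and Laurent rings $\mathbb{B}$. This gives the shape $\mathbb{T}^{\otimes 2}\otimes\mathbb{B}^{\otimes3}/\mathrm{Ann}([\emptyset])$ in both cases, and what remains is to (a) fix the commutation conventions of each triangle algebra and (b) translate the generators of $\mathrm{Ann}([\emptyset])$ into algebraic form.

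For (a), I would read off the orientation of each sink from its outward normal. At the barycenter of $f_{i}$ in $Cf_{i}$, the outward normal of $Cf_{i}$ is the outward normal of $T$ at $f_{i}$. Since $a,b,c$ are clockwise with respect to this normal, the relation in (\ref{eq:SkAld(D_3)}) appears in the form given, with exponent $A^{-1}$ once the labelling of vertices versus opposite edges is matched. At the cone point, the three marking edges are seen from the opposite side, which reverses cyclic order and turns $A^{-1}$ into $A$. For $Sf$, the face $f_{j}$ is glued with the opposite orientation (Remark \ref{rem:matching face, opposite skew-symmetric form}), so its cone point gets the opposite convention from that of $f_{i}$. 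That gives $A$ for $\tilde{\mathbb{T}}_{f_{i}}$ and $A^{-1}$ for $\tilde{\mathbb{T}}_{f_{j}}$.

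For (b), the punctures of $\partial Cf_{i}$ are the three ideal vertices of $f_{i}$. Each is surrounded by four marking edges: two in $f_{i}$ and two in edge cones, so $n=2$. Each puncture therefore contributes the two elements of Theorem \ref{thm:skein module of 3-ball} with $n=2$. The first element is a ``product of corner arcs around the puncture'' relation $(-A^{2})^{n-1}\Gamma\tilde\Gamma - x x$. Here the corner arc at the sink of $f_{i}$ and the corner arc at the cone point around the same ideal vertex are matched with the two bigon arcs at the sources on the adjacent edges. This yields the three elements $(-A^{2})\Gamma_{ab}\tilde\Gamma_{ab}-x_{a,f_i}x_{b,f_i}$ and their rotations, and similarly for $Sf$, with $\tilde\Gamma_{\cdot,f_{j}}$ in place of $\Gamma$.

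The step I expect to be the main obstacle is showing that the second family of generators (the lagrangian-type relations) is redundant when $n=2$. It is not a new relation: it can be expressed as a consequence of the first type, with the middle terms collapsing because the mixed-state arcs on the sources reduce via Remark \ref{rem:mixed state element in terms of generators}. I would verify this by writing the $n=2$ version explicitly. Its three terms become monomials, and after multiplying by inverses they reduce to the vertex relation together with the bad-arc vanishing. I would also check the power of $(-A^{2})$ and the exact state conventions carefully against the figures. I expect sign and exponent bookkeeping here to be the delicate part.
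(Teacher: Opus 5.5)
Your proposal is correct and takes the same route as the paper. The paper presents this corollary simply as a direct application of Theorem \ref{thm:skein module of 3-ball}, with the vertex algebras identified through Theorem \ref{thm:skein algebras of triangles and bigons}, and your choice of $n=2$ at each puncture, the coefficient $(-A^{2})^{n-1}=-A^{2}$, and the $A^{\pm1}$ conventions read off from the outward normals all agree with it. The paper says nothing about the second family of generators, so your redundancy check is extra; when you carry it out, note that for $n=2$ it is a two-term relation whose mixed-state arcs sit at the two sinks (not the sources), written via Remark \ref{rem:mixed state element in terms of generators} as Weyl-ordered monomials such as $[\Gamma_{bc}\Gamma_{ca}^{-1}]$, and it follows from the first-type relations at the \emph{other} two punctures, which give exactly $\Gamma_{bc}\Gamma_{ca}^{-1}[\emptyset]=\tilde{\Gamma}_{ca}\tilde{\Gamma}_{bc}^{-1}[\emptyset]\,x_{b,f_{i}}x_{a,f_{i}}^{-1}$.
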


\subsection{Partial corner reduction for face cones\label{subsec: partial corner reduction}}

As mentioned in the introduction, we require a further quotient of the reduced skein module $\overline{\mathrm{Sk}}(Cf_{i})$ of a face cone by taking corner-reduction on the bare face $f_{i}$ only. Let's discuss this construction in detail. 

Like what we did when we introduced corner-reduction in \ref{subsec:Corner reduction}, first we need
to introduce a $\cdot$-product on the various skein algebras
associated with the vertices of the boundary marking.
Different to \ref{subsec:Corner reduction}, the face cone $Cf_{i}$ of a bare face
$f_{i}$ is not an ideally triangulated boundary marked 3-manifold
(as given by Definition \ref{def:ideally triangulated boundary marked 3-manifolds}).
However we can still define a suitable version of the $\cdot$-product. Just as
in \ref{subsec:Corner reduction}, we let $M_{f_{i}}$ be the set
of marking edges in the bare face $f_{i}$. Recall that the skein
module $\overline{\mathrm{Sk}}(Cf_{i})$ of the face cone of $f_{i}$,
the skein algebra $\mathbb{T}_{f_{i}}$ associated to the trivalent
sink in $f_{i}$ and the skein algebra $\left(\mathbb{B}^{\otimes3}\right)_{Cf_{i}}$
associated with the three bivalent sources all have $\mathbb{Z}^{M_{f_{i}}}$-gradings
(given by the sum of states of endpoints on the marking edges in $f_{i}$).
Also recall that there is a skew-symmetric bilinear form $\langle\ ,\ \rangle_{f_{i}}$
on $\mathbb{Z}^{M_{f_{i}}}$ given by the skew-symmetric bilinear
form on $\mathbb{Z}^{3}$ with matrix $\left(\begin{array}{ccc}
0 & 1 & -1\\
-1 & 0 & 1\\
1 & -1 & 0
\end{array}\right)$ along with an identification $\mathbb{Z}^{M_{f_{i}}}\cong\mathbb{Z}^{3}$
induced by a clockwise labeling of the marking edges on $f_{i}$ with
respect to an outward normal of $f_{i}$ .

\begin{defn}
\label{def: defn of cdot-product in the case of face cone}
(i) Let $\Gamma$
and $\Gamma^{\prime}$ be $\mathbb{Z}^{M_{f_{i}}}$-homogeneous
elements in $\mathbb{T}_{f_{i}}$, the skein algebra associated to
the trivalent sink at the center of the bare face $f_{i}$. We define
\[
\Gamma\cdot\Gamma^{\prime}=A^{-\frac{1}{2}\langle d_{f_{i}}(\Gamma),d_{f_{i}}(\Gamma^{\prime})\rangle_{f_{i}}}\Gamma\Gamma^{\prime}
\]
and extend by bilinearity to all elements of $\mathbb{T}_{f_{i}}$.
This is in fact the same $\cdot$-product on $\mathbb{T}_{f_{i}}$
we have introduced in \ref{subsec:Corner reduction} (see the discussion
right after Definition \ref{def: defn of cdot-product}).

(ii) Let $x$ and $x^{\prime}$ be $\mathbb{Z}^{M_{f_{i}}}$-homogeneous
elements in $\left(\mathbb{B}^{\otimes3}\right)_{Cf_{i}}$, the skein
algebra associated to the three bivalent sources. We define
\[
x\cdot x^{\prime}=A^{-\frac{1}{2}\langle d_{f_{i}}(x),d_{f_{i}}(x^{\prime})\rangle_{f_{i}}}xx^{\prime}
\]
and extend by bilinearity to all elements of $\left(\mathbb{B}^{\otimes3}\right)_{Cf_{i}}$.
Note that although the elements of $\left(\mathbb{B}^{\otimes3}\right)_{Cf_{i}}$
can have endpoints on both the marking edges in the bare face $f_{i}$
and the marking edges in the edge cones, the $A$-factor in the
definition of $\cdot$-product only ``sees'' the endpoints on
the marking edges in $f_{i}$.

(iii) We can also twist the the module structure on $\overline{\mathrm{Sk}}(Cf_{i})$
over these algebras. Let $\Gamma\in\mathbb{T}_{f_{i}}$, $x\in\left(\mathbb{B}^{\otimes3}\right)_{Cf_{i}}$
and $w\in\overline{\mathrm{Sk}}(Cf_{i})$ be $\mathbb{Z}^{M_{f_{i}}}$-homogeneous,
we define 
\[
\Gamma\cdot w=A^{-\frac{1}{2}\langle d_{f_{i}}(\Gamma),d_{f_{i}}(w)\rangle_{f_{i}}}\Gamma w
\]
and 
\[
w\cdot x=A^{-\frac{1}{2}\langle d_{f_{i}}(w),d_{f_{i}}(x)\rangle_{f_{i}}}wx
\]
and extend bilinearly. Once again, the $A$-factors only ``see''
the endpoints on the marking edges in the bare face $f_{i}$.

\end{defn}

The following is immediate from the last definition.

\begin{lem}
\label{lem:presentation of (T,cdot) and (B^tensor 3, cdot)}
(i) The
algebra $\left(\mathbb{T}_{f_{i}},\cdot\right)$ is  the Laurent polynomial
algebra (assuming the boundary edges of $f$ are labeled $a$, $b$
and $c$)
\[
R\left[Cf_{i}\right]:=R\left[\Gamma_{ab}^{\pm1},\ \Gamma_{bc}^{\pm1},\ \Gamma_{ca}^{\pm1}\right].
\]
In this case, a monomial in $\left(\mathbb{T}_{f_{i}},\cdot\right)=R\left[Cf_{i}\right]$
equals the Weyl-ordering of the same monomial under the $\cup$-product
in the common underlying set $\mathbb{T}_{f_{i}}$.

(ii) The algebra $\left(\left(\mathbb{B}^{\otimes3}\right)_{Cf_{i}},\cdot\right)$
is the quantum torus (assuming the labeling of edges by $a$, $b$
and $c$ are in clockwise order with respect to an outward normal
of $f_{i}$)
\[
\mathbb{T}\langle Cf_{i}\rangle:=\frac{R\langle x_{a,f_{i}}^{\pm1},\ x_{b,f_{i}}^{\pm1},\ x_{c,f_{i}}^{\pm1}\rangle}{\left\langle \begin{array}{c}
x_{b,f_{i}}\cdot x_{a,f_{i}}=Ax_{a,f_{i}}\cdot x_{b,f_{i}},\\
x_{c,f_{i}}\cdot x_{b,f_{i}}=Ax_{b,f_{i}}\cdot x_{c,f_{i}},\\
x_{a,f_{i}}\cdot x_{c,f_{i}}=Ax_{c,f_{i}}\cdot x_{a,f_{i}}
\end{array}\right\rangle }.
\]
In this case, a monomial in $\left(\mathbb{B}^{\otimes3}\right)_{Cf_{i}}$
(which is with respect to $\cup$-product) equals the Weyl-ordering
of the same monomial under the $\cdot$-product. 
\end{lem}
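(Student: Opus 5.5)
Both parts come down to computing $\mathbb{Z}^{M_{f_i}}$-degrees of the generators and applying Definition \ref{def: defn of cdot-product in the case of face cone}. Two general facts about the twist are used first.

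\emph{Twisting is associative and keeps the monomial basis.} The twisting exponent $-\tfrac12\langle d(\cdot),d(\cdot)\rangle_{f_i}$ is bilinear in the degrees and the degree is additive under $\cup$. So the $\cdot$-product is the twist of a graded algebra by a bicharacter. Such a twist is again associative and has the same underlying $R$-module. It also sends monomials to monomials up to a power of $A^{\frac12}$, and preserves units, because $d(g^{-1})=-d(g)$ and $\langle d,d\rangle_{f_i}=0$ give $g\cdot g^{-1}=gg^{-1}=1$.

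\emph{Reduction to generators.} By the first fact, to identify $(\mathbb{T}_{f_i},\cdot)$ and $((\mathbb{B}^{\otimes3})_{Cf_i},\cdot)$ with the claimed algebras it suffices to check the commutation relations between pairs of generators. The dimension count is then automatic, since both sides have the same Laurent monomial basis.

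\emph{Degrees.} Write $e_a,e_b,e_c$ for the marking edges $(a,f_i),(b,f_i),(c,f_i)$, ordered clockwise. Then $\langle e_a,e_b\rangle=\langle e_b,e_c\rangle=\langle e_c,e_a\rangle=1$.
\begin{itemize}
\item $\Gamma_{ab}$ has both endpoints in $+$ states on $e_a$ and $e_b$, so $d(\Gamma_{ab})=e_a+e_b$; cyclically for $\Gamma_{bc},\Gamma_{ca}$.
\item $x_{a,f_i}$ has one endpoint on $e_a$ and the other on the edge-cone marking $\{a,f_i\}$, which the grading ignores. So $d(x_{a,f_i})=e_a$.
\end{itemize}

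For (ii), $(\mathbb{B}^{\otimes3})_{Cf_i}$ is commutative under $\cup$. Hence
\[
x_{b,f_i}\cdot x_{a,f_i}=A^{-\frac12\langle e_b,e_a\rangle}x_{b,f_i}x_{a,f_i}=A^{\frac12}x_{a,f_i}x_{b,f_i}=A\,x_{a,f_i}\cdot x_{b,f_i}.
\]
The other two relations follow the same way by cyclic symmetry. For the Weyl-ordering claim, expand a $\cup$-monomial as an iterated $\cdot$-product. The accumulated factor is $A^{\frac12\sum_{p<q}\langle d_p,d_q\rangle}$, which is exactly the Weyl normalization of Definition \ref{def: quantum torus and Weyl-ordering} for this $\Lambda$.

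For (i), compute
\[
\langle e_a+e_b,\,e_b+e_c\rangle=\langle e_a,e_b\rangle+\langle e_a,e_c\rangle+\langle e_b,e_c\rangle=1-1+1=1,
\]
and cyclically. Using the $\cup$-relation $\Gamma_{bc}\Gamma_{ab}=A^{-1}\Gamma_{ab}\Gamma_{bc}$ from Corollary \ref{cor:generators for skein modules of face cones and face suspensions}(i):
\begin{align*}
\Gamma_{bc}\cdot\Gamma_{ab}&=A^{\frac12}\Gamma_{bc}\Gamma_{ab}=A^{-\frac12}\Gamma_{ab}\Gamma_{bc},\\
\Gamma_{ab}\cdot\Gamma_{bc}&=A^{-\frac12}\Gamma_{ab}\Gamma_{bc}.
\end{align*}
So the two generators commute, and likewise for the other pairs. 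Hence $(\mathbb{T}_{f_i},\cdot)=R[\Gamma_{ab}^{\pm1},\Gamma_{bc}^{\pm1},\Gamma_{ca}^{\pm1}]$. The Weyl-ordering statement is the same exponent bookkeeping as in (ii).

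The only real obstacle is convention-matching. One has to confirm that the clockwise labeling $a,b,c$ used in Corollary \ref{cor:generators for skein modules of face cones and face suspensions}(i), whose relations carry $A^{-1}$ as opposed to the $A$ in \eqref{eq:SkAld(D_3)}, gives $\langle e_a,e_b\rangle=+1$ rather than $-1$. If the sign came out the other way, the twist would double the non-commutativity instead of cancelling it. I would fix this by checking one figure, say Figure \ref{fig:cl labeling of marking edges}, against the definition of $\langle\ ,\ \rangle_{f}$. This is also consistent with the single-tetrahedron case in Lemma \ref{lem:presntation of (T^tensor4,cdot)  and (B^tensor6, cdot)}, of which the present lemma is the one-face version.
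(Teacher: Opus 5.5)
Your proposal is correct and follows the same route as the paper. The paper just says the lemma is immediate from Definition \ref{def: defn of cdot-product in the case of face cone}, and your computations ($d(\Gamma_{ab})=e_a+e_b$, $d(x_{a,f_i})=e_a$, $\langle e_a,e_b\rangle_{f_i}=1$ for the clockwise ordering, then comparing twisted and untwisted commutators) are exactly the check it omits. The convention worry you raise is already settled by the paper's definitions, because the matrix of $\langle\ ,\ \rangle_{f_i}$ and the $A^{-1}$ relations in Corollary \ref{cor:generators for skein modules of face cones and face suspensions}(i) are both stated with respect to the same clockwise labeling of $f_i$.
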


Once again for consistency, the multiplications in the algebras $R[Cf_{i}]$ and $\mathbb{T}\langle Cf_{i}\rangle$ will always be written as $\cdot$. Also note that $\underset{f_{i}\in\mathbf{f}(T)}{\bigotimes}R[Cf_{i}]$
is nothing but the algebra $R[T]$.

The upshot is that the skein module $\overline{\mathrm{Sk}}(Cf_{i})$ is
now a left $\left(\mathbb{T}_{f_{i}},\cdot\right)=R\left[Cf_{i}\right]$-module
under the $\cdot$-action, a right $\left(\left(\mathbb{B}^{\otimes3}\right)_{Cf_{i}},\cdot\right)=\mathbb{T}\left\langle Cf_{i}\right\rangle $-module
under the $\cdot$-action, and still a left $\tilde{\mathbb{T}}_{f_{i}}$-module
under the $\cup$-action at the cone point. These module structures are compatible
with each other, meaning $\overline{\mathrm{Sk}}(Cf_{i})$ is a 
\[
\left(\mathbb{T}_{f_{i}},\cdot\right)\otimes\tilde{\mathbb{T}}_{f_{i}}\text{-}\left(\left(\mathbb{B}^{\otimes3}\right)_{Cf_{i}},\cdot\right)=R\left[Cf_{i}\right]\otimes\tilde{\mathbb{T}}_{f_{i}}\text{-}\mathbb{T}\left\langle Cf_{i}\right\rangle 
\]
-bimodule\footnote{Here, the ring structure on $R\left[Cf\right]\otimes\tilde{\mathbb{T}}_{f}$
is the tensor product ring structure.}.

Similar to what we did in \ref{subsec:corner reduction in the case of a singe tetrahedron},
we can also describe this $R\left[Cf_{i}\right]\otimes\tilde{\mathbb{T}}_{f_{i}}\text{-}\mathbb{T}\left\langle Cf_{i}\right\rangle$-bimodule structure on $\overline{\mathrm{Sk}}(Cf_{i})$ in an alternative manner. We first introduce a 
\[
\left(\mathbb{T}_{f_{i}},\cdot\right)\otimes\tilde{\mathbb{T}}_{f_{i}}\text{-}\left(\left(\mathbb{B}^{\otimes3}\right)_{Cf_{i}},\cdot\right)=R\left[Cf_{i}\right]\otimes\tilde{\mathbb{T}}_{f_{i}}\text{-}\mathbb{T}\left\langle Cf_{i}\right\rangle
\]
-bimodule structure on 
\[
\left(\mathbb{T}_{f_{i}},\cdot\right)\otimes\tilde{\mathbb{T}}_{f_{i}}\otimes\left(\left(\mathbb{B}^{\otimes3}\right)_{Cf_{i}},\cdot\right)=R\left[Cf_{i}\right]\otimes\tilde{\mathbb{T}}_{f_{i}}\otimes\mathbb{T}\left\langle Cf_{i}\right\rangle
\]
and then realize $\overline{\mathrm{Sk}}(Cf_{i})$ as a quotient $R\left[Cf_{i}\right]\otimes\tilde{\mathbb{T}}_{f_{i}}\text{-}\mathbb{T}\left\langle Cf_{i}\right\rangle$-bimodule.
The $R\left[Cf_{i}\right]\otimes\tilde{\mathbb{T}}_{f_{i}}\text{-}\mathbb{T}\left\langle Cf_{i}\right\rangle$-bimodule
structure on $R\left[Cf_{i}\right]\otimes\tilde{\mathbb{T}}_{f_{i}}\otimes\mathbb{T}\left\langle Cf_{i}\right\rangle$
is defined as follows:
\begin{enumerate}
\item Let $\Gamma,\Gamma^{\prime}\in R\left[Cf_{i}\right]$, $\tilde{\Gamma}\in\tilde{\mathbb{T}}_{f_{i}}$
and $x\in\mathbb{T}\left\langle Cf_{i}\right\rangle$ be $\mathbb{Z}^{M_{f_{i}}}$-homogeneous,
define
\begin{align*}
\Gamma^{\prime}\cdot\left(\Gamma\otimes\tilde{\Gamma}\otimes x\right) & =A^{-\frac{1}{2}\langle d_{f_{i}}(\Gamma^{\prime}),d_{f_{i}}(x)\rangle_{f_{i}}}\left(\Gamma^{\prime}\cdot\Gamma\right)\otimes\tilde{\Gamma}\otimes x\\
 & =A^{-\frac{1}{2}\langle d_{f_{i}}(\Gamma^{\prime}),d_{f_{i}}(\Gamma)\rangle_{f_{i}}-\frac{1}{2}\langle d_{f_{i}}(\Gamma^{\prime}),d_{f_{i}}(x)\rangle_{f_{i}}}\left(\Gamma^{\prime}\Gamma\right)\otimes\tilde{\Gamma}\otimes x
\end{align*}
and extend by bilinearity. This defines a left $R\left[Cf_{i}\right]$-module
structure on $R\left[Cf_{i}\right]\otimes\tilde{\mathbb{T}}_{f_{i}}\otimes\mathbb{T}\left\langle Cf_{i}\right\rangle$.
\item Let $\Gamma\in R\left[Cf_{i}\right]$, $\tilde{\Gamma}\in\tilde{\mathbb{T}}_{f_{i}}$,
$x,x^{\prime}\in\mathbb{T}\left\langle Cf_{i}\right\rangle$ be $\mathbb{Z}^{M_{f_{i}}}$-homogeneous,
define
\begin{align*}
\left(\Gamma\otimes\tilde{\Gamma}\otimes x\right)\cdot x^{\prime} & =A^{-\frac{1}{2}\langle d_{f_{i}}(\Gamma),d_{f_{i}}(x^{\prime})\rangle_{f_{i}}}\Gamma\otimes\tilde{\Gamma}\otimes\left(x\cdot x^{\prime}\right)\\
 & =A^{-\frac{1}{2}\langle d_{f_{i}}(\Gamma),d_{f_{i}}(x^{\prime})\rangle_{f_{i}}-\frac{1}{2}\langle d_{f_{i}}(x),d_{f_{i}}(x^{\prime})\rangle_{f_{i}}}\Gamma\otimes\tilde{\Gamma}\otimes\left(xx^{\prime}\right)
\end{align*}
and extend by bilinearity. This defines a right $\mathbb{T}\left\langle Cf_{i}\right\rangle$-module
structure on $R\left[Cf_{i}\right]\otimes\tilde{\mathbb{T}}_{f_{i}}\otimes\mathbb{T}\left\langle Cf_{i}\right\rangle$.
\item Therefore if $\Gamma,\Gamma^{\prime}\in R\left[Cf_{i}\right]$, $\tilde{\Gamma}\in\tilde{\mathbb{T}}_{f_{i}}$
and $x,x^{\prime}\in\mathbb{T}\left\langle Cf_{i}\right\rangle$ are $\mathbb{Z}^{M_{f_{i}}}$-homogeneous,
we have
\begin{eqnarray*}
\lefteqn{\left(\Gamma^{\prime}\cdot\left(\Gamma\otimes\tilde{\Gamma}\otimes x\right)\right)\cdot x^{\prime}=\Gamma^{\prime}\cdot\left(\left(\Gamma\otimes\tilde{\Gamma}\otimes x\right)\cdot x^{\prime}\right)}\\
&=&A^{-\frac{1}{2}\langle d_{f_{i}}(\Gamma^{\prime}),d_{f_{i}}(x)\rangle_{f_{i}}-\frac{1}{2}\langle d_{f_{i}}(\Gamma^{\prime}),d_{f_{i}}(x^{\prime})\rangle_{f_{i}}-\frac{1}{2}\langle d_{f_{i}}(\Gamma),d_{f_{i}}(x^{\prime})\rangle_{f_{i}}}\left(\Gamma^{\prime}\cdot\Gamma\right)\otimes\tilde{\Gamma}\otimes\left(x\cdot x^{\prime}\right)\\
&=&A^{-\frac{1}{2}\langle d_{f_{i}}(\Gamma^{\prime}),d_{f_{i}}(\Gamma)\rangle_{f_{i}}-\frac{1}{2}\langle d_{f_{i}}(\Gamma^{\prime}),d_{f_{i}}(x)\rangle_{f_{i}}-\frac{1}{2}\langle d_{f_{i}}(\Gamma^{\prime}),d_{f_{i}}(x^{\prime})\rangle_{f_{i}}-\frac{1}{2}\langle d_{f_{i}}(\Gamma),d_{f_{i}}(x^{\prime})\rangle_{f_{i}}-\frac{1}{2}\langle d_{f_{i}}(x),d_{f_{i}}(x^{\prime})\rangle_{f_{i}}}\\
& &\times\left(\Gamma^{\prime}\Gamma\right)\otimes\tilde{\Gamma}\otimes\left(xx^{\prime}\right).
\end{eqnarray*}
Hence $R\left[Cf_{i}\right]\otimes\tilde{\mathbb{T}}_{f_{i}}\otimes\mathbb{T}\left\langle Cf_{i}\right\rangle$
becomes a $R\left[Cf_{i}\right]\text{-}\mathbb{T}\left\langle Cf_{i}\right\rangle$-bimodule.
\item $R\left[Cf_{i}\right]\otimes\tilde{\mathbb{T}}_{f_{i}}\otimes\mathbb{T}\left\langle Cf_{i}\right\rangle$
is naturally a left $\tilde{\mathbb{T}}_{f_{i}}$-module ($\tilde{\mathbb{T}}_{f_{i}}$
is in the usual $\cup$-product structure) coming from the tensor
product. This left $\tilde{\mathbb{T}}_{f_{i}}$-module structure is
compatible with the $R\left[Cf_{i}\right]\text{-}\mathbb{T}\left\langle Cf_{i}\right\rangle$-bimodule
structure given above, as can be quickly checked.
Hence $R\left[Cf_{i}\right]\otimes\tilde{\mathbb{T}}_{f_{i}}\otimes\mathbb{T}\left\langle Cf_{i}\right\rangle$
is a $R\left[Cf_{i}\right]\otimes\tilde{\mathbb{T}}_{f_{i}}\text{-}\mathbb{T}\left\langle Cf_{i}\right\rangle$-bimodule.
\end{enumerate}

We can immediately give the desired algebraic presentation of 
$\overline{\mathrm{Sk}}(Cf_{i})$.

\begin{lem}
\label{lem: generators of Ann(emptyset) for Cf are homogeneous}
The generators of $\mathrm{Ann}(\emptyset)$ for the face cone $Cf_{i}$ given
by (\ref{eq: generators of Ann for face cone}) in Corollary \ref{cor:generators for skein modules of face cones and face suspensions}(i) are $\mathbb{Z}^{M_{f_{i}}}$-homogeneous, therefore $\mathrm{Ann}(\emptyset)$
for $Cf_{i}$ can be viewed as the $R\left[Cf_{i}\right]\otimes\tilde{\mathbb{T}}_{f_{i}}\text{-}\mathbb{T}\left\langle Cf_{i}\right\rangle $-sub-bimodule
of $R\left[Cf_{i}\right]\otimes\tilde{\mathbb{T}}_{f_{i}}\otimes\mathbb{T}\left\langle Cf_{i}\right\rangle $
generated by the elements given by (\ref{eq: generators of Ann for face cone}).
Therefore $\overline{\mathrm{Sk}}(Cf_{i})$ can be alternatively given as
the $R\left[Cf_{i}\right]\otimes\tilde{\mathbb{T}}_{f_{i}}\text{-}\mathbb{T}\left\langle Cf_{i}\right\rangle $-bimodule quotient
of $R\left[Cf_{i}\right]\otimes\tilde{\mathbb{T}}_{f_{i}}\otimes\mathbb{T}\left\langle Cf_{i}\right\rangle$ by $\mathrm{Ann}[\emptyset]$.
\end{lem}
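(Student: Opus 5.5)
The plan mirrors the argument already given for the tetrahedron in Lemma \ref{lem:generators of Ann(emptyset) are homogeneous, so it is also an ideal in dot-product}. There are two parts. First, I check directly that each of the three generators in (\ref{eq: generators of Ann for face cone}) is $\mathbb{Z}^{M_{f_{i}}}$-homogeneous. Second, I invoke the general observation that twisting the actions by scalars on homogeneous elements does not change the sub-bimodule generated by homogeneous elements.

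For the homogeneity, take the first generator $(-A^{2})\Gamma_{ab}\tilde{\Gamma}_{ab}-x_{a,f_{i}}x_{b,f_{i}}$.
\begin{itemize}
\item $\tilde{\Gamma}_{ab}$ has all its endpoints on the dashed marking edges in the edge cones. Its $\mathbb{Z}^{M_{f_{i}}}$-degree is therefore $0$, since the grading only sees the marking edges $(a,f_{i}),(b,f_{i}),(c,f_{i})$ in $f_{i}$.
\item $\Gamma_{ab}$ has one $+$ endpoint on each of $(a,f_{i})$ and $(b,f_{i})$, so its degree is $e_{(a,f_{i})}+e_{(b,f_{i})}$.
\item Each $x_{a,f_{i}}$ has one $+$ endpoint on $(a,f_{i})$ and one on the dashed edge $\{a,f_{i}\}$, so its degree is $e_{(a,f_{i})}$. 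Hence $x_{a,f_{i}}x_{b,f_{i}}$ also has degree $e_{(a,f_{i})}+e_{(b,f_{i})}$.
\end{itemize}
The two terms therefore have equal degree, and the other two generators follow by cyclic symmetry. This is visible directly from the skein diagrams: both terms have the same boundary states on the solid marking edges.

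For the second part, let $\Gamma,\Gamma'\in R[Cf_{i}]$, $\tilde{\Gamma}\in\tilde{\mathbb{T}}_{f_{i}}$ and $x,x'\in\mathbb{T}\langle Cf_{i}\rangle$ be homogeneous. By points (1)--(4) above, the $\cdot$-actions on $R[Cf_{i}]\otimes\tilde{\mathbb{T}}_{f_{i}}\otimes\mathbb{T}\langle Cf_{i}\rangle$ differ from the $\cup$-actions only by an invertible scalar $A^{k/2}$ depending on the degrees. Consequently, for a homogeneous generator $w$, the element $\Gamma'\cdot\tilde{\Gamma}'(w)\cdot x'$ is a unit multiple of $\Gamma'\tilde{\Gamma}'wx'$ whenever $\Gamma',\tilde{\Gamma}',x'$ are monomials. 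Monomials span the algebras, and both sub-bimodules are the $R$-spans of such products with the generators, so the two sub-bimodules coincide as $R$-submodules.

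One subtlety needs checking: the left $\tilde{\mathbb{T}}_{f_{i}}$-action is untwisted. It is compatible because $\tilde{\mathbb{T}}_{f_{i}}$ has degree $0$ in $\mathbb{Z}^{M_{f_{i}}}$, so no scalar factor arises from it.

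The conclusion then follows from Corollary \ref{cor:generators for skein modules of face cones and face suspensions}(i). The untwisted bimodule structure on $\overline{\mathrm{Sk}}(Cf_{i})$ is the quotient by $\mathrm{Ann}([\emptyset])$, and the $\cdot$-structure on $\overline{\mathrm{Sk}}(Cf_{i})$ is defined by the same scalar twists. Hence $\overline{\mathrm{Sk}}(Cf_{i})$ is the $R[Cf_{i}]\otimes\tilde{\mathbb{T}}_{f_{i}}\text{-}\mathbb{T}\langle Cf_{i}\rangle$-bimodule quotient. The main, though mild, obstacle is the degree bookkeeping: confirming that dashed endpoints do not contribute, and that the mixed $\cup$/$\cdot$ structure is genuinely a bimodule, which is points (3)–(4).
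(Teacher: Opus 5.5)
Your proposal is correct and is essentially the argument the paper intends. The paper states this lemma without a separate proof, treating it as the immediate face-cone analogue of the tetrahedron lemma, namely a degree check on the generators plus the observation that the $\cup$- and $\cdot$-sub-bimodules generated by homogeneous elements coincide. Your bookkeeping matches this: dashed endpoints contribute nothing, the scalar twists depend only on total degree, and $\tilde{\mathbb{T}}_{f_i}$ sits in degree $0$.
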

Now we can define the partial corner reduction for the face cone
$Cf_{i}$ of a bare face $f_{i}$. For the commutative algebra $(\mathbb{T}_{f_{i}},\cdot)=R\left[Cf_{i}\right]$,
we have the ideal $I_{f_{i}}^{c}$ introduced in \ref{subsec:Corner reduction}
(see Definition \ref{def:corner reduced module} (i)), which
in the notation of the current section is (assuming the boundary
edges of $f_{i}$ are labeled $a$, $b$ and $c$)
\[
I_{f_{i}}^{c}=\left\langle \Gamma_{ab}-(-A^{2})^{-\frac{1}{2}},\ \Gamma_{bc}-(-A^{2})^{-\frac{1}{2}},\ \Gamma_{ca}-(-A^{2})^{-\frac{1}{2}}\right\rangle .
\]
\begin{defn}
\label{def: partial corner reduced module of face cone} 
The \emph{partially
corner-reduced skein module} $\overline{\mathrm{Sk}}^{pc}(Cf_{i})$ of the
face cone is defined to be the $R$-module quotient
\[
\overline{\mathrm{Sk}}^{pc}(Cf_{i})=\frac{\overline{\mathrm{Sk}}(Cf_{i})}{I_{f_{i}}^{c}\cdot\overline{\mathrm{Sk}}(Cf_{i})}.
\]
\end{defn}
The $R$-submodule $I_{f_{i}}^{c}\cdot\overline{\mathrm{Sk}}(Cf_{i})$ of
$\overline{\mathrm{Sk}}(Cf_{i})$ is clearly also a 
$
R\left[Cf_{i}\right]\otimes\tilde{\mathbb{T}}_{f_{i}}\text{-}\mathbb{T}\left\langle Cf_{i}\right\rangle
$
sub-bimodule, therefore $\overline{\mathrm{Sk}}^{pc}(Cf_{i})$ is also a quotient 
$
R\left[Cf_{i}\right]\otimes\tilde{\mathbb{T}}_{f_{i}}\text{-}\mathbb{T}\left\langle Cf_{i}\right\rangle
$
bimodule of $\overline{\mathrm{Sk}}(Cf_{i})$ and thus of $R\left[Cf_{i}\right]\otimes\tilde{\mathbb{T}}_{f_{i}}\otimes\mathbb{T}\left\langle Cf_{i}\right\rangle $,
where $R\left[Cf_{i}\right]$ and $\mathbb{T}\left\langle Cf_{i}\right\rangle $ act by
$\cdot$ and $\tilde{\mathbb{T}}_{f_{i}}$ acts by $\cup$. 

\subsection{Splitting of the 3-manifolds into face suspensions and face cones\label{subsec:splitting homomorphisms for face cones and face suspensions}}

The first step of Panitch \& Park's construction of their quantum trace
map is to split the ideally triangulated 3-manifold into face suspensions.
On the other hand, the first step of our construction is to split the 3-manifold
into ideal tetrahedra. If, in the approach of Panitch \& Park, we
further split each face suspension $Sf$ into two face cones $Cf_{i}$
and $Cf_{j}$ where $f_{i}$ and $f_{j}$ are the bare faces identified
to $f$; and if, in our approach, we further split each tetrahedron $T$
into four face cones, one for each of the four bare faces of $T$;
then the result in both cases is the splitting of the 3-manifold into face cones. This
serves as the common refinement of both approaches. 

In summary, if we let $(Y,\mathcal{T})$
be our ideally triangulated 3-manifold, then we have the following commutative
diagram of splittings
\[
\begin{tikzcd}
                        & \underset{f\in\mathcal{T}^{(2)}}{\coprod}Sf  \arrow[rd] &                                                              \\
Y \arrow[ru] \arrow[rd] &                                                         & {{\underset{f_{i}\in\mathbf{f}(\mathcal{T})}{\coprod}Cf_{i}} } \\
                        &  \underset{T\in\mathcal{T}}{\coprod}T \arrow[ru]        &                                                             
\end{tikzcd}
\]

Our task in this subsection is to study the homomorphisms of skein
modules induced by these splittings and produce the following commutative
diagram:
\[
\begin{tikzcd}
               & \underset{f\in\mathcal{T}^{(2)}}{\overline{\bigotimes}}\overline{\mathrm{Sk}}(Sf) \arrow[rd, "\large{\textcircled{\small{2}}}"] &          \\
\mathrm{Sk}(Y)  \arrow[ru, "\large{\textcircled{\small{1}}}"] \arrow[rd, "\large{\textcircled{\small{3}}}"'] &                                       & {{\underset{f_{i}\in\mathbf{f}(\mathcal{T})}{\overline{\bigotimes}}\overline{\mathrm{Sk}}^{pc}(Cf_{i})}} \\
               & \underset{T\in\mathcal{T}}{\overline{\bigotimes}}\overline{\mathrm{Sk}}^{c}(T) \arrow[ru, "\large{\textcircled{\small{4}}}"']   &                                                                                                     
\end{tikzcd}
\]

We first describe these homomorphisms:

{\large{\textcircled{\small{1}}}}:
This map is the splitting homomorphism
\[
\tilde{\sigma}\colon\mathrm{Sk}(Y)\rightarrow\underset{f\in\mathcal{T}^{(2)}}{\overline{\bigotimes}}\overline{\mathrm{Sk}}(Sf)
\]
induced by splitting the 3-manifold $Y$ along the edge cones into face suspensions. Let's first describe the codomain $\underset{f\in\mathcal{T}^{(2)}}{\overline{\bigotimes}}\overline{\mathrm{Sk}}(Sf)$.

\begin{defn}
\label{def:reduced tensor product of skein modules of face suspensions}
(i) Let $T$ be an ideal tetrahedron and suppose that the labeling of its edges by shape parameters is given as in Figure \ref{fig:ideal T with bm and labeling of generators}. Also recall our naming convention of generators of various skein algebras associated with the vertices of the boundary marking. Each ideal vertex of $T$ gives us a pair of elements of the algebra
$\underset{f_{i}\in\mathbf{f}(T)}{\bigotimes}\tilde{\mathbb{T}}_{f_{i}}$.
For simplicity we only write down the pair corresponding to
the vertex incident to edges labeled $z$, $z^{\prime}$ and $z^{\prime\prime}$:
\[
\tilde{v}_{zz^{\prime}z^{\prime\prime}}:=\tilde{\Gamma}_{zz^{\prime}}\tilde{\Gamma}_{z^{\prime}z^{\prime\prime}}\tilde{\Gamma}_{z^{\prime\prime}z}-(-A^{2})^{-1},
\]
\[
\tilde{\ell}_{zz^{\prime\prime}}:=\left[\tilde{\Gamma}_{zy^{\prime}}^{-1}\tilde{\Gamma}_{y^{\prime}z^{\prime\prime}}\right]-\left[\tilde{\Gamma}_{z^{\prime\prime}y}^{-1}\tilde{\Gamma}_{yz^{\prime}}\right]\tilde{\Gamma}_{zz^{\prime}}-\tilde{\Gamma}_{z^{\prime}z^{\prime\prime}}^{-1}\left[\tilde{\Gamma}_{z^{\prime}y^{\prime\prime}}^{-1}\tilde{\Gamma}_{y^{\prime\prime}z}\right];
\]
we let $\tilde{I}_{T}$ be the\emph{ right} ideal of $\underset{f_{i}\in\mathbf{f}(T)}{\bigotimes}\tilde{\mathbb{T}}_{f_{i}}$
generated by all four pairs of elements. Note that the ideal $\tilde{I}_{T}$
is independent of the labelings. By abuse of notation, if $T$ is an ideal tetrahedron in an ideal triangulation $\mathcal{T}$, we will also denote by $\tilde{I}_{T}$ the left ideal of $\underset{f_{i}\in\mathbf{f}(\mathcal{T})}{\bigotimes}\tilde{\mathbb{T}}_{f_{i}}$ generated by the same set of elements.

(ii) Let $\widetilde{\mathcal{\mathfrak{R}}}_{T}$ be the $R$-submodule
of $\underset{f\in\mathcal{T}^{(2)}}{\bigotimes}\overline{\mathrm{Sk}}(Sf)$
given by\footnote{Recall by construction $\underset{f\in\mathcal{T}^{(2)}}{\bigotimes}\overline{\mathrm{Sk}}(Sf)$ is a left $\underset{f_{i}\in\mathbf{f}(\mathcal{T})}{\bigotimes}\tilde{\mathbb{T}}_{f_{i}}$-module under $\cup$-action.} $\tilde{I}_{T}\left(\underset{f\in\mathcal{T}^{(2)}}{\bigotimes}\overline{\mathrm{Sk}}(Sf)\right).$

(iii) Let $e\in\mathcal{T}^{(1)}$ be an edge of the ideal triangulation $\mathcal{T}$ and let $Sf^{(1)},Sf^{(2)},\dots,Sf^{(k)}$
be the sequence of face suspensions around $e$ (see Figure \ref{fig:face suspensions around an edge}). Suppose
each face $f^{(i)}$ is the identification of bare faces $f_{1}^{(i)}$
and $f_{2}^{(i)}$ , so that $Sf^{(i)}$ is the union of face cones
$Cf_{1}^{(i)}$ and $Cf_{2}^{(i)}$ (so $f_{2}^{(i)}$
and $f_{1}^{(i+1)}$ are bare faces of the same ideal tetrahedron,
see Figure \ref{fig:face cones around an edge}). We also let $e_{i}$ be the shape parameter that labels
the common bare edge of both $f_{2}^{(i)}$ and $f_{1}^{(i+1)}$,
and let $e_{k}$ be the shape parameter that labels the common bare
edge of both $f_{2}^{(k)}$ and $f_{1}^{(1)}$. Thus $e_{1},e_{2}\dots,e_{k}$
is the sequence of bare edges identified to the edge $e\in\mathcal{T}^{(1)}$.
We define the element
\[
\tilde{e}=x_{e_{k},f_{1}^{(1)};e_{1},f_{2}^{(1)}}x_{e_{1},f_{1}^{(2)};e_{2};f_{2}^{(2)}}\dots x_{e_{k-1},f_{1}^{(k)};e_{k},f_{2}^{(k)}}\in\bigotimes_{f\in\mathcal{T}^{(2)}}\left({\mathbb{B}^{\otimes3}}\right)_{Sf}.
\]
 (As in Corollary \ref{cor:generators for skein modules of face cones and face suspensions} (ii),
$\left({\mathbb{B}^{\otimes3}}\right)_{Sf}$ denotes the skein algebra associated
to the three bivalent sources of the boundary marking on the face
suspension $Sf$). Let $\tilde{I}_{E}$ be the \emph{left} ideal of $\bigotimes_{f\in\mathcal{T}^{(2)}}\left({\mathbb{B}^{\otimes3}}\right)_{Sf}$
generated by the elements 
\[
\tilde{e}^{\epsilon}-(-A^{2})^{\epsilon},\ \epsilon\in\{\pm\}
\]
for every edge $e\in\mathcal{T}^{(1)}$.
\begin{figure}[h]
  \includegraphics[scale=0.2]{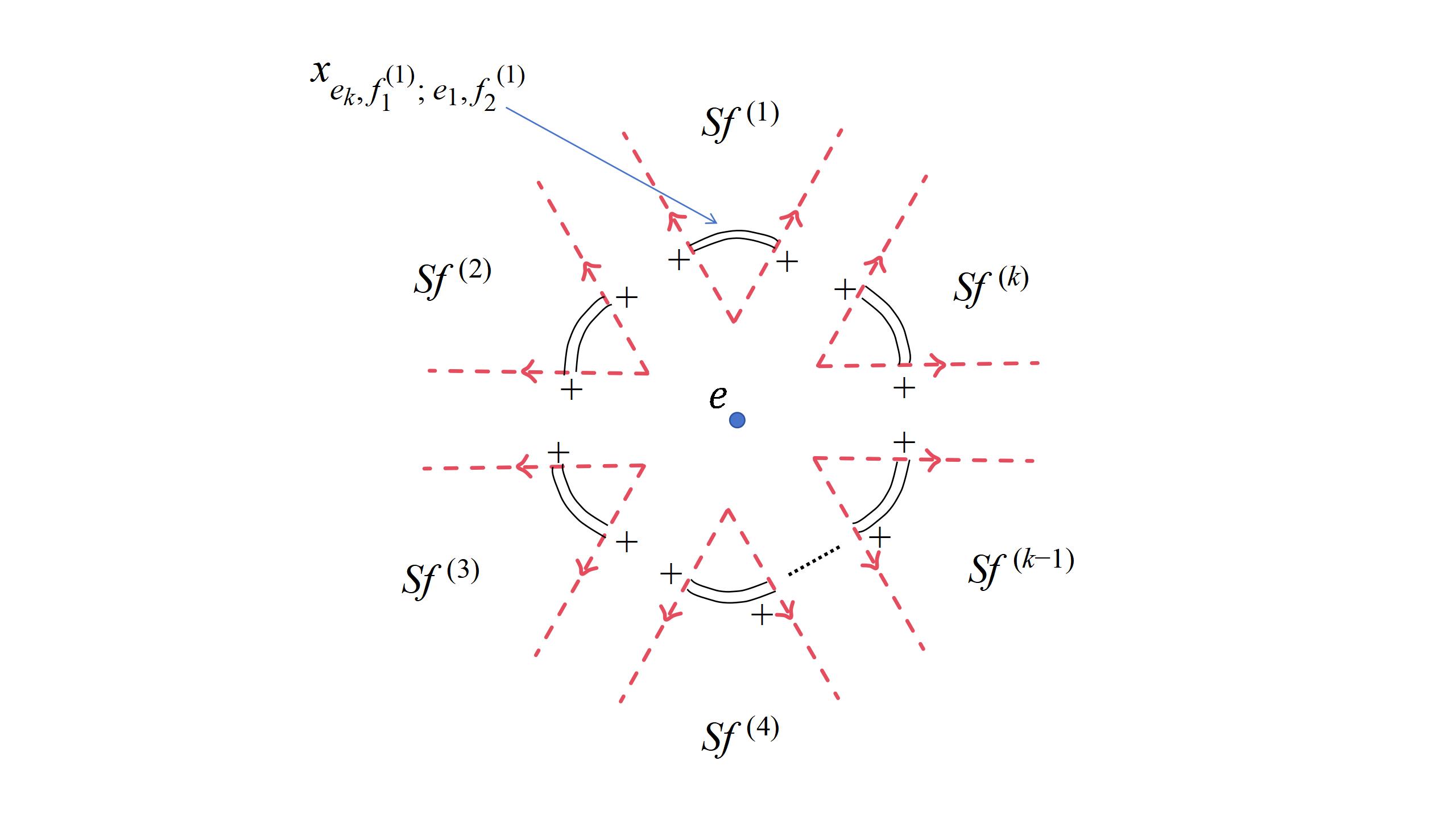} 
  \caption{Face suspensions around an edge $e$ and the skein diagram of the element $\tilde{e}$.}
  \label{fig:face suspensions around an edge}
\end{figure}
\begin{figure}[h]
  \includegraphics[scale=0.2]{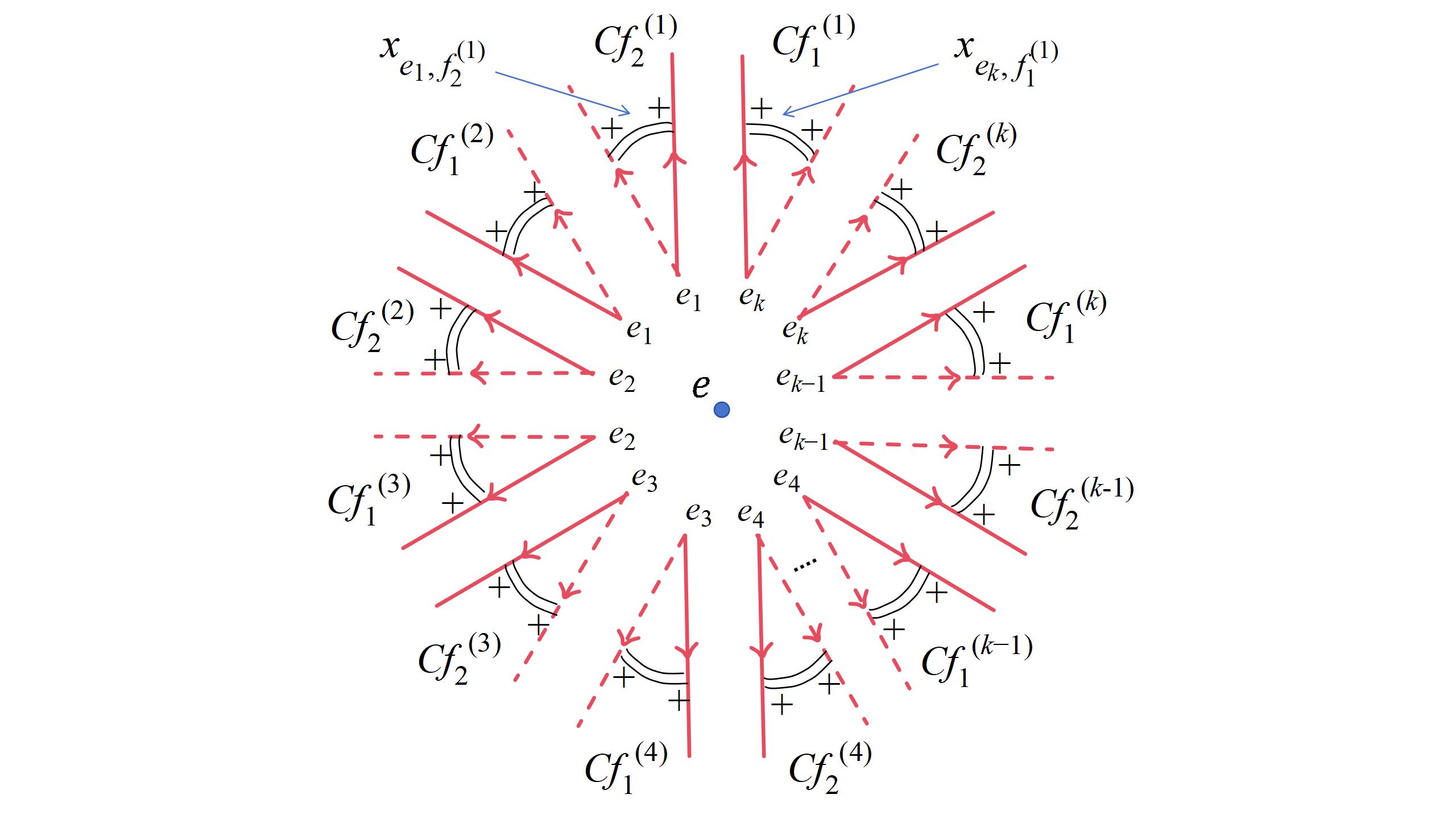} 
  \caption{Face cones around an edge $e$ and the skein diagram of the element $\bar{e}$. This diagram also implies the sequence of face suspensions and the sequence of ideal tetrahedra around the edge $e$.}
  \label{fig:face cones around an edge}
\end{figure}

(iv) Let $\widetilde{\mathfrak{R}}_{E}$ be the $R$-submodule of $\underset{f\in\mathcal{T}^{(2)}}{\bigotimes}\overline{\mathrm{Sk}}(Sf)$
given by $\left(\underset{f\in\mathcal{T}^{(2)}}{\bigotimes}\overline{\mathrm{Sk}}(Sf)\right)\tilde{I}_{E}$.

(v) Lastly, we define the \emph{reduced tensor product} $\underset{f\in\mathcal{T}^{(2)}}{\overline{\bigotimes}}\overline{\mathrm{Sk}}(Sf)$
to be the $R$-module quotient\footnote{For comparison, the submodule $\widetilde{\mathfrak{R}}_{E}$ is the submodule $\left(\underset{f\in\mathcal{T}^{(2)}}{\bigotimes}\overline{\mathrm{Sk}}(Sf)\right)U_{-}$ in \cite{PP1}; the submodule $\sum_{T\in\mathcal{T}}\widetilde{\mathcal{\mathfrak{R}}}_{T}$ is the submodule $U_{+}\left(\underset{f\in\mathcal{T}^{(2)}}{\bigotimes}\overline{\mathrm{Sk}}(Sf)\right)$ in \cite{PP1}.}
\[
\underset{f\in\mathcal{T}^{(2)}}{\overline{\bigotimes}}\overline{\mathrm{Sk}}(Sf):=\frac{\underset{f\in\mathcal{T}^{(2)}}{\bigotimes}\overline{\mathrm{Sk}}(Sf)}{\widetilde{\mathfrak{R}}_{E}+\sum_{T\in\mathcal{T}}\widetilde{\mathcal{\mathfrak{R}}}_{T}}.
\]

\end{defn}

\begin{rem}
Similar to what we have in Corollary \ref{cor:reduced skein module of T}, the element
$\tilde{\ell}_{zz^{\prime\prime}}$ given above is in fact a choice
out of three possibilities, there are two other elements $\tilde{\ell}_{z^{\prime\prime}z^{\prime}}$
and $\tilde{\ell}_{z^{\prime}z}$ in the form of $\tilde{\ell}_{zz^{\prime\prime}}$.
However these three choices are equivalent in the sense that if one
replace the element $\tilde{\ell}_{zz^{\prime\prime}}$ in the above definition by $\tilde{\ell}_{z^{\prime\prime}z^{\prime}}$
or $\tilde{\ell}_{z^{\prime}z}$, the resulting right ideal $\tilde{I}_{T}$
is the same.
\end{rem}

Now let us describe the splitting homomorphism 
\[
\tilde{\sigma}\colon\overline{\mathrm{Sk}}(Y)\rightarrow\underset{f\in\mathcal{T}^{(2)}}{\overline{\bigotimes}}\overline{\mathrm{Sk}}(Sf)
\]
induced by splitting the 3-manifold $Y$ into the face suspensions
along the edge cones. Starting with a ribbon tangle $\ell$
in $Y$ in general position with respect to the foliations on every
edge cone (every edge cone is a single elementary quadrilateral, so
we have a foliation on it, see Figure \ref{fig:foliations on elementary Q and face}). Again we can isotope it
by moving its part near an edge cone along the leaves of the foliation
so that its intersection with every edge cone falls on marking edges.
Then the part $\ell_{Sf}$ of $\ell$ in the face suspension $Sf,\ f\in\mathcal{T}^{(2)}$
is a ribbon tangle in $Sf$ whose endpoints fall on the boundary markings
of $Sf$. Therefore we can consider the element 
\[
\tilde{\sigma}(\ell)\colon=\sum_{\overrightarrow{\epsilon}}\left(\underset{f\in\mathcal{T}^{(2)}}{\otimes}\ell_{Sf}^{\overrightarrow{\epsilon}}\right)\in\underset{f\in\mathcal{T}^{(2)}}{\bigotimes}\overline{\mathrm{Sk}}(Sf),
\]
where the summation is over all compatible states. By abuse of notation,
we will denote the class of $\tilde{\sigma}(\ell)$ in the reduced
tensor product $\underset{f\in\mathcal{T}^{(2)}}{\overline{\bigotimes}}\overline{\mathrm{Sk}}(Sf)$
still by $\tilde{\sigma}(\ell)$.

\begin{prop}
\label{prop:Splitting homomorphism induced by splitting into face suspensions}
\emph{(\cite[Corollary 3.37]{PP1})} The assignment to any ribbon
link $\ell$ in general position of the element
\[
\tilde{\sigma}(\ell)=\sum_{\overrightarrow{\epsilon}}\left(\underset{f\in\mathcal{T}^{(2)}}{\otimes}\ell_{Sf}^{\overrightarrow{\epsilon}}\right)\in\underset{f\in\mathcal{T}^{(2)}}{\overline{\bigotimes}}\overline{\mathrm{Sk}}(Sf),
\]
where the sum is over all compatible states, gives a well-defined $R$-module
homomorphism
\[
\tilde{\sigma}\colon\overline{\mathrm{Sk}}(Y)\rightarrow\underset{f\in\mathcal{T}^{(2)}}{\overline{\bigotimes}}\overline{\mathrm{Sk}}(Sf).
\]
\end{prop}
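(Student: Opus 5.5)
The plan mirrors the proof of Theorem \ref{thm:splitting homomorphism}, with the edge cones playing the role the internal faces played there. First I show that $\tilde{\sigma}(\ell)$ is unchanged in $\underset{f\in\mathcal{T}^{(2)}}{\overline{\bigotimes}}\overline{\mathrm{Sk}}(Sf)$ when $\ell$ moves by an isotopy through tangles in general position with respect to the foliations on the edge cones. Then I show that skein relations are respected, and finally that bad arcs at boundary vertices go to zero; since $Y$ is closed there is no boundary marking, so this last step is vacuous. A generic isotopy between two general-position tangles factors into the elementary moves (\Romannum{1})--(\Romannum{5}) of the proof of Theorem \ref{thm:splitting homomorphism}, now taken relative to the edge cones. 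Moves (\Romannum{1})--(\Romannum{4}) preserve $\tilde{\sigma}(\ell)$ already in the plain tensor product $\bigotimes_{f}\overline{\mathrm{Sk}}(Sf)$. Move (\Romannum{1}) is trivial. For (\Romannum{2})--(\Romannum{4}) the skein relations (S1)--(S5) near a marking edge do the work: half twists by (S5), height exchange by (S1) together with the state-sum convention, and birth or death of an intersection pair by (S3)/(S4). Once isotopy invariance holds, any skein relation can be isotoped into the interior of one face suspension, so $\tilde{\sigma}$ descends to $\mathrm{Sk}(Y)=\overline{\mathrm{Sk}}(Y)$.

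The real content is move (\Romannum{5}), passing the tangle across a singular leaf of an edge cone. Each edge cone is a single elementary quadrilateral, so it has four singular leaves, split into two types. The first type is the half of the bare edge $e$ of $\mathcal{T}$ on either side of its barycenter source. Crossing it makes the strand wind once more around the edge $e$, through every face suspension around $e$. As in Case 2 of Theorem \ref{thm:splitting homomorphism}, I expand using (S4) at the bivalent sources. The new terms are balanced elements multiplied on the right by $\tilde{e}^{\pm1}$, and they are congruent to the old terms modulo $\widetilde{\mathfrak{R}}_{E}$, since $\tilde{e}^{\epsilon}\equiv(-A^{2})^{\epsilon}$. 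This is exactly why the left ideal $\tilde{I}_{E}$ is built into the reduced tensor product.

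The second type is the two segments from the barycenter of $e$ to the cone points, namely the vertex cones $Cv$ of the tetrahedra. Crossing such a segment moves a strand around the trivalent sink at a cone point, that is, around the vertex cone through the interior of a tetrahedron $T$. In $\bigotimes_f \overline{\mathrm{Sk}}(Sf)$ this changes the tangle by left multiplication by elements of $\bigotimes_{f_i\in\mathbf{f}(T)}\tilde{\mathbb{T}}_{f_i}$, where the $\tilde{\Gamma}$'s sit at the four cone sinks of the faces of $T$.

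I expect this last case to be the main obstacle. The aim is to show that the two sides differ by an element of $\tilde{I}_{T}\bigl(\bigotimes_f\overline{\mathrm{Sk}}(Sf)\bigr)=\widetilde{\mathfrak{R}}_{T}$. To do this I will localise: I take a small ball around the cone point of $T$, which is a boundary-marked 3-ball with four trivalent sinks whose marking mimics the boundary of $T$. By Theorem \ref{thm:skein module of 3-ball}, its skein module has annihilator generated, vertex by vertex, by a pair of elements. These pairs should be precisely $\tilde{v}_{zz'z''}$ and $\tilde{\ell}_{zz''}$ and their rotations, after rewriting the diagrams in the $\tilde{\Gamma}$ generators as was done in Corollary \ref{cor:reduced skein module of T}. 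The Type (\Romannum{5}) move around the vertex cone through $v$ then reduces to the identity ``$\ell$ before $=$ $\ell$ after in that ball''. Pushed outward into the face suspensions, this identity is the statement that the difference lies in $\tilde{I}_T\cdot(\,\cdot\,)$.

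The step I would check first is the sign and power bookkeeping. I will verify that the relations produced by the ball agree on the nose with $\tilde{v}$ and $\tilde{\ell}$, including the factor $(-A^{2})^{-1}$ and the Weyl orderings. This should be done using the orientation conventions recorded in Corollary \ref{cor:generators for skein modules of face cones and face suspensions}(ii), and that is where I expect most of the effort to go.
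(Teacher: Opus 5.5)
Your architecture is the right one, and the paper gives no argument of its own here; it only cites \cite[Corollary 3.37]{PP1}. Moves (\Romannum{1})--(\Romannum{4}) are harmless, and the half-edge singular leaves are absorbed by $\widetilde{\mathfrak{R}}_E$ exactly as in Case 2 of Theorem~\ref{thm:splitting homomorphism}. One slip: the other two singular leaves of an edge cone are the segments from the two \emph{ideal endpoints} of $e$ to the single cone point $c_T$. The segment from the barycenter of $e$ to $c_T$ is the marking edge, not a leaf.

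The genuine gap is in the vertex-cone case, and it is more than bookkeeping, because the mechanism you propose does not work.
\begin{itemize}
\item There is no marked ball of the kind you describe. A small sphere about $c_T$ meets the edge cones in the $1$-skeleton of a tetrahedron and the vertex cones in four points. The four trivalent sinks sit at $c_T$, inside the ball, and the edge-cone marking edges cross the sphere transversally. So there is no admissible marking on it, and Theorem~\ref{thm:skein module of 3-ball} does not apply.
\item If you instead use a ball whose marking genuinely mimics $\partial T$, it carries six bivalent sources. Its annihilator generators then have the shape of $v_{zz'z''}$ and $\ell_{zz''}$, involving source generators such as $\hat z\hat z'\hat z''$ rather than the scalar $(-A^2)^{-1}$ of $\tilde v_{zz'z''}$. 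Combining these with the face-cone relations would at best show that $\tilde v,\tilde\ell$ hold after regluing into $T$, i.e.\ that they are necessary. It would not show that the difference produced by the move lies in $\tilde I_T$.
\item More fundamentally, Theorem~\ref{thm:skein module of 3-ball} presents the skein module of a single ball whose punctures are obstacles. In the complement of the vertex cones, the tangles before and after a move (\Romannum{5}) across $Cv$ are not isotopic. So ``$\ell$ before $=$ $\ell$ after in that ball'' is exactly the gluing statement you are trying to prove, not a consequence of an annihilator presentation. Routing it through the splitting of $T$ into face cones would be circular for the same reason.
\end{itemize}

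What is needed instead is a direct local computation. Slide the crossing close to $c_T$. Say the strand passes from $Cf_1$ to $Cf_2$ through $Cz'$ before the move, and through $Cz$, $Cf_3$, $Cz''$ after it.
\begin{itemize}
\item In $Sf_1$ and $Sf_2$, cut the new strand with (S4) at the marking edges $\{z',f_1\}$ and $\{z',f_2\}$.
\item Both sides then take the form $\sum_{\mu,\nu}C_{\mu\nu}\,w_{\mu\nu}$. Here $w_{\mu\nu}$ are the old pieces with states $\mu,\nu$, and $C_{\mu\nu}\in\tilde{\mathbb T}_{f_1}\otimes\tilde{\mathbb T}_{f_2}\otimes\tilde{\mathbb T}_{f_3}$ is a product of stated corner arcs acting on the left at the cone-point sinks. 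On the old side, $C_{\mu\nu}=\delta_{\mu\nu}$.
\item Kill the bad arcs and rewrite the mixed-state arcs as in Remark~\ref{rem:mixed state element in terms of generators}.
\item Check that each $C_{\mu\mu}-1$ is a unit multiple of $\tilde v_{zz'z''}$ or of its inverse form.
\item Check that the surviving off-diagonal entry is a unit multiple of $\tilde\ell_{zz''}$ or one of its rotations, for each pair of sheets around $Cv$.
\end{itemize}
All of these elements lie in the right ideal $\tilde I_T$, since the rotations generate the same ideal. This computation is what actually gives $\tilde\sigma(\ell')-\tilde\sigma(\ell)\in\widetilde{\mathfrak{R}}_T$, and it is where the $(-A^2)^{\pm\frac12}$ and Weyl-ordering bookkeeping really lives.
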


{\large{\textcircled{\small{2}}}}:
This map is induced by splitting each face suspension into two face
cones. Again, let's start by defining the codomain $\underset{f_{i}\in\mathbf{f}(\mathcal{T})}{\overline{\bigotimes}}\overline{\mathrm{Sk}}^{pc}(Cf_{i})$. 

\begin{defn}
\label{def:partially balanced elements} 
An element $\underset{f_{i}\in\mathbf{f}(\mathcal{T})}{\otimes}x_{Cf_{i}}\in\underset{f_{i}\in\mathbf{f}(\mathcal{T})}{\bigotimes}\overline{\mathrm{Sk}}(Cf_{i})$
is said to be\emph{ partially balanced} if whenever $f_{i}$ and $f_{j}$
is a pair of bare faces that are identified in $\mathcal{T}$, 
we have $d_{f_{i}}(x_{Cf_{i}})=d_{f_{j}}(x_{Cf_{j}})$. 
\end{defn}

Here, ``$d_{f_{i}}(x_{Cf_{i}})=d_{f_{j}}(x_{Cf_{j}})"$ is in the
same sense as in Definition \ref{def:balanced elements}. Namely,
if we labeled the marking edges in $f_{i}$ and $f_{j}$ by $e_{1}$,
$e_{2}$ and $e_{3}$ where the same labeling $e_{k}$ is given to
the marking edge in $f_{i}$ and the marking edge in $f_{j}$ that
are identified after we glue $f_{i}$ and $f_{j}$, then $d_{f_{i}}(x_{Cf_{i}})=d_{f_{j}}(x_{Cf_{j}})\in\mathbb{Z}^{\{e_{1},e_{2},e_{3}\}}$.
As in Remark \ref{rem:matching face, opposite skew-symmetric form},
if $x=\underset{f_{i}\in\mathbf{f}(\mathcal{T})}{\otimes}x_{Cf_{i}}$
and $y=\underset{f_{i}\in\mathbf{f}(\mathcal{T})}{\otimes}y_{Cf_{i}}$
are both partially balanced, then we also have
\[
\langle d_{f_{i}}(x),d_{f_{i}}(y)\rangle_{f_{i}}=-\langle d_{f_{j}}(x),d_{f_{j}}(y)\rangle_{f_{j}}.
\]

From the discussion in \ref{subsec: partial corner reduction},
the partially corner-reduced skein module $\overline{\mathrm{Sk}}^{pc}(Cf_{i})$
is still a left $\tilde{\mathbb{T}}_{f_{i}}$-module (under $\cup$-action)
and thus $\underset{f_{i}\in\mathbf{f}(\mathcal{T})}{\bigotimes}\overline{\mathrm{Sk}}^{pc}(Cf_{i})$
is a left $\underset{f_{i}\in\mathbf{f}(\mathcal{T})}{\bigotimes}\tilde{\mathbb{T}}_{f_{i}}$-module.

\begin{defn}
\label{def:reduced tensor peoduct of partially corner-reduced modules of face cones}(i)
Recall that in Definition \ref{def:reduced tensor product of skein modules of face suspensions}
(i), we gave a right ideal $\tilde{I}_{T}$ of $\underset{f_{i}\in\mathbf{f}(T)}{\bigotimes}\tilde{\mathbb{T}}_{f_{i}}$ and by abuse of notation we denote the natural extension of $\tilde{I}_{T}$
into $\underset{f_{i}\in\mathbf{f}(\mathcal{T})}{\bigotimes}\tilde{\mathbb{T}}_{f_{i}}$
still by $\tilde{I}_{T}$. Let $\overline{\mathfrak{R}}_{T}$ be the
$R$-submodule of $\underset{f_{i}\in\mathbf{f}(\mathcal{T})}{\bigotimes}\overline{\mathrm{Sk}}^{pc}(Cf_{i})$
given by\footnote{Recall by construction $\underset{f_{i}\in\mathbf{f}(\mathcal{T})}{\bigotimes}\overline{\mathrm{Sk}}^{pc}(Cf_{i})$ is a left $\underset{f_{i}\in\mathbf{f}(\mathcal{T})}{\bigotimes}\tilde{\mathbb{T}}_{f_{i}}$-module under $\cup$-action.} $\tilde{I}_{T}\left(\underset{f_{i}\in\mathbf{f}(\mathcal{T})}{\bigotimes}\overline{\mathrm{Sk}}^{pc}(Cf_{i})\right)$.

(ii) Let $e\in\mathcal{T}^{(1)}$, and suppose we have the sequence
of face cones around $e$ as given by Figure \ref{fig:face cones around an edge}.
We define the element
\[
\overline{e}=x_{e_{k},f_{1}^{(1)}}x_{e_{1},f_{2}^{(1)}}x_{e_{1},f_{1}^{(2)}}x_{e_{2};f_{2}^{(2)}}\dots x_{e_{k-1},f_{1}^{(k)}}x_{e_{k};f_{2}^{(k)}}\in\underset{f_{i}\in\mathbf{f}(\mathcal{T})}{\bigotimes}\left(\mathbb{B}^{\otimes3}\right)_{Cf_{i}}.
\]
(As in Corollary \ref{cor:generators for skein modules of face cones and face suspensions}
(i), $\left(\mathbb{B}^{\otimes3}\right)_{Cf_{i}}$ denotes the the
skein algebra associated to the three bivalent sources of the boundary
marking on the face cones $Cf_{i}$). Let $\overline{\mathfrak{R}}_{E}$
be the $R$-submodule of $\underset{f_{i}\in\mathbf{f}(\mathcal{T})}{\bigotimes}\overline{\mathrm{Sk}}(Cf_{i})$
spanned by elements of the form
\[
\left(\underset{f_{i}\in\mathbf{f}(\mathcal{T})}{\otimes}x_{Cf_{i}}\right)\cup\left(\overline{e}^{\epsilon}-(-A^{2})^{\epsilon}\right),\ \epsilon\in\{\pm\}
\]
where $\underset{f_{i}\in\mathbf{f}(\mathcal{T})}{\bigotimes}x_{Cf_{i}}$
is partially balanced, $e\in\mathcal{T}^{(1)}$ and $\overline{e}$
is the element defined above.

(iii) Let $\overline{\mathfrak{R}}_{E}^{pc}$ be the $R$-submodule
of $\underset{f_{i}\in\mathbf{f}(\mathcal{T})}{\bigotimes}\overline{\mathrm{Sk}}^{pc}(Cf_{i})$
given by the image of $\overline{\mathfrak{R}}_{E}$ defined above
under the natural map $\underset{f_{i}\in\mathbf{f}(\mathcal{T})}{\bigotimes}\overline{\mathrm{Sk}}(Cf_{i})\twoheadrightarrow\underset{f_{i}\in\mathbf{f}(\mathcal{T})}{\bigotimes}\overline{\mathrm{Sk}}^{pc}(Cf_{i})$.

(iv) The reduced tensor product $\underset{f_{i}\in\mathbf{f}(\mathcal{T})}{\overline{\bigotimes}}\overline{\mathrm{Sk}}^{pc}(Cf_{i})$
is defined to be the $R$-module quotient
\[
\underset{f_{i}\in\mathbf{f}(\mathcal{T})}{\overline{\bigotimes}}\overline{\mathrm{Sk}}^{pc}(Cf_{i}):=\raisebox{1.5ex}{$\underset{f_{i}\in\mathbf{f}(\mathcal{T})}{\bigotimes}\overline{\mathrm{Sk}}^{pc}(Cf_{i})$}\biggm/\raisebox{-1.5ex}{${\overline{\mathfrak{R}}_{E}^{pc}+\sum_{T\in\mathcal{T}}\overline{\mathfrak{R}}_{T}}$}.
\]

\end{defn}

Now that we have the codomain of the desired map, our next step is to understand
the case of a single face suspension. Starting with a stated ribbon
tangle $\ell$ in $Sf$ in general position with respect to the foliation
structure on the face $f$ ($f$ is the union of three elementary
quadrilaterals, it has a canonical foliation structure, see Figure \ref{fig:foliations on elementary Q and face}). Again
we can isotope $\ell$ by moving its part near the face $f$ along
the leaves of the foliation so that its intersection with the face
$f$ falls on marking edges. Then the part $\ell_{Cf_{i}}$ and $\ell_{Cf_{j}}$
of $\ell$ in the face cones $Cf_{i}$ and $Cf_{j}$, respectively,
are ribbon tangles whose endpoints fall on the boundary markings of
the face cones $Cf_{i}$ and $Cf_{j}$. Therefore we can consider
the element 
\[
\sigma_{Sf}(\ell):=\sum_{\overrightarrow{\epsilon}}\left(\ell_{Cf_{i}}^{\overrightarrow{\epsilon}}\otimes\ell_{Cf_{j}}^{\overrightarrow{\epsilon}}\right)\in\overline{\mathrm{Sk}}^{pc}(Cf_{i})\otimes\overline{\mathrm{Sk}}^{pc}(Cf_{j}),
\]
where the sum is over all compatible states.
\begin{lem}
\label{lem: Splitting homomorphism induced by splitting a single face suspension into two face cones}
The assignment to any stated ribbon tangle $\ell$ in $Sf$ in general
position of the element
\[
\sigma_{Sf}(\ell)=\sum_{\overrightarrow{\epsilon}}\left(\ell_{Cf_{i}}^{\overrightarrow{\epsilon}}\otimes\ell_{Cf_{j}}^{\overrightarrow{\epsilon}}\right)\in\overline{\mathrm{Sk}}^{pc}(Cf_{i})\otimes\overline{\mathrm{Sk}}^{pc}(Cf_{j}),
\]
where the sum is over all compatible states gives a well-defined $R$-module
homomorphism
\[
\sigma_{Sf}\colon\overline{\mathrm{Sk}}(Sf)\rightarrow\overline{\mathrm{Sk}}^{pc}(Cf_{i})\otimes\overline{\mathrm{Sk}}^{pc}(Cf_{j}).
\]
\end{lem}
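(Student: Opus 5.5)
This lemma is the single-face analogue of Theorem \ref{thm:splitting homomorphism}, and I would prove it by copying that argument with the ideal tetrahedra replaced by the two face cones $Cf_{i}$ and $Cf_{j}$, and the internal faces replaced by the single face $f$. The proof has three stages.

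First, I show that $\sigma_{Sf}(\ell)$ is invariant under isotopy of $\ell$ in $Sf$. Any such isotopy between tangles in general position decomposes into the elementary moves (\Romannum{1})--(\Romannum{5}) with respect to the foliation on $f$. Moves (\Romannum{1})--(\Romannum{4}) leave $\sigma_{Sf}(\ell)$ unchanged already in $\overline{\mathrm{Sk}}(Cf_{i})\otimes\overline{\mathrm{Sk}}(Cf_{j})$, by the skein relations (S1)--(S5), exactly as in \cite{PP1}.

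The genuinely new point is move (\Romannum{5}). Since $f$ is the only splitting surface and its edges lie in $\partial Sf$, the only singular leaves to cross are those in the interior of $f$. So only Case 1 of Theorem \ref{thm:splitting homomorphism} arises, and Case 2, the edge case that forced the relations $\mathfrak{R}_{E}$, does not appear; this is why no reduced tensor product is needed here. For Case 1, I expand $\sigma_{Sf}(\ell)$ and $\sigma_{Sf}(\ell^{\prime})$ near the sink at the barycenter of $f$ into three pairs of terms. I then check the identity
\[
\left((-A^{2})^{\frac{1}{2}}\Gamma_{++}\cup\ell_{1}\right)\otimes\ell_{2}=\ell_{1}\otimes\left((-A^{2})^{-\frac{1}{2}}\Gamma_{--}\cup\ell_{2}\right)
\]
in $\overline{\mathrm{Sk}}^{pc}(Cf_{i})\otimes\overline{\mathrm{Sk}}^{pc}(Cf_{j})$. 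The verification uses three ingredients:
\begin{enumerate}
\item In $\overline{\mathrm{Sk}}^{pc}(Cf_{i})$, left $\cdot$-multiplication by a generator of $\mathbb{T}_{f_{i}}$ acts as the scalar $(-A^{2})^{-\frac{1}{2}}$ (Definition \ref{def: partial corner reduced module of face cone}).
\item The $\cdot$-twist of Definition \ref{def: defn of cdot-product in the case of face cone} only involves the degrees $d_{f_{i}}$. This is precisely the data used in the tetrahedral computation.
\item The matching-label sign flip $\langle\ ,\ \rangle_{f_{i}}=-\langle\ ,\ \rangle_{f_{j}}$ from Remark \ref{rem:matching face, opposite skew-symmetric form}.
\end{enumerate}
With these, both sides reduce to $A^{\frac{1}{2}\langle d_{f_{j}}(\ell_{2}),d_{f_{i}}(\ell_{1})\rangle_{f_{i}}}\ell_{1}\otimes\ell_{2}$, by the same algebra as in Case 1 of Theorem \ref{thm:splitting homomorphism}. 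The computation goes through verbatim because the extra boundary pieces of $Cf_{i}$ (the edge cones and the cone point) do not enter the grading.

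Having isotopy invariance, the skein relations of $\mathrm{Sk}(Sf)$ are respected: each relation can be isotoped into the interior of one face cone, where it holds in $\mathrm{Sk}(Cf_{i})$. This gives a well-defined map $\mathrm{Sk}(Sf)\to\overline{\mathrm{Sk}}^{pc}(Cf_{i})\otimes\overline{\mathrm{Sk}}^{pc}(Cf_{j})$. Finally, I show it descends to $\overline{\mathrm{Sk}}(Sf)$. A bad arc $b$ at a vertex of the marking of $Sf$ lies entirely in one face cone, away from $f$. At a cone point it is literally a bad arc of $\tilde{\mathbb{T}}_{f_{i}}$. At a bivalent source the arc crosses $f$, but after isotoping its crossing onto the marking edge and summing over states, every term has a bad arc at the corresponding source of $Cf_{i}$ or $Cf_{j}$. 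Hence $\sigma_{Sf}(\ell\cup b)=\sigma_{Sf}(\ell)\cup\sigma_{Sf}(b)=0$.

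The main obstacle I expect is this last point for sources: I need to confirm that splitting a bad arc of $(\mathbb{B}^{\otimes3})_{Sf}$ across $f$ yields only products of bad arcs in the face cones, or terms that vanish in $\overline{\mathrm{Sk}}$. I would try a direct state-sum computation with (S4) and (S3), in the spirit of the reduced splitting of \cite[Corollary 3.29]{PP1}. Failing that, I would simply invoke that result for the splitting $Sf=Cf_{i}\cup_{f}Cf_{j}$ and then pass to the further quotient by the partial corner reduction.
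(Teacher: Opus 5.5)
Your proposal is correct and follows essentially the same route as the paper. The paper proves isotopy invariance via the elementary moves (I)--(V), notes that for move (V) only the interior singular leaves of $f$ (from its barycenter to its vertices) can occur, and handles these by the Case 1 computation of Theorem~\ref{thm:splitting homomorphism}, using the partial corner reductions on $f_i$ and $f_j$ and the sign flip of the forms under matching labels. Your extra steps on skein relations and bad arcs are details the paper leaves implicit.

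The obstacle you flag at sources closes directly, so you do not need the fallback to \cite{PP1}. That fallback would in any case land in a quotient by face-pairing relations, not in the plain tensor product. At each bivalent source of $Cf_i$ or $Cf_j$, \emph{both} mixed-state arcs of the bigon algebra are bad, one for each corner; this is exactly why $\overline{\mathrm{SkAlg}}(D_2)\cong R[x^{\pm1}]$. So when you split a bad arc with states $\mu\neq\nu$ as $\sum_{\epsilon}b_i^{\mu\epsilon}\otimes b_j^{\epsilon\nu}$, every summand contains a mixed-state arc, hence a bad arc acting on the right, and so vanishes.
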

\begin{proof}
The argument is similar to the proof of Theorem \ref{thm:splitting homomorphism}.
We only need to show that the element $\sigma_{Sf}(\ell)\in\overline{\text{Sk}}^{pc}(Cf_{i})\otimes\overline{\text{Sk}}^{pc}(Cf_{j})$
is invariant under an isotopy of the stated tangle $\ell$. Again,
an isotopy of $\ell$ can be decomposed into a finite sequence of
elementary moves of type (\Romannum{1})-(\Romannum{5}) (See the
proof of Theorem \ref{thm:splitting homomorphism}). Once again, using only skein relations, we find that the element $\sigma_{Sf}(\ell)$
is invariant under moves of type (\Romannum{1})-(\Romannum{4}).
For invariance under move of type (\Romannum{5}), we only need to consider
the case of isotopy across a singular leaf which is an edge connecting
the barycenter of the face $f$ to a vertex of $f$; it then follows
from the same calculation as that in the proof of Theorem \ref{thm:splitting homomorphism}, 
making use of the relations coming from the corner-reductions on the
bare faces $f_{i}$ and $f_{j}$, that $\sigma_{Sf}(\ell)$ is invariant under move of type (\Romannum{5}).
\end{proof}
Note, if $\tilde{\Gamma}\in\tilde{\mathbb{T}}_{f_{i}}$ or $\tilde{\mathbb{T}}_{f_{j}}$
and $x\in\overline{\mathrm{Sk}}(Sf)$, then by definition we have 
\[
\sigma_{Sf}(\tilde{\Gamma}\cup x)=\tilde{\Gamma}\cup\sigma_{Sf}(x)\in\overline{\mathrm{Sk}}^{pc}(Cf_{i})\otimes\overline{\mathrm{Sk}}^{pc}(Cf_{j}).
\]
On the other hand, if an edges of $f_{i}$ labeled by $a$ is
identified with the edge of $f_{j}$ labeled $b$ when $f_{i}$
and $f_{j}$ are glued, then for any $z\in\overline{\mathrm{Sk}}(Sf)$
we have
\[
\sigma_{Sf}(z\cup x_{a,f_{i};b,f_{j}})=\sigma_{Sf}(z)\cup(x_{a,f_{i}}x_{b,f_{j}}).
\]

\begin{prop}
\label{prop:gluing splitting homomorphism coming from splitting every single face suspension}
The $R$--module homomorphism
\[
\underset{f\in\mathcal{T}^{(2)}}{\bigotimes}\sigma_{Sf}\colon\underset{f\in\mathcal{T}^{(2)}}{\bigotimes}\overline{\mathrm{Sk}}(Sf)\rightarrow\underset{f_{i}\in\mathbf{f}(\mathcal{T})}{\bigotimes}\overline{\mathrm{Sk}}^{pc}(Cf_{i})
\]
given by the (usual) tensor product of the individual splitting homomorphism
$\sigma_{Sf}$ given in the last lemma over all $f\in\mathcal{T}^{(2)}$
descends to a well-defined $R$--module homomorphism
\[
\underset{f\in\mathcal{T}^{(2)}}{\overline{\bigotimes}}\sigma_{Sf}\colon\underset{f\in\mathcal{T}^{(2)}}{\overline{\bigotimes}}\overline{\mathrm{Sk}}(Sf)\rightarrow\underset{f_{i}\in\mathbf{f}(\mathcal{T})}{\overline{\bigotimes}}\overline{\mathrm{Sk}}^{pc}(Cf_{i}).
\]
\end{prop}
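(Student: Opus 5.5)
The plan is to check that $\bigotimes_{f}\sigma_{Sf}$ sends each of the two families of relations defining $\underset{f\in\mathcal{T}^{(2)}}{\overline{\bigotimes}}\overline{\mathrm{Sk}}(Sf)$, namely $\widetilde{\mathfrak{R}}_{E}$ and $\widetilde{\mathfrak{R}}_{T}$ for each $T\in\mathcal{T}$, into $\overline{\mathfrak{R}}_{E}^{pc}+\sum_{T}\overline{\mathfrak{R}}_{T}$. Both verifications rest on the two equivariance identities recorded just before the statement:
\[
\sigma_{Sf}(\tilde{\Gamma}\cup x)=\tilde{\Gamma}\cup\sigma_{Sf}(x),\qquad \sigma_{Sf}(z\cup x_{a,f_{i};b,f_{j}})=\sigma_{Sf}(z)\cup(x_{a,f_{i}}x_{b,f_{j}}).
\]

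\emph{The relations $\widetilde{\mathfrak{R}}_{T}$.} These are spanned by elements $\tilde{\kappa}\cup w$ with $\tilde{\kappa}\in\tilde{I}_{T}$ and $w\in\bigotimes_{f}\overline{\mathrm{Sk}}(Sf)$. Applying the first identity factor by factor gives
\[
\Bigl(\bigotimes_{f}\sigma_{Sf}\Bigr)(\tilde{\kappa}\cup w)=\tilde{\kappa}\cup\Bigl(\bigotimes_{f}\sigma_{Sf}\Bigr)(w).
\]
The right-hand side lies in $\overline{\mathfrak{R}}_{T}$ by definition. The only care needed is that the left $\bigotimes_{f_i}\tilde{\mathbb{T}}_{f_i}$-action on each side uses the $\cup$-action at the cone points, and the cone points are untouched by the partial corner reduction. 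So this part is formal.

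\emph{The edge relations.} Take an element $w\cup(\tilde{e}^{\epsilon}-(-A^{2})^{\epsilon})$. By linearity we may assume $w=\bigotimes_f w_{Sf}$ with each $w_{Sf}$ a stated tangle. Applying the second identity for each factor of $\tilde{e}$ gives
\[
\Bigl(\bigotimes\sigma_{Sf}\Bigr)(w)\cup\bigl(\overline{e}^{\epsilon}-(-A^{2})^{\epsilon}\bigr).
\]
For $\epsilon=-1$ we use the inverse generators in the same way. Comparing $\tilde e$ with $\overline e$, the product of the $x_{a,f_i}x_{b,f_j}$ over the face suspensions around $e$ is exactly $\overline{e}$, up to reordering commuting factors in different face cones. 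What remains is to see that this element lies in $\overline{\mathfrak{R}}_{E}^{pc}$. By definition $\overline{\mathfrak{R}}_{E}$ only uses partially balanced first factors, so we must show that $\bigl(\bigotimes\sigma_{Sf}\bigr)(w)$ is a sum of partially balanced elements. This holds because $\sigma_{Sf}(w_{Sf})=\sum_{\vec\epsilon}\ell_{Cf_i}^{\vec\epsilon}\otimes\ell_{Cf_j}^{\vec\epsilon}$ is a sum over compatible states. For a stated tangle split along $f$, the two halves have equal sums of states on identified marking edges of $f_i$ and $f_j$. Each summand is therefore homogeneous with $d_{f_i}=d_{f_j}$. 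Since every pair of glued bare faces sits inside a single face suspension, the tensor product over $f$ of such summands is partially balanced.

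The main subtlety is whether $\overline{\mathfrak{R}}_{E}$ is defined with $\cup$ on the right, and whether passing to the pc quotient interacts with the gradings. Since partial corner reduction is not homogeneous, I would carry out the whole computation in $\bigotimes\overline{\mathrm{Sk}}(Cf_i)$, before the pc quotient, and only then project. I would also double-check that $\sigma_{Sf}$ is defined before the pc quotient, by composing it with the lift to $\overline{\mathrm{Sk}}(Cf_i)$ on representatives. The splitting $\sum_{\vec\epsilon}$ is defined at the level of $\overline{\mathrm{Sk}}(Cf_i)$ tangle by tangle, so this works on representatives in general position. Combining the two parts, the map descends to the reduced tensor products, as claimed.
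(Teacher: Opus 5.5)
Your proposal is correct and follows the paper's proof essentially step for step. $\widetilde{\mathfrak{R}}_T$ lands in $\overline{\mathfrak{R}}_T$ because $\sigma_{Sf}$ commutes with the $\tilde{\mathbb{T}}_{f_i}$-action at the cone points, and $\widetilde{\mathfrak{R}}_E$ lands in $\overline{\mathfrak{R}}_E^{pc}$ because $\tilde e$ splits to $\overline e$ and $\bigl(\bigotimes_f\sigma_{Sf}\bigr)(w)$ is a sum of partially balanced terms.

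One small correction to your side remark: $\sigma_{Sf}$ is not well defined into $\overline{\mathrm{Sk}}(Cf_i)\otimes\overline{\mathrm{Sk}}(Cf_j)$, since invariance under the type~(V) move inside $f$ uses the corner reduction. This does not hurt you: working with the general-position representative $\sum_{\vec\epsilon}\ell^{\vec\epsilon}_{Cf_i}\otimes\ell^{\vec\epsilon}_{Cf_j}$ upstairs and then projecting, as you propose, is exactly what is needed, because $\overline{\mathfrak{R}}_E^{pc}$ is defined as the image of $\overline{\mathfrak{R}}_E$.
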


\begingroup
\allowdisplaybreaks
\begin{proof}
We prove it by showing that $\underset{f\in\mathcal{T}^{(2)}}{\bigotimes}\sigma_{Sf}$
maps the $R$--submodule $\tilde{\mathfrak{R}}_{T}$ of $\underset{f\in\mathcal{T}^{(2)}}{\bigotimes}\overline{\mathrm{Sk}}(Sf)$
given in Definition \ref{def:reduced tensor product of skein modules of face suspensions}
(ii) into the $R$--submodule $\overline{\mathfrak{R}}_{T}$ of $\underset{f_{i}\in\mathbf{f}(\mathcal{T})}{\bigotimes}\overline{\mathrm{Sk}}^{pc}(Cf_{i})$
given in Definition \ref{def:reduced tensor peoduct of partially corner-reduced modules of face cones}
(i); and maps the $R$--submodule $\tilde{\mathfrak{R}}_{E}$ of
$\underset{f\in\mathcal{T}^{(2)}}{\bigotimes}\overline{\mathrm{Sk}}(Sf)$
given in Definition \ref{def:reduced tensor product of skein modules of face suspensions}
(iv) into the $R$--submodule $\overline{\mathfrak{R}}_{E}^{pc}$
of $\underset{f_{i}\in\mathbf{f}(\mathcal{T})}{\bigotimes}\overline{\mathrm{Sk}}^{pc}(Cf_{i})$
given in Definition \ref{def:reduced tensor peoduct of partially corner-reduced modules of face cones}
(iii).

To see that $\underset{f\in\mathcal{T}^{(2)}}{\bigotimes}\sigma_{Sf}$
maps $\tilde{\mathfrak{R}}_{T}$ into $\overline{\mathfrak{R}}_{T}$.
By definition, $\tilde{\mathfrak{R}}_{T}$ is the $R$-span of elements
of the form $\tilde{v}_{zz^{\prime}z^{\prime\prime}}\cup x$ and $\tilde{\ell}_{zz^{\prime\prime}}\cup x$,
where $x\in\underset{f\in\mathcal{T}^{(2)}}{\bigotimes}\overline{\mathrm{Sk}}(Sf)$,
$\tilde{v}_{zz^{\prime}z^{\prime\prime}}$ and $\tilde{\ell}_{zz^{\prime\prime}}$
are elements of $\underset{f_{i}\in\mathbf{f}(T)}{\bigotimes}\tilde{\mathbb{T}}_{f_{i}}$
given in Definition \ref{def:reduced tensor product of skein modules of face suspensions}
(i). By definition of $\sigma_{Sf}$, we have
\[
\left(\underset{f\in\mathcal{T}^{(2)}}{\bigotimes}\sigma_{Sf}\right)\left(\tilde{v}_{zz^{\prime}z^{\prime\prime}}\cup x\right)=\tilde{v}_{zz^{\prime}z^{\prime\prime}}\cup\left(\underset{f\in\mathcal{T}^{(2)}}{\bigotimes}\sigma_{Sf}\right)(x)\in\overline{\mathfrak{R}}_{T}
\]
and
\[
\left(\underset{f\in\mathcal{T}^{(2)}}{\bigotimes}\sigma_{Sf}\right)\left(\tilde{\ell}_{zz^{\prime\prime}}\cup x\right)=\tilde{\ell}_{zz^{\prime\prime}}\cup\left(\underset{f\in\mathcal{T}^{(2)}}{\bigotimes}\sigma_{Sf}\right)(x)\in\overline{\mathfrak{R}}_{T}.
\]
This shows that $\underset{f\in\mathcal{T}^{(2)}}{\bigotimes}\sigma_{Sf}$
maps $\tilde{\mathfrak{R}}_{T}$ into $\overline{\mathfrak{R}}_{T}$. 

To see that $\underset{f\in\mathcal{T}^{(2)}}{\bigotimes}\sigma_{Sf}$
maps $\widetilde{\mathfrak{R}}_{E}$ into $\overline{\mathfrak{R}}_{E}^{pc}$.
By definition, $\widetilde{\mathfrak{R}}_{E}$ is the $R$-span of elements
of the form $\left(\underset{f\in\mathcal{T}^{(2)}}{\otimes}x_{Sf}\right)\cup\left(\tilde{e}^{\epsilon}-(-A^{2})^{\epsilon}\right)$,
where $\underset{f\in\mathcal{T}^{(2)}}{\otimes}x_{Sf}\in\underset{f\in\mathcal{T}^{(2)}}{\bigotimes}\overline{\mathrm{Sk}}(Sf)$
and $\tilde{e}$ is the element of $\underset{f\in\mathcal{T}^{(2)}}{\bigotimes}\left(\mathbb{B}^{\otimes3}\right)_{Sf}$
given by Definition \ref{def:reduced tensor product of skein modules of face suspensions}
(iii). Now, by definition of $\sigma_{Sf}$ again, we have
\begin{eqnarray*}
\left(\underset{f\in\mathcal{T}^{(2)}}{\bigotimes}\sigma_{Sf}\right)\left(\left(\underset{f\in\mathcal{T}^{(2)}}{\otimes}x_{Sf}\right)\cup\left(\tilde{e}^{\epsilon}-(-A^{2})^{\epsilon}\right)\right)\\
=\left(\underset{f\in\mathcal{T}^{(2)}}{\bigotimes}\sigma_{Sf}(x_{Sf})\right)\cup\left(\overline{e}^{\epsilon}-(-A^{2})^{\epsilon}\right).
\end{eqnarray*}
Here $\overline{e}$ is the element of $\underset{f_{i}\in\mathbf{f}(\mathcal{T})}{\bigotimes}\left(\mathbb{B}^{\otimes3}\right)_{Cf_{i}}$
given in Definition \ref{def:reduced tensor peoduct of partially corner-reduced modules of face cones}
(ii). Moreover, the element $\underset{f\in\mathcal{T}^{(2)}}{\bigotimes}\sigma_{Sf}(x_{Sf})$
is given by a sum of partially balanced elements (see Definition \ref{def:partially balanced elements}).
Therefore
\[
\left(\underset{f\in\mathcal{T}^{(2)}}{\bigotimes}\sigma_{Sf}(x_{Sf})\right)\cup\left(\overline{e}^{\epsilon}-(-A^{2})^{\epsilon}\right)\in\overline{\mathfrak{R}}_{E}^{pc}
\]
and it shows that $\underset{f\in\mathcal{T}^{(2)}}{\bigotimes}\sigma_{Sf}$
maps $\widetilde{\mathfrak{R}}_{E}$ into $\overline{\mathfrak{R}}_{E}^{pc}$.

\end{proof}
\endgroup

{\large{\textcircled{\small{3}}}}:
This map is induced by decomposing the triangulated manifold $Y$ into ideal tetrahedra,
and is just the splitting homomorphism
\[
\sigma\colon\overline{\mathrm{Sk}}(Y)\rightarrow\underset{T\in\mathcal{T}}{\overline{\bigotimes}}\overline{\mathrm{Sk}}^{c}(T)
\]
given by Theorem \ref{thm:splitting homomorphism}.

{\large{\textcircled{\small{4}}}}:
This map is induced by decomposing each ideal tetrahedron into the four
face cones of its bare faces. We have already defined its codomain
earlier. To describe the map, let's first understand the case of a single tetrahedron $T$.
Starting with a stated ribbon tangle $\ell$ in $T$ in general position
with respect to the foliations on every edge cone. We can isotope
it by moving its part near an edge cone along the leaves of the foliation
so that its intersection with every edge cone falls on marking edges.
Then for each bare face $f_{i}$ of $T$, the part $\ell_{Cf_{i}}$ of $\ell$ in the face cone $Cf_{i}$ is a ribbon tangle whose endpoints
fall on the boundary markings of $Cf_{i}$ . Therefore we can consider
the element
\[
\sigma_{T}(\ell):=\sum_{\overrightarrow{\epsilon}}\left(\underset{f_{i}\in\mathbf{f}(T)}{\otimes}\ell_{Cf_{i}}^{\overrightarrow{\epsilon}}\right)\in\underset{f_{i}\in\mathbf{f}(T)}{\bigotimes}\overline{\mathrm{Sk}}(Cf_{i}),
\]
where the summation is over all compatible states. Note, the compatible
states $\overrightarrow{\epsilon}$ are assigned to endpoints created
by splitting along the edge cones, the endpoints on the marking edges
in the bare faces of $T$ are not affected. In other word, we have $d_{f_{i}}(\ell)=d_{f_{i}}(\ell_{Cf_{i}}^{\overrightarrow{\epsilon}})$
for any $\overrightarrow{\epsilon}$.

\begin{lem}
\label{lem: Splitting homomorphism from splitting a single tetrahedra into face cones}
The assignment to any stated ribbon tangle $\ell$ in $T$ of the element
\[
\sigma_{T}(\ell)=\sum_{\overrightarrow{\epsilon}}\left(\underset{f_{i}\in\mathbf{f}(T)}{\otimes}\ell_{Cf_{i}}^{\overrightarrow{\epsilon}}\right)\in\underset{f_{i}\in\mathbf{f}(T)}{\bigotimes}\overline{\mathrm{Sk}}(Cf_{i})
\]
descends to a well-defined $R$-module homomorphism\footnote{Recall that $\underset{f_{i}\in\mathbf{f}(T)}{\bigotimes}\overline{\mathrm{Sk}}^{pc}(Cf_{i})$
is a left $\underset{f_{i}\in\mathbf{f}(T)}{\bigotimes}\tilde{\mathbb{T}}_{f_{i}}$-module
and $\tilde{I}_{T}$ is a right ideal of $\underset{f_{i}\in\mathbf{f}(T)}{\bigotimes}\tilde{\mathbb{T}}_{f_{i}}$,
so the codomain makes sense.}
\[
\sigma_{T}\colon\overline{\mathrm{Sk}}^{c}(T)\rightarrow\frac{\underset{f_{i}\in\mathbf{f}(T)}{\bigotimes}\overline{\mathrm{Sk}}^{pc}(Cf_{i})}{\tilde{I}_{T}\left(\underset{f_{i}\in\mathbf{f}(T)}{\bigotimes}\overline{\mathrm{Sk}}^{pc}(Cf_{i})\right)}
\]
\end{lem}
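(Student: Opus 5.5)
The proof follows the pattern of Theorem \ref{thm:splitting homomorphism} and Lemma \ref{lem: Splitting homomorphism induced by splitting a single face suspension into two face cones}. It has three stages: well-definedness on $\mathrm{Sk}(T)$, descent to $\overline{\mathrm{Sk}}(T)$, and descent to $\overline{\mathrm{Sk}}^{c}(T)$.

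\textbf{Stage 1: isotopy invariance on $\mathrm{Sk}(T)$.} Write $\mathcal{Q}_T$ for the target quotient. Take a stated tangle $\ell$ in $T$ in general position with respect to the foliations on the six edge cones, and decompose an isotopy into moves of type (\Romannum{1})--(\Romannum{5}).

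Moves (\Romannum{1})--(\Romannum{4}) leave $\sigma_T(\ell)$ unchanged already in $\bigotimes_{f_i}\mathrm{Sk}(Cf_i)$, by the skein relations. Since every edge cone is a single elementary quadrilateral, its only singular leaves are its four sides. Two of these, the halves of the edge $e$, lie in $\partial T$, so the tangle cannot cross them. The type (\Romannum{5}) moves that remain are crossings of the two sides lying on vertex cones, i.e. the sides meeting at the barycenter of $T$. Near such a side the tangle sits close to the sink at the cone point of the face cones. Passing $\ell$ across the side changes $\sigma_T(\ell)$ by exactly an element of the form $\tilde\kappa\cup(\cdot)$ with $\tilde\kappa\in\tilde{I}_T$.

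Concretely, three face cones meet along a vertex cone $Cv$. Pushing a strand around $Cv$ gives a cyclic composite of arcs $\tilde\Gamma_{ab}$ near the three cone-point sinks. The identity obtained relating the two sides is the vertex relation $\tilde v$ (from a small loop around $Cv$) or a lagrangian-type relation $\tilde\ell$ (from a strand passing between edge cones). This is precisely the computation that underlies \cite[Corollary 3.37]{PP1}, where the same right ideal $U_+$ appears for face suspensions. I would import that local computation verbatim: the neighbourhood of a vertex cone in $T$ is identical to the neighbourhood of a vertex cone in the union of face suspensions. This is where the choice of $\tilde{I}_T$ as a \emph{right} ideal acting on the left matters.

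\textbf{Stage 2: descent to $\overline{\mathrm{Sk}}(T)$.} A bad arc at a source on an edge of $T$ splits into a bad arc in one face cone, by the same argument as in Theorem \ref{thm:splitting homomorphism}. A bad arc at a face sink lies inside a single face cone and is already zero there.

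\textbf{Stage 3: descent to $\overline{\mathrm{Sk}}^{c}(T)$.} We need $\sigma_T\bigl((\Gamma_{ab}-(-A^2)^{-1/2})\cdot w\bigr)=0$. The marking edges on $f_i$ are not touched by the splitting, and the degree $d_{f_i}$ is preserved ($d_{f_i}(\ell)=d_{f_i}(\ell^{\vec\epsilon}_{Cf_i})$). Hence the twisting factors in the $\cdot$-action agree on both sides, and
\[
\sigma_T(\Gamma_{ab}\cdot w)=\Gamma_{ab}\cdot\sigma_T(w).
\]
In $\overline{\mathrm{Sk}}^{pc}(Cf_i)$ the right-hand side equals $(-A^2)^{-1/2}\sigma_T(w)$. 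One also checks that $I^{c}_{f_i}$-multiples commute with left multiplication by $\tilde{I}_T$, so the quotient is compatible; this holds because the two actions occur at different sinks.

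\textbf{Main obstacle.} The hard part is Stage 1: verifying that crossing a vertex-cone side produces exactly the generators $\tilde v_{zz'z''}$ and $\tilde\ell_{zz''}$, with the right powers of $A$. The plan for this step is to reduce it to the corresponding step in \cite{PP1}, rather than redo the diagram calculus from scratch.
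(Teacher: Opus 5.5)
Your proposal is correct and follows the paper's proof. The paper likewise takes the existence of $\sigma_T\colon\overline{\mathrm{Sk}}(T)\to\bigotimes_{f_i}\overline{\mathrm{Sk}}(Cf_i)\big/\tilde{I}_T(\cdots)$ directly from the general splitting machinery of \cite{PP1}, which is what your Stages 1--2 sketch (you spell out the move-by-move and bad-arc checks the paper absorbs into that citation, and you also defer the vertex-cone computation to \cite{PP1}); then, exactly as in your Stage 3, it notes that the quotient to the partially corner-reduced modules is left $\bigotimes_{f_i}\tilde{\mathbb{T}}_{f_i}$-linear, and that $d_{f_i}(\ell)=d_{f_i}(\ell^{\overrightarrow{\epsilon}}_{Cf_i})$ sends $\sigma_T\bigl((\Gamma_{ab}-(-A^2)^{-1/2})\cdot\ell\bigr)$ into $I^c_{f_j}\cdot\overline{\mathrm{Sk}}(Cf_j)\otimes(\cdots)$.
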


\begingroup
\allowdisplaybreaks
\begin{proof}
The general machinary of splitting homomorphism developed in \cite[3.2]{PP1}
guarantees that the assigment to a stated ribbon link $\ell$ in $T$ in general position of
the element $\sigma_{T}(\ell)$
gives us a well-defined $R$-module homomorphism
\[
\sigma_{T}\colon\overline{\mathrm{Sk}}(T)\rightarrow\frac{\underset{f_{i}\in\mathbf{f}(T)}{\bigotimes}\overline{\mathrm{Sk}}(Cf_{i})}{\tilde{I}_{T}\left(\underset{f_{i}\in\mathbf{f}(T)}{\bigotimes}\overline{\mathrm{Sk}}(Cf_{i})\right)}.
\]
We need to show that this map descends to the (partially) corner-reduced
setting. Recall from our discussion in \ref{subsec: partial corner reduction}  that the reduced skein module $\overline{\mathrm{Sk}}(Cf_{i})$
is a $R[Cf_{i}]\otimes\tilde{\mathbb{T}}_{f_{i}}\text{-}\mathbb{T}\langle Cf_{i}\rangle$-bimodule
(where $R[Cf_{i}]$ and $\mathbb{T}\langle Cf_{i}\rangle$ acts by
$\cdot$ and $\tilde{\mathbb{T}}_{f_{i}}$ acts by $\cup$) and the
partially corner-reduced module $\overline{\mathrm{Sk}}^{pc}(Cf_{i})$
is defined as the $R[Cf_{i}]\otimes\tilde{\mathbb{T}}_{f_{i}}\text{-}\mathbb{T}\langle Cf_{i}\rangle$-bimodule
quotient
\[
\frac{\overline{\mathrm{Sk}}(Cf_{i})}{I_{f_{i}}^{c}\cdot\overline{\mathrm{Sk}}(Cf_{i})},
\]
where $I_{f_{i}}^{c}$ is the ideal of the commutative algebra $R[Cf_{i}]$
given right before Definition \ref{def: partial corner reduced module of face cone}.
In particular the natural quotient map $\overline{\mathrm{Sk}}(Cf_{i})\twoheadrightarrow\overline{\mathrm{Sk}}^{pc}(Cf_{i})$
is a left $\tilde{\mathbb{T}}_{f_{i}}$-module homomorphism and
thus the map
\[
\underset{f_{i}\in\mathbf{f}(T)}{\bigotimes}\overline{\mathrm{Sk}}(Cf_{i})\twoheadrightarrow\underset{f_{i}\in\mathbf{f}(T)}{\bigotimes}\overline{\mathrm{Sk}}^{pc}(Cf_{i})
\]
is a left $\underset{f_{i}\in\mathbf{f}(T)}{\bigotimes}\tilde{\mathbb{T}}_{f_{i}}$-module
homomorphism. Therefore it maps $\tilde{I}_{T}\left(\underset{f_{i}\in\mathbf{f}(T)}{\bigotimes}\overline{\mathrm{Sk}}(Cf_{i})\right)$
into $\tilde{I}_{T}\left(\underset{f_{i}\in\mathbf{f}(T)}{\bigotimes}\overline{\mathrm{Sk}}^{pc}(Cf_{i})\right)$.
Consequently, we have the induced map
\[
\Pi\colon\frac{\underset{f_{i}\in\mathbf{f}(T)}{\bigotimes}\overline{\mathrm{Sk}}(Cf_{i})}{\tilde{I}_{T}\left(\underset{f_{i}\in\mathbf{f}(T)}{\bigotimes}\overline{\mathrm{Sk}}(Cf_{i})\right)}\twoheadrightarrow\frac{\underset{f_{i}\in\mathbf{f}(T)}{\bigotimes}\overline{\mathrm{Sk}}^{pc}(Cf_{i})}{\tilde{I}_{T}\left(\underset{f_{i}\in\mathbf{f}(T)}{\bigotimes}\overline{\mathrm{Sk}}^{pc}(Cf_{i})\right)}.
\]
We're left to show that the composition 
\[
\Pi\circ\sigma_{T}\colon\overline{\mathrm{Sk}}(T)\rightarrow\frac{\underset{f_{i}\in\mathbf{f}(T)}{\bigotimes}\overline{\mathrm{Sk}}^{pc}(Cf_{i})}{\tilde{I}_{T}\left(\underset{f_{i}\in\mathbf{f}(T)}{\bigotimes}\overline{\mathrm{Sk}}^{pc}(Cf_{i})\right)}
\]
maps the $R$-submodule\footnote{Recall that the corner-reduced module $\overline{\mathrm{Sk}}^{c}(T)$
is defined to be the quotient $
\frac{\overline{\mathrm{Sk}}(T)}{I^{c}\cdot\overline{\mathrm{Sk}}(T)},$
where $I^{c}$ is the ideal of $R[T]=\left(\underset{f_{i}\in\mathbf{f}(T)}{\bigotimes}\mathbb{T}_{f_{i}},\cdot\right)$
generated by elements of the form $\Gamma_{ab}-(-A^{2})^{-\frac{1}{2}}$.} $I^{c}\cdot\overline{\mathrm{Sk}}(T)$ to $\{0\}$. $I^{c}\cdot\overline{\mathrm{Sk}}(T)$
is the $R$-span of elements of the form $\left(\Gamma_{ab}-(-A^{2})^{-\frac{1}{2}}\right)\cdot\ell$,
where $\ell$ is a stated ribbon tangle in $T$, we have (in the following
we assume without lost of generality that $\Gamma_{ab}$ is in $\mathbb{T}_{f_{j}}$
)
\[
\left(\Gamma_{ab}-(-A^{2})^{-\frac{1}{2}}\right)\cdot\ell=\left(A^{-\frac{1}{2}\langle d_{f_{j}}(\Gamma_{ab}),d_{f_{j}}(\ell)\rangle_{f_{j}}}\Gamma_{ab}-(-A^{2})^{-\frac{1}{2}}\right)\cup\ell.
\]
If we apply $\Pi\circ\sigma_{T}$ to it and use the fact that $d_{f_{i}}(\ell)=d_{f_{i}}(\ell_{Cf_{i}}^{\overrightarrow{\epsilon}})$,
the result becomes
\begin{eqnarray*}
\sum_{\overrightarrow{\epsilon}}\left(\left(A^{-\frac{1}{2}\langle d_{f_{j}}(\Gamma_{ab}),d_{j}(\ell_{C_{j}}^{\overrightarrow{\epsilon}})\rangle_{f_{j}}}\Gamma_{ab}-(-A^{2})^{-\frac{1}{2}}\right)\cup\left(\underset{f_{i}\in\mathbf{f}(T)}{\otimes}\ell_{Cf_{i}}^{\overrightarrow{\epsilon}}\right)\right)\\
=\sum_{\overrightarrow{\epsilon}}\left(\left(\Gamma_{ab}-(-A^{2})^{-\frac{1}{2}}\right)\cdot\ell_{Cf_{j}}^{\overrightarrow{\epsilon}}\right)\otimes\left(\underset{i\neq j}{\underset{f_{i}\in\mathbf{f}(T)}{\otimes}}\ell_{Cf_{i}}^{\overrightarrow{\epsilon}}\right).
\end{eqnarray*}
That is, $\Pi\circ\sigma_{T}\left(\left(\Gamma_{ab}-(-A^{2})^{-\frac{1}{2}}\right)\cdot\ell\right)$
is represented by an element in $\left(I_{f_{j}}^{c}\cdot\overline{\mathrm{Sk}}(Cf_{j})\right)\otimes\left(\underset{i\neq j}{\underset{f_{i}\in\mathbf{f}(T)}{\bigotimes}}\overline{\mathrm{Sk}}(Cf_{i})\right)$,
which is already $0$ in $\underset{f_{i}\in\mathbf{f}(T)}{\bigotimes}\overline{\mathrm{Sk}}^{pc}(Cf_{i})$.
\end{proof}
\endgroup

\begin{prop}
\label{prop:gluing splitting homomorphism coming from splitting every ideal tetrahedron}The
usual tensor product 
\[
\underset{T\in\mathcal{T}}{\bigotimes}\sigma_{T}\colon\underset{T\in\mathcal{T}}{\bigotimes}\overline{\mathrm{Sk}}^{c}(T)  \rightarrow\underset{T\in\mathcal{T}}{\bigotimes}\left(\frac{\underset{f_{i}\in\mathbf{f}(T)}{\bigotimes}\overline{\mathrm{Sk}}^{pc}(Cf_{i})}{\tilde{I}_{T}\left(\underset{f_{i}\in\mathbf{f}(T)}{\bigotimes}\overline{\mathrm{Sk}}^{pc}(Cf_{i})\right)}\right)
  =\left(\underset{f_{i}\in\mathbf{f}(\mathcal{T})}{\bigotimes}\overline{\mathrm{Sk}}^{pc}(Cf_{i})\right)\biggm/\sum_{T\in\mathcal{T}}\overline{\mathfrak{R}}_{T}.
\]
of the splitting homomorphisms induced by splitting every ideal tetrahedron
into face cones given above descends to a well-defined $R$-module
homomorphism
\[
\underset{T\in\mathcal{T}}{\overline{\bigotimes}}\sigma_{T}\colon\underset{T\in\mathcal{T}}{\overline{\bigotimes}}\overline{\mathrm{Sk}}^{c}(T)\rightarrow\underset{f_{i}\in\mathbf{f}(\mathcal{T})}{\overline{\bigotimes}}\overline{\mathrm{Sk}}^{pc}(Cf_{i}).
\]
\end{prop}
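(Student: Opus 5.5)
The source of $\bigotimes_{T\in\mathcal{T}}\sigma_{T}$ is $\bigotimes_{T}\overline{\mathrm{Sk}}^{c}(T)$, and its target is already the quotient of $\bigotimes_{f_{i}}\overline{\mathrm{Sk}}^{pc}(Cf_{i})$ by $\sum_{T}\overline{\mathfrak{R}}_{T}$. So the map descends once we show that it sends $\mathfrak{R}_{E}^{c}$ into the image of $\overline{\mathfrak{R}}_{E}^{pc}$. Since $\mathfrak{R}_{E}^{c}$ is the image of $\mathfrak{R}_{E}\subset\bigotimes_{T}\overline{\mathrm{Sk}}(T)$, I would work at the level of $\overline{\mathrm{Sk}}(T)$. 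Here each $\sigma_{T}$ is the unreduced splitting map into $\bigotimes_{f_{i}\in\mathbf{f}(T)}\overline{\mathrm{Sk}}(Cf_{i})$ modulo $\tilde I_T$, as in the proof of Lemma \ref{lem: Splitting homomorphism from splitting a single tetrahedra into face cones}. At the end I would push forward along the quotient maps $\Pi$. It therefore suffices to treat a spanning element $\left(\otimes_{T}x_{T}\right)\cup\left(\hat{e}^{\epsilon}-(-A^{2})^{\epsilon}\right)$ with $\otimes_{T}x_{T}$ balanced and $e$ an internal edge.

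The first step is a compatibility of $\sigma_{T}$ with the right $\mathbb{B}_{e}$-action at a bare edge $e_{i}$ of $T$. The bivalent source on $e_{i}$ sits on the boundary of exactly two face cones $Cf$ and $Cf'$, namely those of the two bare faces of $T$ containing $e_{i}$. The arc $\hat{e}_{i}$ near that source can be isotoped so that it crosses the edge cone $Ce_{i}$ exactly once, transversely to the foliation. Splitting it then gives a sum over the state on the new marking edge of $Ce_{i}$. I expect that in the reduced modules only one state survives, the mixed-state terms vanishing as bad arcs, exactly as in the computation $\sigma_{Sf}(z\cup x_{a,f_i;b,f_j})=\sigma_{Sf}(z)\cup(x_{a,f_i}x_{b,f_j})$ noted after Lemma \ref{lem: Splitting homomorphism induced by splitting a single face suspension into two face cones}. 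This should yield
\[
\sigma_{T}(w\cup\hat{e}_{i}^{\pm1})=\sigma_{T}(w)\cup\left(x_{e_{i},f}\,x_{e_{i},f'}\right)^{\pm1},
\]
possibly up to a power of $(-A^{2})^{1/2}$ coming from a (S4)/(S5)-type normalization. Checking that no such scalar appears, or that it cancels, is the main obstacle. I would first verify it on the empty skein using the presentation of Corollary \ref{cor:generators for skein modules of face cones and face suspensions}(i) and the definition of $\overline{\mathrm{Sk}}(T)$ as a cyclic bimodule. I would then extend to all $w$ by the right-module structure.

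Next I would take the product over the tetrahedra around $e$. Because the bare edges $e_{1},\dots,e_{k}$ are distinct sources, their bigon generators commute under $\cup$. This gives
\[
\Big(\bigotimes_{T}\sigma_{T}\Big)\big((\otimes x_{T})\cup(\hat{e}^{\epsilon}-(-A^{2})^{\epsilon})\big)=\Big(\bigotimes_{T}\sigma_{T}(x_{T})\Big)\cup\big(\overline{e}^{\epsilon}-(-A^{2})^{\epsilon}\big),
\]
after identifying $\prod_{i}x_{e_{i},f}x_{e_{i},f'}$ with $\overline{e}$ of Definition \ref{def:reduced tensor peoduct of partially corner-reduced modules of face cones}(ii). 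The face cones around $e$ in Figure \ref{fig:face cones around an edge} are exactly these pairs $(f_{2}^{(i)},f_{1}^{(i+1)})$ of faces of a common tetrahedron.

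Finally, I would check that $\bigotimes_{T}\sigma_{T}(x_{T})$ is a sum of partially balanced elements. The splitting only introduces states on the edge cones and leaves the degrees on the bare faces unchanged, since $d_{f_{i}}(\ell)=d_{f_{i}}(\ell^{\overrightarrow{\epsilon}}_{Cf_{i}})$. Hence the balanced condition on $\otimes x_{T}$ transfers to the partially balanced condition on each summand. Each summand therefore lies in $\overline{\mathfrak{R}}_{E}$, and its image under $\Pi$ lies in $\overline{\mathfrak{R}}_{E}^{pc}$, which gives the descent.
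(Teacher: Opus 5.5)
Your proposal is correct and follows the paper's proof. Both arguments reduce to the spanning elements $(\otimes_T x_T)\cup(\hat e^{\epsilon}-(-A^2)^{\epsilon})$, show they split to $(\otimes_T\sigma_T(x_T))\cup(\overline e^{\epsilon}-(-A^2)^{\epsilon})$ with $\otimes_T\sigma_T(x_T)$ partially balanced, and conclude they lie in $\overline{\mathfrak R}^{pc}_E$; the paper just asserts the splitting identity, which your local argument at the bivalent sources justifies. The scalar you flag as the main obstacle does not arise: the splitting is an unweighted state sum, both mixed-state arcs at a bivalent source vanish in the reduced modules (each is a bad arc at one of the two corners of the bigon), and the surviving $(+,+)$ pieces are exactly the untwisted arcs $x_{e_i,f}$ and $x_{e_i,f'}$.
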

\begingroup
\allowdisplaybreaks
\begin{proof}
Recall that the reduced tensor product $\underset{T\in\mathcal{T}}{\overline{\bigotimes}}\overline{\mathrm{Sk}}^{c}(T)$
is given by the $R$-module quotient 
$
\frac{\underset{T\in\mathcal{T}}{\bigotimes}\overline{\mathrm{Sk}}^{c}(T)}{\mathfrak{R}_{E}^{c}},
$
(see Definition \ref{def:reduced tensor product}), whereas the reduced tensor product $\underset{f_{i}\in\mathbf{f}(\mathcal{T})}{\overline{\bigotimes}}\overline{\mathrm{Sk}}^{pc}(Cf_{i})$
is given by the $R$-module quotient 
$
\raisebox{1.5ex}{$\underset{f_{i}\in\mathbf{f}(\mathcal{T})}{\bigotimes}\overline{\mathrm{Sk}}^{pc}(Cf_{i})$}\biggm/\raisebox{-1.5ex}{${\overline{\mathfrak{R}}_{E}^{pc}+\sum_{T\in\mathcal{T}}\overline{\mathfrak{R}}_{T}}$},
$
(see Definition \ref{def:reduced tensor peoduct of partially corner-reduced modules of face cones}).

Let 
\[
\Pi^{\prime}\colon\frac{\underset{f_{i}\in\mathbf{f}(\mathcal{T})}{\bigotimes}\overline{\mathrm{Sk}}^{pc}(Cf_{i})}{\sum_{T\in\mathcal{T}}\overline{\mathfrak{R}}_{T}} \twoheadrightarrow\frac{\underset{f_{i}\in\mathbf{f}(\mathcal{T})}{\bigotimes}\overline{\mathrm{Sk}}^{pc}(Cf_{i})}{\overline{\mathfrak{R}}_{E}^{pc}+\sum_{T\in\mathcal{T}}\overline{\mathfrak{R}}_{T}}=\underset{f_{i}\in\mathbf{f}(\mathcal{T})}{\overline{\bigotimes}}\overline{\mathrm{Sk}}^{pc}(Cf_{i}),
\]
be the natural quotient map. We must show that the composition $\Pi^{\prime}\circ\underset{T\in\mathcal{T}}{\bigotimes}\sigma_{T}$
maps the $R$-submodule $\mathfrak{R}_{E}^{c}$ of $\underset{T\in\mathcal{T}}{\bigotimes}\overline{\mathrm{Sk}}^{c}(T)$
to $\{0\}$. Recall that $\mathfrak{R}_{E}^{c}$ is the image of $\mathfrak{R}_{E}$
under the natural map $\underset{T\in\mathcal{T}}{\bigotimes}\overline{\mathrm{Sk}}(T)\twoheadrightarrow\underset{T\in\mathcal{T}}{\bigotimes}\overline{\mathrm{Sk}}^{c}(T)$
(see Definition \ref{def:reduced tensor product}). Therefore $\mathfrak{R}_{E}^{c}$
is the $R$-span of elements which have representatives in $\underset{T\in\mathcal{T}}{\bigotimes}\overline{\mathrm{Sk}}(T)$
of the form
\[
\left(\underset{T\in\mathcal{T}}{\otimes}x_{T}\right)\cup\left(\hat{e}^{\epsilon}-(-A^{2})^{\epsilon}\right),\ \epsilon\in\{\pm1\},
\]
where $\underset{T\in\mathcal{T}}{\otimes}x_{T}$ is balanced and $\hat{e}$ is the element given by (\ref{eqn:definition of e^hat}) associated to an edge $e\in \mathcal{T}^{(1)}$. Apply
$\Pi^{\prime}\circ\underset{T\in\mathcal{T}}{\bigotimes}\sigma_{T}$
to $\left(\underset{T\in\mathcal{T}}{\otimes}x_{T}\right)\cup\left(\hat{e}^{\epsilon}-(-A^{2})^{\epsilon}\right)$,
the result is the element represented by
\[
\left(\underset{T\in\mathcal{T}}{\otimes}\sigma_{T}(x_{T})\right)\cup\left(\overline{e}^{\epsilon}-(-A^{2})^{\epsilon}\right).
\]
Note that $\underset{T\in\mathcal{T}}{\otimes}\sigma_{T}(x_{T})$
is partially balanced, therefore the above element represents an element
of the $R$-submodule $\overline{\mathfrak{R}}_{E}^{pc}$ of $\underset{f_{i}\in\mathbf{f}(\mathcal{T})}{\bigotimes}\overline{\mathrm{Sk}}^{pc}(Cf_{i})$
and thus is $0$ in $\underset{f_{i}\in\mathbf{f}(\mathcal{T})}{\overline{\bigotimes}}\overline{\mathrm{Sk}}^{pc}(Cf_{i})$.
\end{proof}
\endgroup

\vspace{1cm}

We now have discussed all four relevant maps, we can assemble them into the
desired commutative diagram.

\begin{prop}
\label{prop:commutative diagram of various splitting homomorphisms}The
following diagram commutes,
\begin{equation}
\label{eq:commutatve diagram of splitting homomorphisms}
\begin{tikzcd}                                                                 & \underset{f\in\mathcal{T}^{(2)}}{\overline{\bigotimes}}\overline{\mathrm{Sk}}(Sf)  \arrow[rd, "\underset{f\in\mathcal{T}^{(2)}}{\overline{\bigotimes}}\sigma_{Sf}"] &                                                                                                                \\ \mathrm{Sk}(Y) \arrow[ru, "\tilde{\sigma}"] \arrow[rd, "\sigma"'] &                                                                                                                                                                   & {{{\underset{f_{i}\in\mathbf{f}(\mathcal{T})}{\overline{\bigotimes}}\overline{\mathrm{Sk}}^{pc}(Cf_{i})}}} \\                                                                 & \underset{T\in\mathcal{T}}{\overline{\bigotimes}}\overline{\mathrm{Sk}}^{c}(T) \arrow[ru, "\underset{T\in\mathcal{T}}{\overline{\bigotimes}}\sigma_{T}"']           &                                                                                                                \end{tikzcd}.
\end{equation}
\end{prop}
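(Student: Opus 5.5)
Since $\mathrm{Sk}(Y)$ is spanned by stated ribbon links, and all four maps in (\ref{eq:commutatve diagram of splitting homomorphisms}) are $R$-linear, I will prove commutativity by evaluating both composites on a single ribbon link $\ell\subset Y$. First I isotope $\ell$ into a position that is simultaneously generic for every splitting surface involved: transverse to all internal faces of $\mathcal{T}$ and to all edge cones, in general position with respect to the foliations on both families of surfaces, and with no leaf meeting $\ell$ twice. This is possible because the union of faces and edge cones is a finite 2-complex, and the generic-position conditions are open and dense. Next I slide $\ell$ along the leaves of both foliations so that every intersection point with a face or an edge cone lies on a marking edge. The two isotopies take place near different surfaces, which meet only along the edges $e\in\mathcal{T}^{(1)}$ and the vertex cones, and $\ell$ avoids these. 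Hence they can be performed simultaneously. Each of the four maps is well defined (Theorem~\ref{thm:splitting homomorphism}, Proposition~\ref{prop:Splitting homomorphism induced by splitting into face suspensions}, Lemmas~\ref{lem: Splitting homomorphism induced by splitting a single face suspension into two face cones} and \ref{lem: Splitting homomorphism from splitting a single tetrahedra into face cones}). So I may compute each of them using this one common representative.

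Along the upper route, $\tilde\sigma(\ell)=\sum_{\vec\epsilon}\otimes_f \ell_{Sf}^{\vec\epsilon}$, where the states sit at the edge-cone cuts. By construction each $\ell_{Sf}$ is already in general position with respect to the face $f$, with its face intersections lying on marking edges. Applying $\otimes_f\sigma_{Sf}$ then sums over a second family of states $\vec\delta$ at the face cuts. The result is
\[
\sum_{\vec\epsilon,\vec\delta}\ \underset{f_i\in\mathbf f(\mathcal T)}{\otimes}\ell_{Cf_i}^{\vec\epsilon,\vec\delta}.
\]
Along the lower route, $\sigma(\ell)=\sum_{\vec\delta}\otimes_T\ell_T^{\vec\delta}$ sums first over the face cuts. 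Then $\otimes_T\sigma_T$ cuts each $\ell_T$ along its edge cones and sums over $\vec\epsilon$. Because the cut points and the tangle pieces are literally the same, the two double sums agree term by term as elements of $\bigotimes_{f_i}\overline{\mathrm{Sk}}(Cf_i)$. Projecting to $\underset{f_i}{\overline{\bigotimes}}\overline{\mathrm{Sk}}^{pc}(Cf_i)$ therefore gives equal images, since both routes land in this quotient via the induced maps of Propositions~\ref{prop:gluing splitting homomorphism coming from splitting every single face suspension} and \ref{prop:gluing splitting homomorphism coming from splitting every ideal tetrahedron}.

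The main obstacle is justifying that it is legitimate to evaluate the second-stage maps on these particular representatives. The map $\sigma_{Sf}$ requires its input to be generic with respect to the face foliation, and $\sigma_T$ requires genericity with respect to the edge-cone foliations. I must check that the representatives produced at the first stage satisfy these conditions, including near the marking edges where the first sliding took place. The delicate point is near the bivalent sources on edges $e$ and the sinks at face barycenters, where the face foliation and the edge-cone foliation share singular leaves. I would handle this by performing all sliding in thin collar neighbourhoods of the respective surfaces, chosen disjoint away from the edges of $\mathcal T$, and by noting that $\ell$ stays away from $\mathcal T^{(1)}$. A second, minor check is that the quotient relations used at each stage (corner reduction, $\tilde I_T$, and the edge relations) are only ever applied after both sums are formed, so no ordering issue arises. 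This holds because the equality is established already in the unreduced tensor product before any quotient is taken.
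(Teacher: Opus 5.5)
Your proposal is correct and is the same argument as the paper's: evaluate both composites on a single ribbon link $\ell$ in simultaneous general position, and observe that each yields the class of $\sum_{\overrightarrow{\epsilon}}\bigl(\underset{f_{i}\in\mathbf{f}(\mathcal{T})}{\otimes}\ell_{Cf_{i}}^{\overrightarrow{\epsilon}}\bigr)$, summed over compatible states at both the face cuts and the edge-cone cuts. The paper states this in one line, and your discussion of choosing a common generic representative (sliding in thin collars away from $\mathcal{T}^{(1)}$ and the vertex cones) spells out what it leaves implicit---though the two foliations share singular leaves only along the half-edges through the bivalent sources, not at the face barycenters.
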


\begin{proof}
Starting with a ribbon link $\ell\in\mathrm{Sk}(Y)$, following both
routes the result are both given by the element of $\underset{f_{i}\in\mathbf{f}(\mathcal{T})}{\overline{\bigotimes}}\overline{\mathrm{Sk}}^{pc}(Cf_{i})$
represented by
\[
\sum_{\overrightarrow{\epsilon}}\left(\underset{f_{i}\in\mathbf{f}(\mathcal{T})}{\otimes}\ell_{Cf_{i}}^{\overrightarrow{\epsilon}}\right).
\]
\end{proof}

\subsection{The construction of $Tr_{\mathcal{T}}^{[\text{PP}]}$. \label{subsec: construction of PP's quantum trace map}}

Consider an ideally triangulated $3$-manifold $(Y,\mathcal{T})$.
The first step of Panitch \& Park's construction of their quantum
trace map is to decompose $Y$ into face suspensions $Y=\underset{f\in\mathcal{T}^{(2)}}{\bigcup}Sf$.
Next, a quantum trace map $Tr_{Sf}\colon\overline{\mathrm{Sk}}(Sf)\rightarrow\mathbf{S}f$
is contructed for each face suspension. Finally, the tensor product of these quantum trace maps for the separate face suspensions is composed with the splitting homomorphism
$\tilde{\sigma}$ (see Proposition \ref{prop:Splitting homomorphism induced by splitting into face suspensions})
to obtain the quantum trace map $Tr_{\mathcal{T}}^{[\text{PP}]}\colon\overline{\mathrm{Sk}}(Y)\rightarrow\hat{\mathcal{G}}_{\mathcal{T}}^{[\text{PP}]}$.

We first focus on the quantum trace map $Tr_{Sf}$ for a single face
suspension. To begin, let us properly define the codomain. In what follows, we
will assume that the face suspension $Sf$ is given by gluing face
cones $Cf_{i}$ and $Cf_{j}$ of bare faces $f_{i}$ and $f_{j}$
which are identified to $f$. Moreover, we label the edges of $f_{i}$
by $a_{i}$, $b_{i}$ and $c_{i}$ in clockwise order with respect
to an outward normal vector of $f_{i}$; and label the edges of $f_{j}$
by $a_{j}$, $b_{j}$ and $c_{j}$ in counterclockwise order with
respect to an outward normal vector of $f_{j}$ such that when $f_{i}$
and $f_{j}$ are identified, $a_{i}$ (resp. $b_{i}$ and $c_{i}$)
is identified with $a_{j}$ (resp. $b_{j}$ and $c_{j}$) (as in Corollary
\ref{cor:generators for skein modules of face cones and face suspensions}).  Recall from \ref{subsec: face cones and face suspensions}
that we labeled the boundary marking edges on the face cones and face
suspensions. We have also given explicit algebraic description of
the various skein algebras associated with the vertices of the boundary
markings.
\begin{defn}
\label{def:face cone module and face suspension module}
(i) The \emph{face
cone module} $\mathbf{C}f_{i}$ associated with the face cone $Cf_{i}$
is the quantum torus generated by the formal variables $\{a,f_{i}\}$,
$\{b,f_{i}\}$ and $\{c,f_{i}\}$ corresponding to the marking edges
in the edge cones (so we name the variables by the labels of the marking
edges they correspond to), given by
\[
\mathbf{C}f_{i}:=\frac{R\langle\{a_{i},f_{i}\}^{\pm1},\ \{b_{i},f_{i}\}^{\pm1},\ \{c_{i},f_{i}\}^{\pm1}\rangle}{\left\langle \begin{array}{c}
\{b_{i},f_{i}\}\{a_{i},f_{i}\}=A\{a_{i},f_{i}\}\{b_{i},f_{i}\},\\
\{c_{i},f_{i}\}\{b_{i},f_{i}\}=A\{b_{i},f_{i}\}\{c_{i},f_{i}\},\\
\{a_{i},f_{i}\}\{c_{i},f_{i}\}=A\{c_{i},f_{i}\}\{a_{i},f_{i}\}
\end{array}\right\rangle }.
\]
(ii) The\emph{ face suspension module} $\mathbf{S}f$ associated with the
face suspension $Sf$ is defined to be the quantum torus $\mathbf{C}f_{i}\otimes\ensuremath{\mathbf{C}}f_{j}$.
Thus $\mathbf{S}f$ is given by
\[
 \frac{R\langle\{a_{i},f_{i}\}^{\pm1},\ \{b_{i},f_{i}\}^{\pm1},\ \{c_{i},f_{i}\}^{\pm1}\rangle}{\left\langle \begin{array}{c}
\{b_{i},f_{i}\}\{a_{i},f_{i}\}=A\{a_{i},f_{i}\}\{b_{i},f_{i}\},\\
\{c_{i},f_{i}\}\{b_{i},f_{i}\}=A\{b_{i},f_{i}\}\{c_{i},f_{i}\},\\
\{a_{i},f_{i}\}\{c_{i},f_{i}\}=A\{c_{i},f_{i}\}\{a_{i},f_{i}\}
\end{array}\right\rangle }\otimes\frac{R\langle\{a_{j},f_{j}\}^{\pm1},\ \{b_{j},f_{j}\}^{\pm1},\ \{c_{j},f_{j}\}^{\pm1}\rangle}{\left\langle \begin{array}{c}
\{b_{j},f_{j}\}\{a_{j},f_{j}\}=A^{-1}\{a_{j},f_{j}\}\{b_{j},f_{j}\},\\
\{c_{j},f_{j}\}\{b_{j},f_{j}\}=A^{-1}\{b_{j},f_{j}\}\{c_{j},f_{j}\},\\
\{a_{j},f_{j}\}\{c_{j},f_{j}\}=A^{-1}\{c_{j},f_{j}\}\{a_{j},f_{j}\}
\end{array}\right\rangle }.
\]
\end{defn}

Recall that the reduced skein module $\overline{\mathrm{Sk}}(Sf)$ of
the face suspension is given by an $\tilde{\mathbb{T}}_{f_{i}}\otimes\tilde{\mathbb{T}}_{f_{j}}$-$\left(\mathbb{B}^{\otimes3}\right)_{Sf}$-bimodule
quotient of the algebra $\tilde{\mathbb{T}}_{f_{i}}\otimes\tilde{\mathbb{T}}_{f_{j}}\otimes\left(\mathbb{B}^{\otimes3}\right)_{Sf}$
(see Corollary \ref{cor:generators for skein modules of face cones and face suspensions}
(ii)). Let's first consider a map on the level of this algebra.

\begin{lem}
\label{lem: map of algebra that will descend to quantum trace on Sf}
\emph{(\cite[Lemma 5.4]{PP1})} The following assignments give well-defined $R$-algebra homomorphisms. (Note that these conventions follow the ones in \cite{PP2}.) 

Define $\rho_{f_{i}}\colon\tilde{\mathbb{T}}_{f_{i}} \rightarrow \mathbf{C}f_{i}$ by:
\begin{eqnarray*}
\tilde{\Gamma}_{a_{i}b_{i}} & \mapsto & (-A^{2})^{-\frac{1}{2}}\left[\{a_{i},f_{i}\}\{b_{i},f_{i}\}\right]\\
\tilde{\Gamma}_{b_{i}c_{i}} & \mapsto & (-A^{2})^{-\frac{1}{2}}\left[\{b_{i},f_{i}\}\{c_{i},f_{i}\}\right]\\
\tilde{\Gamma}_{c_{i}a_{i}} & \mapsto & (-A^{2})^{-\frac{1}{2}}\left[\{c_{i},f_{i}\}\{a_{i},f_{i}\}\right].
\end{eqnarray*}
And define $\eta_{f}\colon\left(\mathbb{B}^{\otimes3}\right)_{Sf} \rightarrow  \mathbf{S}f$ by:
\begin{eqnarray*}
x_{a_{i},f_{i};a_{j},f_{j}} & \mapsto & \{a_{i},f_{i}\}\otimes\{a_{j},f_{j}\}\\
x_{b_{i},f_{i};b_{j},f_{j}} & \mapsto & \{b_{i},f_{i}\}\otimes\{b_{j},f_{j}\}\\
x_{c_{i},f_{i};c_{j},f_{j}} & \mapsto & \{c_{i},f_{i}\}\otimes\{c_{j},f_{j}\}.
\end{eqnarray*}
\end{lem}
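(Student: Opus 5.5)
Both maps are defined on presented algebras, so the plan is to use the universal property of each presentation. An assignment of generators extends to a well-defined $R$-algebra homomorphism precisely when the images of the generators satisfy the defining relations. The images must also be invertible, so that the inverses $\tilde{\Gamma}^{-1}$ and $x^{-1}$ have somewhere to go.

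For $\rho_{f_{i}}$, the images are scalar multiples of Weyl-ordered monomials in the quantum torus $\mathbf{C}f_{i}$, so they are invertible. Write $X_{a}=\{a_{i},f_{i}\}$, and similarly $X_{b}$, $X_{c}$. Then $\Lambda_{ab}$ is determined by $X_{a}X_{b}=A^{\Lambda_{ab}}X_{b}X_{a}$. From $X_{b}X_{a}=AX_{a}X_{b}$ we get $\Lambda_{ab}=-1$, and cyclically $\Lambda_{bc}=\Lambda_{ca}=-1$.

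For Weyl-ordered monomials with exponent vectors $u,v$ we have $[X^{u}][X^{v}]=A^{\omega(u,v)}[X^{v}][X^{u}]$, where $\omega(u,v)=u^{T}\Lambda v$. The only checks are therefore three such computations. For instance, with $u=(1,1,0)$ and $v=(0,1,1)$ in the basis $(a,b,c)$:
\[
\omega(u,v)=\Lambda_{ab}+\Lambda_{ac}+\Lambda_{bc}=-1+1-1=-1.
\]
This gives $[X_{a}X_{b}][X_{b}X_{c}]=A^{-1}[X_{b}X_{c}][X_{a}X_{b}]$, which is equivalent to the required $\tilde{\Gamma}_{b_{i}c_{i}}\tilde{\Gamma}_{a_{i}b_{i}}=A\tilde{\Gamma}_{a_{i}b_{i}}\tilde{\Gamma}_{b_{i}c_{i}}$. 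The other two relations follow by cyclic symmetry. The scalar factors $(-A^{2})^{-\frac{1}{2}}$ are central, so they play no role.

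For $\eta_{f}$, the source $\left(\mathbb{B}^{\otimes3}\right)_{Sf}$ is a commutative Laurent polynomial ring, so I need to check two things. First, each image $\{a_{i},f_{i}\}\otimes\{a_{j},f_{j}\}$ is invertible, which is clear. Second, the three images pairwise commute. For two of them, moving one past the other picks up $A^{-1}$ from the $\mathbf{C}f_{i}$ factor, by the relation $\{b_{i},f_{i}\}\{a_{i},f_{i}\}=A\{a_{i},f_{i}\}\{b_{i},f_{i}\}$. It picks up $A^{+1}$ from the $\mathbf{C}f_{j}$ factor, whose relations carry $A^{-1}$ in the mirrored orientation. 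These cancel, so the images commute and $\eta_{f}$ is well defined.

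I expect no real obstacle; this is routine bookkeeping. The only delicate point is keeping the orientation conventions straight. The clockwise ordering on $f_{i}$ matches the counterclockwise ordering on $f_{j}$, and that is exactly why the $\mathbf{C}f_{j}$ factor carries the opposite exponents and the cancellation in $\eta_{f}$ works. The presentation of $\tilde{\mathbb{T}}_{f_{j}}$ in Corollary \ref{cor:generators for skein modules of face cones and face suspensions}(ii) likewise has reversed exponents, so $\rho_{f_{j}}$ is verified by the same computation with $A$ replaced by $A^{-1}$.
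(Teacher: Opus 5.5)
Your proposal is correct and takes the same route as the paper. The paper also just verifies that the images of the generators satisfy the defining relations, and it singles out, as you do, that the three images under $\eta_{f}$ commute because the $A$-factors from $\mathbf{C}f_{i}$ and $\mathbf{C}f_{j}$ cancel under the opposite orientation conventions. Your explicit computation with $\omega(u,v)=u^{T}\Lambda v$ is a fuller write-up of the paper's ``straightforward check.''
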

The proof is a straightforward check. A detail worth noticing is
that because $\left(\mathbb{B}^{\otimes3}\right)_{Sf}$ is a commutative
algebra, its image under $\eta_{f}$ in the tensor product of quantum tori $\mathbf{S}f$ should be a commutative
subalgebra. This means, for example, that $\{a_{i},f_{i}\}\{a_{j},f_{j}\}$
and $\{b_{i},f_{i}\}\{b_{j},f_{j}\}$ should commute
in $\mathbf{S}f$ (which is not completely trivial but is easy to check).

The tensor product $\rho_{f_{i}}\otimes\rho_{f_{j}}$ then gives us
an algebra homomorphism $\tilde{\mathbb{T}}_{f_{i}}\otimes\tilde{\mathbb{T}}_{f_{j}}\rightarrow\mathbf{S}f$.
Together with $\eta_{f}$, this equips the face suspension module
$\mathbf{S}f$ with a $\tilde{\mathbb{T}}_{f_{i}}\otimes\tilde{\mathbb{T}}_{f_{j}}$-$\left(\mathbb{B}^{\otimes3}\right)_{Sf}$-bimodule
structure. 

\begin{prop}
\label{prop: quantum trace map on Sf} \emph{(\cite[Theorem 5.5]{PP1})} The
$\tilde{\mathbb{T}}_{f_{i}}\otimes\tilde{\mathbb{T}}_{f_{j}}$-$\left(\mathbb{B}^{\otimes3}\right)_{Sf}$-bimodule
homomorphism 
\[
\rho_{f_{i}}\otimes\rho_{f_{j}}\otimes\eta_{f}\colon\tilde{\mathbb{T}}_{f_{i}}\otimes\tilde{\mathbb{T}}_{f_{j}}\otimes\left(\mathbb{B}^{\otimes3}\right)_{Sf}\rightarrow\mathbf{S}f
\]
maps the $\tilde{\mathbb{T}}_{f_{i}}\otimes\tilde{\mathbb{T}}_{f_{j}}$-$\left(\mathbb{B}^{\otimes3}\right)_{Sf}$-sub-bimodule
$\mathrm{Ann}([\emptyset])$ to $\{0\}$, therefore it descends to a
well-defined $\tilde{\mathbb{T}}_{f_{i}}\otimes\tilde{\mathbb{T}}_{f_{j}}$-$\left(\mathbb{B}^{\otimes3}\right)_{Sf}$-bimodule
homomorphism 
\[
Tr_{Sf}\colon\overline{\mathrm{Sk}}(Sf)\rightarrow\mathbf{S}f.
\]
(See Corollary \ref{cor:generators for skein modules of face cones and face suspensions}
(ii) for the presentation of $\overline{\mathrm{Sk}}(Sf)$.)
\end{prop}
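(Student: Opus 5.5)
Since $\overline{\mathrm{Sk}}(Sf)$ is the quotient of $\tilde{\mathbb{T}}_{f_{i}}\otimes\tilde{\mathbb{T}}_{f_{j}}\otimes\left(\mathbb{B}^{\otimes3}\right)_{Sf}$ by the sub-bimodule $\mathrm{Ann}([\emptyset])$ (Corollary \ref{cor:generators for skein modules of face cones and face suspensions}(ii)), the plan has two parts. First we check that $\rho_{f_{i}}\otimes\rho_{f_{j}}\otimes\eta_{f}$, given by $\tilde{\Gamma}\otimes\tilde{\Gamma}'\otimes x\mapsto(\rho_{f_{i}}\otimes\rho_{f_{j}})(\tilde{\Gamma}\otimes\tilde{\Gamma}')\,\eta_{f}(x)$, is a bimodule homomorphism. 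Then we check that it kills the three generators (\ref{eq:generators of Ann for face suspension}). These two facts together show that it kills all of $\mathrm{Ann}([\emptyset])$, so it descends to $Tr_{Sf}$.

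The bimodule property is formal. The domain carries the tensor-product bimodule structure: the left factor $\tilde{\mathbb{T}}_{f_{i}}\otimes\tilde{\mathbb{T}}_{f_{j}}$ acts by left multiplication on the first two tensor factors, and $\left(\mathbb{B}^{\otimes3}\right)_{Sf}$ acts by right multiplication on the last. The target $\mathbf{S}f$ gets its bimodule structure through the algebra homomorphisms of Lemma \ref{lem: map of algebra that will descend to quantum trace on Sf}. Because both $\rho_{f_{i}}\otimes\rho_{f_{j}}$ and $\eta_{f}$ are multiplicative, we get
\[
\phi\bigl(\kappa\cdot(\tilde\Gamma\otimes x)\cdot x'\bigr)=(\rho\otimes\rho)(\kappa)\,\phi(\tilde\Gamma\otimes x)\,\eta_{f}(x').
\]

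The main computation is on the generators, and by the rotational symmetry of the presentation it suffices to treat $(-A^{2})\tilde{\Gamma}_{a_{i}b_{i}}\tilde{\Gamma}_{a_{j}b_{j}}-x_{a_{i},f_{i};a_{j},f_{j}}x_{b_{i},f_{i};b_{j},f_{j}}$. The first term maps to
\[
(-A^{2})(-A^{2})^{-1}\left[\{a_{i},f_{i}\}\{b_{i},f_{i}\}\right]\otimes\left[\{a_{j},f_{j}\}\{b_{j},f_{j}\}\right].
\]
The second term maps to
\[
\left(\{a_{i},f_{i}\}\otimes\{a_{j},f_{j}\}\right)\left(\{b_{i},f_{i}\}\otimes\{b_{j},f_{j}\}\right)=\{a_{i},f_{i}\}\{b_{i},f_{i}\}\otimes\{a_{j},f_{j}\}\{b_{j},f_{j}\}.
\]
The Weyl normalizations on the two sides have opposite exponents, since $\mathbf{C}f_{j}$ carries the inverse commutation relations, namely $A^{-\frac{1}{2}\Lambda}$ and $A^{+\frac{1}{2}\Lambda}$. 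They therefore cancel in the tensor product, giving
\[
\left[\{a_{i}\}\{b_{i}\}\right]\otimes\left[\{a_{j}\}\{b_{j}\}\right]=\{a_{i}\}\{b_{i}\}\otimes\{a_{j}\}\{b_{j}\},
\]
and the difference vanishes.

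The only delicate point is sign and orientation bookkeeping. We must confirm that the clockwise/counterclockwise labeling convention makes the exponent for $f_{j}$ exactly opposite to that for $f_{i}$, and that the $(-A^{2})^{-\frac{1}{2}}$ factors in the two copies of $\rho$ combine to cancel the $(-A^{2})$ coefficient. I expect this consistency check, rather than any conceptual issue, to be the main obstacle. If it fails, the likely culprit is that \cite{PP1}'s conventions (following \cite{PP2}) produce a residual power of $A$, and the definition of $\rho_{f_{j}}$ would then have to be read with the matching normalization.
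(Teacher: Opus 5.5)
Your proposal is correct and follows the paper's route. The paper's proof is just ``straightforward checking'', mirroring the explicit computation it gives for $Tr_{Cf_{i}}$ on face cones: $\rho_{f_{i}}\otimes\rho_{f_{j}}\otimes\eta_{f}$ is a bimodule map because $\rho_{f_{i}}\otimes\rho_{f_{j}}$ and $\eta_{f}$ are algebra maps, and it kills each generator of $\mathrm{Ann}([\emptyset])$. Your closing worry about a residual power of $A$ is unfounded: the two factors $(-A^{2})^{-\frac{1}{2}}$ cancel the coefficient $-A^{2}$, and the Weyl factors $A^{\frac{1}{2}}$ (in $\mathbf{C}f_{i}$) and $A^{-\frac{1}{2}}$ (in $\mathbf{C}f_{j}$) cancel each other.
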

The proof is once again straightforward checking.

\begin{defn}
\label{def:quantum trace map for Sf}The map 
\[
Tr_{Sf}\colon\overline{\mathrm{Sk}}(Sf)\rightarrow\mathbf{S}f
\]
given by Proposition \ref{prop: quantum trace map on Sf} is the \emph{quantum
trace map for the face suspension} $Sf$. It is a $\tilde{\mathbb{T}}_{f_{i}}\otimes\tilde{\mathbb{T}}_{f_{j}}$-$\left(\mathbb{B}^{\otimes3}\right)_{Sf}$-bimodule
homomorphism.
\end{defn}

Now we have the quantum trace map for each face suspension, we next
need to glue them into a quantum trace map for the 3-manifold $Y$.
Recall from Definition \ref{def:reduced tensor product of skein modules of face suspensions}
that the reduced tensor product $\underset{f\in\mathcal{T}^{(2)}}{\overline{\bigotimes}}\overline{\mathrm{Sk}}(Sf)$
is defined to be the $R$-module quotient
\[
\underset{f\in\mathcal{T}^{(2)}}{\overline{\bigotimes}}\overline{\mathrm{Sk}}(Sf)=\frac{\underset{f\in\mathcal{T}^{(2)}}{\bigotimes}\overline{\mathrm{Sk}}(Sf)}{\widetilde{\mathfrak{R}}_{E}+\sum_{T\in\mathcal{T}}\widetilde{\mathcal{\mathfrak{R}}}_{T}}
\]
where $\widetilde{\mathfrak{R}}_{E}=\left(\underset{f\in\mathcal{T}^{(2)}}{\bigotimes}\overline{\mathrm{Sk}}(Sf)\right)\tilde{I}_{E}$
and $\tilde{\mathfrak{R}}_{T}=\tilde{I}_{T}\left(\underset{f\in\mathcal{T}^{(2)}}{\bigotimes}\overline{\mathrm{Sk}}(Sf)\right)$ (see Definition \ref{def:reduced tensor product of skein modules of face suspensions}).
Now the tensor product 
\[
\underset{f\in\mathcal{T}^{(2)}}{\bigotimes}Tr_{Sf}\colon\underset{f\in\mathcal{T}^{(2)}}{\bigotimes}\overline{\mathrm{Sk}}(Sf)\rightarrow\underset{f\in\mathcal{T}^{(2)}}{\bigotimes}\mathbf{S}f
\]
is a $\underset{f_{i}\in\mathbf{f}(\mathcal{T})}{\bigotimes}\tilde{\mathbb{T}}_{f_{i}}$-$\underset{f\in\mathcal{T}^{(2)}}{\bigotimes}\left(\mathbb{B}^{\otimes3}\right)_{Sf}$-bimodule
homomorphism, with the $\underset{f_{i}\in\mathbf{f}(\mathcal{T})}{\bigotimes}\tilde{\mathbb{T}}_{f_{i}}$-$\underset{f\in\mathcal{T}^{(2)}}{\bigotimes}\left(\mathbb{B}^{\otimes3}\right)_{Sf}$-bimodule
structure on $\underset{f\in\mathcal{T}^{(2)}}{\bigotimes}\mathbf{S}f$
given by the algebra homomorphisms
\begin{equation}
\rho=\underset{f_{i}\in\mathbf{f}(\mathcal{T})}{\bigotimes}\rho_{i}\colon\underset{f_{i}\in\mathbf{f}(\mathcal{T})}{\bigotimes}\tilde{\mathbb{T}}_{f_{i}}\rightarrow\underset{f_{i}\in\mathbf{f}(\mathcal{T})}{\bigotimes}\mathbf{C}f_{i}=\underset{f\in\mathcal{T}^{(2)}}{\bigotimes}\mathbf{S}f\label{eq:the map tilde=00007BQ=00007D}
\end{equation}
and 
\begin{equation}
\eta=\underset{f\in\mathcal{T}^{(2)}}{\bigotimes}\eta_{f}\colon\underset{f\in\mathcal{T}^{(2)}}{\bigotimes}\left(\mathbb{B}^{\otimes3}\right)_{Sf}\rightarrow\underset{f\in\mathcal{T}^{(2)}}{\bigotimes}\mathbf{S}f.\label{eq:the map tilde=00007BB=00007D}
\end{equation}
Therefore, by definiton $\underset{f\in\mathcal{T}^{(2)}}{\bigotimes}Tr_{Sf}$
maps the $R$-submodule 
\[
\widetilde{\mathfrak{R}}_{E}+\sum_{T\in\mathcal{T}}\widetilde{\mathcal{\mathfrak{R}}}_{T}=\left(\underset{f\in\mathcal{T}^{(2)}}{\bigotimes}\overline{\mathrm{Sk}}(Sf)\right)\tilde{I}_{E}+\sum_{T\in\mathcal{T}}\tilde{I}_{T}\left(\underset{f\in\mathcal{T}^{(2)}}{\bigotimes}\overline{\mathrm{Sk}}(Sf)\right)
\]
of $\underset{f\in\mathcal{T}^{(2)}}{\bigotimes}\overline{\mathrm{Sk}}(Sf)$
into the $R$-submodule 
\[
\left(\underset{f\in\mathcal{T}^{(2)}}{\bigotimes}\mathbf{S}f\right)\left(\eta(\tilde{I}_{E})\right)+\sum_{T\in\mathcal{T}}\left(\rho(\tilde{I}_{T})\right)\left(\underset{f\in\mathcal{T}^{(2)}}{\bigotimes}\mathbf{S}f\right)
\]
of $\underset{f\in\mathcal{T}^{(2)}}{\bigotimes}\mathbf{S}f$. Thus
if we define the reduced tensor product of face suspension modules as the quotient $R$-module
\begin{equation}
\label{eq:reduced tensor product of face suspension modules}
\underset{f\in\mathcal{T}^{(2)}}{\overline{\bigotimes}}\mathbf{S}f:=\frac{\underset{f\in\mathcal{T}^{(2)}}{\bigotimes}\mathbf{S}f}{\left(\underset{f\in\mathcal{T}^{(2)}}{\bigotimes}\mathbf{S}f\right)\left(\eta(\tilde{I}_{E})\right)+\sum_{T\in\mathcal{T}}\left(\rho(\tilde{I}_{T})\right)\left(\underset{f\in\mathcal{T}^{(2)}}{\bigotimes}\mathbf{S}f\right)},
\end{equation}
 then, $\underset{f\in\mathcal{T}^{(2)}}{\bigotimes}Tr_{Sf}$ descends
to an $R$-homomorphism
\[
\underset{f\in\mathcal{T}^{(2)}}{\overline{\bigotimes}}Tr_{Sf}\colon\underset{f\in\mathcal{T}^{(2)}}{\overline{\bigotimes}}\overline{\mathrm{Sk}}(Sf)\rightarrow\underset{f\in\mathcal{T}^{(2)}}{\overline{\bigotimes}}\mathbf{S}f.
\]

\begin{defn}
\label{def: quantum trace map of PP} The quantum trace map $Tr_{\mathcal{T}}^{[\text{PP}]}$
of Panitch \& Park is given by the composition of $R$-module homomorphisms
\[
Tr_{\mathcal{T}}^{[\text{PP}]}:=\left(\underset{f\in\mathcal{T}^{(2)}}{\overline{\bigotimes}}Tr_{Sf}\right)\circ\tilde{\sigma}\colon\mathrm{Sk}(Y)\rightarrow\underset{f\in\mathcal{T}^{(2)}}{\overline{\bigotimes}}\overline{\mathrm{Sk}}(Sf)\rightarrow\underset{f\in\mathcal{T}^{(2)}}{\overline{\bigotimes}}\mathbf{S}f,
\]
where $\tilde{\sigma}$ is the splitting homomorphism induced by decomposing
$Y$ into face suspensions described in \ref{subsec:splitting homomorphisms for face cones and face suspensions}.
\end{defn}

Having described the map, next we need to refine the codomain and
bring up the notions of quantized shape parameters and quantum gluing module, as defined by Panitch \& Park. 

\begin{defn}
\label{def:quantized shape parameters and quantum gluing module as in PP}
(i) Let $T$ be an ideal tetrahedron, and, without loss of generality, assume its bare faces are $f_{1}$, $f_{2}$, $f_{3}$ and $f_{4}$. The bare edges of $T$ are labeled by shape
parameters as in Figure \ref{fig:ideal T with edges and faces all labeled}.

\begin{figure}[h]
  \includegraphics[scale=0.15]{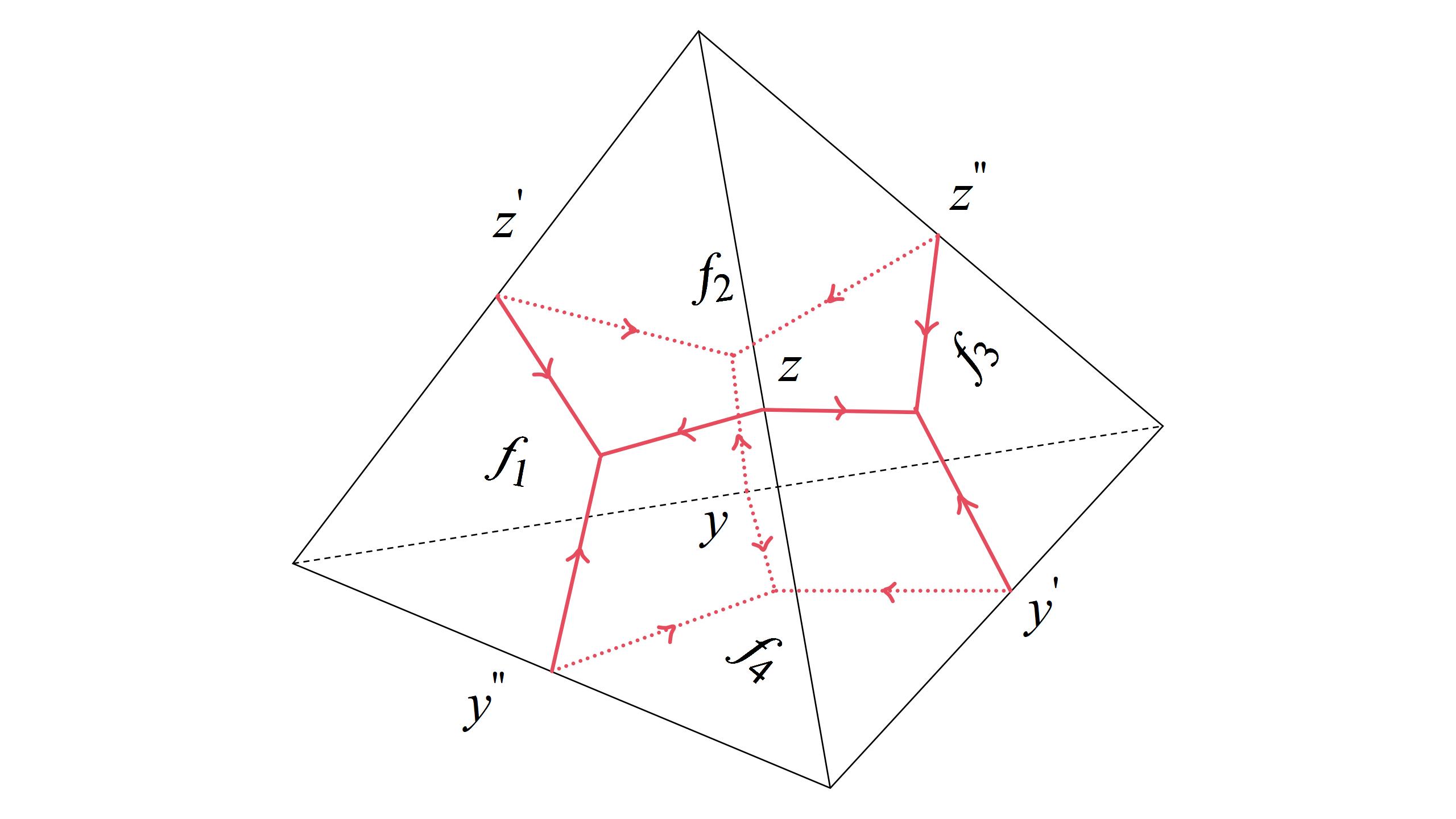} 
  \caption{}
  \label{fig:ideal T with edges and faces all labeled}
\end{figure}

Define the following elements in $\underset{f_{i}\in\mathbf{f}(T)}{\bigotimes}\mathbf{C}f_{i}\subset\underset{f_{i}\in\mathbf{f}(\mathcal{T})}{\bigotimes}\mathbf{C}f_{i}=\underset{f\in\mathcal{T}^{(2)}}{\bigotimes}\mathbf{S}f$:
\[
\begin{array}{lll}
\tilde{z}:=\{z,f_{1}\}\otimes\{z,f_{3}\}, & 
\tilde{z}^{\prime}:=\{z^{\prime},f_{1}\}\otimes\{z^{\prime},f_{2}\}, & 
\tilde{z}^{\prime\prime}:=\{z^{\prime\prime},f_{2}\}\otimes\{z^{\prime\prime},f_{3}\}, \\
\tilde{y}:=\{y,f_{2}\}\otimes\{y,f_{4}\},
 &
\tilde{y}^{\prime}:=\{y^{\prime},f_{3}\}\otimes\{y^{\prime},f_{4}\},
&  
\tilde{y}^{\prime\prime}:=\{y^{\prime\prime},f_{1}\}\otimes\{y^{\prime\prime},f_{4}\}.
\end{array}
\]

These are the \emph{quantized shape parameters}\footnote{The definition is basically saying that if an edge of $T$ is labeled
with shape parameter $a$, then the corresponding quantized shape
parameter $\tilde{a}$ is given by the product of the two generators
of $\underset{f_{i}\in\mathbf{f}(T)}{\bigotimes}\mathbf{C}f_{i}$
corresponding to the two (dashed) marking edges in the edge cones
emanating from the edge labeled with $a$.} introduced
in {\cite{PP1}}. 

(ii) Let $\tilde{\mathbb{T}}\langle T\rangle$ be the $R$-subalgebra
of $\underset{f\in\mathcal{T}^{(2)}}{\bigotimes}\mathbf{S}f$ generated
by $\tilde{z},\tilde{z}^{\prime},\tilde{z}^{\prime\prime},\tilde{y},\tilde{y}^{\prime},\tilde{y}^{\prime\prime}$.

(iii) Let $(Y,\mathcal{T})$ be an ideally triangulated 3-manifold.
The \emph{quantum gluing module} $\hat{\mathcal{G}}_{\mathcal{T}}^{[\text{PP}]}$ introduced
in \cite{PP1} is given by the image of $\underset{T\in\mathcal{T}}{\bigotimes}\tilde{\mathbb{T}}\langle T\rangle\subset\underset{f\in\mathcal{T}^{(2)}}{\bigotimes}\mathbf{S}f$
in $\underset{f\in\mathcal{T}^{(2)}}{\overline{\bigotimes}}\mathbf{S}f$
under the quotient map $\underset{f\in\mathcal{T}^{(2)}}{\bigotimes}\mathbf{S}f\twoheadrightarrow\underset{f\in\mathcal{T}^{(2)}}{\overline{\bigotimes}}\mathbf{S}f$.
\end{defn}

\begin{rem}
The algebra $\tilde{\mathbb{T}}\langle T\rangle$ is a quantum torus,
indeed it is straightforward to check by definition that is has a presentation\footnote{For example, $z^{\prime},z,y^{\prime\prime}$ is a clockwise labeling of the three edges
of $f_{1}$, clockwise with respect to an outward normal of $f_{1}$.
Therefore by Definition \ref{def:face cone module and face suspension module}
of the face cone module $\mathbf{C}f_{1}$, we have $\{z,f_{1}\}\{z^{\prime},f_{1}\}=A\{z^{\prime},f_{1}\}\{z,f_{1}\}$,
which implies that $\tilde{z}\tilde{z}^{\prime}=A\tilde{z}^{\prime}\tilde{z}$.} 
\begin{equation}
\tilde{\mathbb{T}}\langle T\rangle=\frac{R\langle\tilde{z}^{\pm1},\tilde{z}^{\prime\pm1},\tilde{z}^{\prime\prime\pm1},\tilde{y}^{\pm1},\tilde{y}^{\prime\pm1},\tilde{y}^{\prime\prime\pm1}\rangle}{\left\langle \begin{array}{c}
\tilde{a}\tilde{b}^{\prime}=A\tilde{b}^{\prime}\tilde{a}\\
\tilde{a}^{\prime}\tilde{b}^{\prime\prime}=A\tilde{b}^{\prime\prime}\tilde{a}^{\prime}\\
\tilde{a}^{\prime\prime}\tilde{b}=A\tilde{b}\tilde{a}^{\prime\prime}
\end{array}\bigg|a,b\in\{z,y\}\right\rangle }.\label{eq:presentation of tilde mathbbT(T)}
\end{equation}
This shows that $\hat{\mathcal{G}}_{\mathcal{T}}^{[\text{PP}]}$ is
given by a quotient of the tensor product of the quantum tori $\underset{T\in\mathcal{T}}{\bigotimes}\tilde{\mathbb{T}}\langle T\rangle$.
This is similar to our quantum gluing module $\hat{\mathcal{G}}_{\mathcal{T}}$, which we had given as an explicit quotient of the quantum
tori $\underset{T\in\mathcal{T}}{\bigotimes}\mathbb{T}\langle T\rangle$
(see Lemma \ref{lem:presntation of (T^tensor4,cdot)  and (B^tensor6, cdot)}
(ii), Definition \ref{def: quantum module of T} and Definition \ref{def: quantum gluing module}). 
\end{rem}

\begin{prop}
\label{prop:PP quantum trace map takes value in their quantum gluing module}
\emph{(\cite[Lemma 5.14]{PP1})}The image of the quantum trace map
$Tr_{\mathcal{T}}^{[\text{PP}]}$ is contained in $\hat{\mathcal{G}}_{\mathcal{T}}^{[\text{PP}]}$.
That is $Tr_{\mathcal{T}}^{[\text{PP}]}$ is an $R$-module homomorphism
\[
Tr_{\mathcal{T}}^{[\text{PP}]}\colon\mathrm{Sk}(Y)\rightarrow\hat{\mathcal{G}}_{\mathcal{T}}^{[\text{PP}]}.
\]
\end{prop}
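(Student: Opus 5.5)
The plan is to show that for every stated ribbon link $\ell$ in $Y$, $Tr_{\mathcal{T}}^{[\text{PP}]}(\ell)$ is represented in $\underset{f\in\mathcal{T}^{(2)}}{\bigotimes}\mathbf{S}f$ by an element of the subalgebra $\underset{T\in\mathcal{T}}{\bigotimes}\tilde{\mathbb{T}}\langle T\rangle$. Since $Tr_{\mathcal{T}}^{[\text{PP}]}=\left(\overline{\bigotimes}Tr_{Sf}\right)\circ\tilde{\sigma}$, we compute it on a representative: put $\ell$ in general position with respect to the edge cones and write $\tilde{\sigma}(\ell)=\sum_{\overrightarrow{\epsilon}}\otimes_{f}\ell_{Sf}^{\overrightarrow{\epsilon}}$. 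By Theorem \ref{thm:skein module of 3-ball} each $\overline{\mathrm{Sk}}(Sf)$ is cyclic, generated by $[\emptyset]$. Hence each $\ell_{Sf}^{\overrightarrow{\epsilon}}$ is a linear combination of elements $\tilde{\Gamma}\,[\emptyset]\,x$ with $\tilde{\Gamma}$ a monomial in $\tilde{\mathbb{T}}_{f_i}\otimes\tilde{\mathbb{T}}_{f_j}$ and $x$ a monomial in $(\mathbb{B}^{\otimes 3})_{Sf}$. The bimodule property of $Tr_{Sf}$ (Proposition \ref{prop: quantum trace map on Sf}) then gives
\[
Tr_{Sf}(\tilde{\Gamma}[\emptyset]x)=(\rho_{f_i}\otimes\rho_{f_j})(\tilde{\Gamma})\,\eta_f(x),
\]
so everything reduces to monomials in the generators $\{a,f_i\}^{\pm1}$ of the face cone modules.

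The key step is a degree (grading) argument. Give $\underset{f_i\in\mathbf{f}(\mathcal{T})}{\bigotimes}\mathbf{C}f_i$ the $\mathbb{Z}$-valued grading that records, for each marking edge $\{a,f_i\}$ in an edge cone, the exponent of the variable $\{a,f_i\}$. For a fixed tetrahedron $T$ and a bare edge $a$ of $T$, exactly two edge-cone marking edges emanate from $a$, namely $\{a,f_p\}$ and $\{a,f_q\}$ for the two bare faces $f_p,f_q$ of $T$ containing $a$. A monomial lies in $\tilde{\mathbb{T}}\langle T\rangle$, up to a power of $A$, precisely when these two exponents agree for every edge $a$ of $T$. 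This characterization follows from the definition of $\tilde{a}=\{a,f_p\}\otimes\{a,f_q\}$ in Definition \ref{def:quantized shape parameters and quantum gluing module as in PP}.

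Next, this "balancing" condition must hold for the image of a compatible-state summand. The two marking edges $\{a,f_p\}$ and $\{a,f_q\}$ are the two sides of the edge cone $Ca$ inside $T$. In $Y$ these two sides are identified, because the edge cone is the splitting surface, and compatible states assign equal states to matched endpoints. Hence the total state degree of $\otimes_f\ell_{Sf}^{\overrightarrow{\epsilon}}$ on $\{a,f_p\}$ equals that on $\{a,f_q\}$.

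It remains to check that $Tr_{Sf}$ carries state degree to exponent degree. Under $\rho_{f_i}$, the arc $\tilde{\Gamma}_{ab}$ goes to a scalar multiple of $[\{a,f_i\}\{b,f_i\}]$, and $\tilde{\Gamma}_{ab}$ has state $+1$ on the marking edges $\{a,f_i\}$ and $\{b,f_i\}$. Under $\eta_f$, $x_{a_i,f_i;a_j,f_j}$ goes to $\{a_i,f_i\}\otimes\{a_j,f_j\}$, again with matching degrees. Thus $Tr_{Sf}$ sends a homogeneous element of state degree $d$ to a sum of monomials of exponent degree $d$. Combining these facts, every monomial in the image is balanced at every edge of every tetrahedron, and therefore lies in $\underset{T}{\bigotimes}\tilde{\mathbb{T}}\langle T\rangle$, which gives the claimed containment after passing to the quotient.

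The main obstacle is the step asserting that $Tr_{Sf}$ preserves the grading and that $\mathrm{Ann}([\emptyset])$ is homogeneous. One must check that the generators in (\ref{eq:generators of Ann for face suspension}) are homogeneous, which is immediate from the diagrams, and that no mixed-state contribution breaks the bookkeeping. The latter holds because reduced elements are spanned by $++$ and $--$ generators. A secondary point is confirming that the orientation and labeling conventions make $\tilde{a}$ use exactly the two edge-cone markings emanating from $a$, which is the footnote to Definition \ref{def:quantized shape parameters and quantum gluing module as in PP}.
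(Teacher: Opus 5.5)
Your argument is correct. The paper gives no proof of its own here; it imports the statement from \cite[Lemma 5.14]{PP1}. So your state-balancing argument is a genuinely independent, self-contained proof. It uses only three facts:
\begin{enumerate}
\item the grading by state sums on the dashed marking edges is well defined on each $\overline{\mathrm{Sk}}(Sf)$;
\item $\rho_{f_i}\otimes\rho_{f_j}\otimes\eta_f$ preserves degree on generators;
\item cutting along an edge cone $Ca$ produces two endpoints with equal states, one on $\{a,f_p\}$ and one on $\{a,f_q\}$.
\end{enumerate}

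Within the paper there is a second route, and the containment is in fact a formal consequence of the comparison diagrams. View $Tr_{\mathcal{T}}^{[\text{PP}]}$ as a map into $\overline{\bigotimes}_{f}\mathbf{S}f$. By Lemma \ref{lem:comparing 2,3} and Proposition \ref{prop:commutative diagram of various splitting homomorphisms} it equals $\bigl(\overline{\bigotimes}Tr^{pc}_{Cf_i}\bigr)\circ\bigl(\overline{\bigotimes}\sigma_T\bigr)\circ\sigma$. By Lemma \ref{lem:comparing 1,3} this is the inclusion of $\hat{\mathcal{G}}_{\mathcal{T}}^{[\text{PP}]}$ composed with $\mu\circ Tr_{\mathcal{T}}$, and $\mu$ lands in $\hat{\mathcal{G}}_{\mathcal{T}}^{[\text{PP}]}$. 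None of these ingredients uses the proposition, so its appeal in the final theorem is redundant. That route gives the stronger factorization through $\mu$, but it depends on the whole face-cone and partial-corner-reduction apparatus. Your grading argument is elementary and stays entirely on the Panitch--Park side. It also shows directly why the image consists of products of quantized shape parameters.

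Two small remarks on your write-up.

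First, the step you flag as the main obstacle is automatic. Relations (S1)--(S5) and the bad arcs are homogeneous, so $\overline{\mathrm{Sk}}(Sf)$ is graded. The surjection $\tilde\Gamma\otimes x\mapsto\tilde\Gamma[\emptyset]x$ is then a graded map onto a graded module. Hence $\mathrm{Ann}([\emptyset])$ is homogeneous, and any homogeneous element has a homogeneous preimage (take the degree-$d$ component of an arbitrary preimage). Moreover, each graded piece of a quantum torus is free of rank one. So $Tr_{Sf}$ of a degree-$d$ element is a scalar multiple of the single monomial of exponent $d$, and no separate bookkeeping of mixed-state arcs is needed.

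Second, balanced monomials may have negative exponents. You therefore need $\tilde{\mathbb{T}}\langle T\rangle$ to contain the inverses $\tilde{a}^{-1}$, which is what the paper's presentation of it as a quantum torus indicates.
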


\subsection{The map between the quantum gluing modules.\label{subsec:map between the quantum gluing modules}}

In this subsection, we study the map $\mu$ from our quantum gluing
module $\hat{\mathcal{G}}_{\mathcal{T}}$ to the quantum gluing module
$\hat{\mathcal{G}}_{\mathcal{T}}^{[\text{PP}]}$ of Panitch \& Park.
It is clear from their presentations that $\mathbb{T}\langle T\rangle$
and $\tilde{\mathbb{T}}\langle T\rangle$ are isomorphic as quantum
tori, and the isomorphism is simply given by ``matching'' the quantized
shape parameters. This isomorphism will induce a map between the two
quantum gluing modules. More specifically, we have the following
\begin{prop}
\label{prop:maps from our quantum gluing module to PP's quantum gluing module}
For each $T\in\mathcal{T}$, the map 
\[
\mu_{T}\colon\mathbb{T}\langle T\rangle\rightarrow\tilde{\mathbb{T}}\langle T\rangle
\]
 of quantum tori given by
 \[
\begin{array}{ccc}
\hat{z}\mapsto\tilde{z} & , & \hat{y}\mapsto\tilde{y}\\
\hat{z}^{\prime}\mapsto\tilde{z}^{\prime} & , & \hat{y}^{\prime}\mapsto\tilde{y}^{\prime}\\
\hat{z}^{\prime\prime}\mapsto\tilde{z}^{\prime\prime} & , & \hat{y}^{\prime\prime}\mapsto\tilde{y}^{\prime\prime}
\end{array}
\]
is an isomorphism. The tensor product of these maps 
\[
\underset{T\in\mathcal{T}}{\bigotimes}\mu_{T}\colon\underset{T\in\mathcal{T}}{\bigotimes}\mathbb{T}\langle T\rangle\rightarrow\underset{T\in\mathcal{T}}{\bigotimes}\tilde{\mathbb{T}}\langle T\rangle
\]
 descends to a surjective $R$-module homomorphism
\[
\mu\colon\hat{\mathcal{G}}_{\mathcal{T}}\twoheadrightarrow\hat{\mathcal{G}}_{\mathcal{T}}^{[\text{PP}]}.
\]
\end{prop}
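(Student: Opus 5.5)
The plan is to first establish that each $\mu_T$ is an isomorphism, and then show that $\bigotimes_T\mu_T$, followed by the quotient map onto $\hat{\mathcal{G}}_{\mathcal{T}}^{[\text{PP}]}$, kills the submodule $E+\sum_{T}(V_T+L_T)$ of Remark \ref{rem: alternative description of quantum guling module}.

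\textbf{Step 1: $\mu_T$ is an isomorphism.} Compare the presentation (\ref{eq:quantum torus T(T)}) of $\mathbb{T}\langle T\rangle$ with the presentation (\ref{eq:presentation of tilde mathbbT(T)}) of $\tilde{\mathbb{T}}\langle T\rangle$. They have identical commutation matrices under $\hat a\mapsto\tilde a$. Hence $\mu_T$ is a well-defined surjective algebra map, and its inverse $\tilde a\mapsto\hat a$ is defined the same way. (One must also check that $\tilde{\mathbb{T}}\langle T\rangle$ really is the quantum torus on these generators, with no extra relations. This holds because monomials in the $\tilde a$ map to distinct monomials in $\bigotimes_{f_i}\mathbf{C}f_i$: each $\{a,f_i\}$ occurs in exactly one $\tilde a$.) Consequently $\bigotimes_T\mu_T$ is an algebra isomorphism onto $\bigotimes_T\tilde{\mathbb{T}}\langle T\rangle\subset\bigotimes_f\mathbf{S}f$. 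Composing with the quotient to $\overline{\bigotimes}_f\mathbf{S}f$ gives a map whose image is, by Definition \ref{def:quantized shape parameters and quantum gluing module as in PP}(iii), exactly $\hat{\mathcal{G}}_{\mathcal{T}}^{[\text{PP}]}$. Surjectivity is therefore automatic once the map descends.

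\textbf{Step 2: the relations are killed.} There are three kinds of generators to handle.

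\emph{Edge relations.} For a left-ideal element $w\cdot(\hat e-(-A^2))$, the image is $\mu(w)\,(\tilde e'-(-A^2))$, where $\tilde e'=\prod\tilde e_i$ is the product of the quantized shape parameters around $e$. Expanding, $\tilde e'$ is a product of the generators $\{e_i,f\}$ over all face cones around $e$. Regrouping them pairwise across faces gives $\eta(\tilde e)$ of Definition \ref{def:reduced tensor product of skein modules of face suspensions}(iii), up to a power of $A$. I expect this power to vanish because $\hat e$ is Weyl ordered and balanced; at worst it can be absorbed by adjusting normalisations. Then $\mu(w)\,\eta(\tilde e-(-A^2))$ lies in $(\bigotimes\mathbf{S}f)\,\eta(\tilde I_E)$ and is zero in the quotient.

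\emph{Vertex and lagrangian relations.} Here the generators form a right ideal, and the image of $(\text{rel})\cdot w$ is $\mu(\text{rel})\mu(w)$. So it suffices to show that $\mu(\text{rel})$ lies in $\rho(\tilde I_T)$ modulo the submodule we quotient by. For the vertex relation, compute
\[
[\tilde z\tilde z'\tilde z'']=\big[(\{z,f_1\}\{z',f_1\})(\{z',f_2\}\{z'',f_2\})(\{z'',f_3\}\{z,f_3\})\big],
\]
which by Lemma \ref{lem: map of algebra that will descend to quantum trace on Sf} equals $(-A^2)^{3/2}\rho(\tilde\Gamma_{zz'}\tilde\Gamma_{z'z''}\tilde\Gamma_{z''z})$ up to ordering scalars. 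Then $[\tilde z\tilde z'\tilde z'']-(-A^2)^{1/2}=(-A^2)^{3/2}\rho(\tilde v_{zz'z''})$, which lies in $\rho(\tilde I_T)$. The lagrangian relation is handled the same way: rewrite it, as in the proof of Proposition \ref{prop:P descends to Tr}, in the form $1-(-A^2)^{-1/2}[\hat z''^{-1}\hat z'\hat z]-(-A^2)^{1/2}[\hat z''^{-1}\hat z'^{-1}\hat z]$ modulo vertex relations. The image of this expression should be a unit multiple of $\rho(\tilde\ell_{zz''})$. The rewriting itself uses only right multiples of vertex relations, which are central, so it stays inside $\rho(\tilde I_T)$ together with its right multiples.

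\textbf{Main obstacle.} The main difficulty is the bookkeeping of $A$-powers, in two places: matching $\mu$ of the lagrangian with $\rho(\tilde\ell_{zz''})$ term by term, and matching $\tilde e'$ with $\eta(\tilde e)$. Orientation conventions (clockwise versus counterclockwise, and the sign flips between $\mathbf{C}f_i$ and $\mathbf{C}f_j$) determine these exponents. I would first verify the identities at $A=1$ to confirm the monomials agree. I would then fix the exponents by computing the degree pairings, using the rule that a Weyl-ordered product depends only on the multiset of factors.
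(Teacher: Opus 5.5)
Your proposal is correct and takes essentially the paper's route: $\mu_{T}$ is an isomorphism by comparing presentations, the edge relations go into $\bigl(\bigotimes_{f}\mathbf{S}f\bigr)\eta(\tilde{I}_{E})$ and the vertex and lagrangian relations into $\rho(\tilde{I}_{T})\bigl(\bigotimes_{f}\mathbf{S}f\bigr)$ (your vertex computation is exactly the paper's), and surjectivity is built into the definition of $\hat{\mathcal{G}}_{\mathcal{T}}^{[\text{PP}]}$. The two steps you leave as expectations do hold, but not through your fallbacks:
\begin{itemize}
\item In the edge case there is no normalisation freedom, so the power of $A$ must vanish, and it does. The element $\eta(\tilde{e})$ is a product of the pairwise commuting factors $\{e_{p-1},f_{1}^{(p)}\}\{e_{p},f_{2}^{(p)}\}$ (indices mod $k$), so it equals its own Weyl ordering $[\tilde{e}_{1}\cdots\tilde{e}_{k}]$. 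Since $\hat{e}$ is Weyl-ordered by definition, this is $\bigl(\bigotimes_{T}\mu_{T}\bigr)(\hat{e})$.
\item In the lagrangian case the ``unit'' in your ``unit multiple'' cannot be a scalar, because $\rho(\tilde{\ell}_{zz^{\prime\prime}})$ does not even lie in $\tilde{\mathbb{T}}\langle T\rangle$. It is right multiplication by the monomial $\{z^{\prime\prime},f_{3}\}^{-1}\{z,f_{3}\}$, which gives $\rho(\tilde{\ell}_{zz^{\prime\prime}})\{z^{\prime\prime},f_{3}\}^{-1}\{z,f_{3}\}=A^{-\frac{1}{2}}\mu_{T}\bigl(1-(-A^{2})^{-\frac{1}{2}}[\hat{z}^{\prime\prime-1}\cdot\hat{z}^{\prime}\cdot\hat{z}]-(-A^{2})^{\frac{1}{2}}[\hat{z}^{\prime\prime-1}\cdot\hat{z}^{\prime-1}\cdot\hat{z}]\bigr)$. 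This is legitimate exactly because the target is a right ideal.
\end{itemize}
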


\begingroup
\allowdisplaybreaks
\begin{proof}
It is clear that $\mu_{T}$ is an isomorphism, see (\ref{eq:quantum torus T(T)})
for the presentation of $\mathbb{T}\langle T\rangle$ and (\ref{eq:presentation of tilde mathbbT(T)})
of $\tilde{\mathbb{T}}\langle T\rangle$. The fact that $\underset{T\in\mathcal{T}}{\bigotimes}\mu_{T}$
descends to the quantum gluing modules is actually implied by \cite[Proposition 5.8]{PP1}.
For the sake of self-containedness, we still give a proof of it, which is essentially a careful rephrase and expansion with details of the calculation done in \cite[Proposition 5.8]{PP1} in our notation.
We show that $\underset{T\in\mathcal{T}}{\bigotimes}\mu_{T}$ descends
to a well-defined map $\hat{\mathcal{G}}_{\mathcal{T}}\rightarrow\underset{f\in\mathcal{T}^{(2)}}{\overline{\bigotimes}}\mathbf{S}f$
whose image is exactly the $R$-submodule $\hat{\mathcal{G}}_{\mathcal{T}}^{[\text{PP}]}$
of $\underset{f\in\mathcal{T}^{(2)}}{\overline{\bigotimes}}\mathbf{S}f$.
Recall that $\hat{\mathcal{G}}_{\mathcal{T}}$ is given by the $R$-module
quotient (see Remark \ref{rem: alternative description of quantum guling module})
\[
\frac{\underset{T\in\mathcal{T}}{\bigotimes}\mathbb{T}\langle T\rangle}{E+\sum_{T\in\mathcal{T}}(V_{T}+L_{T})},
\]
whereas the reduced tensor product $\underset{f\in\mathcal{T}^{(2)}}{\overline{\bigotimes}}\mathbf{S}f$
is defined to be the $R$-module quotient (see (\ref{eq:reduced tensor product of face suspension modules}))
\[
\frac{\underset{f\in\mathcal{T}^{(2)}}{\bigotimes}\mathbf{S}f}{\left(\underset{f\in\mathcal{T}^{(2)}}{\bigotimes}\mathbf{S}f\right)\left(\eta(\tilde{I}_{E})\right)+\sum_{T\in\mathcal{T}}\left(\rho(\tilde{I}_{T})\right)\left(\underset{f\in\mathcal{T}^{(2)}}{\bigotimes}\mathbf{S}f\right)}.
\]
Thus we need to show that $\underset{T\in\mathcal{T}}{\bigotimes}\mu_{T}$
maps the $R$-submodule $
E+\sum_{T\in\mathcal{T}}(V_{T}+L_{T})
$
 into the $R$-submodule 
$
\left(\underset{f\in\mathcal{T}^{(2)}}{\bigotimes}\mathbf{S}f\right)\left(\eta(\tilde{I}_{E})\right)+\sum_{T\in\mathcal{T}}\left(\rho(\tilde{I}_{T})\right)\left(\underset{f\in\mathcal{T}^{(2)}}{\bigotimes}\mathbf{S}f\right).
$

Recall that $E$ is the left ideal of $\underset{T\in\mathcal{T}}{\bigotimes}\mathbb{T}\langle T\rangle$
generated by edge relations, namely, by elements of the form 
$\hat{e}-(-A^{2})$
where $e\in\mathcal{T}^{(1)}$ is an edge of the triangulation. Let's
also recall that the element $\hat{e}$ is given in the following
way: if $e_{1},e_{2},\dots,e_{k}$ are the labeling of bare edges
that are identified to $e$, then 
\[
\hat{e}=\left[\hat{e}_{1}\cdot\hat{e}_{2}\cdot\ldots\cdot\hat{e}_{k}\right].
\]
Thus, we have 
\[
\left(\underset{T\in\mathcal{T}}{\bigotimes}\mu_{T}\right)\left(\hat{e}-(-A^{2})\right)=\left[\tilde{e}_{1}\tilde{e}_{2}\dots\tilde{e}_{k}\right]-(-A^{2})
\]
On the other hand, if the sequence of face suspensions around the edge $e$
is as in Figure \ref{fig:face suspensions around an edge}, then we have
a generator of the left ideal $\tilde{I}_{E}$ given by (see Definition
\ref{def:reduced tensor product of skein modules of face suspensions})
\[
\tilde{e}-(-A^{2})
\]
where 
\[
\tilde{e}=x_{e_{k},f_{1}^{(1)};e_{1},f_{2}^{(1)}}x_{e_{1},f_{1}^{(2)};e_{2};f_{2}^{(2)}}\dots x_{e_{k-1},f_{1}^{(k)};e_{k},f_{2}^{(k)}}\in\underset{f\in\mathcal{T}^{(2)}}{\bigotimes}\left(\mathbb{B}^{\otimes3}\right)_{Sf}.
\]
Then by definition (see (\ref{eq:the map tilde=00007BB=00007D}),
Lemma \ref{lem: map of algebra that will descend to quantum trace on Sf}
and Definition \ref{def:quantized shape parameters and quantum gluing module as in PP}
(i))
\begin{align*}
\eta(\tilde{e}) & =\{e_{k},f_{1}^{(1)}\}\{e_{1},f_{2}^{(1)}\}\{e_{1},f_{1}^{(2)}\}\{e_{2};f_{2}^{(2)}\}\dots\{e_{k-1},f_{1}^{(k)}\}\{e_{k},f_{2}^{(k)}\}\\
 & =\left[\{e_{k},f_{1}^{(1)}\}\{e_{1},f_{2}^{(1)}\}\{e_{1},f_{1}^{(2)}\}\{e_{2};f_{2}^{(2)}\}\dots\{e_{k-1},f_{1}^{(k)}\}\{e_{k},f_{2}^{(k)}\}\right]\\
 & =\left[\{e_{1},f_{2}^{(1)}\}\{e_{1},f_{1}^{(2)}\}\{e_{2};f_{2}^{(2)}\}\dots\{e_{k-1},f_{1}^{(k)}\}\{e_{k},f_{2}^{(k)}\}\{e_{k},f_{1}^{(1)}\}\right]\\
 & =\left[\tilde{e}_{1}\tilde{e}_{2}\dots\tilde{e}_{k}\right].
\end{align*}
(Here we are using the fact that all the terms of the form $\{e_{p-1},f_{1}^{(p)}\}\{e_{p},f_{2}^{(p)}\}$
as well as $\{e_{k},f_{1}^{(1)}\}\{e_{1},f_{2}^{(1)}\}$ commute with
each other, therefore the $A$ factors one gains when expanding the Weyl-ordering
of the product of those terms cancel in pairs.) This shows that
\[
\left(\underset{T\in\mathcal{T}}{\bigotimes}\mu_{T}\right)\left(\hat{e}-(-A^{2})\right)=\eta\left(\tilde{e}-(-A^{2})\right)
\]
and thus $\underset{T\in\mathcal{T}}{\bigotimes}\mu_{T}$ maps the
left ideal $E$ of $\underset{T\in\mathcal{T}}{\bigotimes}\mathbb{T}\langle T\rangle$
into the left ideal $\left(\underset{f\in\mathcal{T}^{(2)}}{\bigotimes}\mathbf{S}f\right)\left(\eta(\tilde{I}_{E})\right)$
of $\underset{f\in\mathcal{T}^{(2)}}{\bigotimes}\mathbf{S}f$.

Next we focus on the right ideals $V_{T}$ and $L_T$ of $\underset{T\in\mathcal{T}}{\bigotimes}\mathbb{T}\langle T\rangle$, assuming the bare faces of the tetrahedron $T$ at hand are $f_1$, $f_{2}$, $f_{3}$ and $f_{4}$, and the bare egdes are labeled as in Figure \ref{fig:ideal T with edges and faces all labeled}. The right ideal $V_{T}$ is generated by vertex relations, for instance 
\[
\left[\hat{z}\cdot\hat{z}^{\prime}\cdot\hat{z}^{\prime\prime}\right]-(-A^{2})^{\frac{1}{2}},
\]
and we have
\[
\left(\underset{T\in\mathcal{T}}{\bigotimes}\mu_{T}\right)\left(\left[\hat{z}\cdot\hat{z}^{\prime}\cdot\hat{z}^{\prime\prime}\right]-(-A^{2})^{\frac{1}{2}}\right)=\left[\tilde{z}\tilde{z}^{\prime}\tilde{z}^{\prime\prime}\right]-(-A^{2})^{\frac{1}{2}}.
\]
On the other hand, one of generators of the right ideal $\tilde{I}_{T}$
is (Definition \ref{def:reduced tensor product of skein modules of face suspensions}
(i))
\[
\tilde{\Gamma}_{zz^{\prime}}\tilde{\Gamma}_{z^{\prime}z^{\prime\prime}}\tilde{\Gamma}_{z^{\prime\prime}z}-(-A^{2})^{-1},
\]
by definition we have (see (\ref{eq:the map tilde=00007BQ=00007D}),
Lemma \ref{lem: map of algebra that will descend to quantum trace on Sf}
and Definition \ref{def:quantized shape parameters and quantum gluing module as in PP}
and also assume that the faces of the tetrahedron are labeled as in
Figure \ref{fig:ideal T with edges and faces all labeled})
\begin{eqnarray*}
\lefteqn{\rho\left(\tilde{\Gamma}_{zz^{\prime}}\tilde{\Gamma}_{z^{\prime}z^{\prime\prime}}\tilde{\Gamma}_{z^{\prime\prime}z}-(-A^{2})^{-1}\right)}\\
&=&(-A^{2})^{-\frac{3}{2}}\left[\{z,f_{1}\}\{z^{\prime},f_{1}\}\{z^{\prime},f_{2}\}\{z^{\prime\prime},f_{2}\}\{z^{\prime\prime},f_{3}\}\{z,f_{3}\}\right]-(-A^{2})^{-1}\\
&=&(-A^{2})^{-\frac{3}{2}}\left[\{z,f_{3}\}\{z,f_{1}\}\{z^{\prime},f_{1}\}\{z^{\prime},f_{2}\}\{z^{\prime\prime},f_{2}\}\{z^{\prime\prime},f_{3}\}\right]-(-A^{2})^{-1}\\
&=&(-A^{2})^{-\frac{3}{2}}\left(\left[\tilde{z}\tilde{z}^{\prime}\tilde{z}^{\prime\prime}\right]-(-A^{2})^{\frac{1}{2}}\right).
\end{eqnarray*}
Similar calculations also apply to the other vertex relations. This
shows that $\underset{T\in\mathcal{T}}{\bigotimes}\mu_{T}$ maps the
$V_{T}$ into the right ideal $\left(\rho(\tilde{I}_{T})\right)\left(\underset{f\in\mathcal{T}^{(2)}}{\bigotimes}\mathbf{S}f\right)$
of $\underset{f\in\mathcal{T}^{(2)}}{\bigotimes}\mathbf{S}f$.

The right ideal $L_{T}$ of $\underset{T\in\mathcal{T}}{\bigotimes}\mathbb{T}\langle T\rangle$ is generated by the lagrangian relation
\[
\hat{z}^{2}+\hat{z}^{\prime\prime-2}-1.
\]
We have
\[
\left(\underset{T\in\mathcal{T}}{\bigotimes}\mu_{T}\right)\left(\hat{z}^{2}+\hat{z}^{\prime\prime-2}-1\right)=\tilde{z}^{2}+\tilde{z}^{\prime\prime-2}-1.
\]
On the other hand, one of the generators of the right ideal $\tilde{I}_{T}$
is (Definition \ref{def:reduced tensor product of skein modules of face suspensions}
(i))
\[
\left[\tilde{\Gamma}_{zy^{\prime}}^{-1}\tilde{\Gamma}_{y^{\prime}z^{\prime\prime}}\right]-\left[\tilde{\Gamma}_{z^{\prime\prime}y}^{-1}\tilde{\Gamma}_{yz^{\prime}}\right]\tilde{\Gamma}_{zz^{\prime}}-\tilde{\Gamma}_{z^{\prime}z^{\prime\prime}}^{-1}\left[\tilde{\Gamma}_{z^{\prime}y^{\prime\prime}}^{-1}\tilde{\Gamma}_{y^{\prime\prime}z}\right],
\]
we have (again assume that the faces of the tetrahedron are labeled
as in Figure \ref{fig:ideal T with edges and faces all labeled})
\begin{eqnarray*}
\lefteqn{\rho\left(\left[\tilde{\Gamma}_{zy^{\prime}}^{-1}\tilde{\Gamma}_{y^{\prime}z^{\prime\prime}}\right]-\left[\tilde{\Gamma}_{z^{\prime\prime}y}^{-1}\tilde{\Gamma}_{yz^{\prime}}\right]\tilde{\Gamma}_{zz^{\prime}}-\tilde{\Gamma}_{z^{\prime}z^{\prime\prime}}^{-1}\left[\tilde{\Gamma}_{z^{\prime}y^{\prime\prime}}^{-1}\tilde{\Gamma}_{y^{\prime\prime}z}\right]\right)}\\
&=&A^{-\frac{1}{2}}\{z,f_{3}\}^{-1}\{z^{\prime\prime},f_{3}\}-(-A^{2})^{-\frac{1}{2}}A^{-1}\{z^{\prime\prime},f_{2}\}^{-1}\{z^{\prime},f_{2}\}\{z,f_{1}\}\{z^{\prime},f_{1}\}\\
& &-(-A^{2})^{\frac{1}{2}}A^{-1}\{z^{\prime},f_{2}\}^{-1}\{z^{\prime\prime},f_{2}\}^{-1}\{z^{\prime},f_{1}\}^{-1}\{z,f_{1}\}\\
&=&A^{-\frac{1}{2}}\{z,f_{3}\}^{-1}\{z^{\prime\prime},f_{3}\}-(-A^{2})^{-\frac{1}{2}}\{z^{\prime\prime},f_{2}\}^{-1}\tilde{z}^{\prime}\{z,f_{1}\}-(-A^{2})^{\frac{1}{2}}\{z^{\prime\prime},f_{2}\}^{-1}\tilde{z}^{\prime-1}\{z,f_{1}\}.
\end{eqnarray*}
Multiply the last expression by $\{z^{\prime\prime},f_{3}\}^{-1}\{z,f_{3}\}$
on the right, it becomes
\[
A^{-\frac{1}{2}}-(-A^{2})^{-\frac{1}{2}}\tilde{z}^{\prime\prime-1}\tilde{z}^{\prime}\tilde{z}-(-A^{2})^{\frac{1}{2}}\tilde{z}^{\prime\prime-1}\tilde{z}^{\prime-1}\tilde{z}.
\]
Now if we modify the last expression with $\left[\tilde{z}\tilde{z}^{\prime}\tilde{z}^{\prime\prime}\right]-(-A^{2})^{\frac{1}{2}}$,
which was shown to lie in the right ideal $\left(\rho(\tilde{I}_{T})\right)\left(\underset{f\in\mathcal{T}^{(2)}}{\bigotimes}\mathbf{S}f\right)$
previously, it becomes
\[
A^{-\frac{1}{2}}-A^{-\frac{1}{2}}\tilde{z}^{\prime\prime-2}-A^{-\frac{1}{2}}\tilde{z}^{2},
\]
which is nothing but $-A^{-\frac{1}{2}}\left(\underset{T\in\mathcal{T}}{\bigotimes}\mu_{T}\right)\left(\hat{z}^{2}+\hat{z}^{\prime\prime-2}-1\right)$.
This shows that $\underset{T\in\mathcal{T}}{\bigotimes}\mu_{T}$
maps the right ideal $L_{T}$ of $\underset{T\in\mathcal{T}}{\bigotimes}\mathbb{T}\langle T\rangle$
into the right ideal $\left(\rho(\tilde{I}_{T})\right)\left(\underset{f\in\mathcal{T}^{(2)}}{\bigotimes}\mathbf{S}f\right)$
of $\underset{f\in\mathcal{T}^{(2)}}{\bigotimes}\mathbf{S}f$.

We have shown that $\underset{T\in\mathcal{T}}{\bigotimes}\mu_{T}$
descends to a well-defined $R$-module homomorphism
\[
\mu\colon\hat{\mathcal{G}}_{\mathcal{T}}\rightarrow\underset{f\in\mathcal{T}^{(2)}}{\overline{\bigotimes}}\mathbf{S}f.
\]
Now because $\underset{T\in\mathcal{T}}{\bigotimes}\mu_{T}$ is an
isomorphism of quantum tori $\underset{T\in\mathcal{T}}{\bigotimes}\mathbb{T}\langle T\rangle\overset{\cong}{\rightarrow}\underset{T\in\mathcal{T}}{\bigotimes}\tilde{\mathbb{T}}\langle T\rangle$,
and by definition $\hat{\mathcal{G}}_{\mathcal{T}}^{[\text{PP}]}$
is the image of $\underset{T\in\mathcal{T}}{\bigotimes}\tilde{\mathbb{T}}\langle T\rangle$
in $\underset{f\in\mathcal{T}^{(2)}}{\overline{\bigotimes}}\mathbf{S}f$
under the natural quotient map, we conclude that the image of $\mu$ is
just $\hat{\mathcal{G}}_{\mathcal{T}}^{[\text{PP}]}$.

This completes the proof.
\end{proof}
\endgroup

\begin{rem}
\label{rem: vertex, lagrangian and edge relations hold in PP's quantum gluing module}A
consequence of the last proposition is that the relations satisfied
by the quantized shape parameters in $\hat{\mathcal{G}}_{\mathcal{T}}$,
namely the vertex relations, lagrangian relations and edge relations
are also satisfied by the quantized shape parameters in $\hat{\mathcal{G}}_{\mathcal{T}}^{[\text{PP}]}$.
That is, the followings hold in $\hat{\mathcal{G}}_{\mathcal{T}}^{[\text{PP}]}$.
\begin{itemize}
\item If $T\in\mathcal{T}$ is an ideal tetrahedron with edges labeled as
in Figure \ref{fig:ideal T with edges and faces all labeled}, then
we have
\[
\left[\tilde{z}\tilde{z}^{\prime}\tilde{z}^{\prime\prime}\right]=\left[\tilde{z}\tilde{y}^{\prime}\tilde{y}^{\prime\prime}\right]=\left[\tilde{y}\tilde{z}^{\prime}\tilde{y}^{\prime\prime}\right]=\left[\tilde{y}\tilde{y}^{\prime}\tilde{z}^{\prime\prime}\right]=(-A^{2})^{\frac{1}{2}}
\]
\item Similarly, we also have
\[
\tilde{z}^{\prime\prime-2}+\tilde{z}^{2}-1=0
\]
as well as other lagrangian relations identifying left action.
\item If $e\in\mathcal{T}^{(1)}$ is an edge of the triangulation and $e_{1},e_{2},\dots,e_{k}$
are the labels of bare edges (by shape parameters) identified to $e$
. We have
\[
\left[\tilde{e}_{1}\tilde{e}_{2}\dots\tilde{e}_{k}\right]=(-A^{2})
\]
identifying right action.
\end{itemize}
\end{rem}

These are the defining relations for our quantum gluing module $\hat{\mathcal{G}}_{\mathcal{T}}$,
however it is not clear to us if it is also the case for $\hat{\mathcal{G}}_{\mathcal{T}}^{[\text{PP}]}$. After all, the quantum gluing module $\hat{\mathcal{G}}_{\mathcal{T}}^{[\text{PP}]}$ of Panitch \& Park is by definition a submodule of certain quotient of $\underset{f\in\mathcal{T}^{(2)}}{\bigotimes}\mathbf{S}f$ that ensures the map $\underset{f\in\mathcal{T}^{(2)}}{\bigotimes}Tr_{Sf}$ can descends to $\underset{f\in\mathcal{T}^{(2)}}{\overline{\bigotimes}}\mathrm{Sk}(Sf)$.
Therefore we can only conclude that the map $\mu\colon\hat{\mathcal{G}}_{\mathcal{T}}\rightarrow\hat{\mathcal{G}}_{\mathcal{T}}^{[\text{PP}]}$
is surjective. We make the injectivity of $\mu$ a conjecture.
\begin{conjecture}
\label{conj:mu is injective}The surjective $R$-module homomorphism
$\mu\colon\hat{\mathcal{G}}_{\mathcal{T}}\rightarrow\hat{\mathcal{G}}_{\mathcal{T}}^{[\text{PP}]}$
is also injective, therefore it is an isomorphism of $R$-modules.
\end{conjecture}

\subsection{Proof of $Tr_{\mathcal{T}}^{[\text{PP}]}=\mu\circ Tr_{\mathcal{T}}$\label{subsec:Exact relation between our quantum trace and PP quantum trace}}

We are finally in a position to prove our second main comparison theorem, Theorem
\ref{thm: Main theorem 2, exact relation between our Tr and PP's Tr}.
We can decompose both ideal tetrahedra and face suspensions into face
cones, and recall that in both cases we have splitting homomorphisms of skein
modules coming from such decompositions (see the discussions in \ref{subsec:splitting homomorphisms for face cones and face suspensions}).
The key is to construct a further version of the quantum trace map defined on the partially
corner-reduced skein module $\overline{\mathrm{Sk}}^{pc}(Cf_{i})$ of
each face cone $Cf_{i}$, such that when the face cones are glued back into the face suspensions or ideal tetrahedra, the quantum trace maps on the face cones glue to compare with the quantum trace map $Tr_{Sf}$ on face suspensions and the quantum trace map $Tr_{T}^{c}$ on ideal tetrahedra.

We begin by describing the construction of the quantum trace map on $\overline{\mathrm{Sk}}(Cf_{i})$.
In \ref{subsec: partial corner reduction},
we have realized the skein module $\overline{\mathrm{Sk}}(Cf_{i})$
of the face cone of a bare face $f_{i}$ and its partial corner-reduction
$\overline{\mathrm{Sk}}^{pc}(Cf_{i})$ as a
\[
\left(\mathbb{T}_{f_{i}},\cdot\right)\otimes\tilde{\mathbb{T}}_{f_{i}}\text{-}\left(\left(\mathbb{B}^{\otimes3}\right)\cdot\right)=R[Cf_{i}]\otimes\tilde{\mathbb{T}}_{f_{i}}\text{-}\mathbb{T}\langle Cf_{i}\rangle
\]
-bimodule quotient of 
\[
\left(\mathbb{T}_{f_{i}},\cdot\right)\otimes\tilde{\mathbb{T}}_{f_{i}}\otimes\left(\left(\mathbb{B}^{\otimes3}\right)\cdot\right)=R[Cf_{i}]\otimes\tilde{\mathbb{T}}_{f_{i}}\otimes\mathbb{T}\langle Cf_{i}\rangle.
\]
Similar to what we did in \ref{subsec:quantum trace map on T}, we
first construct a map on the level of the algebra $R[Cf_{i}]\otimes\tilde{\mathbb{T}}_{f_{i}}\otimes\mathbb{T}\langle Cf_{i}\rangle$.

\begin{defn}
We define the following maps.
\begin{enumerate}
\item Let $
Q\colon R[Cf_{i}]\rightarrow R
$
 be the algebra homomorphism that maps each generator of $\Gamma_{ab}$ of $R[Cf_{i}]$
to the scalar $(-A^{2})^{-\frac{1}{2}}$. 
\item Let 
$
\eta_{f_{i}}\colon\mathbb{T}\langle Cf_{i}\rangle\rightarrow\mathbf{C}f_{i}
$
 be the isomorphism of quantum tori given by (assuming the labeling
of edges by $a$, $b$ and $c$)
\[
x_{a,f_{i}}\mapsto\{a,f_{i}\}
\]
\[
x_{b,f_{i}}\mapsto\{b,f_{i}\}
\]
\[
x_{c,f_{i}}\mapsto\{c,f_{i}\}
\]
(By the presentations of the quantum tori, see Lemma \ref{lem:presentation of (T,cdot) and (B^tensor 3, cdot)}
(ii) and Definition \ref{def:face cone module and face suspension module}
(i), $\eta_{f_{i}}$ is indeed an isomorphism of quantum tori.)
\item Let 
\[
P^{pc}\colon R[Cf_{i}]\otimes\tilde{\mathbb{T}}_{f_{i}}\otimes\mathbb{T}\langle Cf_{i}\rangle\rightarrow\mathbf{C}f_{i}
\]
be the $R$-module homomorphism given by
\[
P^{pc}\left(\Gamma\otimes\tilde{\Gamma}\otimes x\right)=A^{\frac{1}{2}\langle d_{f_{i}}(\Gamma),d_{f_{i}}(x)\rangle_{f_{i}}}Q(\Gamma)\rho_{f_{i}}(\tilde{\Gamma})\eta_{f_{i}}(x)
\]
for $\mathbb{Z}^{M_{f_{i}}}$-homogeneous elements and extend by
multilinearity. Recall the maps $\rho_{f_i}$ were introduced in Lemma \ref{lem: map of algebra that will descend to quantum trace on Sf}.
\end{enumerate}
\end{defn}

Similar to what we had in \ref{subsec:quantum trace map on T}, the
factor $A^{\frac{1}{2}\langle d_{f_{i}}(\Gamma),d_{f_{i}}(x)\rangle_{f_{i}}}$
guarantees that $P^{pc}$ is a $R[Cf_{i}]\otimes\tilde{\mathbb{T}}_{f_{i}}\text{-}\mathbb{T}\langle Cf_{i}\rangle$-bimodule
homomorphism.

\begin{lem}
\label{lem:P^pc is a bimodule homomorphism}
The $R$-module homomorphism $P^{pc}$ is in fact a $R[Cf_{i}]\otimes\tilde{\mathbb{T}}_{f_{i}}\text{-}\mathbb{T}\langle Cf_{i}\rangle$-bimodule
homomorphism\footnote{The left $R[Cf_{i}]$-module structure of $\mathbf{C}f_{i}$ is
given by the algebra homomorphism $Q$; the left $\tilde{\mathbb{T}}_{f_{i}}$-module
structure of $\mathbf{C}f_{i}$ is provided by the map $\rho_{f_{i}}$; the
right $\mathbb{T}\langle Cf_{i}\rangle$-module structure of $\mathbf{C}f_{i}$
comes from the algebra homomorphism $\eta_{f_{i}}$}. 
\end{lem}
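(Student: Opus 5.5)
The plan is to mirror the proof of Lemma \ref{lem:P is a bimodule homomorphism}: a direct computation on homogeneous pure tensors, exploiting that the three actions are separately compatible with $P^{pc}$. Since $P^{pc}$ is $R$-linear and every element of $R[Cf_{i}]\otimes\tilde{\mathbb{T}}_{f_{i}}\otimes\mathbb{T}\langle Cf_{i}\rangle$ is a sum of $\mathbb{Z}^{M_{f_{i}}}$-homogeneous monomials, it suffices to check three identities for homogeneous $\Gamma,\Gamma^{\prime}\in R[Cf_{i}]$, $\tilde{\Gamma},\tilde{\Gamma}^{\prime}\in\tilde{\mathbb{T}}_{f_{i}}$ and $x,x^{\prime}\in\mathbb{T}\langle Cf_{i}\rangle$:
\[
P^{pc}\bigl(\Gamma^{\prime}\cdot(\Gamma\otimes\tilde{\Gamma}\otimes x)\bigr)=Q(\Gamma^{\prime})P^{pc}(\Gamma\otimes\tilde{\Gamma}\otimes x),
\]
\[
P^{pc}\bigl(\tilde{\Gamma}^{\prime}(\Gamma\otimes\tilde{\Gamma}\otimes x)\bigr)=\rho_{f_{i}}(\tilde{\Gamma}^{\prime})P^{pc}(\Gamma\otimes\tilde{\Gamma}\otimes x),
\]
\[
P^{pc}\bigl((\Gamma\otimes\tilde{\Gamma}\otimes x)\cdot x^{\prime}\bigr)=P^{pc}(\Gamma\otimes\tilde{\Gamma}\otimes x)\,\eta_{f_{i}}(x^{\prime}).
\]

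For the left $R[Cf_{i}]$-action, the definition in point (1) of the bimodule structure gives the factor $A^{-\frac12\langle d_{f_i}(\Gamma'),d_{f_i}(x)\rangle_{f_i}}$ times $(\Gamma'\cdot\Gamma)\otimes\tilde\Gamma\otimes x$. Applying $P^{pc}$ adds $A^{\frac12\langle d_{f_i}(\Gamma'\cdot\Gamma),d_{f_i}(x)\rangle_{f_i}}$. By bilinearity and additivity of degree ($d(\Gamma'\cdot\Gamma)=d(\Gamma')+d(\Gamma)$), the $\Gamma'$ contributions cancel, and since $Q$ is an algebra map on the commutative algebra $R[Cf_i]$, $Q(\Gamma'\cdot\Gamma)=Q(\Gamma')Q(\Gamma)$. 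For the right action, point (2) gives $A^{-\frac12\langle d(\Gamma),d(x')\rangle}\Gamma\otimes\tilde\Gamma\otimes(x\cdot x')$. Applying $P^{pc}$ gives $A^{\frac12\langle d(\Gamma),d(x)+d(x')\rangle}$; the $x'$ part cancels, and $\eta_{f_i}(x\cdot x')=\eta_{f_i}(x)\eta_{f_i}(x')$ because $\eta_{f_i}$ is an algebra isomorphism out of $(\mathbb{T}\langle Cf_i\rangle,\cdot)$. One should also note that $Q(\Gamma)$ is a scalar, so the placement of factors in $\mathbf{C}f_i$ is irrelevant.

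The $\tilde{\mathbb{T}}_{f_i}$-action is the step I expect to need the most care. It is the plain tensor-product action on the middle factor, so it is $\cup$ and carries no $A$-twist. The task is to verify that $\rho_{f_i}(\tilde\Gamma')\rho_{f_i}(\tilde\Gamma)\eta_{f_i}(x)$ really equals $\rho_{f_i}(\tilde\Gamma'\tilde\Gamma)\eta_{f_i}(x)$, with the product placed on the left. This holds because $\rho_{f_i}$ is an algebra homomorphism (Lemma \ref{lem: map of algebra that will descend to quantum trace on Sf}) and because of the order convention in the formula $Q(\Gamma)\rho_{f_{i}}(\tilde{\Gamma})\eta_{f_{i}}(x)$, which puts $\rho$ before $\eta$. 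Note also that $\tilde\Gamma$ has $\mathbb{Z}^{M_{f_i}}$-degree zero, since its endpoints lie on edge-cone markings. It therefore does not enter any $A$-exponent, and the degree exponent is unchanged. Finally, compatibility of the three actions, needed for a genuine bimodule homomorphism, follows from these identities together with the bimodule structure already established on the source, as in point (4).
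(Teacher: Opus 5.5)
Your proposal is correct and takes the same approach as the paper. The paper only says the proof is a straightforward calculation on homogeneous elements analogous to Lemma \ref{lem:P is a bimodule homomorphism}, and your three checks are exactly those details: the $A$-exponents cancel by additivity of degree for the $R[Cf_i]$- and $\mathbb{T}\langle Cf_i\rangle$-actions, and the $\tilde{\mathbb{T}}_{f_i}$-action is untwisted and of degree zero, so it passes through $\rho_{f_i}$, which is an algebra map placed on the left.
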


\begin{proof}
The proof is a straightforward calculation similar to that in the
proof of Lemma \ref{lem:P is a bimodule homomorphism}. We leave the
details to the reader.
\end{proof}

\begin{prop}
\label{prop:P^pc descends}
The $R[Cf_{i}]\otimes\tilde{\mathbb{T}}_{f_{i}}\text{-}\mathbb{T}\langle Cf_{i}\rangle$-bimodule
homomorphism 
\[
P^{pc}\colon R[Cf_{i}]\otimes\tilde{\mathbb{T}}_{f_{i}}\otimes\mathbb{T}\langle Cf_{i}\rangle\rightarrow\mathbf{C}f_{i}
\]
descends to a well-defined $R[Cf_{i}]\otimes\tilde{\mathbb{T}}_{f_{i}}\text{-}\mathbb{T}\langle Cf_{i}\rangle$-bimodule
homomorphism 
\[
Tr_{Cf_{i}}\colon\overline{\mathrm{Sk}}(Cf_{i})\rightarrow\mathbf{C}f_{i},
\]
which further descends to a well-defined $R[Cf_{i}]\otimes\tilde{\mathbb{T}}_{f_{i}}\text{-}\mathbb{T}\langle Cf_{i}\rangle$-bimodule
homomorphism
\[
Tr_{Cf_{i}}^{pc}\colon\overline{\mathrm{Sk}}^{pc}(Cf_{i})\rightarrow\mathbf{C}f_{i}.
\]
\end{prop}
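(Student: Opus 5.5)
The argument parallels Proposition \ref{prop:P descends to Tr} and the first half of the proof of Theorem \ref{thm:Tr induce isomorphism between corner reduced module and quantum module}, now for the face cone $Cf_{i}$.

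\emph{First descent.} By Lemma \ref{lem: generators of Ann(emptyset) for Cf are homogeneous}, $\overline{\mathrm{Sk}}(Cf_{i})$ is the quotient of $R[Cf_{i}]\otimes\tilde{\mathbb{T}}_{f_{i}}\otimes\mathbb{T}\langle Cf_{i}\rangle$ by the sub-bimodule $\mathrm{Ann}([\emptyset])$. That sub-bimodule is generated by the three homogeneous elements (\ref{eq: generators of Ann for face cone}). By Lemma \ref{lem:P^pc is a bimodule homomorphism}, $P^{pc}$ is a bimodule homomorphism, so it suffices to check that $P^{pc}$ kills each of these three generators. By the cyclic symmetry of the face cone, it is enough to treat
\[
(-A^{2})\Gamma_{ab}\tilde{\Gamma}_{ab}-x_{a,f_{i}}x_{b,f_{i}}.
\]

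\emph{The key computation.} Rewrite $\cup$-products as $\cdot$-products. By Lemma \ref{lem:presentation of (T,cdot) and (B^tensor 3, cdot)}, the monomial $x_{a,f_{i}}x_{b,f_{i}}$ equals the Weyl-ordered product $[x_{a,f_{i}}\cdot x_{b,f_{i}}]$, so its image under $\eta_{f_{i}}$ is $[\{a,f_{i}\}\{b,f_{i}\}]$.

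The first term is $\Gamma_{ab}\otimes\tilde{\Gamma}_{ab}\otimes 1$, on which $P^{pc}$ acts as
\[
(-A^{2})\,Q(\Gamma_{ab})\,\rho_{f_{i}}(\tilde{\Gamma}_{ab})=(-A^{2})(-A^{2})^{-\frac{1}{2}}(-A^{2})^{-\frac{1}{2}}\bigl[\{a,f_{i}\}\{b,f_{i}\}\bigr]=\bigl[\{a,f_{i}\}\{b,f_{i}\}\bigr].
\]
No twisting factor arises, since $d_{f_{i}}(1)=0$. The two terms therefore cancel.

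The step I expect to be the main obstacle is orientation bookkeeping. One must check that the Weyl-ordering convention in $\mathbf{C}f_{i}$, used in the definition of $\rho_{f_{i}}$, matches the one inherited from $\mathbb{T}\langle Cf_{i}\rangle$ through $\eta_{f_{i}}$. Both quantum tori have the same commutation relations under clockwise labeling, and $\eta_{f_{i}}$ is an isomorphism, so this should hold. One must also confirm that the labeling in Corollary \ref{cor:generators for skein modules of face cones and face suspensions}(i) uses the same clockwise convention. If a stray power of $A$ appears, I would recheck these conventions rather than the algebra. This yields $Tr_{Cf_{i}}$.

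\emph{Second descent.} This follows the argument of Theorem \ref{thm:Tr induce isomorphism between corner reduced module and quantum module}. The map $Tr_{Cf_{i}}$ is left $R[Cf_{i}]$-linear, with $R[Cf_{i}]$ acting on $\mathbf{C}f_{i}$ through $Q$. Hence for any $w$,
\[
Tr_{Cf_{i}}\bigl((\Gamma_{ab}-(-A^{2})^{-\frac{1}{2}})\cdot w\bigr)=\bigl(Q(\Gamma_{ab})-(-A^{2})^{-\frac{1}{2}}\bigr)Tr_{Cf_{i}}(w)=0,
\]
and the same holds for $\Gamma_{bc}$ and $\Gamma_{ca}$. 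Thus $I^{c}_{f_{i}}\cdot\overline{\mathrm{Sk}}(Cf_{i})$ is killed, and $Tr_{Cf_{i}}$ factors through $\overline{\mathrm{Sk}}^{pc}(Cf_{i})$, giving $Tr^{pc}_{Cf_{i}}$. The bimodule property passes to the quotient automatically.
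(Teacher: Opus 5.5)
Your proposal is correct and follows the paper's proof essentially step for step. You reduce to the three generators (\ref{eq: generators of Ann for face cone}) using Lemma \ref{lem: generators of Ann(emptyset) for Cf are homogeneous} and the bimodule property of $P^{pc}$, and check $(-A^{2})\Gamma_{ab}\tilde{\Gamma}_{ab}-[x_{a,f_{i}}\cdot x_{b,f_{i}}]\mapsto 0$ via $Q(\Gamma_{ab})=(-A^{2})^{-\frac{1}{2}}$ and the definition of $\rho_{f_{i}}$. You then kill $I^{c}_{f_{i}}\cdot\overline{\mathrm{Sk}}(Cf_{i})$ by left $R[Cf_{i}]$-linearity through $Q$; the orientation worry you flag does not arise, since $\eta_{f_{i}}$ is an isomorphism of quantum tori with identical commutation relations, so Weyl orderings correspond exactly.
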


\begin{proof}
By Lemma \ref{lem: generators of Ann(emptyset) for Cf are homogeneous},
to show that $P^{pc}$ descends to $\overline{\mathrm{Sk}}(Cf_{i})$,
we only need to show that $P^{pc}$ maps the generators of $\mathrm{Ann}([\emptyset])$
as in Corollary \ref{cor:generators for skein modules of face cones and face suspensions}
(i) to $0$. This is achieved by a straightforward calculation similar
to that in the proof of Proposition \ref{prop:P descends to Tr}.
For example, one of the generators is
\[
(-A^{2})\Gamma_{ab}\tilde{\Gamma}_{ab}-x_{a,f_{i}}x_{b,f_{i}}=(-A^{2})\Gamma_{ab}\tilde{\Gamma}_{ab}-\left[x_{a,f_{i}}\cdot x_{b,f_{i}}\right].
\]
Thus by definition we have
\begin{eqnarray*}
\lefteqn{P^{pc}\left((-A^{2})\Gamma_{ab}\tilde{\Gamma}_{ab}-\left[x_{a,f_{i}}\cdot x_{b,f_{i}}\right]\right)}\\
&=&(-A^{2})Q(\Gamma_{ab})\rho_{f_{i}}(\tilde{\Gamma}_{ab})-\eta_{f_{i}}\left(\left[x_{a,f_{i}}\cdot x_{b,f_{i}}\right]\right)\\
&=&(-A^{2})(-A^{2})^{-\frac{1}{2}}(-A^{2})^{-\frac{1}{2}}\left[\{a,f_{i}\}\{b,f_{i}\}\right]-\left[\{a,f_{i}\}\{b,f_{i}\}\right]
=0.
\end{eqnarray*}

To show that $Tr_{Cf_{i}}$ descends to $\overline{\mathrm{Sk}}^{pc}(Cf_{i})$,
we need to show that it maps
$I_{f_{i}}^{c}\cdot\overline{\mathrm{Sk}}(Cf_{i})$ to $\{0\}$.
But this is obvious because $I_{f_{i}}^{c}$ is the ideal generated by
elements of the form 
\[
\Gamma_{ab}-(-A^{2})^{-\frac{1}{2}}
\]
and $Tr_{Cf_{i}}$ is left $R[Cf_{i}]$-linear (and recall that
the left $R[Cf_{i}]$-module structure on $\mathbf{C}f_{i}$ is
given by the algebra homomorphism $Q$, which sends $\Gamma_{ab}$
to $(-A^{2})^{-\frac{1}{2}}$.)
\end{proof}

Now we study the gluing of these quantum trace maps.

\begin{lem}
\label{lem:quantum trace map for face cones glue}
The tensor product
\[
\underset{f_{i}\in\mathbf{f}(\mathcal{T})}{\bigotimes}Tr_{Cf_{i}}^{pc}\colon\underset{f_{i}\in\mathbf{f}(\mathcal{T})}{\bigotimes}\overline{\mathrm{Sk}}^{pc}(Cf_{i})\rightarrow\underset{f_{i}\in\mathbf{f}(\mathcal{T})}{\bigotimes}\mathbf{C}f_{i}=\underset{f\in\mathcal{T}^{(2)}}{\bigotimes}\mathbf{S}f
\]
descends to a well-defined $R$-module homomorphism 
\[
\underset{f_{i}\in\mathbf{f}(\mathcal{T})}{\overline{\bigotimes}}Tr_{Cf_{i}}^{pc}\colon\underset{f_{i}\in\mathbf{f}(\mathcal{T})}{\overline{\bigotimes}}\overline{\mathrm{Sk}}^{pc}(Cf_{i})\rightarrow\underset{f\in\mathcal{T}^{(2)}}{\overline{\bigotimes}}\mathbf{S}f.
\]
\end{lem}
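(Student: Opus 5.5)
We have to show that $\bigotimes_{f_i}Tr^{pc}_{Cf_i}$ sends the two submodules $\sum_{T}\overline{\mathfrak{R}}_T$ and $\overline{\mathfrak{R}}^{pc}_E$ into the submodule
\[
\Bigl(\underset{f\in\mathcal{T}^{(2)}}{\bigotimes}\mathbf{S}f\Bigr)\eta(\tilde{I}_E)+\sum_{T}\rho(\tilde{I}_T)\Bigl(\underset{f\in\mathcal{T}^{(2)}}{\bigotimes}\mathbf{S}f\Bigr)
\]
from (\ref{eq:reduced tensor product of face suspension modules}). We treat the two submodules separately.

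\textbf{Tetrahedron relations.} Since each $Tr^{pc}_{Cf_i}$ is left $\tilde{\mathbb{T}}_{f_i}$-linear (Proposition \ref{prop:P^pc descends}), with the action on $\mathbf{C}f_i$ given by $\rho_{f_i}$, the tensor product is left $\bigotimes_{f_i}\tilde{\mathbb{T}}_{f_i}$-linear. The action on $\bigotimes\mathbf{C}f_i$ is through $\rho=\bigotimes\rho_{f_i}$. Hence for a generator $\tilde{g}\in\tilde{I}_T$ and any $w$,
\[
\Bigl(\bigotimes Tr^{pc}_{Cf_i}\Bigr)(\tilde{g}\cup w)=\rho(\tilde{g})\Bigl(\bigotimes Tr^{pc}_{Cf_i}\Bigr)(w)\in\rho(\tilde{I}_T)\Bigl(\bigotimes\mathbf{S}f\Bigr).
\]
So $\overline{\mathfrak{R}}_T$ is handled formally.

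\textbf{Edge relations.} The real work is $\overline{\mathfrak{R}}^{pc}_E$, which is spanned by $(\otimes x_{Cf_i})\cup(\overline{e}^{\epsilon}-(-A^2)^{\epsilon})$ with $\otimes x_{Cf_i}$ partially balanced. Each $Tr^{pc}_{Cf_i}$ is right linear only for the $\cdot$-action of $\mathbb{T}\langle Cf_i\rangle$, so we first convert $\cup$ into $\cdot$. For homogeneous pieces, $w\cup x=A^{\frac12\langle d_{f_i}(w),d_{f_i}(x)\rangle_{f_i}}w\cdot x$, applied face by face. Summing over all bare faces, the total exponent is
\[
\sum_{f_i}\langle d_{f_i}(\otimes x_{Cf_i}),d_{f_i}(\overline{e})\rangle_{f_i}.
\]
Both $\otimes x_{Cf_i}$ and $\overline{e}$ are partially balanced. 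Indeed, $\overline{e}$ is a product of pairs $x_{e_{p-1},f_1^{(p)}}x_{e_p,f_2^{(p)}}$ whose two factors sit on matched marking edges of the identified faces $f_1^{(p)},f_2^{(p)}$. By the partially balanced analogue of Remark \ref{rem:matching face, opposite skew-symmetric form}, the contributions of $f_i$ and $f_j$ cancel whenever they are identified, so the total exponent vanishes. (If $\otimes x_{Cf_i}$ is only a sum of homogeneous partially balanced terms, we argue termwise.)

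Thus the element equals $(\otimes x_{Cf_i})\cdot(\overline{e}^{\epsilon}-(-A^2)^{\epsilon})$, and its image is
\[
\Bigl(\bigotimes Tr^{pc}_{Cf_i}(x_{Cf_i})\Bigr)\Bigl(\bigl(\textstyle\bigotimes\eta_{f_i}\bigr)(\overline{e})^{\epsilon}-(-A^2)^{\epsilon}\Bigr),
\]
using that $\bigotimes\eta_{f_i}$ is an algebra map for the $\cdot$-products. Here $\overline{e}$, viewed in $(\bigotimes(\mathbb{B}^{\otimes3})_{Cf_i},\cdot)$, must be rewritten as a $\cdot$-monomial. Within a single face cone, $\cup$-monomials are Weyl orderings of $\cdot$-monomials (Lemma \ref{lem:presentation of (T,cdot) and (B^tensor 3, cdot)}), and factors on different cones commute.

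\textbf{Main obstacle.} The delicate step is the identity $(\bigotimes\eta_{f_i})(\overline{e})=\eta(\tilde{e})$. By definition,
\[
\eta(\tilde{e})=\prod_p\{e_{p-1},f_1^{(p)}\}\{e_p,f_2^{(p)}\},
\]
and its factors commute, as in the proof of Proposition \ref{prop:maps from our quantum gluing module to PP's quantum gluing module}. Meanwhile $(\bigotimes\eta_{f_i})(\overline{e})$ is the Weyl-ordered product of the same generators, and the Weyl ordering of commuting pair-products equals their plain product. So we expect the two to agree, possibly after checking that no stray $A$-power arises from two factors lying in the same face cone. No such factor should arise, since each cone contributes a single generator to $\overline{e}$ unless $e$ meets a face twice; in that case we handle the extra pair by the same commutation check. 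Granting this, the image lies in $(\bigotimes\mathbf{S}f)\eta(\tilde{I}_E)$, and the descended map exists.
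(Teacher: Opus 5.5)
Your proposal is correct and follows essentially the same route as the paper. $\overline{\mathfrak{R}}_T$ is handled by the left $\tilde{\mathbb{T}}_{f_i}$-linearity of each $Tr^{pc}_{Cf_i}$ (acting through $\rho$), and $\overline{\mathfrak{R}}^{pc}_E$ by using partial balance of $\otimes x_{Cf_i}$ and $\overline{e}$ to replace $\cup$ by $\cdot$, followed by right $\mathbb{T}\langle Cf_i\rangle$-linearity and the identity $(\bigotimes\eta_{f_i})(\overline{e})=\eta(\tilde{e})$. Your hedging on that last identity is unnecessary, because your own Weyl-ordering observation already settles it: both sides are the Weyl-ordered product of the same $2k$ generators, since the pair-products $\{e_{p-1},f_1^{(p)}\}\{e_p,f_2^{(p)}\}$ pairwise commute. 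This also covers the case where a bare face meets $e$ twice, which the paper passes over silently.
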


\begingroup
\allowdisplaybreaks
\begin{proof}
We need to show that $\underset{f_{i}\in\mathbf{f}(\mathcal{T})}{\bigotimes}Tr_{Cf_{i}}^{pc}$
maps the $R$--submodule 
\[
\overline{\mathfrak{R}}_{E}^{pc}+\sum_{T\in\mathcal{T}}\overline{\mathfrak{R}}_{T}
\]
 of $\underset{f_{i}\in\mathbf{f}(\mathcal{T})}{\bigotimes}\overline{\mathrm{Sk}}^{pc}(Cf_{i})$ (see Definition \ref{def:reduced tensor peoduct of partially corner-reduced modules of face cones})
into the $R$--submodule 
\[
\left(\underset{f\in\mathcal{T}^{(2)}}{\bigotimes}\mathbf{S}f\right)\left(\eta(\tilde{I}_{E})\right)+\sum_{T\in\mathcal{T}}\left(\rho(\tilde{I}_{T})\right)\left(\underset{f\in\mathcal{T}^{(2)}}{\bigotimes}\mathbf{S}f\right)
\]
of $\underset{f\in\mathcal{T}^{(2)}}{\bigotimes}\mathbf{S}f$.

The $R$-submodule $\overline{\mathfrak{R}}_{E}^{pc}$ is the $R$-span
of the elements that admit representatives in $\underset{f_{i}\in\mathbf{f}(\mathcal{T})}{\bigotimes}\overline{\mathrm{Sk}}(Cf_{i})$
of the form (Definition \ref{def:reduced tensor peoduct of partially corner-reduced modules of face cones}
(ii), (iii))
\[
\left(\underset{f_{i}\in\mathbf{f}(\mathcal{T})}{\otimes}x_{Cf_{i}}\right)\cup\left(\overline{e}-(-A^{2})\right),
\]
where $\underset{f_{i}\in\mathbf{f}(\mathcal{T})}{\otimes}x_{Cf_{i}}$
is partially balanced (Definition \ref{def:partially balanced elements})
and $\overline{e}$ is the element of $\underset{f_{i}\in\mathbf{f}(\mathcal{T})}{\bigotimes}\left(\mathbb{B}^{\otimes3}\right)_{Cf_{i}}$
associated with an edge $e\in\mathcal{T}^{(1)}$ given by (assume
the sequence of face cones around $e$ is given as in Figure \ref{fig:face cones around an edge})
\[
\overline{e}=x_{e_{k},f_{1}^{(1)}}x_{e_{1},f_{2}^{(1)}}x_{e_{1},f_{1}^{(2)}}x_{e_{2};f_{2}^{(2)}}\dots x_{e_{k-1},f_{1}^{(k)}}x_{e_{k};f_{2}^{(k)}}.
\]
Note that $\overline{e}$ is also partially balanced, therefore by
the discussion in Remark \ref{rem:matching face, opposite skew-symmetric form},
\[
\left(\underset{f_{i}\in\mathbf{f}(\mathcal{T})}{\otimes}x_{Cf_{i}}\right)\cup\left(\overline{e}-(-A^{2})\right)=\left(\underset{f_{i}\in\mathbf{f}(\mathcal{T})}{\otimes}x_{Cf_{i}}\right)\cdot\left(\overline{e}-(-A^{2})\right).
\]
Now because each $Tr_{Cf_{i}}^{pc}$ is right $\left(\left(\mathbb{B}^{\otimes3}\right)_{Cf_{i}},\cdot\right)=\mathbb{T}\langle Cf_{i}\rangle$-linear,
we have
\begin{eqnarray*}
\lefteqn{\underset{f_{i}\in\mathbf{f}(\mathcal{T})}{\overline{\bigotimes}}Tr_{Cf_{i}}^{pc}\left(\left(\underset{f_{i}\in\mathbf{f}(\mathcal{T})}{\otimes}x_{Cf_{i}}\right)\cdot\left(\overline{e}-(-A^{2})\right)\right)}\\
&=&\left(\underset{f_{i}\in\mathbf{f}(\mathcal{T})}{\otimes}Tr_{Cf_{i}}^{pc}\left(x_{Cf_{i}}\right)\right)\left(\underset{f_{i}\in\mathbf{f}(\mathcal{T})}{\bigotimes}\eta_{f_{i}}\left(\overline{e}\right)-(-A^{2})\right)\\
&=&\left(\underset{f_{i}\in\mathbf{f}(\mathcal{T})}{\otimes}Tr_{Cf_{i}}^{pc}\left(x_{Cf_{i}}\right)\right)\left(\{e_{k},f_{1}^{(1)}\}\{e_{1},f_{2}^{(1)}\}\dots\{e_{k-1},f_{1}^{(k)}\}\{e_{k},f_{2}^{(k)}\}-(-A^{2})\right)\\
&=&\left(\underset{f_{i}\in\mathbf{f}(\mathcal{T})}{\otimes}Tr_{Cf_{i}}^{pc}\left(x_{Cf_{i}}\right)\right)\left(\eta(\tilde{e})-(-A^{2})\right)
\in\left(\underset{f\in\mathcal{T}^{(2)}}{\bigotimes}\mathbf{S}f\right)\left(\eta(\tilde{I}_{E})\right).
\end{eqnarray*}

The $R$-submodule $\overline{\mathfrak{R}}_{T}$ is given by (Definition
\ref{def:reduced tensor peoduct of partially corner-reduced modules of face cones}
(i))
\[
\tilde{I}_{T}\left(\underset{f_{i}\in\mathbf{f}(\mathcal{T})}{\bigotimes}\overline{\mathrm{Sk}}^{pc}(Cf_{i})\right).
\]
Because each $Tr_{Cf_{i}}^{pc}$ is left $\tilde{\mathbb{T}}_{f_{i}}$-linear and is induced by the map $P^{pc}$,
we only need to show that $\underset{f_{i}\in\mathbf{f}(\mathcal{T})}{\bigotimes}P^{pc}$
maps the right ideal $\tilde{I}_{T}$ of $\underset{f_{i}\in\mathbf{f}(\mathcal{T})}{\bigotimes}\tilde{\mathbb{T}}_{f_{i}}$
into $\rho(\tilde{I}_{T})$, but this is trivial as, by definition,
the action of $P^{pc}$ on the factor $\tilde{\mathbb{T}}_{f_{i}}$
is given by the map $\rho_{f_{i}}$ (and the map $\rho$ is nothing
but the tensor product of the $\rho_{f_{i}}$'s over all bare faces).
\end{proof}
\endgroup

Next, we are going to compare the three ``total'' $R$-module homomorphisms we obtain by gluing up the quantum trace maps respectively over all tetrahedra, over all face suspensions, and over all face cones:
\begin{enumerate}
    \item (Tetrahedra.) The $R$-homomorphism $\underset{T\in\mathcal{T}}{\overline{\bigotimes}}Tr_{T}^{c}\colon\underset{T\in\mathcal{T}}{\overline{\bigotimes}}\overline{\mathrm{Sk}}^{c}(T)\rightarrow\hat{\mathcal{G}}_{\mathcal{T}}
$  given in Proposition \ref{prop:quantum trace for T glue}.
\item (Face suspensions.) The  $R$-homomorphism $\underset{f\in\mathcal{T}^{(2)}}{\overline{\bigotimes}}Tr_{Sf}\colon\underset{f\in\mathcal{T}^{(2)}}{\overline{\bigotimes}}\overline{\mathrm{Sk}}(Sf)\rightarrow\underset{f\in\mathcal{T}^{(2)}}{\overline{\bigotimes}}\mathbf{S}f$
given just before Definition \ref{def: quantum trace map of PP}.
\item (Face cones.) 
The $R$-homomorphism $\underset{f_{i}\in\mathbf{f}(\mathcal{T})}{\overline{\bigotimes}}Tr_{Cf_{i}}^{pc}\colon\underset{f_{i}\in\mathbf{f}(\mathcal{T})}{\overline{\bigotimes}}\overline{\mathrm{Sk}}^{pc}(Cf_{i})\rightarrow\underset{f\in\mathcal{T}^{(2)}}{\overline{\bigotimes}}\mathbf{S}f$
given in Lemma \ref{lem:quantum trace map for face cones glue}.
\end{enumerate}


\begin{lem}
\label{lem:comparing 2,3}The following diagram commutes
\[
\begin{tikzcd}
\underset{f\in\mathcal{T}^{(2)}}{\overline{\bigotimes}}\overline{\mathrm{Sk}}(Sf) \arrow[rrd, "\underset{f\in\mathcal{T}^{(2)}}{\overline{\bigotimes}}Tr_{Sf}"] \arrow[dd, "\underset{f\in\mathcal{T}^{(2)}}{\overline{\bigotimes}}\sigma_{Sf}"'] &  &                                                                    \\
&  & \underset{f\in\mathcal{T}^{(2)}}{\overline{\bigotimes}}\mathbf{S}f \\
\underset{f_{i}\in\mathbf{f}(\mathcal{T})}{\overline{\bigotimes}}\overline{\mathrm{Sk}}^{pc}(Cf_{i}) \arrow[rru, "\underset{f_{i}\in\mathbf{f}(\mathcal{T})}{\overline{\bigotimes}}Tr_{Cf_{i}}^{pc}"']        &  &                                                                   
\end{tikzcd}
\]
(Here, the map $\underset{f\in\mathcal{T}^{(2)}}{\overline{\bigotimes}}\sigma_{Sf}$
is the reduced tensor product of splitting homomorphism coming from
splitting each face suspension into two face cones, see Proposition
\ref{prop:gluing splitting homomorphism coming from splitting every single face suspension}.)
\end{lem}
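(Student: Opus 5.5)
It suffices to check commutativity before passing to reduced tensor products, one face suspension at a time. All maps in the diagram are induced from the unreduced tensor products $\bigotimes_{f}\overline{\mathrm{Sk}}(Sf)\to\bigotimes_{f_i}\overline{\mathrm{Sk}}^{pc}(Cf_i)\to\bigotimes_f\mathbf{S}f$, and these are tensor products over $f\in\mathcal{T}^{(2)}$ of the single-suspension maps. So the whole diagram follows once we show, for each face $f$ obtained by gluing $f_i$ and $f_j$, that
\[
(Tr^{pc}_{Cf_i}\otimes Tr^{pc}_{Cf_j})\circ\sigma_{Sf}=Tr_{Sf}\colon\overline{\mathrm{Sk}}(Sf)\rightarrow\mathbf{C}f_i\otimes\mathbf{C}f_j=\mathbf{S}f,
\]
and then pass to the quotients. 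Those quotients are well defined by Proposition \ref{prop:gluing splitting homomorphism coming from splitting every single face suspension} and Lemma \ref{lem:quantum trace map for face cones glue}.

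To verify the single-suspension identity, I use that $\overline{\mathrm{Sk}}(Sf)$ is cyclic, generated by $[\emptyset]$ as a $\tilde{\mathbb{T}}_{f_i}\otimes\tilde{\mathbb{T}}_{f_j}$-$(\mathbb{B}^{\otimes3})_{Sf}$-bimodule (Corollary \ref{cor:generators for skein modules of face cones and face suspensions}(ii)). Both sides are then determined by three pieces of data: their value on $[\emptyset]$, their compatibility with the left $\tilde{\mathbb{T}}$-actions, and their compatibility with the right action of the generators $x_{a_i,f_i;a_j,f_j}$.

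\begin{itemize}
\item \emph{Empty skein.} Both sides send $[\emptyset]$ to $1\otimes 1$.
\item \emph{Left actions.} $\sigma_{Sf}(\tilde{\Gamma}\cup w)=\tilde{\Gamma}\cup\sigma_{Sf}(w)$, as noted after Lemma \ref{lem: Splitting homomorphism induced by splitting a single face suspension into two face cones}. Each $Tr^{pc}_{Cf}$ is left $\tilde{\mathbb{T}}$-linear through $\rho_f$, and $Tr_{Sf}$ is left linear through $\rho_{f_i}\otimes\rho_{f_j}$. So the two sides agree on the left action.
\item \emph{Right actions.} Here $\sigma_{Sf}(w\cup x_{a_i,f_i;a_j,f_j})=\sigma_{Sf}(w)\cup(x_{a_i,f_i}x_{a_j,f_j})$, while $Tr_{Sf}$ multiplies on the right by $\{a_i,f_i\}\otimes\{a_j,f_j\}$. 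This case is handled in the next paragraph.
\end{itemize}

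The right action is the main obstacle. Each $Tr^{pc}_{Cf}$ is right linear only for the twisted $\cdot$-action of $\mathbb{T}\langle Cf\rangle$, not for $\cup$. So I must check that, for a homogeneous element $u\otimes v$ in the image of $\sigma_{Sf}$, the twisting factors cancel:
\[
(u\otimes v)\cup(x_{a_i,f_i}\otimes x_{a_j,f_j})=A^{\frac12\langle d_{f_i}(u),d_{f_i}(x_{a_i,f_i})\rangle_{f_i}+\frac12\langle d_{f_j}(v),d_{f_j}(x_{a_j,f_j})\rangle_{f_j}}\,(u\cdot x_{a_i,f_i})\otimes(v\cdot x_{a_j,f_j}).
\]
The summands of $\sigma_{Sf}(w)$ are partially balanced across $f_i,f_j$, and $x_{a_i,f_i}\otimes x_{a_j,f_j}$ is also balanced. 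Remark \ref{rem:matching face, opposite skew-symmetric form} (the opposite sign of the forms on matched faces) then makes the exponent vanish, so $\cup$ agrees with $\cdot$ on the tensor product. Applying $Tr^{pc}_{Cf_i}\otimes Tr^{pc}_{Cf_j}$ with $\eta_{f_i}\otimes\eta_{f_j}$ now gives right multiplication by $\{a_i,f_i\}\otimes\{a_j,f_j\}=\eta_f(x_{a_i,f_i;a_j,f_j})$.

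Right multiplication by a monomial in the $x$'s can be built up one generator at a time, and inverses are handled the same way. So the two sides agree on every $\Gamma\,[\emptyset]\,x$, which span $\overline{\mathrm{Sk}}(Sf)$. Finally, I pass to the reduced tensor products, where the lower route is well defined by the cited results, to conclude the commutativity of the diagram.
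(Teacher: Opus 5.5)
Your proposal is correct and follows essentially the paper's argument. Both verify the identity face suspension by face suspension on the spanning elements $\tilde{\Gamma}_{f_i}\otimes\tilde{\Gamma}_{f_j}\otimes x$, and in both the only nontrivial point is that the $A$-factors relating $\cup$ and $\cdot$ cancel between $Cf_i$ and $Cf_j$, because the matched faces carry opposite skew-symmetric forms. The difference is organizational: the paper handles a whole monomial $x$ at once through Weyl ordering in $\mathbf{C}f_i\otimes\mathbf{C}f_j$, whereas you build it up one generator at a time using the bimodule linearity of the maps.
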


\begingroup
\allowdisplaybreaks
\begin{proof}
Let $f\in\mathcal{T}^{(2)}$ and $f_{i}$,
$f_{j}$ be bare faces identified to $f$, so the edges of $f_{i}$
and $f_{j}$ are labeled as in Corollary \ref{cor:generators for skein modules of face cones and face suspensions}
(ii). 

To begin we will check commutativity for skeins localized at the source vertices on the edges of $f$. Let 
\begin{equation}
x=x_{a_{i},f_{i};a_{j},f_{j}}^{m}x_{b_{i},f_{i};b_{j},f_{j}}^{n}x_{c_{i},f_{i};c_{j},f_{j}}^{\ell}\in\left(\mathbb{B}^{\otimes 3}\right)_{Sf}.\label{eq:a monimial x}
\end{equation}
On the one hand we have
\[
\eta_{f}(x)=\{a_{i},f_{i}\}^{m}\{a_{j},f_{j}\}^{m}\{b_{i},f_{i}\}^{n}\{b_{j},f_{j}\}^{n}\{c_{i},f_{i}\}^{\ell}\{c_{j},f_{j}\}^{\ell};
\]
on the other hand, $x$ represents an element of $\overline{\mathrm{Sk}}(Sf)$,
under the splitting homomorphism $\sigma_{Sf}$, it becomes the element
of $\overline{\mathrm{Sk}}^{pc}(Cf_{i})\otimes\overline{\mathrm{Sk}}^{pc}(Cf_{j})$
represented by
\begin{align*}
x^{\prime} & =x_{a_{i},f_{i}}^{m}x_{b_{i},f_{i}}^{n}x_{c_{i},f_{i}}^{\ell}x_{a_{j},f_{j}}^{m}x_{b_{j},f_{j}}^{n}x_{c_{j},f_{j}}^{\ell}\\
 & =\left[x_{a_{i},f_{i}}^{m}\cdot x_{b_{i},f_{i}}^{n}\cdot x_{c_{i},f_{i}}^{\ell}\cdot x_{a_{j},f_{j}}^{m}\cdot x_{b_{j},f_{j}}^{n}\cdot x_{c_{j},f_{j}}^{\ell}\right]\in\mathbb{T}\langle Cf_{i}\rangle\otimes\mathbb{T}\langle Cf_{j}\rangle
\end{align*}
and we have
\begin{align*}
\left(\eta_{f_{i}}\otimes\eta_{f_{j}}\right)(x^{\prime}) & =\left[\{a_{i},f_{i}\}^{m}\{b_{i},f_{i}\}^{n}\{c_{i},f_{i}\}^{\ell}\{a_{j},f_{j}\}^{m}\{b_{j},f_{j}\}^{n}\{c_{j},f_{j}\}^{\ell}\right]\\
 & =\{a_{i},f_{i}\}^{m}\{b_{i},f_{i}\}^{n}\{c_{i},f_{i}\}^{\ell}\{a_{j},f_{j}\}^{m}\{b_{j},f_{j}\}^{n}\{c_{j},f_{j}\}^{\ell}\\
 & =\{a_{i},f_{i}\}^{m}\{a_{j},f_{j}\}^{m}\{b_{i},f_{i}\}^{n}\{b_{j},f_{j}\}^{n}\{c_{i},f_{i}\}^{\ell}\{c_{j},f_{j}\}^{\ell}\\
 & =\eta_{f}(x)
\end{align*}
(Here, the $A$ factors one gains when expending the Weyl-ordering
cancel each other because the corresponding generators of the two
face cone modules associated with the two face cones gluing into a single
face suspension commute up to $A$ factors of opposite powers, see the presentations of quantum tori in Definition \ref{def:face cone module and face suspension module}.) Now, $\underset{f\in\mathcal{T}^{(2)}}{\overline{\bigotimes}}\overline{\mathrm{Sk}}(Sf)$
is the $R$-span of elements of the form 
$
w=\underset{f\in\mathcal{T}^{(2)}}{\otimes}\left(\tilde{\Gamma}_{f_{i}}\otimes\tilde{\Gamma}_{f_{j}}\otimes x\right)$,
where $\tilde{\Gamma}_{f_{i}}\in\widetilde{\mathbb{T}}_{f_{i}}$,
$\tilde{\Gamma}_{f_{j}}\in\widetilde{\mathbb{T}}_{f_{j}}$ and $x\in\left(\mathbb{B}^{\otimes 3}\right)_{Sf}$ is a monomial
as given by (\ref{eq:a monimial x}). (Here, we use $f_{i}$ and $f_{j}$
to denote the two bare faces that are identified to $f$ for all $f\in\mathcal{T}^{(2)}$.)
We have
\[
\left(\underset{f\in\mathcal{T}^{(2)}}{\overline{\bigotimes}}Tr_{Sf}\right)(w)=\underset{f\in\mathcal{T}^{(2)}}{\otimes}\left(\rho_{f_{i}}\left(\tilde{\Gamma}_{f_{i}}\right)\otimes\rho_{f_{j}}\left(\tilde{\Gamma}_{f_{j}}\right)\otimes\eta_{f}(x)\right);
\]
on the other hand, we also have
\begin{align*}
\left(\underset{f_{i}\in\mathbf{f}(\mathcal{T})}{\overline{\bigotimes}}Tr_{Cf_{i}}^{pc}\right)\left(\underset{f\in\mathcal{T}^{(2)}}{\overline{\bigotimes}}\sigma_{Sf}\right)(w) & =\left(\underset{f_{i}\in\mathbf{f}(\mathcal{T})}{\overline{\bigotimes}}Tr_{Cf_{i}}^{pc}\right)\left(\underset{f\in\mathcal{T}^{(2)}}{\otimes}\left(\tilde{\Gamma}_{f_{i}}\otimes\tilde{\Gamma}_{f_{j}}\otimes x^{\prime}\right)\right)\\
 & =\underset{f\in\mathcal{T}^{(2)}}{\otimes}\left(\rho_{f_{i}}\left(\tilde{\Gamma}_{f_{i}}\right)\otimes\rho_{f_{j}}\left(\tilde{\Gamma}_{f_{j}}\right)\otimes\left(\eta_{f_{i}}\otimes\eta_{f_{j}}\right)(x^{\prime})\right)\\
 & =\underset{f\in\mathcal{T}^{(2)}}{\otimes}\left(\rho_{f_{i}}\left(\tilde{\Gamma}_{f_{i}}\right)\otimes\rho_{f_{j}}\left(\tilde{\Gamma}_{f_{j}}\right)\otimes\eta_{f}(x)\right)\\
 & =\left(\underset{f\in\mathcal{T}^{(2)}}{\overline{\bigotimes}}Tr_{Sf}\right)(w).\\
\end{align*}
\end{proof}
\endgroup

\begin{lem}
\label{lem:comparing 1,3}The following diagram commutes
\[
\begin{tikzcd} \underset{f_{i}\in\mathbf{f}(\mathcal{T})}{\overline{\bigotimes}}\overline{\mathrm{Sk}}^{pc}(Cf_{i}) \arrow[rr, "\underset{f_{i}\in\mathbf{f}(\mathcal{T})}{\overline{\bigotimes}}Tr_{Cf_{i}}^{pc}"] &  & \underset{f\in\mathcal{T}^{(2)}}{\overline{\bigotimes}}\mathbf{S}f \\ &  & {\hat{\mathcal{G}}_{\mathcal{T}}^{[\text{PP}]}} \arrow[u, hook]    \\ \underset{T\in\mathcal{T}}{\overline{\bigotimes}}\overline{\mathrm{Sk}}^{c}(T) \arrow[uu, "\underset{T\in\mathcal{T}}{\overline{\bigotimes}}\sigma_{T}"] \arrow[rr, "\underset{T\in\mathcal{T}}{\overline{\bigotimes}}Tr_{T}^{c}"']      &  & \hat{\mathcal{G}}_{\mathcal{T}} \arrow[u, "\mu"']                  \end{tikzcd}
\]
(Here, the map $\underset{T\in\mathcal{T}}{\overline{\bigotimes}}\sigma_{T}$
is the reduced tensor product of the splitting homomorphisms coming from
splitting each ideal tetrahedron into four face cones, as in Proposition
\ref{prop:gluing splitting homomorphism coming from splitting every ideal tetrahedron}.)
\end{lem}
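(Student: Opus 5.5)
Everything in the diagram is an $R$-module map, so I will check commutativity on a spanning set of $\overline{\bigotimes}_{T\in\mathcal{T}}\overline{\mathrm{Sk}}^{c}(T)$. Theorem \ref{thm:Tr induce isomorphism between corner reduced module and quantum module} shows that each $\overline{\mathrm{Sk}}^{c}(T)$ is the image of $\mathbb{T}\langle T\rangle$ under $F$. Hence $\overline{\bigotimes}_{T}\overline{\mathrm{Sk}}^{c}(T)$ is spanned by tensors $\otimes_{T}(1\otimes x_T)$, where each $x_T$ is a monomial in $\hat z^{\boxempty}_T,\hat y^{\boxempty}_T$. It suffices to treat a single tetrahedron $T$: all maps involved are tensor products over tetrahedra (respectively over the face cones of each tetrahedron), and the Weyl-ordering $A$-factors between different tetrahedra are trivial because the monomials $\hat a$ have no degree on the bare faces where the sign conventions matter. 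Along the bottom-right route, $Tr^c_T(1\otimes x_T)=P(1\otimes x_T)=x_T$, and $\mu$ sends this to the same monomial $\mu_T(x_T)$ in the variables $\tilde z^{\boxempty},\tilde y^{\boxempty}$.

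For the top-left route I will compute $\sigma_T(\hat a)$ for a generator $\hat a$. In $T$ it is the arc near the source on the edge labeled $a$, joining the two marking edges in the two bare faces $f_p,f_q$ adjacent to that edge. When $T$ is cut along the edge cones into face cones, this arc crosses exactly one edge cone, namely that of the edge $a$. Summing over the compatible state at the crossing, the bad-arc relation in the bigon should kill the mixed-state term, leaving $x_{a,f_p}\otimes x_{a,f_q}$, possibly times a scalar that I must verify is $1$ (compare the identity $\sigma_{Sf}(z\cup x_{a,f_i;b,f_j})=\sigma_{Sf}(z)\cup(x_{a,f_i}x_{b,f_j})$ used earlier). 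Because $\sigma_T$ is compatible with the right $\cup$-actions of the source algebras, a $\cup$-monomial in the $\hat a$'s maps to the corresponding $\cup$-monomial in the $x_{a,f}$'s. The empty skein maps to $\otimes[\emptyset]$.

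Applying $\otimes Tr^{pc}_{Cf_i}$ then gives $\otimes_i\eta_{f_i}$ of that monomial. Since $\eta_{f_i}$ is right linear for the $\cdot$-structure, the task is to convert $\cup$ to $\cdot$ on both sides. A monomial $x_T$ in $\mathbb{T}\langle T\rangle$ equals the Weyl-ordered $\cup$-monomial by Lemma \ref{lem:presntation of (T^tensor4,cdot)  and (B^tensor6, cdot)}(ii). In each face cone the $\cup$-monomial equals its Weyl-ordered $\cdot$-monomial by Lemma \ref{lem:presentation of (T,cdot) and (B^tensor 3, cdot)}(ii). So the image is the Weyl-ordered product $\bigl[\prod \{a,f_p\}\{a,f_q\}\bigr]$ in $\bigotimes_i\mathbf{C}f_i$, and by Definition \ref{def:quantized shape parameters and quantum gluing module as in PP} this should equal the Weyl-ordered product of the $\tilde a$'s, which is $\mu_T(x_T)$. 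The step I need is the identity $\bigl[\tilde a_1\cdots\tilde a_k\bigr]=\bigl[\prod_r\{a_r,f_{p_r}\}\{a_r,f_{q_r}\}\bigr]$. This holds because each $\tilde a=\{a,f_p\}\{a,f_q\}$ is a product of commuting factors (they lie in different $\mathbf{C}f$'s), so the Weyl-ordering is multiplicative in the required sense. The commutation exponents must also match those of $\mathbb{T}\langle T\rangle$, and they do because $\mu_T$ is an isomorphism of quantum tori.

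I expect the main obstacle to be bookkeeping rather than ideas. The first point is confirming that the splitting of $\hat a$ across the edge cone yields exactly $x_{a,f_p}x_{a,f_q}$ with coefficient $1$: the state-sum produces $(-A^2)^{\pm1/2}$ factors and a bad-arc cancellation, and these must be checked against the conventions of (S3)/(S4). The second point is that the $A$-power relating $\cup$ and $\cdot$ in $\mathbb{T}\langle T\rangle$ (built from the forms $\langle\,,\rangle_{f}$ on the four faces) must agree with the sum of the corresponding factors in the four face-cone tori. My first attempt will compute both sides for a pair $\hat a,\hat b'$ sharing a face, where the exponent $A^{1}$ should arise from the single shared face cone, and then extend by bilinearity.
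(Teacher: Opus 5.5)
Your proposal is correct and follows essentially the same route as the paper. You use the spanning set $\otimes_T\hat x_T$ of monomials in the quantized shape parameters, note that $\sigma_T$ sends a $\cup$-monomial in the $\hat a$ to the corresponding $\cup$-monomial in the $x_{a,f_p}x_{a,f_q}$, and identify the tensor product of the per-face-cone Weyl-ordered images under $\eta_{f_i}$ with $[\tilde z^{m}\tilde z^{\prime n}\cdots\tilde y^{\prime\prime r}]=\mu_T(\hat x_T)$. Your bookkeeping worries resolve as you expect: the mixed-state term at the edge-cone crossing vanishes because both mixed-state arcs are zero in the reduced bigon algebra, the $A$-powers match automatically through the Weyl-ordering identities, and there are no cross-tetrahedron factors simply because the algebras involved are tensor products.
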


\begingroup
\allowdisplaybreaks
\begin{proof}
Again, let us start with a preliminary observation. Let $T$ be an
ideal tetrahedron with its edges and faces labeled as in Figure
\ref{fig:ideal T with edges and faces all labeled} and let
\begin{align}
\hat{x}_{T} & =\hat{z}^{m}\hat{z}^{\prime n}\hat{z}^{\prime\prime\ell}\hat{y}^{p}\hat{y}^{\prime q}\hat{y}^{\prime\prime r}\label{eq:a monomial xhat}\\
 & =\left[\hat{z}^{m}\cdot\hat{z}^{\prime n}\cdot\hat{z}^{\prime\prime\ell}\cdot\hat{y}^{p}\cdot\hat{y}^{\prime q}\cdot\hat{y}^{\prime\prime r}\right]\in\mathbb{T}\langle T\rangle.
\end{align}
We have
\[
\mu_{T}(\hat{x}_{T})=\left[\tilde{z}^{m}\tilde{z}^{\prime n}\tilde{z}^{\prime\prime\ell}\tilde{y}^{p}\tilde{y}^{\prime q}\tilde{y}^{\prime\prime r}\right]\in\tilde{\mathbb{T}}\langle T\rangle.
\]
On the other hand, $\hat{x}_T$ represents an element of $\overline{\mathrm{Sk}}^{c}(T)$
and if we apply the splitting homomorphism $\sigma_{T}$ to it, the result
is the element of $\underset{f_{i}\in\mathbf{f}(T)}{\overline{\bigotimes}}\overline{\mathrm{Sk}}^{pc}(Cf_{i})$
represented by
\begin{eqnarray*}
\hat{x}_{T}^{\prime} & = &x_{z,f_{1}}^{m}x_{z,f_{3}}^{m}x_{z^{\prime},f_{1}}^{n}x_{z^{\prime},f_{2}}^{n}x_{z^{\prime\prime},f_{2}}^{\ell}x_{z^{\prime\prime},f_{3}}^{\ell}
x_{y,f_{2}}^{p}x_{y,f_{4}}^{p}x_{y^{\prime},f_{3}}^{q}x_{y^{\prime},f_{4}}^{q}x_{y^{\prime\prime},f_{1}}^{r}x_{y^{\prime\prime},f_{4}}^{r}\\
 &=& \left[x_{z,f_{1}}^{m}\cdot x_{z^{\prime},f_{1}}^{n}\cdot x_{y^{\prime\prime},f_{1}}^{r}\right]\otimes\left[x_{z^{\prime},f_{2}}^{n}\cdot x_{z^{\prime\prime},f_{2}}^{\ell}\cdot x_{y,f_{2}}^{p}\right]\\
 &&\otimes\left[x_{z,f_{3}}^{m}\cdot x_{z^{\prime\prime},f_{3}}^{\ell}\cdot x_{y^{\prime},f_{3}}^{q}\right]\otimes\left[x_{y,f_{4}}^{p}\cdot x_{y^{\prime},f_{4}}^{q}\cdot x_{y^{\prime\prime},f_{4}}^{r}\right].
\end{eqnarray*}
Therefore we have
\begin{eqnarray*}
\left(\underset{f_{i}\in\mathbf{f}(T)}{\bigotimes}\eta_{f_{i}}\right)(\hat{x}_{T}^{\prime})
&=&\left[\{z,f_{1}\}^{m}\{z^{\prime},f_{1}\}^{n}\{y^{\prime\prime},f_{1}\}^{r}\right]\otimes\left[\{z^{\prime},f_{2}\}^{n}\{z^{\prime\prime},f_{2}\}^{\ell}\{y,f_{2}\}^{p}\right]\\[-12pt]
& &\otimes\left[\{z,f_{3}\}^{m}\{z^{\prime\prime},f_{3}\}^{\ell}\{y^{\prime},f_{3}\}^{q}\right]\otimes\left[\{y,f_{4}\}^{p}\{y^{\prime},f_{4}\}^{q}\{y^{\prime\prime},f_{4}\}^{r}\right]\\
&=&[\{z,f_{1}\}^{m}\{z,f_{3}\}^{m}\{z^{\prime},f_{1}\}^{n}\{z^{\prime},f_{2}\}^{n}\{z^{\prime\prime},f_{2}\}^{\ell}\{z^{\prime\prime},f_{3}\}^{\ell}\\
 && \{y,f_{2}\}^{p}\{y,f_{4}\}^{p}\{y^{\prime},f_{3}\}^{q}\{y^{\prime},f_{4}\}^{q}\{y^{\prime\prime},f_{1}\}^{r}\{y^{\prime\prime},f_{4}\}^{r}]\\
 &=&\left[\tilde{z}^{m}\tilde{z}^{\prime n}\tilde{z}^{\prime\prime\ell}\tilde{y}^{p}\tilde{y}^{\prime q}\tilde{y}^{\prime\prime r}\right]\\
 &=&\mu_{T}(\hat{x}_{T}).\\
\end{eqnarray*}
Now $\underset{T\in\mathcal{T}}{\overline{\bigotimes}}\overline{\text{Sk}}^{c}(T)$
is the $R$-span of elements of the form 
$
w=\underset{T\in\mathcal{T}}{\otimes}\hat{x}_{T},
$
where $\hat{x}_{T}\in\mathbb{T}\langle T\rangle$ is a monomial as
given by (\ref{eq:a monomial xhat}). We have
\begin{align*}
\mu\circ\left(\underset{T\in\mathcal{T}}{\overline{\bigotimes}}Tr_{T}^{c}\right)(w) & =\underset{T\in\mathcal{T}}{\otimes}\mu_{T}(\hat{x}_{T}) =\underset{T\in\mathcal{T}}{\otimes}\left(\underset{f_{i}\in\mathbf{f}(T)}{\bigotimes}\eta_{f_{i}}\right)(\hat{x}_{T}^{\prime})\\
 & =\left(\underset{f_{i}\in\mathbf{f}(T)}{\underset{T\in\mathcal{T}}{\overline{\bigotimes}}}Tr_{Cf_{i}}^{pc}\right)\left(\underset{T\in\mathcal{T}}{\otimes}\hat{x}_{T}^{\prime}\right)
 =\left(\underset{f_{i}\in\mathbf{f}(T)}{\underset{T\in\mathcal{T}}{\overline{\bigotimes}}}Tr_{Cf_{i}}^{pc}\right)\left(\underset{T\in\mathcal{T}}{\overline{\bigotimes}}\sigma_{T}\right)(w).
\end{align*}
\end{proof}
\endgroup

Finally, we can prove our second main comparison result. The following is a restatement
of Theorem \ref{thm: Main theorem 2, exact relation between our Tr and PP's Tr}.
\begin{thm}
$Tr_{\mathcal{T}}^{[\text{PP}]}=\mu\circ Tr_{\mathcal{T}}$.
\end{thm}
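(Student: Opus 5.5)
The plan is a pure diagram chase, assembling the three commutative diagrams already established: Proposition \ref{prop:commutative diagram of various splitting homomorphisms}, Lemma \ref{lem:comparing 2,3} and Lemma \ref{lem:comparing 1,3}. Since $Y$ has empty boundary, $\overline{\mathrm{Sk}}^{c}(Y)=\mathrm{Sk}(Y)$, so both $Tr_{\mathcal{T}}$ and $Tr_{\mathcal{T}}^{[\text{PP}]}$ are defined on $\mathrm{Sk}(Y)$. By Proposition \ref{prop:maps from our quantum gluing module to PP's quantum gluing module} and Proposition \ref{prop:PP quantum trace map takes value in their quantum gluing module}, both $\mu\circ Tr_{\mathcal{T}}$ and $Tr_{\mathcal{T}}^{[\text{PP}]}$ land in $\hat{\mathcal{G}}_{\mathcal{T}}^{[\text{PP}]}\subset\overline{\bigotimes}_{f\in\mathcal{T}^{(2)}}\mathbf{S}f$. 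Because that inclusion is injective, it suffices to prove equality after composing with the inclusion into $\overline{\bigotimes}_{f}\mathbf{S}f$.

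The chase runs as follows. By Definition \ref{def: quantum trace map of PP},
\[
Tr_{\mathcal{T}}^{[\text{PP}]}=\Bigl(\overline{\textstyle\bigotimes}_{f}Tr_{Sf}\Bigr)\circ\tilde{\sigma}.
\]
By Lemma \ref{lem:comparing 2,3} this equals
\[
\Bigl(\overline{\textstyle\bigotimes}_{f_i}Tr^{pc}_{Cf_i}\Bigr)\circ\Bigl(\overline{\textstyle\bigotimes}_{f}\sigma_{Sf}\Bigr)\circ\tilde{\sigma}.
\]
Proposition \ref{prop:commutative diagram of various splitting homomorphisms} gives
\[
\Bigl(\overline{\textstyle\bigotimes}_{f}\sigma_{Sf}\Bigr)\circ\tilde{\sigma}=\Bigl(\overline{\textstyle\bigotimes}_{T}\sigma_{T}\Bigr)\circ\sigma,
\]
so the expression becomes
\[
\Bigl(\overline{\textstyle\bigotimes}_{f_i}Tr^{pc}_{Cf_i}\Bigr)\circ\Bigl(\overline{\textstyle\bigotimes}_{T}\sigma_{T}\Bigr)\circ\sigma.
\]
Lemma \ref{lem:comparing 1,3} then rewrites the first two factors as $\iota\circ\mu\circ\overline{\bigotimes}_{T}Tr_{T}^{c}$, where $\iota$ is the inclusion of $\hat{\mathcal{G}}_{\mathcal{T}}^{[\text{PP}]}$. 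This yields
\[
\iota\circ\mu\circ\Bigl(\overline{\textstyle\bigotimes}_T Tr^c_T\Bigr)\circ\sigma=\iota\circ\mu\circ Tr_{\mathcal{T}},
\]
using Definition \ref{def:quantum trace map for Y}. Cancelling the injective $\iota$ gives the theorem.

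The only point needing care, and the expected obstacle, is a bookkeeping check that the maps in the three diagrams really have matching domains and codomains. Concretely, the $\sigma$ in Proposition \ref{prop:commutative diagram of various splitting homomorphisms} must be the composite of the quotient $\mathrm{Sk}(Y)\to\overline{\mathrm{Sk}}^{c}(Y)$ with the splitting map of Theorem \ref{thm:splitting homomorphism}, which is exactly what enters Definition \ref{def:quantum trace map for Y}. Likewise, $\tilde{\sigma}$ must factor through $\overline{\mathrm{Sk}}(Y)$ in the same way used in Definition \ref{def: quantum trace map of PP}. Once these identifications are noted, which is immediate since all three quotients coincide with $\mathrm{Sk}(Y)$ when $\partial Y=\emptyset$, no further computation is needed.
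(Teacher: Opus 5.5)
Your proposal is correct and is essentially the paper's own proof. The paper likewise pastes Proposition \ref{prop:commutative diagram of various splitting homomorphisms}, Lemma \ref{lem:comparing 2,3} and Lemma \ref{lem:comparing 1,3} into one commutative diagram and reads off its boundary, using that $Tr_{\mathcal{T}}^{[\text{PP}]}$ lands in $\hat{\mathcal{G}}_{\mathcal{T}}^{[\text{PP}]}$; your explicit chase and the cancellation of the injective inclusion simply spell that argument out.
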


\begin{proof}
Putting Proposition \ref{prop:commutative diagram of various splitting homomorphisms},
Lemma \ref{lem:comparing 2,3} and Lemma \ref{lem:comparing 1,3}
together, we obtain the following commutative diagram
\[
\begin{tikzcd}
 &  & \underset{f\in\mathcal{T}^{(2)}}{\overline{\bigotimes}}\overline{\mathrm{Sk}}(Sf) \arrow[dd] \arrow[rrdd, "\underset{f\in\mathcal{T}^{(2)}}{\overline{\bigotimes}}Tr_{Sf}"] &  &                                                                    \\
 &  &                                                                                &  &                                                                    \\
\mathrm{Sk}(Y) \arrow[rruu, "\tilde{\sigma}"] \arrow[rrdd, "\sigma"'] &  & \underset{f_{i}\in\mathbf{f}(\mathcal{T})}{\overline{\bigotimes}}\overline{\mathrm{Sk}}^{pc}(Cf_{i}) \arrow[rr]                                           &  & \underset{f\in\mathcal{T}^{(2)}}{\overline{\bigotimes}}\mathbf{S}f \\
 &  &                                                                                &  & {\hat{\mathcal{G}}_{\mathcal{T}}^{[\text{PP}]}} \arrow[u, hook]    \\
&  & \underset{T\in\mathcal{T}}{\overline{\bigotimes}}\overline{\mathrm{Sk}}^{c}(T) \arrow[uu] \arrow[rr, "\underset{T\in\mathcal{T}}{\overline{\bigotimes}}Tr_{T}^{c}"]         &  & \hat{\mathcal{G}}_{\mathcal{T}} \arrow[u, "\mu"']                 
\end{tikzcd}
\]
Taking the boundary of the above diagram, together with the fact that
the image of $Tr_{\mathcal{T}}^{[\text{PP}]}$ is in $\hat{\mathcal{G}}_{\mathcal{T}}^{[\text{PP}]}$,
we conclude that $Tr_{\mathcal{T}}^{[\text{PP}]}=\mu\circ Tr_{\mathcal{T}}$.
\end{proof}

\end{document}